\documentclass[bj,authoryear]{imsart}

\usepackage{mathtools}
\usepackage{pgfplots}
\pgfplotsset{compat=1.18}
\usepackage{booktabs}

\usepackage{tikz}

\usepackage{amsmath, amssymb, amsthm}
\usepackage{float}   
\usepackage{natbib}

\usepackage{subfiles}

\newcommand{\vir}[1]{``#1''}

\newcommand{\We}{W_1^{(\ell^2)}}              
\newcommand{\Wl}{W_1^{(\ell^1)}}   

\usepackage{enumitem}    

\PassOptionsToPackage{unicode}{hyperref}

\startlocaldefs
\theoremstyle{plain}

\newtheorem{theorem}{Theorem}[section]
\newtheorem{lemma}[theorem]{Lemma}
\newtheorem{proposition}[theorem]{Proposition}
\newtheorem{corollary}[theorem]{Corollary}

\DeclareMathOperator*{\argmin}{arg\,min}

\theoremstyle{definition}
\newtheorem{definition}[theorem]{Definition}

\newtheorem{remark}{Remark}
\newcommand{\di}[1]{\mathrm{d}#1}

\providecommand{\di}{\,\mathrm{d}}

\allowdisplaybreaks

\usepackage{bm}

\usepackage{enumitem}

\usepackage{array,booktabs}

\newenvironment{steps}[1][$\bullet$]{%
    \begin{itemize}[label=#1]
}{%
    \end{itemize}
}

\usepackage[sectionbib]{bibunits}
\usepackage{nameref}
\usepackage[nameinlink,noabbrev,capitalise]{cleveref}

\endlocaldefs

\begin{document}

\begin{frontmatter}
\title{Adaptive minimax multivariate \(L^1\)-deconvolution: harmonic-mean rates under noise filtering, and a \(1\)-Wasserstein equivalence}
\runtitle{Multivariate \(L^1\)-deconvolution and a \(W_1\)-equivalence}

\begin{aug}
\author[A]{\inits{F.}\fnms{Catia}~\snm{Scricciolo}
\ead[label=e1]{catia.scricciolo@univr.it}}
\address[A]{Dipartimento di Scienze Economiche,
Università di Verona, Via Cantarane no. 24, 37129, Verona, Italy\printead[presep={,\ }]{e1}}
\end{aug}

\begin{abstract}
We study the multivariate convolution model with additive independent noise, multidimensional signal and noise, aiming to recover the signal's cumulative distribution function from contaminated observations. The noise is ordinary smooth with anisotropic regularity, and the signal belongs to an anisotropic Nikol'skii density class. We extend to the multivariate setting an approximate minimum \(L^1\)-distance estimator based on integrated kernel density estimation, and establish matching upper and lower bounds for the \(L^1\)-risk over anisotropic Nikol'skii classes. To our knowledge, this is the first minimax rate, not a sharpening of existing bounds. Although the lower-bound scheme is classical, the test-function construction is novel: it distinguishes no active component from at least one active component. The rate is governed by a censoring mechanism through the positive part, a noise filter in a harmonic-mean representation of the exponent. We further propose a fully data-driven, rate-adaptive procedure selecting an optimal bandwidth vector over the full Nikol'skii scale. On Nikol'skii product density classes, the \(L^1\)-distance between cumulative distribution functions and the \(1\)-Wasserstein distance share the same minimax rate, though not equivalent in general, via coordinate-wise decoupling of the \(1\)-Wasserstein cost on product measures. These findings reveal a link between the two distances, whose full understanding remains an open question.
\end{abstract}

\begin{keyword}
\kwd{Adaptive estimation}
\kwd{Kantorovich--Wasserstein distance}
\kwd{Anisotropic Nikol'skii classes}
\kwd{Minimax optimality}
\kwd{Multivariate distribution function deconvolution} 
\kwd{Ordinary smooth errors}
\end{keyword}

\end{frontmatter}

\section{Introduction}\label{sec:1}
We study the multivariate deconvolution model with additive noise. We observe $n$ independent and identically distributed (i.i.d.) random vectors $\mathbf{Y}_1,\,\dots,\,\mathbf{Y}_n$ in $\mathbb{R}^d$, generated as
\begin{equation}\label{eq:model}
\mathbf{Y}_i=\mathbf{X}_i+\boldsymbol{\varepsilon}_i,\quad i=1,\,\dots,\,n,
\end{equation}
where the signal vectors $\mathbf{X}_i\in\mathbb{R}^d$ are i.i.d. with unknown probability measure $\mu_X$ and cumulative distribution function $F_X$. The noise vectors $\boldsymbol{\varepsilon}_i\in\mathbb{R}^d$ are i.i.d. with known probability measure $\mu_\varepsilon$ having density $f_\varepsilon$ with respect to Lebesgue measure. The sequences $(\mathbf{X}_i)_{i\in\mathbb N}$ and $(\boldsymbol{\varepsilon}_i)_{i\in\mathbb N}$ are independent of each other. Only the $\mathbf{Y}_i$ are observed. The goal is to recover $F_X$ from the indirect observations $\mathbf{Y}_1,\,\dots,\,\mathbf{Y}_n$. The nonparametric estimation of $F_X$ has received renewed attention in recent years, driven by its role in shape-constrained inference and in data regimes where the dimension grows with $n$, where observations are functional or censored, or where privacy constraints restrict the information released; in such regimes, classical methods often do not extend straightforwardly. This function is a natural target: unlike the density, it always exists and is bounded and monotone, and in the direct-observation setting it can be estimated without smoothing, since the empirical distribution function converges uniformly at the rate $n^{-1/2}$, independent of the dimension.

A further reason for studying the cumulative distribution function is its connection to Wasserstein distances, a standard tool for comparing distributions, from generative modeling, \emph{e.g.}, Wasserstein Generative Adversarial Networks (GANs) \citep{goodfellow2014generative, Arjovsky2017, Gulrajani2017}, to two-sample testing. In the univariate setting, the \(1\)-Wasserstein distance coincides with the \(L^1\)-distance between cumulative distribution functions, a consequence of the dual Kantorovich formulation \citep{Kantorovitch1942} and of the one-dimensional structure of the problem. The identity fails when \(d\ge 2\), so we work with the more tractable \(L^1\)-distance between distribution functions. Although the two distances are not equivalent in general, on the product Nikol'skii classes considered here they share the same minimax rate, as shown in Section~\ref{sec:equivalence}.

Estimating the distribution function rather than the density is also motivated by the fact that the former, being an integral of the density, is a smoother functional of the underlying distribution. This regularity, however, does not by itself guarantee a better rate than density estimation: as shown by \citet{HallLahiri2008}, cumulative distribution function deconvolution may exhibit unexpected local behaviour, with a convergence rate that can be slower at the origin than away from it. We study the deconvolution problem with known error distribution, assuming ordinary smooth noise with anisotropic regularity and a signal density in an anisotropic Nikol'skii class, thereby extending standard smoothness conditions under which the deconvolution problem admits finite minimax rates.

While the univariate case is well understood, to our knowledge multivariate cumulative distribution function deconvolution with known error distribution has not been previously addressed in the form presented here.
The main technical difficulty is that the anisotropic minimax rate is governed by a \emph{bottleneck effect}: a single unfavorable coordinate direction can dominate the overall difficulty, so that the convergence exponent is a harmonic mean of the coordinate-wise exponents (Section~\ref{sec:structure}). Capturing this phenomenon sharply requires a lower-bound construction that distinguishes the case of \emph{no active component} from that of \emph{at least one active component} (Section~\ref{sec:lowerbound}); this appears to be a device with no counterpart in the existing deconvolution literature.

The estimator we study is kernel-based, in the sense that it is an approximate $L^1$-projection of the integral of a kernel density estimator onto the space of cumulative distribution functions, following the univariate construction of \citet{Dedecker2015}. The kernel is anisotropic, with a Fourier transform compactly supported away from the origin; this design makes the integrated bias and the stochastic term simultaneously controllable, which is why Section~\ref{sec:assumptions} is devoted to the choice of the kernel and its interplay with the signal regularity. Our contributions are the following.
\begin{enumerate}
\item[(i)] \emph{An approximate multivariate minimum-\(L^1\)-distance estimator.} We extend to the multivariate setting the approximate minimum-\(L^1\)-distance estimator of \citet{Dedecker2015}, constructed as the \(L^1\)-projection of an integrated kernel density estimator onto the space of distribution functions. This projection guarantees that the estimator is a genuine cumulative distribution function, a property not automatically inherited from the integrated kernel (Section~\ref{sec:assumptions}).\\[-8pt]

\item[(ii)] \emph{Minimax-optimal rate and its structure.} We provide an asymptotic lower bound matching the upper bound, thereby obtaining the minimax-optimal rate. In the anisotropic Nikol'skii scale, the rate takes the form
\[
\psi_n^{\boldsymbol{\alpha},\boldsymbol{\beta}}
=
n^{-\frac{1}{2}\left[1+\sum_{j=1}^d
\frac{(\beta_j-\frac12)_+}{\alpha_j+1}\right]^{-1}},
\]
up to logarithmic factors, where \(\alpha_j\) and \(\beta_j\) quantify the anisotropic smoothness of the signal and the anisotropic decay of the noise characteristic function along coordinate \(j\), respectively.
The exponent admits a harmonic-mean representation in terms of coordinate-wise difficulties, revealing a \emph{bottleneck effect}: a single unfavorable coordinate direction can severely degrade the overall estimation efficiency (Section~\ref{sec:structure}).\\[-8pt]

\item[(iii)] \emph{A novel lower-bound construction.} The matching lower bound relies on a new construction of test functions that distinguishes the case of \emph{no active component} from that of \emph{at least one active component}. While the general lower-bound scheme is classical, this device is, to the best of our knowledge, what makes the anisotropic bound sharp, and no analogous construction seems to have been used previously in the deconvolution literature (Section~\ref{sec:lowerbound}).\\[-8pt]

\item[(iv)] \emph{A \(W_1\)--\(L^1\) structural link.} On Nikol'skii product density classes, the \(L^1\)-distance between cumulative distribution functions and the \(1\)-Wasserstein distance share the same minimax rate, although the two distances are not equivalent in general for \(d\geq2\). This follows from the coordinate-wise decoupling of the \(1\)-Wasserstein cost on product measures and contributes to a partial understanding of the structural link between the two distances, whose full nature remains to be investigated (Section~\ref{sec:equivalence}).\\[-8pt]

\item[(v)] \emph{A fully data-driven, rate-adaptive procedure.} We propose a penalized model-selection procedure, inspired by \citet{ComteLacour2013} and itself adapted from \citet{GoldenshlugerLepski2011} and \citet{Chagny2013}, that selects an optimal bandwidth vector across the full Nikol'skii scale without prior knowledge of the signal smoothness. We establish an asymptotic oracle inequality for the integrated kernel density estimator, and, as far as we are aware, this is the first adaptive procedure for multivariate cumulative distribution function deconvolution (Section~\ref{sec:adaptivity}).
\end{enumerate}

We now relate our contributions to the existing deconvolution literature, which has primarily addressed density rather than cumulative distribution function recovery in the univariate setting, and discuss how our work fits into it.
Deconvolving a density \(f_X\) from observations corrupted by independent additive noise with known distribution has been extensively studied. Kernel methods \citep{Carroll1988, Zhang1990} and wavelet-based strategies \citep{Fan2002} are among the most prominent. In the univariate case, a standard approach is the kernel deconvolution estimator of \citet{StefanskiCarroll1990}, whose theoretical properties depend on the smoothness of both the target density \(f_X\) and the error density \(f_\varepsilon\). A pivotal framework, introduced by \citet{Fan1991}, distinguishes between \textit{ordinary smooth} and \textit{supersmooth} distributions based on the decay rate of their characteristic functions, and classifies deconvolution problems into four regimes; \citet{Butucea2004v} showed that kernel estimators achieve minimax-optimal rates when \(f_X\) is supersmooth and the noise is ordinary smooth, while the supersmooth--supersmooth case has been examined from a minimax perspective by \citet{ButuceaTsybakov2008a, ButuceaTsybakov2008b}.

A separate line of research concerns \emph{adaptive} estimators, methods that attain optimal convergence rates without prior knowledge of the target's smoothness. Adaptivity in kernel-based deconvolution is typically addressed through data-driven bandwidth selection: \citet{Delaigle2004} proposed a consistent bootstrap method, while \citet{HazeltonTurlach2009} introduced a weighted kernel estimator with automated weight selection, extendable to multivariate settings. Projection and penalization provide an alternative approach: \citet{Pensky1999} developed an adaptive wavelet estimator for Sobolev spaces, \citet{Comte2006} devised an adaptive estimator using a Shannon-type basis and a penalization device, and more recent work employs non-compactly supported bases for specific spaces, such as the Laguerre and Hermite bases \citep{Mabon2017, ComteGenonCatalot2018}. 


The literature on multivariate density deconvolution is sparser. \citet{Masry1991} generalized the kernel estimator to stationary random processes, focusing on dependencies while assuming isotropic noise, but did not study an adaptive strategy. \citet{YoundjeWells2008} proposed a cross-validation strategy to select the optimal bandwidth under isotropic, ordinary smooth noise and established its asymptotic optimality. \citet{ComteLacour2013} introduced a bandwidth selection procedure inspired by \citet{GoldenshlugerLepski2011} accommodating anisotropic noises, with ordinary and supersmooth components, and provided convergence rates for the pointwise and \(L^2\)-risks when \(f_X\) belongs to anisotropic H\"older, Nikol'skii, or Sobolev classes. For ordinary smooth noise and \(f_X\) in anisotropic Nikol'skii classes, \citet{Rebelles2016} provided an adaptive kernel estimator that is minimax-optimal for the \(L^p\)-loss, and \citet{LepskiWiller2019} studied a more general model, with direct and indirect observations of \(X\), and constructed one that is optimal over anisotropic Nikol'skii classes under the \(L^p\)-loss.

A parallel line of work concerns the cumulative distribution function rather than the density. This is motivated by the role of the \(W_1\)-distance in comparing distributions, since in the univariate case \(W_1\) coincides with the \(L^1\)-distance between cumulative distribution functions, so estimating the latter amounts to estimating the former. For known ordinary smooth error densities, \citet{Dedecker2015} introduced an approximate minimum-$L^1$-distance cumulative distribution function estimator based on integrating a kernel density estimator, attaining minimax-optimal rates up to logarithmic factors without smoothness conditions on the mixing density. These results were later extended to Sobolev smoothness classes by \citet{Rousseau&Scricciolo}, and, for unknown error distribution, \citet{Scricciolo25b} derived minimax-optimal convergence rates and proposed adaptive procedures. The present work differs in two respects: it is multivariate (\(d\ge 2\)), where the univariate identity between \(W_1\) and the \(L^1\)-cumulative distribution function distance fails, and it establishes the first matching upper and lower bounds over anisotropic Nikol'skii classes, together with a fully data-driven adaptive procedure.

Finally, the classical convolution model assumes the error density to be completely known, a requirement often regarded as a major practical limitation. When repeated measurements of the same variable are available, each corrupted by independent noise, information on the error distribution can be obtained without knowing it a priori \citep{Delaigle2008,
Neumann2007}; alternatively, given a separate calibration experiment, one observes an independent noise sample and replaces the unknown noise characteristic function by its empirical counterpart \citep{Diggle1993,
Johannes2009, ComteLacour2011}. The present setting assumes the noise to be known, so these directions are not pursued.

The rest of the paper is structured as follows. Subsection~\ref{sec:preliminaries} introduces the notation. Section~\ref{sec:assumptions} states the assumptions on noise and kernel. Section~\ref{sec:estimator} presents the cumulative distribution function estimator. Section~\ref{sec:convergence} derives the non-asymptotic MIAE bound and establishes rates over anisotropic Nikol'skii classes. Section~\ref{sec:lowerbound} provides the matching lower bound. Section~\ref{sec:structure} analyzes the minimax-optimal rate structure, the influence of the dimension \(d\), and the bottleneck effect, comparing them with the density estimation rates of \citet{ComteLacour2013} and illustrating on a specific family of error densities. Section~\ref{sec:adaptivity} introduces the model-selection procedure and the corresponding oracle inequality. Section~\ref{sec:equivalence} discusses the \(W_1\)--\(L^1\) link on product Nikol'skii classes and outlines future research.


\subsection{Notation}\label{sec:preliminaries}
Fix the index sets $\mathbb{N} \coloneqq \{1,\,2,\,3,\,\dots\}$ and $\mathbb{N}_0 \coloneqq \mathbb{N} \cup \{0\}$. Let \(\mathbb{R}_+ \coloneqq (0,\,+\infty)\). For \(a \in \mathbb{R}\), the positive part is \((a)_+ \coloneqq \max\{a,\,0\}\). We denote by \(\mathbf{0}\) and \(\mathbf{1}\) the zero vector and the vector of ones in \(\mathbb{R}^d\). For vectors \(\boldsymbol\alpha\) and \(\boldsymbol\beta\) in \(\mathbb{R}^d\), the inequality \(\boldsymbol\beta \ge \boldsymbol\alpha\) is understood componentwise, \emph{i.e.}, \(\beta_j \ge \alpha_j\) for all \(j = 1,\,\dots,\,d\).

For positive sequences \((a_n)_{n\in\mathbb N}\) and \((b_n)_{n\in\mathbb N}\), we write \(a_n \lesssim b_n\) if there exists a constant \(C > 0\) such that \(a_n \le C b_n\) for all sufficiently large \(n\), \(a_n \gtrsim b_n\) if \(b_n \lesssim a_n\), and \(a_n \asymp b_n\) if there exist constants \(c, \,C > 0\) such that \(c b_n \le a_n \le C b_n\) for all sufficiently large \(n\).

Let \(C^\infty(\mathbb R)\) denote the space of infinitely differentiable functions \(f :\, \mathbb R \to \mathbb R\). The subspace of compactly supported functions is \(C_c^\infty(\mathbb R) \coloneqq \{ f \in C^\infty(\mathbb R) :\, \operatorname{supp}(f) \text{ is compact} \}\). For \(s \ge 1\), the space of \(s\)-times continuously differentiable functions on \(\mathbb R\) is denoted by \(C^s(\mathbb R)\). For \(x \in \mathbb R\), let \(\lceil x \rceil\) denote the ceiling function, \emph{i.e.}, \(\lceil x \rceil \coloneqq \min\{ n \in \mathbb Z :\, n \ge x \}\). Analogously, let \(\lfloor \cdot \rfloor\) stand for the floor function, \emph{i.e.}, \(\lfloor x \rfloor \coloneqq \max\{ m \in \mathbb Z :\, m \le x \}\).

For vectors \(\mathbf{t}\) and \(\mathbf{x}\) in \(\mathbb{R}^d\), the standard Euclidean inner product is \(\langle \mathbf{t},\, \mathbf{x} \rangle \coloneqq \sum_{j=1}^d t_j x_j\). For functions \(f,\,g \in L^2(\mathbb{R}^d)\), the scalar product is \(\langle f,\, g \rangle_2 \coloneqq \int_{\mathbb{R}^d} f(\mathbf{x})\,\overline{g(\mathbf{x})}\,\mathrm{d}\mathbf{x}\), with norm \(\|f\|_{L^2(\mathbb{R}^d)} \coloneqq \sqrt{\langle f,\, f \rangle_2}\). 

For \(f \in L^1(\mathbb{R})\), the Fourier transform is \(\mathcal F[f](t) \coloneqq \int_{\mathbb{R}} \mathrm{e}^{\mathrm{i} t x} f(x)\,\mathrm{d}x\), \(t \in \mathbb{R}\). If \(f \in L^1(\mathbb{R})\) and \(\mathcal F[f] \in L^1(\mathbb{R})\), the Fourier inversion formula holds \(f(x) = (2\pi)^{-1} \int_{\mathbb{R}} \mathrm{e}^{-\mathrm{i} t x} \mathcal F[f](t)\,\mathrm{d}t\). On \(L^2(\mathbb{R})\), the Fourier transform extends uniquely via the Plancherel theorem, preserving the \(L^2\)-norm up to a constant, \(\|\mathcal F[f]\|_{L^2(\mathbb{R})} = \sqrt{2\pi} \,\|f\|_{L^2(\mathbb{R})}\). For \(f \in L^1(\mathbb{R}^d) \cap L^2(\mathbb{R}^d)\), the Fourier transform on \(\mathbb{R}^d\) is defined analogously as \(\mathcal F[f](\mathbf{t}) \coloneqq \int_{\mathbb{R}^d} \mathrm{e}^{\mathrm{i} \langle \mathbf{t},\, \mathbf{x} \rangle} f(\mathbf{x})\,\mathrm{d}\mathbf{x}\), \(\mathbf{t} \in \mathbb{R}^d\). The convolution of \(f,\,g :\, \mathbb{R}^d \to \mathbb{R}\) is \((f \ast g)(\mathbf{x}) \coloneqq \int_{\mathbb{R}^d} f(\mathbf{x} - \mathbf{u})\,g(\mathbf{u})\,\mathrm{d}\mathbf{u}\), \(\mathbf{x} \in \mathbb{R}^d\), provided the integral exists.
The characteristic function of \(\mu_{\varepsilon}\) is $\varphi_{\varepsilon}(\mathbf{t}) \coloneqq \int_{\mathbb{R}^d} \mathrm{e}^{\mathrm{i} \langle \mathbf{t},\, \mathbf{x} \rangle} \,\mu_{\varepsilon}(\mathrm{d}\mathbf{x})$. For \(j = 1,\,\dots,\,d\), let \(\mu_{\varepsilon_j}\) denote the marginal distribution of the \(j\)-th component \(\varepsilon_{1,j}\) of \(\boldsymbol{\varepsilon}_1\), with Lebesgue density \(f_{\varepsilon_j}\). Its characteristic function is \(\varphi_{\varepsilon_j}(t) \coloneqq \int_{\mathbb{R}} \mathrm{e}^{\mathrm{i} t x} \,\mu_{\varepsilon_j}(\mathrm{d}x)\), \(t \in \mathbb{R}\).

\section{Assumptions on the noise and the signal}\label{sec:assumptions}
This section details the conditions on the noise distribution (Section~\ref{sec:noise}), the signal density (Section~\ref{sec:signal}), and the choice of the kernel function (Section~\ref{sec:kern}) that will be used throughout the article.

\subsection{Assumptions on the noise}\label{sec:noise}
We impose the following conditions on the error distribution:\\[-10pt]
\begin{enumerate}[label=(\textbf{E\arabic*}), leftmargin=*]
    \item The components \(\varepsilon_{1,1},\,\ldots,\,\varepsilon_{1,d}\) of the error vector \(\boldsymbol{\varepsilon}_1\in\mathbb{R}^d\) are independent random variables.\label{eq:E1indep}
    \item For each \(j=1,\,\ldots,\,d\), the characteristic function \(\varphi_{\varepsilon_j}\) of \(\mu_{\varepsilon_j}\) is such that there exist an exponent \(\beta_j>0\) and constants \(c_{0,j},\,c_{1,j}>0\) for which either or both of the following hold:\label{eq:E2}\\[-8pt]
    \begin{enumerate}[label=(\textbf{E2.\arabic*}), leftmargin=*]
        \item \(|\varphi_{\varepsilon_j}(t)|\geq c_{0,j}(1+|t|)^{-\beta_j}\) for all \(t\in\mathbb{R}\),\label{it:E21}\\[-8pt]
        \item \(|\varphi_{\varepsilon_j}(t)|\leq c_{1,j}(1+|t|)^{-\beta_j}\) for all \(t\in\mathbb{R}\).\label{it:E22}\\[-8pt]
    \end{enumerate}
    \item For each \(j=1,\,\ldots,\,d\), let \(r_{\varepsilon_j}(t)\coloneqq1/\varphi_{\varepsilon_j}(t)\), \(t\in\mathbb{R}\), be the reciprocal of the characteristic function \(\varphi_{\varepsilon_j}\). We assume that \(r_{\varepsilon_j}\in C^2(\mathbb{R})\). Moreover, for \(\ell=1,\,2\), there exist constants \(d_{\ell,j}>0\) such that
    \(
    |r^{(\ell)}_{\varepsilon_j}(t)|\leq d_{\ell,j}(1+|t|)^{\beta_j}\), $t\in\mathbb{R}$.
    \label{eq:E3}
\end{enumerate}

\noindent\emph{Consequences of Assumptions \emph{\ref{eq:E1indep}}--\emph{\ref{eq:E3}}}\\[2pt]
By Assumption \ref{eq:E1indep}, we have
\(\boldsymbol{\varepsilon}_1\sim \mu_{\varepsilon}=\bigotimes_{j=1}^d \mu _{\varepsilon_j}.\footnote{Here \(\bigotimes_{j=1}^d \mu_{\varepsilon_j} = \mu_{\varepsilon_1}\otimes\cdots\otimes\mu_{\varepsilon_d}\) denotes the product measure on \(\mathbb{R}^d\) induced by the independence of the components \(\varepsilon_{1,1},\,\ldots,\,\varepsilon_{1,d}\) stated in Assumption \ref{eq:E1indep}. The joint distribution of \(\boldsymbol{\varepsilon}_1\) factorizes as the product of its marginal distributions.}\)
Hence, the characteristic function of $\boldsymbol{\varepsilon}_1$ factorizes into the product of the characteristic functions of its components 
\(\varphi _{\varepsilon }(\mathbf{t})=\prod _{j=1}^d\varphi _{\varepsilon_j}(t_j)\), 
$\mathbf{t}\in\mathbb{R}^d$.
Combined with Assumption \ref{eq:E2}, this factorization implies the existence of constants $R_0\coloneqq\prod_{j=1}^d c_{0,j}>0$ and $R_1\coloneqq\prod_{j=1}^d c_{1,j}>0$ such that either or both of the following hold:\\[-15pt]
\begin{align}
R_0\prod _{j=1}^d(1+|t_j|)^{-\beta _j}\leq| \varphi _{\varepsilon }(\mathbf{t})|&,\quad 
\mathbf{t}\in\mathbb{R}^d,\label{eq:OS}\\[-10pt]
|\varphi _{\varepsilon }(\mathbf{t})|&
\leq R_1\prod _{j=1}^d(1+|t_j|)^{-\beta_j},\quad 
\mathbf{t}\in\mathbb{R}^d.\label{eq:OSS}
\end{align}
Condition \eqref{eq:OS} implies $|\varphi _{\varepsilon }(\mathbf{t})|\neq 0$ for every $\mathbf{t}\in\mathbb{R}^d$, that is, \(|\varphi_{\varepsilon}|\) never vanishes. If both \ref{it:E21} and \ref{it:E22} hold, then both the lower and upper bounds hold, describing a polynomial decay in each coordinate direction, with exponent \(\beta_j\) governing the tail decay rate of \(|\varphi_{\varepsilon_j}|\) along the \(j\)-th coordinate. Consequently, \eqref{eq:OS} and \eqref{eq:OSS} hold, and \(\varphi_{\varepsilon}\) satisfies the \emph{anisotropic ordinary smooth condition}
\(
R_0\prod_{j=1}^d (1+|t_j|)^{-\beta_j} \leq |\varphi_{\varepsilon}(\mathbf{t})| \leq R_1\prod_{j=1}^d (1+|t_j|)^{-\beta_j}\), $ \mathbf{t}\in\mathbb{R}^d$. Assumption \ref{eq:E3} imposes regularity and growth conditions on the derivatives of the reciprocal \(r_{\varepsilon_j} = 1/\varphi_{\varepsilon_j}\). For \(\ell=0\), we may take \(d_{0,j} \coloneqq 1/c_{0,j}\), so that
\(
|r^{(\ell)}_{\varepsilon_j}(t)| \leq d_{\ell,j}(1+|t|)^{\beta_j}\), $t \in \mathbb{R}$,
holds for \(\ell = 0,\,1,\,2\), extending the decay information from \ref{it:E21} to the derivatives of the reciprocal.
Several classical families satisfy Assumption \ref{eq:E3}. A prominent example is the \emph{Laplace} distribution, with characteristic function $\varphi_{\varepsilon}(t) = (1 + t^2)^{-1}$. The reciprocal $r_{\varepsilon}(t) = 1 + t^2$ is a polynomial of degree two, hence $|r_{\varepsilon}^{(\ell)}(t)| \le d_{\ell} (1 + |t|)^2$ for $\ell =1,\,2$. More generally, the \emph{Gamma} distribution with shape parameter $a>0$ and rate $\lambda>0$ has characteristic function $\varphi_{\varepsilon}(t) = (1-\mathrm{i}t/\lambda)^{-a}$. The reciprocal $r_{\varepsilon}(t) = (1 - \mathrm{i}t/\lambda)^{a}$ grows polynomially of order $a$. For \(a \ge 2\), it satisfies Assumption \ref{eq:E3} (with $j=1$), taking \(\beta = a\) if \(a\) is an integer, and \(\beta = \lceil a \rceil\) otherwise; in the integer case, \(r_{\varepsilon}(t)\) is a polynomial of degree $a$, ensuring the required differentiability and growth conditions.

Another family satisfying \ref{eq:E3} is the \emph{Student's t} with $\nu > 0$ degrees of freedom. Its characteristic function decays like $|\varphi_{\varepsilon}(t)| \sim C |t|^{-\nu}$ as $|t| \to \infty$, so the reciprocal behaves as $|r_{\varepsilon}(t)| \sim C |t|^{\nu}$. For $\nu \ge 4$, the reciprocal is twice continuously differentiable and satisfies $|r_{\varepsilon}^{(\ell)}(t)| \le d_\ell (1 + |t|)^{\nu}$ for $\ell = 1,\,2$.

We emphasize that the \emph{symmetric stable} distributions, with characteristic function $\varphi_{\varepsilon}(t) = \mathrm{e}^{-|t|^s}$, $0 < s \le 2$, satisfy neither \ref{eq:E2} nor \ref{eq:E3}. Indeed, the reciprocal $r_{\varepsilon}(t) = \mathrm{e}^{|t|^s}$ grows exponentially rather than polynomially, violating \ref{eq:E3}. Likewise, the lower bound in \ref{eq:E2} fails because $|\varphi_{\varepsilon}(t)| = \mathrm{e}^{-|t|^s}$ decays exponentially, not polynomially. Thus, stable distributions are excluded from the ordinary smooth noise framework. Distributions whose characteristic functions have zeros, such as the \emph{uniform}, are also excluded because the reciprocal is not defined everywhere.

\subsection{Regularity assumptions on the signal density}\label{sec:signal}
If the signal distribution \(\mu_X\) admits a Lebesgue density \(f_X\) on \(\mathbb{R}^d\), different regularity assumptions on \(f_X\) can be considered. The choice of the kernel depends on the specific regularity of \(f_X\). Here, we focus on anisotropic Nikol'skii classes with respect to the \(L^1\)-norm, because this norm matches the \(L^1\)-loss used for cumulative distribution functions.

\begin{definition}[Anisotropic Nikol'skii class of densities]\label{def:anisotropy}
For \(\boldsymbol{\alpha}=(\alpha_1,\,\ldots,\,\alpha_d)^\top\in\mathbb{R}_+^d\footnote{Here \(\mathbb{R}_+^d=(0,\,\infty)^d\), so each $\alpha_j>0$.}\) and a constant \(L>0\), the anisotropic Nikol'skii density class \(\mathcal{N}_1^d(\boldsymbol{\alpha},\,L)\) consists of all Lebesgue probability densities \(f_X\) on \(\mathbb{R}^d\) satisfying the following conditions.

\begin{enumerate}[label=(\textbf{N\arabic*}), leftmargin=*]
    \item \textbf{Differentiability.} \label{it:diff}
    For each \(j=1,\,\ldots,\,d\), let \(m_j\coloneqq \lceil \alpha_j \rceil -1\), where 
    \(\lceil \alpha_j \rceil\) denotes the smallest integer that is greater than or equal to \(\alpha_j\). Then, \(f_X\) admits partial derivatives with respect to \(x_j\) up to order \(m_j\), with the convention that \(\partial_j^0 f_X = f_X\) when \(m_j = 0\).\\[-8pt]

\item \textbf{\(L^1\)-bounds on derivatives.} \label{it:L1}
    For every \(j=1,\,\ldots,\,d\) and every \(k=0,\,\ldots,\,m_j\), we have \(\|\partial_j^{k} f_X\|_{L^1(\mathbb R^d)} \le L\), where \(\partial_j^{k} f_X\) denotes the partial derivative of order \(k\) with respect to the \(j\)-th coordinate.\\[-8pt]

    \item \textbf{Global smoothness of derivatives.} \label{it:smooth}
    For every \(j=1,\,\ldots,\,d\) and all \(\delta\in\mathbb{R}\), we have \(\|\Delta_{\delta\mathbf{e}_j}\partial_j^{m_j} f_X\|_{L^1(\mathbb R^d)} \le L\,|\delta|^{\alpha_j-m_j}\), where \(\mathbf{e}_j \coloneqq (0,\,\ldots,\,0,\,1,\,0,\,\ldots,\,0)^\top\) is the \(j\)-th standard basis vector of \(\mathbb{R}^d\), and the forward finite difference operator is defined by \(\Delta_{\delta\mathbf{e}_j}\partial_j^{m_j} f_X(\mathbf{x})\coloneqq \partial_j^{m_j} f_X(\mathbf{x}+\delta\mathbf{e}_j) - \partial_j^{m_j} f_X(\mathbf{x})\), \(\mathbf{x}\in\mathbb{R}^d\).
\end{enumerate}
\end{definition}

We briefly comment on the parameters involved. For \(\alpha_j\in(0,\,1)\), we have \(m_j=0\); then condition \ref{it:diff} requires no differentiability (\emph{i.e.}, only the density $f_X$ itself), and condition \ref{it:smooth} reduces to a Hölder condition of order \(\alpha_j\) on \(f_X\). For integer \(\alpha_j\ge 1\), we have \(m_j=\alpha_j-1\), and the exponent in \ref{it:smooth} becomes \(1\), \emph{i.e.}, a Lipschitz condition on the \((\alpha_j-1)\)-th derivative. More generally, for any $\alpha_j > 0$, the quantity $\alpha_j - m_j$ satisfies $0 < \alpha_j - m_j \le 1$: if $\alpha_j$ is not an integer, then 
$m_j = \lfloor \alpha_j \rfloor$ and $\alpha_j - m_j \in (0,\,1)$, so the condition is H\"{o}lder of order $\alpha_j - \lfloor \alpha_j \rfloor$; if $\alpha_j$ is an integer, then $m_j = \alpha_j - 1$ and 
$\alpha_j - m_j = 1$, yielding a Lipschitz condition on the derivative of order $\alpha_j - 1$. Condition \ref{it:smooth} thus controls the oscillation of the highest-order partial derivative of $f_X$ in each coordinate direction. 
For a given coordinate $j$, it requires that the $L^1$-norm of the difference of $\partial_j^{m_j} f_X$ over a step of size $\delta$ does not exceed $L|\delta|^{\alpha_j-m_j}$. Together with \ref{it:L1}, this ensures that 
$\partial_j^{m_j} f_X$ satisfies an $L^1$-H\"{o}lder condition of order $\alpha_j-m_j$ along the $j$-th coordinate direction. The anisotropic nature of the class is captured by the fact that the smoothness exponent $\alpha_j$ 
and the order of differentiability may differ across coordinates. 

The above definition is a multivariate extension of the one-dimensional Nikol'skii density 
class; for the univariate case, see, \emph{e.g.}, \citet{tsybakov2009introduction}, p.~13.


\subsection{Choice of the kernel}\label{sec:kern}
Since the deconvolution cumulative distribution function estimator is obtained by integrating the kernel density estimator, the kernel must satisfy suitable regularity conditions to ensure that the density estimator is well-defined (\emph{i.e.}, integrable) and that the approximation error has the correct order.

The specific choice of the kernel depends on the regularity of the density \(f_X\); different regularity assumptions require different bias-control strategies.  For densities belonging to an anisotropic Nikol'skii class \(\mathcal{N}_1^d(\boldsymbol{\alpha},\, L)\), we take a \emph{kernel \(K\) of order} \(\boldsymbol{\ell} \coloneqq (\ell_1,\,\dots,\,\ell_d)^{\!\top} \in \mathbb{N}^d\), with \(\boldsymbol{\ell}=\lceil\boldsymbol{\alpha}\rceil\) componentwise. This means that the following two conditions hold:
\begin{enumerate}[label=(\textbf{K\arabic*}), leftmargin=*, align=left]
\item \label{cond:K1} \textbf{Moment conditions}: \(\displaystyle \int_{\mathbb R^d}x_j^rK(\mathbf{x})\,\di\mathbf{x}=\delta_{0,r}\) for all \(r=0,\,\ldots,\,\ell_j\) and \(j=1,\,\ldots,\,d\), where \(\delta_{0,r}\) is the Kronecker delta (equal to $1$ if $r=0$ and $0$ otherwise). Thus, the kernel integrates to $1$ (when $r=0$) and its \vir{moments} of orders $1$ through $\ell_j$ vanish.
\item \label{cond:K2} \textbf{Integrability condition}: \(\displaystyle \int_{\mathbb R^d}(1+|x_j|)^{\ell_j+1}|K(\mathbf{x})|\,\di\mathbf{x}<\infty\) for all \(j=1,\,\ldots,\,d\). 
\end{enumerate}


For technical convenience, we employ a product kernel structure  
\begin{equation}\label{eq:kernstruc}
K(\mathbf{x}) \coloneqq \prod_{j=1}^d K_j(x_j), \quad \mathbf{x}\in \mathbb{R}^d,
\end{equation}
where each \(K_j\) is a univariate kernel. For a bandwidth vector \(\mathbf{h} = (h_1,\,\dots,\,h_d)^{\!\top} \in \mathbb{R}_+^d\), the corresponding multivariate rescaled kernel function is defined by  
\begin{equation}\label{eq:productkern}
K_{\mathbf{h}}(\mathbf{x}) \coloneqq \prod_{j=1}^d \frac{1}{h_j} K_j\left(\frac{x_j}{h_j}\right), \quad \mathbf{x} \in \mathbb{R}^d.
\end{equation}
Although the product form is not strictly necessary, it simplifies the proofs. 
We further require that \(K \in L^1(\mathbb{R}^d) \cap L^2(\mathbb{R}^d)\) and that its Fourier transform (which exists)  
\(
\varphi_K(\mathbf{t}) \coloneqq \mathcal{F}[K](\mathbf{t}) = \int_{\mathbb{R}^d} \mathrm{e}^{\mathrm{i}\langle\mathbf{t},\,\mathbf{x}\rangle} K(\mathbf{x}) \, \di\mathbf{x}=\prod_{j=1}^d\int_{\mathbb R}\mathrm{e}^{\mathrm{i}t_jx_j}K_j(x_j)\,\di x_j\), $\mathbf{t} \in \mathbb{R}^d$, 
has compact support and satisfies $\varphi_K(\mathbf{t})=\varphi_K(-\mathbf{t})$ for all $\mathbf{t}\in\mathbb R^d$. Since $\varphi_K(-\mathbf{t})=\overline{\varphi_K(\mathbf{t})}$, this implies that 
$\varphi_K(\mathbf{t})=\overline{\varphi_K(\mathbf{t})}$, \emph{i.e.}, $\varphi_K$ is real-valued. 

If in \eqref{eq:kernstruc} or \eqref{eq:productkern} we take all univariate kernels equal, then it is sufficient to take a univariate kernel of order $\ell\coloneqq \max_{1\leq j\leq d}\ell_j=\max_{1\leq j\leq d}\lceil {\alpha}_j \rceil$, although this may not be optimal for anisotropic classes.

\citet{GoldenLepski2011} construct kernels of order \(\boldsymbol{\ell}\) satisfying conditions \ref{cond:K1} and \ref{cond:K2} as follows. This is a classical finite-difference construction of higher-order kernels; see, \emph{e.g.}, \citet{tsybakov2009introduction}, Section~1.2.2, pp.~10--11. Consider functions \(u_j \colon \mathbb{R} \to \mathbb{R}\) such that, for each \(j=1,\,\dots,\,d\), we have
$\int_{\mathbb{R}} x^r u_j(x) \, \di x = \delta_{0,r}$ for all $r=0,\,\dots,\,\ell_j$,
and $\int_{\mathbb{R}} (1+|x|)^{\ell_j+1} |u_j(x)| \, \di x <\infty$ (each a univariate kernel of order \(\ell_j\), corresponding to the case \(d=1\) in \ref{cond:K1} and \ref{cond:K2}).
Then,  
\[
K_j(x_j) \coloneqq \sum_{k=1}^{\ell_j} \binom{\ell_j}{k} (-1)^{k+1} \frac{1}{k}u_j\!\left(\frac{x_j}{k}\right), \quad x_j \in \mathbb{R},
\]  
is a univariate kernel of order \(\ell_j\), and the product kernel defined in \eqref{eq:kernstruc} is a kernel of order \(\boldsymbol{\ell}\). A concrete example uses \(u_j(x_j) = v_{p_j}(x_j)\), with \(p_j \in \mathbb{N}\) chosen large enough to ensure the integrability condition, \emph{e.g.}, \(p_j > \ell_j+2\). For any \(p \in \mathbb{N}\), define \(v_p(x) \coloneqq c_p [\sin(x/p)/(x/p)]^p\), \(x \in \mathbb{R}\), with the constant \(c_p\) chosen such that \(\int_{\mathbb{R}} v_p(x) \, \di x = 1\). This choice ensures \(|v_{p_j}(x)| = O(|x|^{-p_j})\) as \(|x| \to\infty\), which guarantees both the moment and integrability conditions. 
Moreover, the Fourier transform of \(v_p\) is a compactly supported function with support contained in \([-1,\,1]\) (indeed, the convolution vanishes for \(|pt| > p\), \emph{i.e.}, \(|t| > 1\)), given by \(\mathcal{F}[v_p](t) = c_p(\pi p)^p (\mathbf{1}_{[-1,\,1]} \ast \cdots \ast \mathbf{1}_{[-1,\,1]})(p t)\), for \(t \in \mathbb{R}\), with \(p\) copies of $\mathbf{1}_{[-1,\,1]}$. Taking \(p_j = \ell_j+3\) (or any larger integer) guarantees the moment and integrability conditions, as the decay $|v_{p_j}(x)| = O(|x|^{-(\ell_j+3)})$ 
ensures $\int_{\mathbb R} (1+|x|)^{\ell_j+1} |v_{p_j}(x)|\, \di x < \infty$.

\section{Deconvolution cumulative distribution function estimator}\label{sec:estimator}
This section introduces an approximate minimum-\(L^1\)-distance estimator for the cumulative distribution function in the multivariate convolution model. The definition extends, to the multivariate setting, the univariate deconvolution estimator proposed by \citet{Dedecker2015}, pp.~240--241. The estimator is constructed from a preliminary estimator, defined as the integral of a kernel density estimator. We begin by presenting this preliminary estimator in a general form.
For a symmetric positive definite bandwidth matrix \(\mathbf{H}\) of size \(d \times d\), 
the kernel density estimator is defined as
\begin{equation}\label{eq:prelestim}
\hat f_{n,\mathbf{H}}(\mathbf{x}) \coloneqq \frac{1}{n}\sum_{i=1}^n \frac{1}{|\mathbf{H}|}\,\tilde K_{\mathbf{H}}(\mathbf{H}^{-1}(\mathbf{x}-\mathbf{Y}_i)),\quad \mathbf{x}\in\mathbb{R}^d,
\end{equation}
where \(|\mathbf{H}| \coloneqq |\det(\mathbf{H})|\) is the absolute value of the determinant of \(\mathbf{H}\) and the 
kernel \(\tilde K_{\mathbf{H}}\) is given by
\begin{equation}\label{eq:tildekernelH}
\tilde K_{\mathbf{H}}(\mathbf{x}) \coloneqq 
\mathcal{F}^{-1}\!\left[\frac{\varphi_K(\mathbf{t})}{\varphi_\varepsilon(\mathbf{H}^{-1}\mathbf{t})}\right](\mathbf{x}),\quad \mathbf{x}\in\mathbb{R}^d,
\end{equation}
where the Fourier transform \(\varphi_K\) of a kernel \(K\) has compact support and satisfies \(\varphi_K(-\mathbf{t}) = \varphi_K(\mathbf{t})\) for all \(\mathbf{t}\in\mathbb{R}^d\). This condition ensures that the kernel is real-valued. The kernel $\tilde K_{\mathbf{H}}$ is defined via its Fourier transform; it depends on $\mathbf{H}$ through the argument in the denominator of the Fourier transform and on the error distribution through its characteristic function $\varphi_\varepsilon$.
The definition is well-posed because \(\varphi_K\) has compact support and, under Assumption \ref{it:E21}, \(\varphi_\varepsilon\) is continuous and strictly positive on that support. Hence, the minimum of $\varphi_\varepsilon$ on that support is strictly positive, so the quotient is bounded. Its inverse Fourier transform therefore exists as a continuous, bounded function. The preliminary cumulative distribution function estimator is then defined as 
\(
\widehat F_{n,\mathbf{H}}(\mathbf{x}) \coloneqq \int_{-\infty}^{x_1}\!\!\cdots\!\!\int_{-\infty}^{x_d} \hat f_{n,\mathbf{H}}(\mathbf{u})\,\mathrm{d}\mathbf{u}\), $\mathbf{x}\in\mathbb{R}^d$.
A key property, which will be used repeatedly, is that the expectation of \(\widehat F_{n,\mathbf{H}}\) equals the convolution of the true cumulative distribution function \(F_X\) with the rescaled kernel \(K_{\mathbf{H}}\), defined as
\begin{equation}\label{eq:scaledkern}
K_{\mathbf{H}}(\mathbf{x})\coloneqq\frac{1}{|\mathbf{H}|}K(\mathbf{H}^{-1}\mathbf{x}),\quad \mathbf{x}\in\mathbb R^d.
\end{equation}
The result follows from the Fourier inversion formula and the definition of \(\tilde{K}_{\mathbf{H}}\); a rigorous derivation is given in the following proposition (the proof is deferred to \hyperref[sec:S1]{Section~S1 of the Supplement}).
\begin{proposition}[Unbiasedness of the estimator $\widehat F_{n,\mathbf{H}}(\mathbf{x})$]\label{prop:unbiasedness}
Assume that \(K \in L^1(\mathbb{R}^d)\), that \(\varphi_K\) has compact support, 
and that \(\varphi_\varepsilon(\mathbf{t}) \neq 0\) for all \(\mathbf{t}\) in the support of \(\varphi_K(\mathbf{H}\,\cdot\,)\).
Then, for every \(\mathbf{x}\in\mathbb{R}^d\),
\begin{equation}\label{eq:bias}
\mathsf E_{(\mu_X\ast\mu_\varepsilon)^{\otimes n}}[\widehat F_{n,\mathbf{H}}(\mathbf{x})]
=(K_{\mathbf{H}}\ast F_X)(\mathbf{x}),
\end{equation}
where the expectation is taken over the \(n\)-fold product of the convolution measure \(\mu_X\ast \mu_\varepsilon\).
\end{proposition}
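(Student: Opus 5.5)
The plan is to first prove the density-level identity $\mathsf E[\hat f_{n,\mathbf H}(\mathbf x)]=(K_{\mathbf H}\ast f_X)(\mathbf x)$, understood distributionally (for $\mu_X$ without a density, read $(K_{\mathbf H}\ast \mu_X)(\mathbf x)=\int K_{\mathbf H}(\mathbf x-\mathbf u)\,\mu_X(\mathrm d\mathbf u)$). We then integrate over the orthant $\prod_j(-\infty,x_j]$ and exchange expectation and integration by Fubini.

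\emph{Step 1: Fourier form of the kernel.} By linearity it suffices to treat one observation. Write $\Phi_{\mathbf H}(\mathbf t)\coloneqq\varphi_K(\mathbf t)/\varphi_\varepsilon(\mathbf H^{-1}\mathbf t)$. It is bounded and compactly supported, since $\varphi_K$ has compact support and $\varphi_\varepsilon$ is continuous and nonvanishing on the support of $\varphi_K(\mathbf H\,\cdot\,)$; hence $\Phi_{\mathbf H}\in L^1$ and
\[
\tilde K_{\mathbf H}(\mathbf z)=(2\pi)^{-d}\int \mathrm e^{-\mathrm i\langle\mathbf t,\mathbf z\rangle}\Phi_{\mathbf H}(\mathbf t)\,\mathrm d\mathbf t .
\]
Substituting $\mathbf z=\mathbf H^{-1}(\mathbf x-\mathbf Y_1)$ and $\mathbf s=\mathbf H^{-1}\mathbf t$ (Jacobian $|\mathbf H|$, using the symmetry of $\mathbf H$), we get
\[
|\mathbf H|^{-1}\tilde K_{\mathbf H}(\mathbf H^{-1}(\mathbf x-\mathbf Y_1))=(2\pi)^{-d}\int \mathrm e^{-\mathrm i\langle \mathbf s,\mathbf x-\mathbf Y_1\rangle}\frac{\varphi_K(\mathbf H\mathbf s)}{\varphi_\varepsilon(\mathbf s)}\,\mathrm d\mathbf s .
\]

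\emph{Step 2: take expectations.} The integrand is bounded by the integrable function $|\varphi_K(\mathbf H\mathbf s)/\varphi_\varepsilon(\mathbf s)|$, uniformly in $\mathbf Y_1$, so Fubini applies. Since $\mathsf E[\mathrm e^{\mathrm i\langle\mathbf s,\mathbf Y_1\rangle}]=\varphi_X(\mathbf s)\varphi_\varepsilon(\mathbf s)$, the noise factor cancels and
\[
\mathsf E[\hat f_{n,\mathbf H}(\mathbf x)]=(2\pi)^{-d}\int \mathrm e^{-\mathrm i\langle\mathbf s,\mathbf x\rangle}\varphi_K(\mathbf H\mathbf s)\varphi_X(\mathbf s)\,\mathrm d\mathbf s .
\]
Now $\varphi_K(\mathbf H\,\cdot\,)$ is the Fourier transform of $K_{\mathbf H}\in L^1$, so the integrand is the Fourier transform of the finite measure $K_{\mathbf H}\ast\mu_X$. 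That Fourier transform is integrable, so Fourier inversion gives $\mathsf E[\hat f_{n,\mathbf H}(\mathbf x)]=\int K_{\mathbf H}(\mathbf x-\mathbf u)\,\mu_X(\mathrm d\mathbf u)$ pointwise; both sides are continuous.

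\emph{Step 3: integrate over the orthant (main obstacle).} We need
\[
\mathsf E\!\int_{(-\infty,\mathbf x]}\hat f_{n,\mathbf H}=\int_{(-\infty,\mathbf x]}\mathsf E\,\hat f_{n,\mathbf H},
\]
and for this Fubini requires $\mathsf E\int_{(-\infty,\mathbf x]}|\hat f_{n,\mathbf H}|<\infty$. Boundedness alone is not enough over an unbounded domain. I would use the product structure together with Assumption \ref{eq:E3}: each univariate factor $\tilde K_j$ has Fourier transform $\varphi_{K_j}/\varphi_{\varepsilon_j}(\cdot/h_j)$, which is compactly supported and $C^2$. Two integrations by parts give $|\tilde K_j(z)|\lesssim(1+|z|)^{-2}$, so $\tilde K_{\mathbf H}\in L^1$, and Tonelli then gives finiteness. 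This handles the diagonal case $\mathbf H=\mathrm{diag}(\mathbf h)$; for a general $\mathbf H$ the same $C^2$ argument applies in the variable $\mathbf s$. If only the hypotheses as literally stated are to be used, I would instead note that $\tilde K_{\mathbf H}$ is the inverse Fourier transform of a compactly supported $L^1$ function, and justify its integrability via the smoothness of $1/\varphi_\varepsilon$.

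Granting integrability, we apply Fubini twice:
\[
\int_{(-\infty,\mathbf x]}\int K_{\mathbf H}(\mathbf v-\mathbf u)\,\mu_X(\mathrm d\mathbf u)\,\mathrm d\mathbf v=\int K_{\mathbf H}(\mathbf w)\,\mu_X\bigl((-\infty,\mathbf x-\mathbf w]\bigr)\,\mathrm d\mathbf w=(K_{\mathbf H}\ast F_X)(\mathbf x),
\]
which uses $K\in L^1$. This yields \eqref{eq:bias}.
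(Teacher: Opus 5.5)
Your proposal is correct and follows the paper's proof. Fourier inversion identifies $\mathsf E[\hat f_{n,\mathbf H}(\mathbf u)]$ with $\int K_{\mathbf H}(\mathbf u-\mathbf z)\,\mu_X(\mathrm d\mathbf z)$, and Fubini over the orthant together with the shift $\mathbf v=\mathbf u-\mathbf z$ then gives $K_{\mathbf H}\ast F_X$.

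You are more careful than the paper, which simply invokes Fubini, in noting that exchanging $\mathsf E$ with the orthant integral needs $\tilde K_{\mathbf H}\in L^1$. The stated hypotheses do not give this. Your product-structure argument via \ref{eq:E3} and a $C^2$ $\varphi_{K_j}$ settles it for diagonal $\mathbf H$. For non-diagonal $\mathbf H$ with $d\ge 2$, however, plain $C^2$ regularity only yields $|\mathbf z|^{-2}$ decay, so you would need mixed derivatives and hence more smoothness of $\varphi_K$.
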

The assumptions stated in Proposition~\ref{prop:unbiasedness} are precisely those used in the proof. 
They are satisfied under the kernel conditions of Section~\ref{sec:kern} and Assumption~\ref{it:E21} on the noise.

In the special case of a diagonal bandwidth matrix $\mathbf{H}=\operatorname{diag}(h_1,\,\dots,\, h_d)$ and a product kernel $K(\mathbf{x})=\prod_{j=1}^d K_j(x_j)$ as in \eqref{eq:kernstruc}, where $\varphi_K(\mathbf{t}) = \prod_{j=1}^d \varphi_{K_j}(t_j)$, if in addition the noise components are independent (Assumption \ref{eq:E1indep}) so that $\varphi_\varepsilon(\mathbf{t}) = \prod_{j=1}^d \varphi_{\varepsilon_j}(t_j)$, then the deconvolution kernel in \eqref{eq:tildekernelH} factorizes as
$
\tilde{K}_{\mathbf{H}}(\mathbf{x}) = \prod_{j=1}^d \tilde{K}_{j,h_j}(x_j)$, $\mathbf{x}\in\mathbb R^d$,
where each univariate component is
\[
\tilde{K}_{j,h_j}(x_j)\coloneqq\mathcal F^{-1}\!\left[\frac{\varphi_{K_j}(t)}{\varphi_{\varepsilon_j}(t/h_j)}\right](x_j),
\quad x_j\in\mathbb R.
\]  
Consequently, for the bandwidth vector $\mathbf{h} = (h_1,\,\ldots,\,h_d)^\top \in \mathbb{R}_+^d$, whose components are the diagonal entries of $\mathbf{H} = \operatorname{diag}(h_1,\,\ldots,\,h_d)$, the preliminary kernel density estimator in \eqref{eq:prelestim} simplifies to
\[
\hat{f}_{n,\mathbf{h}}(\mathbf{x}) = \frac{1}{n} \sum_{i=1}^n \prod_{j=1}^d \frac{1}{h_j} \tilde{K}_{j,h_j}\!\left(\frac{x_j - Y_{i,j}}{h_j}\right),\quad \mathbf{x}\in\mathbb{R}^d.
\]
The corresponding integrated estimator $\widehat{F}_{n,\mathbf{h}}$ is defined by
\begin{equation}\label{eq:defcumestim}
\widehat{F}_{n,\mathbf{h}}(\mathbf{x}) \coloneqq \int_{-\infty}^{x_1} \!\cdots \int_{-\infty}^{x_d} \hat{f}_{n,\mathbf{h}}(\mathbf{u})\,\mathrm{d}\mathbf{u}=\frac{1}{n} \sum_{i=1}^n\prod_{j=1}^d\int_{-\infty}^{x_j}
\frac{1}{h_j} \tilde{K}_{j,h_j}\!\left(\frac{u_j - Y_{i,j}}{h_j}\right)\,\di u_j,\quad \mathbf{x}\in \mathbb{R}^d.
\end{equation}
In this setting, the expectation in \eqref{eq:bias} becomes 
\begin{equation}\label{eq:biasprod}
\mathsf{E}_{(\mu_X \ast \mu_\varepsilon)^{\otimes n}}[\widehat{F}_{n,\mathbf{h}}(\mathbf{x})] = (K_{\mathbf{h}} \ast F_X)(\mathbf{x}),\quad \mathbf{x}\in\mathbb{R}^d,
\end{equation}
where, with a slight abuse of notation, $K_{\mathbf{h}}$ denotes the product kernel with bandwidth vector $\mathbf{h}=(h_1,\,\dots,\, h_d)^\top$, as given in \eqref{eq:scaledkern} for $\mathbf{H}=\operatorname{diag}(h_1,\,\dots,\, h_d)$.

Because the deconvolution kernel density estimator $\hat{f}_{n,\mathbf{h}}$ is not guaranteed to be non-negative, the integrated estimator $\widehat{F}_{n,\mathbf{h}}$ in \eqref{eq:defcumestim}
may fail to be monotone and therefore is not, in general, a proper cumulative distribution function. To obtain an estimator that is a cumulative distribution function, we project $\widehat{F}_{n,\mathbf{h}}$ onto the set of all cumulative distribution functions via an approximate minimum-\(L^1\)-distance approach, as proposed by \citet{Dedecker2015} for the univariate case, pp.~240--241. Let $\varrho_n$ be a positive sequence such that $\varrho_n \to 0$ as $n \to \infty$, and let $\mathfrak{F}$ denote the set of all cumulative distribution functions on $\mathbb{R}^d$. 
A $\varrho_n$-approximate $L^1$-minimizer $\widehat{F}_{n,\mathbf{h}}^{\ast}$ is defined by the condition
\begin{equation}\label{eq:defminimizer}
\|\widehat{F}_{n,\mathbf{h}}^{\ast} - \widehat{F}_{n,\mathbf{h}}\|_{L^1(\mathbb R^d)} = \inf_{F \in \mathfrak{F}} \|F - \widehat{F}_{n,\mathbf{h}}\|_{L^1(\mathbb R^d)} + \varrho_n.
\end{equation}
As will be shown later, the choice $\varrho_n = O({n}^{-1/2})$ is sufficient to preserve the minimax-optimal convergence rates of the kernel-based estimator $\widehat{F}_{n,\mathbf{h}}$.


\section{Risk upper bound for the approximate minimum-\(L^1\) estimator}\label{sec:convergence}
In this section, we derive an upper bound on the convergence rates of the \(L^1\)-risk of the \(\varrho_n\)-approximate minimum-\(L^1\)-distance estimator \(\widehat F^{\ast}_{n,\mathbf{h}}\) defined in \eqref{eq:defminimizer}. As will be shown in Theorem~\ref{thm:ubrate}, the \(L^1\)-risk is bounded above by the sum of an approximation error and a stochastic error; we therefore treat these two components separately. We first establish an upper bound for the bias term, namely the \(L^1\)-norm of \(B_{\mathbf{h}}\coloneqq F_X - (K_{\mathbf{h}} \ast F_X)\). Recall from \eqref{eq:biasprod} that the pointwise expectation of \(\widehat F_{n,\mathbf{h}}\) is \(K_{\mathbf{h}} \ast F_X\).

\begin{proposition}[Upper bound on the $L^1$-bias for anisotropic Nikol'skii classes]\label{prop:bias}
Let \(F_X\) be a cumulative distribution function whose Lebesgue density \(f_X\) belongs to the anisotropic Nikol'skii class \(\mathcal{N}_1^d(\boldsymbol{\alpha},\,L)\), where \(\alpha_j>0\) for each \(j=1,\,\dots,\,d\) and \(L>0\) is a constant. 
For every \(j=1,\,\dots,\,d\), let \(K_j\in L^1(\mathbb R)\cap L^2(\mathbb R)\) be a univariate kernel satisfying the following moment conditions:
\begin{equation}\label{eq:kernelcond}
\int_{\mathbb R} z^r K_j(z)\,\mathrm{d}z = \delta_{0,r},\quad r=0,\,\dots,\,\lceil\alpha_j\rceil,
\end{equation}
which correspond to the \(d=1\) version of \ref{cond:K1}. We also impose the integrability condition
$\int_{\mathbb R} |z|^{\alpha_j+1}|K_j(z)|\,\mathrm{d}z<\infty$.\footnote{This is the natural condition for controlling the bias of the cumulative distribution function. Note that when \(\alpha_j\) is an integer, this coincides with the \(d=1\) version of \ref{cond:K2}; when \(\alpha_j\) is not an integer, it is slightly weaker than \ref{cond:K2}, but it is sufficient for the bound established below.}
For \(\mathbf h\in(0,\,1)^d\), let \(K_{\mathbf h}\) be the rescaled product kernel defined in \eqref{eq:productkern}.
Then,
\[
\|F_X-(K_{\mathbf h}\ast F_X)\|_{L^1(\mathbb R^d)}
\le L\sum_{j=1}^d C_j h_j^{\alpha_j+1},
\]
where, for every \(j=1,\,\dots,\,d\), with the convention that the empty product equals \(1\),
\begin{equation}\label{eq:constantc_j}
C_j\coloneqq
\prod_{\substack{r=1 \\ r\ne j}}^d \|K_r\|_{L^1(\mathbb R)}
\,
\frac{C_{\alpha_j}}{\alpha_j+1}
\int_{\mathbb R}|z|^{\alpha_j+1}|K_j(z)|\,\mathrm{d}z<\infty .
\end{equation}
Here, \(C_{\alpha_j}>0\) is a constant depending only on \(\alpha_j\), whose explicit expression is given in the proof.
\end{proposition}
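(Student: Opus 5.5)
We split the bias coordinate by coordinate with a telescoping decomposition, and then treat each coordinate as a univariate bias estimate for a function with one more order of smoothness. For \(j=0,\dots,d\), let \(G_j\coloneqq (K_{1,h_1}\otimes\cdots\otimes K_{j,h_j})\ast F_X\), where each convolution acts only in its own coordinate and \(G_0=F_X\). Since \(G_d=K_{\mathbf h}\ast F_X\), we have
\[
F_X-(K_{\mathbf h}\ast F_X)=\sum_{j=1}^d (G_{j-1}-G_j).
\]
Each summand can be written as \(G_{j-1}-G_j=(K_{1,h_1}\otimes\cdots\otimes K_{j-1,h_{j-1}})\ast\bigl(F_X-K_{j,h_j}\ast_j F_X\bigr)\), where \(\ast_j\) denotes convolution in the \(j\)-th coordinate only. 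Young's inequality then gives
\[
\|G_{j-1}-G_j\|_{L^1}\le \prod_{r<j}\|K_r\|_{L^1}\,\|F_X-K_{j,h_j}\ast_j F_X\|_{L^1},
\]
and since \(\|K_r\|_{L^1}\ge 1\), the product can be bounded by \(\prod_{r\ne j}\|K_r\|_{L^1}\). It therefore suffices to prove
\[
\|F_X-K_{j,h_j}\ast_jF_X\|_{L^1(\mathbb R^d)}\le L\,\frac{C_{\alpha_j}}{\alpha_j+1}\,h_j^{\alpha_j+1}\int|z|^{\alpha_j+1}|K_j(z)|\,\mathrm dz .
\]

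\textbf{One-coordinate bound.} Write \(\mathbf x=(x_j,\mathbf x_{-j})\). Using \(\int K_j=1\),
\[
F_X(\mathbf x)-(K_{j,h_j}\ast_jF_X)(\mathbf x)=\int K_j(z)\bigl[F_X(\mathbf x)-F_X(\mathbf x-h_jz\mathbf e_j)\bigr]\,\mathrm dz.
\]
Set \(m=m_j\). We Taylor expand \(s\mapsto F_X(\mathbf x+s\mathbf e_j)\) to order \(m+1\) around \(s=0\). This is legitimate because \(\partial_jF_X(\mathbf x)=\int_{u_{-j}\le \mathbf x_{-j}} f_X(x_j,\mathbf u_{-j})\,\mathrm d\mathbf u_{-j}\) and, by \ref{it:diff}, \(\partial_j^{k+1}F_X\) is the corresponding marginal integral of \(\partial_j^kf_X\) for \(k\le m\). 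The polynomial terms of orders \(1,\dots,m+1\) in \(h_jz\) are annihilated by the moment conditions \eqref{eq:kernelcond}, because \(m+1=\lceil\alpha_j\rceil\). What remains is the integral remainder
\[
\frac{(-h_jz)^{m+1}}{m!}\int_0^1(1-\tau)^m\bigl[\partial_j^{m+1}F_X(\mathbf x-\tau h_jz\mathbf e_j)-\partial_j^{m+1}F_X(\mathbf x)\bigr]\,\mathrm d\tau .
\]
Taking the \(L^1(\mathbb R^d)\) norm and applying Minkowski's integral inequality, it remains to bound \(\|\Delta_{\delta\mathbf e_j}\partial_j^{m+1}F_X\|_{L^1}\), since \(\partial_j^{m+1}F_X\) is \(\partial_j^mf_X\) integrated over the orthant in the other coordinates.

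\textbf{Main obstacle.} The difficulty is that integrating over the orthant in \(\mathbf u_{-j}\) produces a function that is not in \(L^1\) in \(\mathbf x_{-j}\), since it tends to the full marginal as \(\mathbf x_{-j}\to+\infty\). A plain Minkowski step would therefore give \(\int|\Delta g|\) over an infinite range. To handle this, I would not Taylor expand \(F_X\) pointwise. Instead, I would represent the difference directly as
\[
F_X(\mathbf x)-F_X(\mathbf x-h\mathbf e_j)=\int_{x_j-h}^{x_j}\int_{\mathbf u_{-j}\le\mathbf x_{-j}} f_X\,\mathrm d\mathbf u_{-j}\,\mathrm du_j,
\]
and use Fubini in the form \(\int_{\mathbb R}|\int_{x-h}^{x}g|\,\mathrm dx\le |h|\,\|g\|_{L^1}\). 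This absorbs one power of \(h\) and explains the extra \(+1\) in the exponent. The same must be done for the \(\mathbf x_{-j}\)-orthant integration, and this is where I would expect the genuine difficulty: the \(L^1\) norm over \(\mathbf x_{-j}\) of an orthant integral is not controlled by \(\|g\|_{L^1}\) alone without a moment condition. If that estimate fails, I would first try expressing the \(L^1\) distance through \(\int|\mathbf u|\,|g(\mathbf u)|\) together with the vanishing-total-mass property of the remainder, which holds since it is a difference of translates. Assuming this step goes through, the Nikol'skii condition \ref{it:smooth} gives
\[
\|\Delta_{\tau h_jz\mathbf e_j}\partial_j^mf_X\|_{L^1}\le L|\tau h_jz|^{\alpha_j-m}.
\]
Collecting the factors \(|h_jz|^{m+1}\cdot|h_jz|^{\alpha_j-m}\) yields \(h_j^{\alpha_j+1}|z|^{\alpha_j+1}\), and the \(\tau\)-integral \(\int_0^1(1-\tau)^m\tau^{\alpha_j-m}\,\mathrm d\tau/m!\) defines \(C_{\alpha_j}/(\alpha_j+1)\) up to a Beta-function constant. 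Summing over \(j\) gives the claimed bound.
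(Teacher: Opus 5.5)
\textbf{The step you flag as the main obstacle is a genuine gap, and it cannot be closed: for $d\ge 2$ the claimed inequality is false, because its left-hand side is $+\infty$ for generic $f_X$.}

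Fix $x_j$ and let $\mathbf x_{-j}\to+\infty$ componentwise. Since $K_{\mathbf h}\in L^1$ and $\int K_r=1$ for every $r$, dominated convergence gives
\[
F_X(\mathbf x)-(K_{\mathbf h}\ast F_X)(\mathbf x)\;\longrightarrow\;F_{X_j}(x_j)-(K_{j,h_j}\ast F_{X_j})(x_j).
\]
The right-hand side is the univariate bias of the $j$-th marginal. The same limit holds for your one-coordinate term $F_X-K_{j,h_j}\ast_jF_X$.

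Unless this marginal bias vanishes identically, the integral in $\mathbf x_{-j}$ diverges for every $x_j$ in a set of positive measure, so $\|B_{\mathbf h}\|_{L^1(\mathbb R^d)}=\infty$. For instance, take a Gaussian product density and $\varphi_{K_j}$ supported in $[-1,1]$, as for the paper's kernels. The derivative of the marginal bias then has Fourier transform $\varphi_{X_j}(t)[1-\varphi_{K_j}(h_jt)]\neq 0$ for $|t|>1/h_j$.

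Neither of your proposed rescues can work. A moment condition on $X$ does not change this limit. The zero total mass of $\Delta_{\delta\mathbf e_j}\partial_j^{m_j}f_X$ concerns integration over all of $\mathbb R^d$. The divergence, however, occurs in $\mathbf x_{-j}$ at fixed $x_j$, where the orthant integral tends to $f_{X_j}^{(m_j)}(x_j+\delta)-f_{X_j}^{(m_j)}(x_j)\neq 0$.

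The paper's own proof reaches a finite bound only by writing $F_X(\mathbf v)-F_X(\mathbf v-h_jz_j\mathbf e_j)=\int_{v_j-h_jz_j}^{v_j}f_X(\mathbf v_{-j},t)\,\mathrm dt$. This drops exactly the integral over $\{\mathbf u_{-j}\le\mathbf v_{-j}\}$ that you correctly kept, and the identity holds only for $d=1$. So there is no route around your obstacle. Your argument proves the proposition only for $d=1$; its telescoping-plus-Young step is in fact cleaner than the paper's, which puts $|K_j|$ inside the integral before invoking the vanishing moments.

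What your argument does give for $d\ge 2$ is a mixed-norm bound. For fixed $\mathbf x_{-j}$, the function $t\mapsto\int_{\mathbf u_{-j}\le\mathbf x_{-j}}f_X(t,\mathbf u_{-j})\,\mathrm d\mathbf u_{-j}$ satisfies \ref{it:L1} and \ref{it:smooth} in the variable $t$ with the same constant $L$, uniformly in $\mathbf x_{-j}$. The univariate argument then yields
\[
\sup_{\mathbf x_{-j}}\int_{\mathbb R}|F_X-K_{j,h_j}\ast_jF_X|\,\mathrm dx_j\le L\,\frac{C_{\alpha_j}}{\alpha_j+1}\,h_j^{\alpha_j+1}\int_{\mathbb R}|z|^{\alpha_j+1}|K_j(z)|\,\mathrm dz .
\]
A correct multivariate statement has to be formulated in a norm of this kind, or a weighted or bounded-domain $L^1$ norm, not in $L^1(\mathbb R^d)$.
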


The proof of the bias bound, which is provided in \hyperref[sec:S2]{Section~S2 of the Supplement}, exploits the anisotropic structure of the Nikol'skii class. The key idea is to decompose the pointwise bias $B_{\mathbf{h}}(\mathbf x)$ into a telescoping sum of one-dimensional increments along each coordinate direction. By applying a Taylor expansion with integral remainder in each coordinate direction and using the vanishing moment conditions of the kernel, only the highest-order remainder terms survive. These are then controlled componentwise by virtue of the anisotropic smoothness of $f_X$.

\begin{remark}\label{rem:samebias}
Although Proposition~\ref{prop:bias} is stated for the \(L^1\)-norm of the bias $B_{\mathbf{h}}$, the same proof carries over to the \(L^2\)-norm of $B_{\mathbf{h}}$, provided that the regularity assumption on the density \(f_X\) is modified accordingly: the \(L^1\)-based Nikol'skii class \(\mathcal N_1^d(\boldsymbol{\alpha},\,L)\) is replaced by its \(L^2\)-based counterpart \(\mathcal N_2^d(\boldsymbol{\alpha},\, L)\). In particular, condition \ref{it:smooth} is replaced by its \(L^2\)-analogue: for each \(j=1,\,\ldots,\,d\), there exists a constant \(L>0\) such that, for all \(\delta\in\mathbb R\), we have
\(
\bigl\| \Delta_{\delta\mathbf{e}_j}\partial_j^{m_j} f_X \bigr\|_{L^2(\mathbb R^d)}\le L |\delta |^{\alpha_j - m_j}\),
where \(m_j \coloneqq \lceil \alpha_j\rceil - 1\) as in \ref{it:diff}. Under this strengthened assumption, the bound is
$
\|F_X-(K_{\mathbf h}\ast F_X)\|_{L^2(\mathbb R^d)}
\le
L \sum_{j=1}^d C_j h_j^{\alpha_j+1}$, 
with the same constants \(C_j>0\) as in \eqref{eq:constantc_j}. Further details are provided in Remark \ref{rem:adjusted_proof} in \hyperref[sec:S2]{Section~S2 of the Supplement}.
\end{remark}


We now give a result for controlling the stochastic error term.
For notational convenience, we define
\begin{equation}\label{eq:defV}
V(\mathbf{h}) \coloneqq \frac{1}{n}\prod_{j=1}^d h_j^{-(\beta_j-\frac{1}{2})_+},
\quad \mathbf{h} \in (0,\,1)^d.
\end{equation}
The following bound follows from the fact that the stochastic error term gives rise to a product of coordinatewise factors, while the $L^1$-norm introduces a square root. The proof is deferred to \hyperref[sec:S2]{Section~S2 of the Supplement}.


\begin{proposition}[Upper bound on the stochastic error]\label{prop:variance}
Assume that the noise conditions \ref{eq:E1indep}, \ref{it:E21} and \ref{eq:E3} hold with $\beta_j>0$ for all $j=1,\,\ldots,\,d$. Suppose also that, for some \(p > 1/2\), the following moment conditions are satisfied:
\begin{align}
&\mathsf E_{\mu_X}\Biggl[\prod_{j=1}^{d}(1\vee |X_{1,j}|^{2p})\Biggr] < \infty, \label{eq:assonX}\\
&\mathsf E_{\mu_{\varepsilon_j}}[|\varepsilon_{1,j}|^{2p}] < \infty \quad\text{for every } j=1,\,\dots,\,d. \label{eq:assoneps}
\end{align}
Consider the kernel-based estimator $\widehat F_{n,\mathbf{h}}$ constructed with a product kernel $K(\mathbf{x})=\prod_{j=1}^d K_j(x_j)$, $\mathbf{x}\in\mathbb R^d$, as in \eqref{eq:kernstruc}, where, for each $j=1,\,\ldots,\,d$,
\begin{enumerate}[label=(\emph{\roman*}), align=left, leftmargin=*]
\item $K_j\in L^1(\mathbb R)\cap L^2(\mathbb R)$,
\item $K_j$ is real-valued and symmetric about $0$ (consequently its Fourier transform $\varphi_{K_j}\coloneqq \mathcal F[K_j]$ is real and even),\footnote{The symmetry ensures that the deconvolution kernel $\tilde K_{1,j,h_j}$ is real-valued, which is essential for the proof to proceed using standard variance arguments for real-valued random variables.}
\item $\varphi_{K_j}$ is supported on $[-1,\,1]$ and is twice continuously differentiable on $\mathbb R$.\footnote{An explicit example of a kernel $K_j$ satisfying all the requirements in (\emph{i})--(\emph{iii}) is exhibited in the proof of Proposition~\ref{prop:variance}.}
\end{enumerate}
Then, for every $\mathbf{h}\in(0,\,1)^d$ and $q=1,\,2$, the expected $L^q$-deviation of $\widehat F_{n,\mathbf{h}}$ from $K_{\mathbf{h}}\ast F_X$ satisfies
$$\mathsf E_{(\mu_X\ast\mu_\varepsilon)^{\otimes n}}[\|\widehat F_{n,\mathbf{h}}-(K_{\mathbf{h}}\ast F_X)\|_{L^q(\mathbb R^d)}]\lesssim \mathcal V(\mathbf{h}),$$
where 
\begin{equation}\label{eq:varprox}
\mathcal V(\mathbf{h})\coloneqq V(\mathbf{h})\, \prod_{j=1}^d |\log h_j|^{\mathbf{1}_{\{\beta_j=1/2\}}/2}
\end{equation}
with $V(\mathbf{h})$ defined in \eqref{eq:defV}.
\end{proposition}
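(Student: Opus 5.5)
The plan is to write the centred process as an empirical average of i.i.d. mean-zero random fields and bound its $L^1$-norm through a weighted $L^2$ (Cauchy--Schwarz) argument. By the product structure of \eqref{eq:defcumestim}, set
\[
G_{\mathbf h}(\mathbf x,\mathbf Y)\coloneqq\prod_{j=1}^d g_{j,h_j}(x_j-Y_j),\qquad
g_{j,h_j}(s)\coloneqq\int_{-\infty}^{s/h_j}\tilde K_{j,h_j}(v)\,\mathrm dv .
\]
Then $\widehat F_{n,\mathbf h}(\mathbf x)-(K_{\mathbf h}\ast F_X)(\mathbf x)=n^{-1}\sum_i\{G_{\mathbf h}(\mathbf x,\mathbf Y_i)-\mathsf E G_{\mathbf h}(\mathbf x,\mathbf Y_1)\}$, using Proposition~\ref{prop:unbiasedness}.

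For $q=1$, introduce the weight $w(\mathbf x)=\prod_j(1+|x_j|)^{2p}$, which is integrable against $w^{-1}$ because $2p>1$. By Cauchy--Schwarz and Jensen,
\[
\mathsf E\|\cdot\|_{L^1}\le\Bigl(\int w^{-1}\Bigr)^{1/2}\Bigl(n^{-1}\int w(\mathbf x)\,\mathsf E\,G_{\mathbf h}(\mathbf x,\mathbf Y_1)^2\,\mathrm d\mathbf x\Bigr)^{1/2}.
\]
This is the source of the ``square root'' mentioned before \eqref{eq:defV}. The case $q=2$ is handled the same way with $w\equiv1$, provided $G_{\mathbf h}(\cdot,\mathbf Y)\in L^2$. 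Next use $(1+|x_j|)^{2p}\lesssim(1+|x_j-Y_j|)^{2p}(1+|Y_j|)^{2p}$ and the independence of $X$ and $\boldsymbol\varepsilon$ together with \eqref{eq:assonX}--\eqref{eq:assoneps}, which control $\mathsf E\prod_j(1\vee|Y_{1,j}|^{2p})$. After these steps everything factorizes over coordinates, and it suffices to show for each $j$ that
\[
\int_{\mathbb R}(1+|s|)^{2p}\,g_{j,h_j}(s)^2\,\mathrm ds\lesssim h_j^{-(2\beta_j-1)_+}\,|\log h_j|^{\mathbf 1_{\{\beta_j=1/2\}}}.
\]
Taking the square root and the product over $j$ then gives $\mathcal V(\mathbf h)$. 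The weights do not need to be handled exactly: any $2p>1$ works.

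The key univariate step, which I expect to be the main obstacle, is that $g_{j,h_j}$ does not decay at $+\infty$, since it tends to $\int\tilde K_{j,h_j}=\varphi_{K_j}(0)/\varphi_{\varepsilon_j}(0)=1$. I would therefore centre it, replacing $g$ by $g-\mathbf 1_{\{s\ge0\}}$. The centred part of $G$ then expands as a sum over subsets of coordinates of products of centred pieces and indicators. The indicator pieces are handled by the moments, as in the direct case, because $\mathsf E\,\mathbf 1\{Y_j\le x_j\}$ minus its own mean has variance $F(1-F)$, which is weighted-integrable under the moment assumptions.

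The centred piece has Fourier transform $\varphi_{K_j}(t)/\{(-\mathrm it)\varphi_{\varepsilon_j}(t/h_j)\}-1/(-\mathrm it)$, which is smooth away from $0$ and has a $1/t$ singularity cancelled at $t=0$ since $\varphi_{K_j}(0)/\varphi_{\varepsilon_j}(0)=1$. For the unweighted $L^2$ part, Plancherel together with \ref{it:E21} gives $\int_{h_j\lesssim|t|\le1}|t|^{-2}(1+|t|/h_j)^{2\beta_j}\,\mathrm dt$. Splitting at $|t|=h_j$, this is $O(h_j^{-2\beta_j}\cdot h_j^{2\beta_j-1}\cdot\ldots)$, which after rescaling yields $h_j^{-(2\beta_j-1)}$ for $\beta_j>1/2$, $|\log h_j|$ for $\beta_j=1/2$, and $O(1)$ for $\beta_j<1/2$, measured in the original variable $s$ and so including the factor $h_j$ from the change of variables. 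For the weight $(1+|s|)^{2p}$ one needs decay of the centred piece, which I would obtain from derivatives in $t$ via Plancherel: $\|s\,u\|_{L^2}=\|\partial_t\hat u\|_{L^2}$, and likewise with two derivatives. Here \ref{eq:E3} on $r_{\varepsilon_j}^{(1)},r_{\varepsilon_j}^{(2)}$ and the $C^2$ regularity of $\varphi_{K_j}$ from (iii) bound $\partial_t^\ell$ of the multiplier by the same powers; interpolating between $\ell=0,2$ covers any $p\le1$. The delicate point is that differentiating near $t=0$ creates $|t|^{-2},|t|^{-3}$ singularities. I would handle this by working with $(\hat u)(t)=\{\varphi_{K_j}(t)r_{\varepsilon_j}(t/h_j)-1\}/(-\mathrm it)$, which is smooth at $0$ because the numerator vanishes there, and bounding its derivatives through Taylor expansion of the numerator.
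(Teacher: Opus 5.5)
The skeleton of your proposal matches the paper's: weighted Cauchy--Schwarz in $\mathbf x$, the bound $(1+|x_j|)^{2p}\lesssim(1+|x_j-Y_j|)^{2p}(1+|Y_j|)^{2p}$ to pass to $\mathsf E\prod_j(1\vee|Y_{1,j}|^{2p})$, and factorisation into univariate weighted $L^2$ integrals. Your direct Plancherel treatment of the centred piece $c_j(s)=g_{j,h_j}(s)-\mathbf 1_{\{s\ge0\}}$ is a sound alternative to the paper's cutoff-$\phi$ split and its appeal to Dedecker--Michel for $I^{\pm}_{1,j,h_j}$, $I_{2,j,h_j}$. Its transform $\{\varphi_{K_j}(t)r_{\varepsilon_j}(t/h_j)-1\}/(-\mathrm i t)$ is regular at the origin, and the estimate does give $h_j^{-(2\beta_j-1)_+}|\log h_j|^{\mathbf 1_{\{\beta_j=1/2\}}}$. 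The gap is the sentence saying that the indicator pieces are ``handled by the moments''.

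\emph{The case $d\ge2$.} Here that step fails, and no argument can repair it. Expand $\prod_j[c_j(x_j-Y_j)+\mathbf 1\{Y_j\le x_j\}]$. Every term containing an indicator has a variance that does not vanish as that coordinate tends to $+\infty$. For $d=2$, as $x_2\to+\infty$, $\operatorname{Var}[c_1(x_1-Y_1)\mathbf 1\{Y_2\le x_2\}]\to\operatorname{Var}[c_1(x_1-Y_1)]$ and $F_Y(\mathbf x)(1-F_Y(\mathbf x))\to F_{Y_1}(x_1)(1-F_{Y_1}(x_1))$. Neither is integrable in $x_2$, with or without weights, whatever moments you assume.

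This is intrinsic, not an artefact of your decomposition. Since $g_{1,h_1}(s)\to\varphi_{K_1}(0)/\varphi_{\varepsilon_1}(0)=1$ as $s\to+\infty$, fix $\mathbf x_{-1}=(x_2,\dots,x_d)$. As $x_1\to+\infty$, $\widehat F_{n,\mathbf h}(\mathbf x)-(K_{\mathbf h}\ast F_X)(\mathbf x)$ converges to the centred $(d-1)$-dimensional estimator at $\mathbf x_{-1}$, which is a non-degenerate random variable. By Fatou's lemma, $\mathsf E\|\widehat F_{n,\mathbf h}-(K_{\mathbf h}\ast F_X)\|_{L^1(\mathbb R^d)}=+\infty$. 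The $L^2(\mathbb R^d)$ norm is infinite on an event of positive probability, which also refutes your proviso $G_{\mathbf h}(\cdot,\mathbf Y)\in L^2$ for $q=2$. So the stated bound is unattainable for $d\ge2$.

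The paper's proof does not avoid this either. It rewrites $G_{1,j,h_j}(s)$ as $G_{1,j,h_j}(s)\mathbf 1_{\{s\le0\}}-(1-G_{1,j,h_j}(s))\mathbf 1_{\{s>0\}}$ inside the variance. That silently subtracts $\mathbf 1_{\{s>0\}}=\mathbf 1\{Y_{1,j}<x_j\}$, which is random and does change the variance. These are exactly the terms you tried to keep, so you located the real difficulty correctly.

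\emph{The case $d=1$.} Your claim is still too strong. $\int(1+|x|)^{2p}F_Y(x)(1-F_Y(x))\,\mathrm dx$ is finite only when $\mathsf E|Y|^{2p+1}<\infty$, which \eqref{eq:assonX}--\eqref{eq:assoneps} do not give. Treating the indicator piece without the weight needs $\int\sqrt{F_Y(1-F_Y)}<\infty$, and this can fail when $p<1$. For tails $\mathsf P(|Y|>x)\asymp x^{-a}$ with $2p<a<2$, the empirical-distribution part alone is of order $n^{-(a-1)/a}\gg n^{-1/2}$ for fixed $h$.
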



We combine the upper bounds for the approximation error (Proposition~\ref{prop:bias}) and the stochastic error (Proposition~\ref{prop:variance}) to obtain, in Theorem~\ref{thm:ubrate}, an upper bound for the mean integrated absolute error of the estimator \(\widehat{F}^\ast_{n,\mathbf{h}}\) defined in \eqref{eq:defminimizer}. The proof is deferred to \hyperref[sec:S2]{Section~S2 of the Supplement}.

\begin{theorem}[Upper bound on the $L^1$-risk of $\widehat F_{n,\mathbf{h}}^\ast$]\label{thm:ubrate}
Assume that the noise conditions \ref{eq:E1indep}, \ref{it:E21}, and \ref{eq:E3} hold with $\beta_j>0$ for all $j=1,\,\dots,\,d$. 
Suppose further that the moment conditions \eqref{eq:assonX} and \eqref{eq:assoneps} are satisfied for some $p>1/2$, and that the density $f_X$ belongs to an anisotropic Nikol'skii class $\mathcal{N}_1^d(\boldsymbol{\alpha},\,L)$ for some $\boldsymbol{\alpha}\in\mathbb{R}_+^d$ and $L>0$.

Let $\widehat F_{n,\mathbf{h}}$ be the kernel‑based estimator constructed with a product kernel as in \eqref{eq:kernstruc} of order $\boldsymbol{\ell}= \lceil\boldsymbol{\alpha}\rceil$, where each univariate kernel $K_j$ satisfies the conditions (i)--(iii) listed in Proposition~\ref{prop:variance}. 
Let $\widehat F^{\ast}_{n,\mathbf{h}}$ be the $\varrho_n$-approximate minimum-$L^1$-distance estimator with $\varrho_n = O({n}^{-1/2})$. Then, for the bandwidth vector $\mathbf{h}$ chosen as 
$h_j \asymp n^{-{E}/[2(\alpha_j+1)]}$, $j=1,\,\dots,\,d$, 
the $L^1$-risk of $\widehat F^{\ast}_{n,\mathbf{h}}$ satisfies
\[
\mathsf{E}_{(\mu_X \ast \mu_\varepsilon)^{\otimes n}}[\|\widehat F^{\ast}_{n,\mathbf{h}} - F_X\|_{L^1(\mathbb R^d)}]
\lesssim\psi_n^{\boldsymbol{\alpha},\boldsymbol{\beta}} \; (\log n)^{\frac12\sum_{j=1}^d \mathbf{1}_{\{\beta_j = 1/2\}}},
\]
where
\begin{equation}\label{eq:rate}
\psi_n^{\boldsymbol{\alpha},\boldsymbol{\beta}} 
\coloneqq n^{-\frac{E}{2}},
\qquad\text{with }
E \coloneqq \Biggl[1 + \sum_{j=1}^d \frac{(\beta_j-\frac12)_+}{\alpha_j+1}\Biggr]^{-1}.
\end{equation}
\end{theorem}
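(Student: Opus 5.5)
The plan is to reduce the risk of the projected estimator to that of the preliminary estimator, split the latter into bias and stochastic error, and balance the two with the stated bandwidth.

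\emph{Step 1: reduction to the preliminary estimator.} By the triangle inequality and the defining property \eqref{eq:defminimizer}, using $F_X\in\mathfrak F$ as a competitor,
\[
\|\widehat F^{\ast}_{n,\mathbf h}-F_X\|_{L^1}\le \|\widehat F^{\ast}_{n,\mathbf h}-\widehat F_{n,\mathbf h}\|_{L^1}+\|\widehat F_{n,\mathbf h}-F_X\|_{L^1}\le 2\|\widehat F_{n,\mathbf h}-F_X\|_{L^1}+\varrho_n .
\]
Another application of the triangle inequality, around $K_{\mathbf h}\ast F_X$, then gives
\[
\mathsf E\|\widehat F^{\ast}_{n,\mathbf h}-F_X\|_{L^1}\le 2\|B_{\mathbf h}\|_{L^1}+2\,\mathsf E\|\widehat F_{n,\mathbf h}-K_{\mathbf h}\ast F_X\|_{L^1}+\varrho_n .
\]
One subtlety has to be checked here: $\|F-\widehat F_{n,\mathbf h}\|_{L^1}$ must be finite for at least one $F\in\mathfrak F$, so that the infimum is finite and the minimizer is well defined. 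This finiteness follows from Proposition~\ref{prop:variance} together with Proposition~\ref{prop:bias}, taking $F=F_X$, almost surely.

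\emph{Step 2: apply the two propositions.} For the bias, Proposition~\ref{prop:bias} requires the moment conditions up to order $\lceil\alpha_j\rceil$ and the integrability of $|z|^{\alpha_j+1}|K_j|$. Both hold for a kernel of order $\boldsymbol\ell=\lceil\boldsymbol\alpha\rceil$ satisfying \ref{cond:K1}--\ref{cond:K2}, and they yield $\|B_{\mathbf h}\|_{L^1}\lesssim\sum_j h_j^{\alpha_j+1}$. For the stochastic term, Proposition~\ref{prop:variance} with $q=1$ gives
\[
\mathsf E\|\widehat F_{n,\mathbf h}-K_{\mathbf h}\ast F_X\|_{L^1}\lesssim \mathcal V(\mathbf h)=n^{-1/2}\prod_j h_j^{-(\beta_j-1/2)_+/2}\,|\log h_j|^{\mathbf 1_{\{\beta_j=1/2\}}/2}.
\]
I read $V(\mathbf h)$ as the variance proxy entering under the square root, as the sentence before \eqref{eq:defV} indicates, so that the resulting bound is $\sqrt{V(\mathbf h)}$ times the logarithmic factors. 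This normalization has to be matched against the exact statement of Proposition~\ref{prop:variance}, and it is the step I expect to be the main obstacle; the exponent $E/2$ in \eqref{eq:rate} strongly indicates the square-root reading.

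\emph{Step 3: balancing.} With $h_j\asymp n^{-E/[2(\alpha_j+1)]}$ every bias term satisfies $h_j^{\alpha_j+1}\asymp n^{-E/2}$. On the variance side,
\[
\prod_j h_j^{-(\beta_j-1/2)_+/2}=n^{\frac E4\sum_j(\beta_j-1/2)_+/(\alpha_j+1)}=n^{\frac E4(1/E-1)},
\]
so the stochastic term is of order
\[
n^{-1/2+1/4-E/4}\cdot(\log)=n^{-1/4-E/4}.
\]
This does not equal $n^{-E/2}$ unless $E=1$, which signals that the correct normalization is
\[
\mathcal V(\mathbf h)^{1/2}\;\text{with}\; V=n^{-1}\prod_j h_j^{-2(\beta_j-1/2)_+}\quad\text{or}\quad \mathcal V=V\ \text{taken as}\ n^{-1/2}\prod_j h_j^{-(\beta_j-1/2)_+}.
\]
Under the latter reading, the stochastic term becomes
\[
n^{-1/2}\,n^{\frac E2(1/E-1)}=n^{-E/2},
\]
which matches the bias exactly. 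I would therefore fix the convention from the proof of Proposition~\ref{prop:variance}: the $L^1$ stochastic error is bounded by $n^{-1/2}\prod_j h_j^{-(\beta_j-1/2)_+}$ up to logarithms. This is consistent with the univariate rate $n^{-(\alpha+1)/(2\alpha+2\beta)}$ of \citet{Dedecker2015}, obtained for $d=1$ and $\beta>1/2$.

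\emph{Step 4: logarithms.} Since $|\log h_j|\asymp\log n$, the logarithmic factors contribute $(\log n)^{\frac12\sum_j\mathbf 1_{\{\beta_j=1/2\}}}$. The remainder $\varrho_n=O(n^{-1/2})$ is harmless because $E\le1$ implies $n^{-1/2}\le n^{-E/2}$. Combining the bias, the stochastic term, the logarithmic factors and $\varrho_n$ gives the claimed bound.
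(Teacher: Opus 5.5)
Your proposal is correct and follows the paper's proof step for step. Both arguments use the reduction $\|\widehat F^{\ast}_{n,\mathbf h}-F_X\|_{L^1(\mathbb R^d)}\le 2\|\widehat F_{n,\mathbf h}-F_X\|_{L^1(\mathbb R^d)}+\varrho_n$, then Propositions~\ref{prop:bias} and~\ref{prop:variance}, then balancing at $h_j\asymp n^{-E/[2(\alpha_j+1)]}$. The stochastic normalization you finally adopt, $n^{-1/2}\prod_j h_j^{-(\beta_j-1/2)_+}$ up to logarithms, is exactly what the paper's proof uses: the factor $1/n$ in \eqref{eq:defV} is a typo, so the detour in your Steps~2--3 can be dropped. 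Separately, the univariate rate you quote as a consistency check should read $n^{-(\alpha+1)/(2\alpha+2\beta+1)}$.

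The only difference concerns the moment and integrability conditions needed in Proposition~\ref{prop:bias}. You take them directly from the order-$\lceil\boldsymbol\alpha\rceil$ hypothesis. The paper instead tries to verify them for the explicit kernel $\varphi_{K_j}(t)=(1-t^2)^{q_j}\mathbf{1}_{[-1,1]}(t)$, and that check actually fails when $\lceil\alpha_j\rceil\ge 2$, since $\varphi_{K_j}''(0)=-2q_j\neq 0$. Your route is therefore the sounder one.
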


The following theorem provides the $L^2$-analogue of Theorem~\ref{thm:ubrate}, under the corresponding $L^2$-based regularity condition on the density.
\begin{theorem}[Extension of Theorem~\ref{thm:ubrate} to the $L^2$-risk]\label{thm:ubrateL2}
Under the same assumptions as in Theorem~\ref{thm:ubrate}, except that $f_X$ belongs to the $L^2$-based Nikol'skii class $\mathcal N_2^d(\boldsymbol{\alpha},\, L)$ instead of $\mathcal N_1^d(\boldsymbol{\alpha},\, L)$, the $L^2$-risk of the $\varrho_n$-approximate minimum-$L^2$-distance estimator $\widehat F^{\ast}_{n,\mathbf{h}}$, with $\varrho_n = O({n}^{-1/2})$, satisfies
\(
\mathsf E_{(\mu_X \ast \mu_\varepsilon)^{\otimes n}}
[\| \widehat F^{\ast}_{n,\mathbf{h}} - F_X \|_{L^2(\mathbb R^d)}]
\lesssim
\psi_n^{\boldsymbol{\alpha},\boldsymbol{\beta}}
(\log n)^{\frac12 \sum_{j=1}^d \mathbf{1}_{\{\beta_j = 1/2\}}}\),
with rate \(\psi_n^{\boldsymbol{\alpha},\boldsymbol{\beta}}\) as in \eqref{eq:rate}.
\end{theorem}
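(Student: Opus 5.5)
The plan is to reproduce the three-step argument of Theorem~\ref{thm:ubrate}, with the $L^1$-norm replaced by the $L^2$-norm throughout: a projection inequality, a bias--variance split, and bandwidth balancing.

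\emph{Step 1 (projection inequality).} Let $\widehat F^{\ast}_{n,\mathbf h}$ be defined as in \eqref{eq:defminimizer}, with $L^1$ replaced by $L^2$. Since $F_X\in\mathfrak F$, the triangle inequality gives
\[
\|\widehat F^{\ast}_{n,\mathbf h}-F_X\|_{L^2}\le \|\widehat F^{\ast}_{n,\mathbf h}-\widehat F_{n,\mathbf h}\|_{L^2}+\|\widehat F_{n,\mathbf h}-F_X\|_{L^2}\le 2\|\widehat F_{n,\mathbf h}-F_X\|_{L^2}+\varrho_n .
\]
One point needs care: the infimum over $\mathfrak F$ must be finite, since a cumulative distribution function is not in $L^2(\mathbb R^d)$ in general. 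The inequality only requires that $\|\widehat F_{n,\mathbf h}-F_X\|_{L^2}<\infty$. The expectation bound below shows that this holds almost surely, so the infimum is finite and $\varrho_n$-approximate minimizers exist. Since $\varrho_n=O(n^{-1/2})$ and $n^{-1/2}\le\psi_n^{\boldsymbol\alpha,\boldsymbol\beta}$ (because $E\le1$), the term $\varrho_n$ is negligible.

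\emph{Step 2 (bias--variance split).} Write $\widehat F_{n,\mathbf h}-F_X=(\widehat F_{n,\mathbf h}-K_{\mathbf h}\ast F_X)-B_{\mathbf h}$. For the deterministic bias term, Remark~\ref{rem:samebias} (the $L^2$-version of Proposition~\ref{prop:bias}, valid for $f_X\in\mathcal N_2^d(\boldsymbol\alpha,L)$) gives $\|B_{\mathbf h}\|_{L^2}\le L\sum_j C_jh_j^{\alpha_j+1}$. The kernel of order $\lceil\boldsymbol\alpha\rceil$ supplies the moment conditions \eqref{eq:kernelcond}, and \ref{cond:K2} supplies the integrability condition. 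For the stochastic term, Proposition~\ref{prop:variance} with $q=2$ gives $\mathsf E\|\widehat F_{n,\mathbf h}-K_{\mathbf h}\ast F_X\|_{L^2}\lesssim\mathcal V(\mathbf h)$. Its hypotheses are exactly the noise, moment and kernel assumptions carried over from Theorem~\ref{thm:ubrate}. Reading \eqref{eq:defV} with the $L^2$-to-$L^1$ square-root convention, the variance proxy is $\mathcal V(\mathbf h)=n^{-1/2}\prod_j h_j^{-(\beta_j-1/2)_+}|\log h_j|^{\mathbf 1_{\{\beta_j=1/2\}}/2}$, which is the quantity balanced in Theorem~\ref{thm:ubrate}.

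\emph{Step 3 (balancing).} Take $h_j\asymp n^{-E/[2(\alpha_j+1)]}$, so that $h_j^{\alpha_j+1}\asymp n^{-E/2}$ for every $j$. For the variance, note that $\prod_j h_j^{-(\beta_j-1/2)_+}=n^{\frac E2\sum_j(\beta_j-1/2)_+/(\alpha_j+1)}=n^{\frac E2(E^{-1}-1)}=n^{\frac12-\frac E2}$. Multiplying by $n^{-1/2}$ gives $n^{-E/2}$. Each factor $|\log h_j|^{1/2}\lesssim(\log n)^{1/2}$ is present only when $\beta_j=1/2$, which produces the stated logarithmic factor. Combining the three steps yields the claim.

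\emph{Main obstacle.} The substantive analytic inputs, namely the $L^2$ bias bound and the $L^2$ stochastic bound, are already supplied by Remark~\ref{rem:samebias} and by Proposition~\ref{prop:variance} with $q=2$. The only genuinely new point is the well-posedness of the $L^2$ projection onto $\mathfrak F$ in Step~1, handled as described there.
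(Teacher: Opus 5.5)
Your three steps are the paper's proof. The paper only says that the argument for Theorem~\ref{thm:ubrate} goes through with Remark~\ref{rem:samebias} in place of Proposition~\ref{prop:bias}, and with Proposition~\ref{prop:variance} for $q=2$. Your balancing computation is also right, including the normalization $V(\mathbf h)=n^{-1/2}\prod_j h_j^{-(\beta_j-1/2)_+}$. That is what the proof of Theorem~\ref{thm:ubrate} actually uses; the $1/n$ in \eqref{eq:defV} is a misprint. So, relative to the paper, nothing is missing. However, your patch does not resolve the well-posedness issue you flag in Step~1, and for $d\ge 2$ it breaks your argument and the paper's alike.

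The obstruction is not merely that a distribution function is not in $L^2(\mathbb R^d)$. For $d\ge2$, the \emph{difference} of two distribution functions does not decay when a single coordinate tends to $+\infty$; it converges to the difference of the corresponding lower-dimensional marginals. For example, with $\Phi$ the standard normal distribution function and $a\neq0$, the difference $\Phi(x_1)\Phi(x_2)-\Phi(x_1-a)\Phi(x_2)$ has infinite $L^1(\mathbb R^2)$- and $L^2(\mathbb R^2)$-norm.

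In your setting take $d=2$. Each univariate deconvolution kernel $\tilde K_{j,h_j}$ integrates to $\varphi_{K_j}(0)/\varphi_{\varepsilon_j}(0)=1$, so letting $x_1\to+\infty$ gives
\[
\widehat F_{n,\mathbf h}(x_1,x_2)-(K_{\mathbf h}\ast F_X)(x_1,x_2)\;\longrightarrow\;\widehat F^{(2)}_{n,h_2}(x_2)-(K_{h_2}\ast F_{X,2})(x_2).
\]
Here $\widehat F^{(2)}_{n,h_2}$ is the univariate estimator built from $Y_{1,2},\dots,Y_{n,2}$, and $F_{X,2}$ is the second marginal. The limit is almost surely a nonzero continuous function, so by Tonelli $\|\widehat F_{n,\mathbf h}-K_{\mathbf h}\ast F_X\|_{L^2(\mathbb R^2)}=\infty$ almost surely. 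Similarly, $B_{\mathbf h}(x_1,x_2)\to F_{X,2}(x_2)-(K_{h_2}\ast F_{X,2})(x_2)$, so $\|B_{\mathbf h}\|_{L^2(\mathbb R^2)}=\infty$ in general. Thus the expectation bound you invoke to get almost-sure finiteness is false for $d\ge2$, and so are both inputs of your Step~2.

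In the paper, the two inputs break at identifiable steps:
\begin{itemize}
\item The stochastic bound breaks where the variance of $\prod_j G_{j,h_j}$ is replaced by that of $\prod_j(G_{j,h_j}-\mathbf 1_{\{s_j>0\}})$. This discards the random indicators $\mathbf 1_{\{Y_{1,j}<x_j\}}$, whose cross terms do not decay in $\mathbf x$.
\item The bias bound breaks where $F_X(\mathbf v)-F_X(\mathbf v-h_jz_j\mathbf e_j)$ is written as $\int f_X(\mathbf v_{-j},t)\,\mathrm{d}t$ rather than $\int\partial_jF_X(\mathbf v_{-j},t)\,\mathrm{d}t$. The function $\partial_jF_X$ is not integrable in $\mathbf v_{-j}$.
\end{itemize}

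In fact, $\|F-F_X\|_{L^2(\mathbb R^d)}<\infty$ already forces $F$ to share all $(d-1)$-dimensional marginals with $F_X$, so the loss itself is degenerate. Your argument, like the paper's, is therefore valid only for $d=1$. For $d\ge2$ the loss must be changed first, for instance by integrating over a bounded set or against an integrable weight, before the projection, bias--variance and balancing scheme can work.
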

\begin{proof}
The proof follows the same structure as that of Theorem~\ref{thm:ubrate}, with Proposition~\ref{prop:bias} replaced by its $L^2$-counterpart established in Remark~\ref{rem:samebias}. Moreover, as shown in Proposition~\ref{prop:variance}, the stochastic error bound remains valid in the $L^2$-norm.
\end{proof}

In practice, the regularity vector $\boldsymbol{\alpha}$ of the density $f_X$ is typically unknown. This poses a problem when choosing a kernel, since the bias bounds derived in Proposition~\ref{prop:bias} and Remark~\ref{rem:samebias} require the kernel order to be at least $\lceil \alpha_j \rceil$ along each coordinate direction. However, there exist kernels that are simultaneously of every order, and thus can be used in the absence of any knowledge of $\boldsymbol{\alpha}$. Such kernels are particularly appealing for adaptive estimation procedures, which will be studied in Section~\ref{sec:adaptivity}. We formalize this notion in the following definition.
\begin{definition}[Superkernel]\label{def:superkernel}
A \emph{superkernel} is a function $K \in L^1(\mathbb R) \cap L^2(\mathbb R)$ whose Fourier transform $\mathcal F[K]$ is of class $C^\infty(\mathbb R)$, compactly supported, symmetric about zero, and satisfies $\mathcal F[K]= 1$ on $[-1,\,1]$.
\end{definition}

The following proposition, asserting the existence of a superkernel, is a standard result in the literature; its proof is provided in \hyperref[sec:S2]{Section~S2 of the Supplement} for completeness and reader's convenience.

\begin{proposition}[Existence of a superkernel]\label{prop:exsuperkern}
A superkernel $K$ in the sense of Definition~\ref{def:superkernel} exists.
\end{proposition}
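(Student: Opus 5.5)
We construct the superkernel by first building its Fourier transform and then inverting it. Fix a standard bump function: let $\eta(s)\coloneqq \exp(-1/s)$ for $s>0$ and $\eta(s)\coloneqq 0$ for $s\le 0$. It is classical that $\eta\in C^\infty(\mathbb R)$. Define the smooth step
\[
\chi(s)\coloneqq \frac{\eta(2-s)}{\eta(2-s)+\eta(s-1)},\qquad s\in\mathbb R .
\]
The denominator never vanishes, since for every $s$ at least one of $2-s>0$ or $s-1>0$ holds. Hence $\chi\in C^\infty(\mathbb R)$, and $0\le\chi\le 1$. Moreover $\chi(s)=1$ for $s\le 1$ and $\chi(s)=0$ for $s\ge 2$.

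We then set $\phi(t)\coloneqq \chi(t^2)$. This is $C^\infty$ because it is a composition of smooth maps; using $t^2$ instead of $|t|$ avoids any issue at the origin. The function $\phi$ is even, equals $1$ on $[-1,1]$, vanishes outside $[-\sqrt2,\sqrt2]$, and is real-valued.

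Next we define $K\coloneqq \mathcal F^{-1}[\phi]$, that is, $K(x)=(2\pi)^{-1}\int_{\mathbb R}\mathrm e^{-\mathrm i tx}\phi(t)\,\mathrm dt$. Since $\phi$ is even and real, $K$ is real and even. It is bounded and continuous because $\phi\in L^1(\mathbb R)$. Since $\phi\in L^2(\mathbb R)$, Plancherel gives $K\in L^2(\mathbb R)$ with $\|K\|_{L^2}=(2\pi)^{-1/2}\|\phi\|_{L^2}$.

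The main point is to show $K\in L^1(\mathbb R)$, and this is the only step requiring an argument. Because $\phi\in C_c^\infty$, we integrate by parts twice; the boundary terms vanish by compact support. This gives
\[
x^2K(x)=-(2\pi)^{-1}\int_{\mathbb R}\mathrm e^{-\mathrm i tx}\phi''(t)\,\mathrm dt,
\]
so $|K(x)|\le (2\pi)^{-1}\|\phi''\|_{L^1}\,x^{-2}$. Combined with $\sup_x|K(x)|\le (2\pi)^{-1}\|\phi\|_{L^1}$, we obtain $|K(x)|\lesssim (1+x^2)^{-1}$, which is integrable. Iterating the integration by parts shows in fact that $K$ is a Schwartz function, so all its moments are finite.

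It remains to confirm that the Fourier transform of $K$, taken in the paper's convention $\mathcal F[f](t)=\int \mathrm e^{\mathrm i tx}f(x)\,\mathrm dx$, returns $\phi$. Since both $K$ and $\phi$ lie in $L^1$, the Fourier inversion theorem stated in the Notation section applies, and $\mathcal F[K]=\phi$ pointwise by continuity. Hence $\mathcal F[K]$ is $C^\infty$, compactly supported, symmetric about zero, and identically $1$ on $[-1,1]$. Therefore $K$ is a superkernel in the sense of Definition~\ref{def:superkernel}.

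As a remark, since $\mathcal F[K]\equiv1$ near $0$, all derivatives of $\mathcal F[K]$ at $0$ vanish except the zeroth. Thus $\int x^rK=\delta_{0,r}$ for every $r$, which justifies calling $K$ a kernel of every order. This fact is not required for the statement itself.
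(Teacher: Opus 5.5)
Your proof is correct and follows essentially the same route as the paper. Both arguments build an even $C_c^\infty$ function equal to $1$ on $[-1,1]$, take $K$ to be its inverse Fourier transform, and obtain $K\in L^2$ from Plancherel and $K\in L^1$ from integration by parts. Your explicit $\chi(t^2)$ construction and your appeal to Fourier inversion to confirm $\mathcal F[K]=\phi$ just make precise two steps the paper leaves implicit: the paper only asserts that its transition function $\psi$ exists, and it defines $K$ by setting $\mathcal F[K]\coloneqq\Phi$.
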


Since a superkernel has infinite order (\emph{i.e.}, $\int_{\mathbb R} x^r K(x)\,\di x = \delta_{0,r}$ for all $r\ge0$), it is of order $\lceil \alpha \rceil$ for every $\alpha>0$. Let $f_X \in \mathcal N_q^d(\boldsymbol{\alpha},\, L)$, for $q = 1,\, 2$. Applying Proposition~\ref{prop:bias} or Remark~\ref{rem:samebias} to a product kernel $K_{\mathbf{h}}$ as in \eqref{eq:productkern}, where each univariate kernel is a superkernel, the bias for the cumulative distribution function is of order $\sum_{j=1}^d h_j^{\alpha_j+1}$, regardless of $\boldsymbol{\alpha}$, and without requiring knowledge of the density regularity. This makes the superkernel particularly well-suited for adaptive estimation procedures, as it can achieve the optimal rate of convergence for any regularity level without prior knowledge of $\boldsymbol{\alpha}$. 

We conclude with a remark on the case where the signal density $f_X$ is non-smooth, yet the noise decays enough to cancel the deconvolution effect, yielding the noise-free density estimation rate \(n^{-{1}/{(2+d)}}\).

\begin{remark}
When the density $f_X$ exists and no further smoothness is assumed (this may be regarded as corresponding to $\alpha_j = 0$ for all $j = 1,\,\dots,\,d$) and the noise satisfies $\beta_j = 1$ for all $j = 1,\,\dots,\,d$, the optimal convergence rate is \(n^{-{1}/{(2+d)}}\).
Indeed, substituting \(\alpha_j=0\) and \(\beta_j=1\) into \eqref{eq:rate} gives \((\beta_j-1/2)_+ = 1/2\), hence $
E =[1 + d/2]^{-1} = {2}/{(2+d)}$,
so \(\psi_n^{\boldsymbol{\alpha},\boldsymbol{\beta}} = n^{-E/2} = n^{-{1}/{(2+d)}}\).
This rate coincides with the classical \emph{noise-free} density estimation rate, where the random vectors $\mathbf{X}_1,\,\dots,\,\mathbf{X}_n\in\mathbb{R}^d$ are observed directly without measurement errors.
The rate follows from the standard bias-variance trade-off. Indeed, the bias is of order $\sum_{j=1}^d h_j$, while the stochastic error term is of order $(n\prod_{j=1}^d h_j)^{-1/2}$. Balancing these two quantities yields $n^{-{1}/{(2+d)}}$. To verify the order of the stochastic error, consider the bound on the squared dominant term:
\[
\frac{1}{n}\prod_{j=1}^d I_{2,j,h_j} \;\lesssim\; \frac{1}{n}\prod_{j=1}^d \int_{1<|t|\le 1/h_j} \frac{|r_{\varepsilon_j}(t)|^2}{|t|^2}\,\mathrm{d}t
\lesssim\frac{1}{n}\prod_{j=1}^d \int_{1<|t|\le 1/h_j} \frac{(1+|t|)^{2\beta_j}}{|t|^2}\,\mathrm{d}t
\lesssim \frac{1}{n}\prod_{j=1}^d h_j^{-1},
\]
where we have used $|r_{\varepsilon_j}(t)| \lesssim (1+|t|)^{\beta_j}$ for all $j$, and the last inequality follows from $(1+|t|)^{2\beta_j}/|t|^2 \lesssim 1$ for $|t|>1$ when $\beta_j=1$. Consequently, the square root of the dominant stochastic term is of order $(n\prod_{j=1}^d h_j)^{-1/2}$. Balancing this with the bias term $\sum_{j=1}^d h_j$ by choosing the bandwidth vector $\mathbf{h}$ optimally gives the rate $n^{-{1}/{(2+d)}}$.

The key insight why this matches the noise-free density estimation rate is that when $\beta_j = 1$, the factor $|r_{\varepsilon_j}(t)|^2$ cancels the $|t|^2$ term in the denominator. Estimating the cumulative distribution function introduces a factor $|t|^{-2}$ in the Fourier domain, which corresponds to a smoothness gain of order $1$ compared to density estimation. At the same time, the measurement error with $\beta_j = 1$ contributes a factor $|r_{\varepsilon_j}(t)|^2 \lesssim (1+|t|)^2$, which grows at the same rate. Hence, the ratio $|r_{\varepsilon_j}(t)|^2/|t|^2$ is asymptotically constant for $|t|> 1$, making the squared stochastic error scale as $(n\prod_{j=1}^d h_j)^{-1}$, exactly as in the noise-free density estimation case; see, \emph{e.g.}, \citet{ComteLacour2013}, p.~575, where $\varphi_{\varepsilon_j}(t)=1$ (\emph{i.e.}, no noise). The smoothness gain from estimating the cumulative distribution function therefore compensates perfectly for the measurement error when $\beta_j = 1$, explaining why the optimal convergence rate $n^{-{1}/{(2+d)}}$ coincides with the classical rate for nonparametric density estimation from direct observations.
\end{remark}


\section{Minimax optimality of the approximate minimum-\(L^1\) estimator}\label{sec:lowerbound}
In the previous section, we established that the estimator \(\widehat{F}_{n,\mathbf{h}}^{\ast}\) achieves, with an appropriate choice of the bandwidth vector \(\mathbf{h}\), an upper bound of order \(\psi_n^{\boldsymbol{\alpha},\boldsymbol{\beta}}\) (up to logarithmic factors) on the \(L^1\)-risk. To prove that this rate is minimax-optimal, we now derive a matching lower bound. The argument proceeds by constructing densities within the anisotropic Nikol'skii class for which the \(L^1\)-distance between the corresponding cumulative distribution functions is at least a constant multiple of \(\psi_n^{\boldsymbol{\alpha},\boldsymbol{\beta}}\), while the \(\chi^2\)-divergence between their convolved versions with the noise density \(f_\varepsilon\) is \(o(n^{-1})\). A two-point testing argument then yields the lower bound.

Define the set of active coordinates
\begin{equation}\label{eq:setactivecoord}
\mathcal P \coloneqq \bigl\{j\in\{1,\dots,d\}:\,\beta_j>\tfrac12\bigr\}.
\end{equation}
The lower bound construction is standard when no coordinates are active (\(|\mathcal P|=0\)), following the translation perturbation approach of \citet{ComteLacour2013}. However, when multiple coordinates are active (\(|\mathcal P|\ge 1\)), the translation perturbation no longer yields a density in the anisotropic Nikol'skii class with a constant independent of \(n\). A novel construction of the test densities is therefore required, which constitutes the main technical contribution of the lower bound proof.

Combining this lower bound with the upper bound from Theorem~\ref{thm:ubrate}, we conclude that the estimator \(\widehat{F}_{n,\mathbf{h}}^{\ast}\) is minimax-optimal (up to logarithmic factors) for the \(L^1\)-risk over the anisotropic Nikol'skii class. We now state the lower bound theorem; its proof, along with preparatory lemmas, is given in \hyperref[sec:S3]{Section~S3 of the Supplement}.


\begin{theorem}[Lower bound on the minimax $L^1$-risk]\label{th:LB}
Consider an i.i.d. sample $\mathbf{Y}_1,\,\dots,\,\mathbf{Y}_n$ from the multivariate convolution model in \eqref{eq:model}. 
Suppose that noise conditions \ref{eq:E1indep}, \ref{it:E22}, and \ref{eq:E3} hold with $\beta_j > 0$ for $j = 1,\, \dots,\, d$. Then, for any $\boldsymbol{\alpha}\in\mathbb R_+^d$ and $L>0$,
\[
\inf_{\hat{F}_n}\sup_{f_X\in\mathcal{N}_1^d(\boldsymbol{\alpha},L)}
\mathsf{E}_{(\mu_X\ast \mu_\varepsilon)^{\otimes n}}[\|\hat{F}_n-F_X\|_{L^1(\mathbb R^d)}]
\;\gtrsim\; \psi_n^{\boldsymbol{\alpha},\boldsymbol{\beta}},
\]
where $\hat{F}_n$ denotes a distribution function estimator based on $n$ observations,
and $\psi_n^{\boldsymbol{\alpha},\boldsymbol{\beta}}$ is as in \eqref{eq:rate}.
\end{theorem}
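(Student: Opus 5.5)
We will use the two-point (Le Cam) scheme announced above. Two densities $f_0,f_1\in\mathcal N_1^d(\boldsymbol\alpha,L)$ are built with
\[
\|F_1-F_0\|_{L^1(\mathbb R^d)}\gtrsim\psi_n^{\boldsymbol\alpha,\boldsymbol\beta},
\qquad
\chi^2\bigl(f_1\ast f_\varepsilon,\,f_0\ast f_\varepsilon\bigr)\lesssim n^{-1}.
\]
Once the $\chi^2$ bound holds, the $n$-fold product satisfies $\chi^2\le(1+C/n)^n-1\le e^{C}-1$. The standard reduction (Tsybakov, Theorem~2.2) then gives $\inf_{\hat F_n}\max_{k}\mathsf E_k\|\hat F_n-F_k\|_{L^1}\gtrsim\|F_1-F_0\|_{L^1}$.

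\textbf{Reference density.} Take $f_0(\mathbf x)=\prod_j g_j(x_j)$, where each $g_j$ is a smooth univariate density with polynomially heavy tails, for instance a Cauchy-type density $g_j(x)\propto(1+x^2)^{-\gamma}$ with $\gamma$ close to $1$. Lemma one (the lower-tail lemma) will show that $f_0\ast f_\varepsilon(\mathbf y)\gtrsim\prod_j(1+y_j^2)^{-\gamma}$. This uses the product structure from \ref{eq:E1indep} together with a crude bound on the noise mass in a compact set. After shrinking constants, $f_0$ lies strictly inside the class.

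\textbf{Perturbation.} Set $f_1=f_0+\gamma_n\prod_j \omega_j(x_j/h_j)$ with $h_j=n^{-E/[2(\alpha_j+1)]}$. For $j\in\mathcal P$, take $\omega_j$ with $\mathcal F[\omega_j]$ supported in an annulus $\{1\le|t|\le2\}$, so that $\omega_j$ has all vanishing moments. For $j\notin\mathcal P$, take $\omega_j$ to be a fixed smooth bump of integral zero; this is the "inactive" direction, where no high-frequency localisation is needed.

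Choose the amplitude $\gamma_n=\prod_{j\in\mathcal P}h_j^{\alpha_j}\cdot(\text{const})$, up to the normalisation $\prod_j h_j$ coming from the $L^1$ norms, so that the Nikol'skii seminorm in each direction $j$ stays bounded. The anisotropic balance $h_j^{\alpha_j+1}$, identical in every coordinate, is exactly what keeps all directions bounded simultaneously. Positivity of $f_1$ follows from the heavy tails of $f_0$ versus the fast decay of the $\omega_j$, provided $\gamma_n\to0$.

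\textbf{Separation.} Since $\int\omega_j=0$, the difference $F_1-F_0$ equals $\gamma_n\prod_j h_j\,W_j(x_j/h_j)$, where $W_j$ is the antiderivative of $\omega_j$. This is nonzero and integrable, so
\[
\|F_1-F_0\|_{L^1}\asymp\gamma_n\prod_j h_j^{2}.
\]
The bookkeeping reduces, after substitution, to $\prod_j h_j^{\alpha_j+1}\cdot(\cdot)\asymp n^{-E/2}$. This matches the upper-bound trade-off, where the bias $h_j^{\alpha_j+1}$ equals $n^{-E/2}$ for every $j$.

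\textbf{$\chi^2$ bound.} By the lower-tail lemma,
\[
\chi^2\lesssim\int\prod_j(1+y_j^2)^{\gamma}\,|(f_1-f_0)\ast f_\varepsilon|^2\,\mathrm d\mathbf y.
\]
The weight is handled by Plancherel together with derivatives in $t$: $(1+y_j^2)$ corresponds to $(1-\partial_{t_j}^2)$. This is where \ref{eq:E3} enters, since it controls up to second derivatives of $\varphi_{\varepsilon_j}=1/r_{\varepsilon_j}$ and hence of $\varphi_{\varepsilon_j}$ via \ref{it:E22}. The integral factorises over coordinates:
\begin{itemize}
\item For $j\in\mathcal P$, the annular support gives a factor $\lesssim h_j\,h_j^{2\beta_j}$.
\item For $j\notin\mathcal P$, using only $|\varphi_{\varepsilon_j}|\le1$ and the vanishing integral of $\omega_j$, one gets a factor $\lesssim h_j$ at worst, with no noise gain.
\end{itemize}
Multiplying, the total is $\gamma_n^2\prod_j h_j^{2}\prod_{j\in\mathcal P}h_j^{2\beta_j-1}$, which must be $\lesssim n^{-1}$. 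Substituting the choice of $h_j$ shows this is equivalent to the defining identity $1=E\bigl[1+\sum_j(\beta_j-\frac12)_+/(\alpha_j+1)\bigr]$. Thus the positive part arises exactly from the dichotomy between active and inactive coordinates.

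\textbf{Main obstacle.} The hardest step is making the construction simultaneously valid in the Nikol'skii class with a constant independent of $n$ and sharp in $\chi^2$ when $|\mathcal P|\ge1$. A plain translation perturbation, as in \citet{ComteLacour2013}, fails in this case. I would first try the mixed product above: frequency-annulus wavelets only in the active directions, and smooth mean-zero bumps in the inactive ones. I would then check directly that each directional difference quotient of order $\alpha_j$ scales like $\gamma_n h_j^{-\alpha_j}\prod_k h_k$.

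If the inactive directions spoil the $\chi^2$ computation, because without noise gain the factor cannot drop below $h_j$, the fallback is to handle the case $\mathcal P=\emptyset$ separately by translation. For $\mathcal P\neq\emptyset$, I would place the perturbation only along active coordinates and keep the inactive coordinates unperturbed at scale $h_j$. This is the "no active versus at least one active" split.
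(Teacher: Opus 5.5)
There is a genuine gap, and it sits in the final bookkeeping step. A two-point test built from a single product bump at scales $h_j=n^{-E/[2(\alpha_j+1)]}$ cannot reach $\psi_n^{\boldsymbol{\alpha},\boldsymbol{\beta}}$. Start from your own estimates, $\|F_1-F_0\|_{L^1}\asymp\gamma_n\prod_j h_j^2$ and $\chi^2\asymp\gamma_n^2\prod_j h_j^2\prod_{j\in\mathcal P}h_j^{2\beta_j-1}$. If the noise also satisfies \ref{it:E21} (a case the theorem must cover), the second expression is in fact a \emph{lower} bound for the true $\chi^2$, because for $j\notin\mathcal P$ the true factor is at least of order $h_j^{2\beta_j+1}\ge h_j^2$. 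Eliminating $\gamma_n$ gives $\chi^2\asymp\|F_1-F_0\|_{L^1}^2\prod_j h_j^{-2}\prod_{j\in\mathcal P}h_j^{2\beta_j-1}$. For your bandwidths $\prod_{j\in\mathcal P}h_j^{2\beta_j-1}=n^{-(1-E)}$, so a separation of order $n^{-E/2}$ forces
\[
\chi^2\gtrsim n^{-1}\prod_{j=1}^d h_j^{-2}\longrightarrow\infty .
\]
The identity $E\bigl[1+\sum_j(\beta_j-\tfrac12)_+/(\alpha_j+1)\bigr]=1$ that you invoke would only appear if the separation were $\gamma_n\prod_j h_j$. That is, the perturbation would need $\prod_j h_j^{-1}$ times more CDF mass at the same $\chi^2$ cost. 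Retuning $h$ or $\gamma_n$ does not rescue this. Positivity and the Nikol'skii condition only cap $\gamma_n$ further ($\gamma_n\lesssim 1$ and $\gamma_n\lesssim h_j^{\alpha_j}/\prod_k h_k$). Already for $d=1$, $\beta>\frac12$, optimizing one band-limited bump over $h\le 1$ and $\gamma_n$ gives separation at most $\max\{n^{-1/2},\,n^{-(\bar\alpha+1)/(2\bar\alpha+2\beta-1)}\}$ with $\bar\alpha=\max\{\alpha,1\}$, which is $o\bigl(n^{-(\alpha+1)/(2\alpha+2\beta+1)}\bigr)$. Your fallback of perturbing only the active coordinates does not address this, since the loss already occurs there. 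In the inactive coordinates, shrinking $h_j$ is pure loss, so the scale there must stay of order one.

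The paper obtains the missing factor from volume rather than from localization. For $\mathcal P=\emptyset$ it uses essentially your scheme with $h_j=1$: a band-limited perturbation of size $cn^{-1/2}$ of a product of symmetric stable densities. The Comte--Lacour lemma gives $\chi^2=O(c^2n^{-1})$, and the separation is $\asymp n^{-1/2}=\psi_n^{\boldsymbol{\alpha},\boldsymbol{\beta}}$.

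For $|\mathcal P|\ge 1$ it drops frequency localization altogether. Two fixed $C_c^\infty$ functions are used: a density $\omega$, and a mean-zero $\xi$ whose primitive is bounded below on an interval. These are dilated in each active coordinate to scale $R_n^{\alpha_j+1}$ with $R_n=n^{1/E}$, while the inactive coordinates carry a fixed plateau density. The amplitude is $\lambda_n=n^{-E/2}\prod_{j\in\mathcal P}R_n^{-(\alpha_j+1)}$. The estimates then go as follows.
\begin{itemize}
\item Dilation by a factor $\ge 1$ only decreases the Nikol'skii seminorms, so $L$ stays fixed.
\item $|F_1-F_0|\gtrsim\lambda_n$ on a box of volume $\asymp\prod_{j\in\mathcal P}R_n^{\alpha_j+1}$, so the separation is $\gtrsim n^{-E/2}$.
\item $\chi^2$ is bounded with the noise discarded (Young's inequality), giving $\chi^2\lesssim\lambda_n^2\prod_{j\in\mathcal P}R_n^{\alpha_j+1}=o(n^{-1})$.
\end{itemize}
So $E$ is inserted through $\lambda_n$ rather than produced by a bias--variance balance. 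The argument rests on $\mathcal N_1^d(\boldsymbol\alpha,L)$ having no scale or moment constraint. Dilating a fixed admissible pair leaves $\chi^2(f_1\,\|\,f_0)$ unchanged, and convolution with $f_\varepsilon$ can only decrease it. The Nikol'skii seminorms do not grow, while $\|F_1-F_0\|_{L^1}$ picks up the volume factor.

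The quickest repair of your proof, for the theorem as stated, is therefore to replace the $h_j$-localization by such a dilation. The Fan-type mechanism you propose is the natural one only if a moment constraint is imposed. Even then a single bump cannot supply the missing factor $\prod_j h_j^{-1}$; in $d=1$ an Assouad-type argument over $\asymp h^{-1}$ disjoint cells is what provides it.
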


Some remarks are in order. The upper bound on the risk of the \(\varrho_n\)-approximate minimum-\(L^1\)-distance estimator \(\widehat F_{n,\mathbf{h}}^\ast\) requires the noise conditions \ref{eq:E1indep}, \ref{it:E21}, and \ref{eq:E3}, together with the finite‑moment conditions \eqref{eq:assonX} and \eqref{eq:assoneps}.
In contrast, the lower bound (Theorem~\ref{th:LB}) relies on the noise conditions \ref{eq:E1indep}, \ref{it:E22}, and \ref{eq:E3}, and does not require any moment conditions on the signal. Consequently, there exist heavy-tailed distributions that satisfy the lower-bound assumptions but not those of the upper bound.

To establish minimax optimality, the noise distribution must satisfy all the required noise conditions \ref{eq:E1indep}, \ref{it:E21}, \ref{it:E22}, and \ref{eq:E3}, as well as the moment conditions \eqref{eq:assonX} and \eqref{eq:assoneps}. Under these assumptions, the upper bound of Theorem~\ref{thm:ubrate} and the lower bound of Theorem~\ref{th:LB} hold on the same class 
\(\mathcal{N}_{\mathrm{mom}} \coloneqq \{f_X \in \mathcal{N}_1(\boldsymbol{\alpha},L):\, \text{\eqref{eq:assonX} and \eqref{eq:assoneps} hold} \}\).
Therefore, the minimax \(L^1\)-risk over this class satisfies
\[
\inf_{\hat{F}_n} \sup_{\mu_X \in \mathcal{N}_{\mathrm{mom}}} 
\mathsf{E}_{(\mu_X \ast \mu_\varepsilon)^{\otimes n}}[\|\hat{F}_n - F_X\|_{L^1(\mathbb R^d)}] 
\asymp \psi_n^{\boldsymbol{\alpha},\boldsymbol{\beta}}.
\]
Thus, the rate \(\psi_n^{\boldsymbol{\alpha},\boldsymbol{\beta}}\) is minimax-optimal up to logarithmic factors for this class.

\section{Noise-filtered harmonic mean representation of the rate exponent}\label{sec:structure}
This section analyzes the exponent of the convergence rate $\psi_n^{\boldsymbol{\alpha},\boldsymbol{\beta}}$ and its harmonic-mean representation. It also discusses the bottleneck effect and the curse of dimensionality, and compares it with the exponent in density deconvolution. We show that the positive-part threshold acts as a noise filter, and that this filter is naturally embedded in the harmonic mean structure of the exponent.

Recall that the $L^1$-minimax convergence rate for a $d$-dimensional deconvolution problem (up to a logarithmic factor) is
$\psi_n^{\boldsymbol{\alpha}, \boldsymbol{\beta}}= n^{-{E}/{2}}$,
where the exponent $E$ is defined in~\eqref{eq:rate}.
Here $\alpha_j > 0$ represents the smoothness of the density along the $j$-th coordinate direction,
and $\beta_j > 0$ characterizes the decay rate of the noise characteristic function in that direction. Only coordinate directions with $\beta_j > 1/2$ contribute to the sum; directions with $\beta_j\leq1/2$ do not affect the rate (up to a logarithmic factor).

For the univariate case ($d=1$) with $\beta>1/2$, the convergence rate becomes
\(
 n^{-{(\alpha+1)}{[2(\alpha+\beta) + 1]}}
\).
This rate is known to be minimax-optimal up to logarithmic factors for the $1$-Wasserstein distance. By the identity between the $1$-Wasserstein distance for probability measures on the real line and the $L^1$-distance between their corresponding cumulative distribution functions, see \cite{Rousseau&Scricciolo}, p.~1708, the same optimality holds for the $L^1$-distance between distribution functions.

For $d \ge 2$, when $\alpha_j = 0$ for all $j$ and the noise is isotropic, \emph{i.e.}, $\beta_1 =\, \dots \,= \beta_d = \beta$, the rate simplifies to
$
n^{-1/{\{2[1+d(\beta-1/2)_+]\}}}$,
which deteriorates as $d$ increases, the \emph{curse of dimensionality}.

The exponent $E$ admits a harmonic-mean representation that makes the filter explicit. Restrict attention to the directions that survive the filter, that is, those with $\beta_j > 1/2$; directions with $\beta_j \le 1/2$ simply disappear from the sum. For these directions, define
\(a_j \coloneqq \alpha_j+1 > 1\) and \(b_j \coloneqq \beta_j - 1/2 > 0\).
Let $H_{\textnormal{CF}}$ be the harmonic mean of the ratios $a_j/b_j$; in symbols,
\begin{equation}\label{eq:H_b}
H_{\textnormal{CF}} \coloneqq \operatorname{HM}\!\left\{\frac{a_j}{b_j}\right\}_{j=1}^d 
      = \frac{d}{\sum_{j=1}^d \frac{b_j}{a_j}}
      = \frac{d}{\sum_{j=1}^d \frac{\beta_j-\frac{1}{2}}{\alpha_j+1}}.
\end{equation}
Then, the exponent can be written as $E =(1 + {d}/{H_{\textnormal{CF}}})^{-1}$.
The noise filter operates on two levels.
\begin{itemize}
    \item \emph{Entry filter:} directions with $\beta_j \le 1/2$ are absent from the harmonic mean sum because their contribution $(\beta_j-1/2)_+$ is zero, so they do not appear in $H_{\textnormal{CF}}$;\\[-12pt]
    \item \emph{Bottleneck effect:} because $H_{\textnormal{CF}}$ is a harmonic mean, it is dominated by the smallest ratio $(\alpha_j+1)/(\beta_j-1/2)$. A single direction with poor characteristics (small $\alpha_j$ or large $\beta_j$) reduces $H_{\textnormal{CF}}$, reducing $E$ and slowing the convergence rate. This acts as a filter that amplifies the worst direction.
\end{itemize}
Thus, the harmonic mean representation not only encodes the threshold filter, but also reveals the bottleneck effect, a sensitivity to the most unfavourable coordinate.

The ratio \(d/H_{\textnormal{CF}}\) measures the \emph{effective difficulty}
of the deconvolution problem, combining anisotropic smoothness \(\alpha_j\) of the
density with the anisotropic decay $\beta_j$ of the noise characteristic 
function along the $j$-th coordinate direction. Since \(E = (1 + d/H_{\textnormal{CF}})^{-1}\) decreases as \(d/H_{\textnormal{CF}}\) increases, raising the dimension $d$ or lowering $H_{\textnormal{CF}}$ reduces \(E\). As shown in Figure~\ref{fig:convergence}, a smaller exponent \(E\) slows the minimax convergence rate \(\psi_n^{\boldsymbol{\alpha},\boldsymbol{\beta}} = n^{-E/2}\).
A large value of \(H_{\textnormal{CF}}\) indicates that in every direction either 
\(\alpha_j\) is large, or \(\beta_j\) is close to the 
threshold \(1/2\), or both. The ratio \((\alpha_j+1)/(\beta_j-1/2)\)
measures the ``advantage'' of smoothness over the excess decay \(\beta_j-1/2\): a large ratio means that smoothness dominates.

\begin{figure}[H]
\centering
\begin{minipage}{0.48\textwidth}
\centering
\begin{tikzpicture}
\begin{axis}[
    width=\textwidth,
    height=0.50\textwidth,
    xlabel={$\log_{10} n$},
    ylabel={$\log_{10}\psi_n^{\boldsymbol{\alpha},\boldsymbol{\beta}}$},
    grid=major,
    legend pos=north east,
    title={Impact of $E$ on convergence rate},
    xmajorgrids=true,
    ymajorgrids=true,
    xtick={1,2,3,4},
    xticklabels={$1$ ($n=10$), $2$ ($n=10^2$), $3$ ($n=10^3$), $4$ ($n=10^4$)},
    ytick={-0.5,-0.4,-0.3,-0.2,-0.1,0},
    yticklabels={$-0.5$,$-0.4$,$-0.3$,$-0.2$,$-0.1$,$0$},
    domain=1:4,
    xmin=1, xmax=4,
    ymin=-0.55, ymax=0.05,
    grid style={line width={0.2pt}, draw=gray!30},
    axis lines=left,
    tick label style={font=\footnotesize},
    label style={font=\footnotesize},
    title style={font=\small},
    legend style={
        font=\scriptsize,
        row sep=0.5pt,
        inner sep=2pt,
    },
]
\addplot[blue, thick, samples=50] {-0.025*x};
\addplot[red, thick, samples=50] {-0.075*x};
\addplot[green, thick, samples=50] {-0.125*x};
\addplot[orange, thick, samples=50] {-0.175*x};
\addplot[black, thick, dashed, samples=50] {-0.25*x};
\legend{$E=0.05$, $E=0.15$, $E=0.25$, $E=0.35$, $E=0.5$}
\end{axis}
\end{tikzpicture}
\caption{$\log_{10}\psi_n^{\boldsymbol{\alpha},\boldsymbol{\beta}} = -(E/2)\log_{10} n$ for different $E$.}
\label{fig:convergence}
\end{minipage}\hfill
\begin{minipage}{0.48\textwidth}
\centering
\begin{tikzpicture}
\begin{axis}[
    width=\textwidth,
    height=0.50\textwidth,
    xlabel={Direction $j$},
    ylabel={$\frac{\alpha_j+1}{\beta_j-\frac12}$},
    ymin=0, ymax=6,
    xtick={1,2,3,4,5},
    ytick={0,2,4,6},
    grid=major,
    title={Bottleneck effect},
    bar width=1pt,
    ymajorgrids=true,
    tick label style={font=\footnotesize},
    label style={font=\footnotesize},
    title style={font=\small},
]
\addplot[ybar, fill=blue!30, draw=blue!50] coordinates {
    (1,5.0) (2,5.0) (3,5.0) (4,5.0) (5,0.9)
};
\addplot[red, dashed, thick] coordinates {(0,2.62) (6,2.62)};
\node[red, font=\footnotesize] at (3,2.9) {$H_{\textnormal{CF}} \approx 2.62$};
\addplot[blue, dashed, thick] coordinates {(0,5.0) (6,5.0)};
\node[blue, font=\footnotesize] at (3,5.2) {$H_{\textnormal{CF}}$ if all good};
\end{axis}
\end{tikzpicture}
\caption{One coordinate with small $(\alpha_j+1)/(\beta_j-1/2)$ drastically reduces \(H_{\textnormal{CF}}\), and hence \(E\).}
\label{fig:bottleneck}
\end{minipage}
\end{figure}

\vspace{-0.3cm}

Because the harmonic mean \(H_{\textnormal{CF}}\) is dominated by the smallest ratio \((\alpha_j+1)/(\beta_j-1/2)\), a single direction with poor characteristics, either low \(\alpha_j\) or a large difference \(\beta_j-1/2\), can drastically reduce \(H_{\textnormal{CF}}\) and hence \(E\), creating a \emph{bottleneck effect}: one ``bad'' direction can destroy the overall estimation efficiency. Figure~\ref{fig:bottleneck} illustrates how one single unfavorable direction can dominate the convergence rate through the harmonic mean structure.

\vspace{-0.4cm}

\begin{figure}[H]
\centering
\begin{tikzpicture}
\begin{axis}[
    width=0.62\textwidth,
    height=0.34\textwidth,
    xlabel={Dimension $d$},
    ylabel={Exponent $E$},
    domain=1:50,
    grid=major,
    legend pos=north east,
    title={Curse of dimensionality: $E$ decreases with dimension $d$},
    xmajorgrids=true,
    ymajorgrids=true,
    tick label style={font=\footnotesize},
    label style={font=\footnotesize},
    title style={font=\small},
    legend style={font=\footnotesize},
]
\addplot[blue, thick] {3/(3+0.5*x)};
\addplot[red, thick, dashed] {3/(3+1.0*x)};
\legend{$\beta=1$ ($b=0.5$), $\beta=1.5$ ($b=1.0$)}
\end{axis}
\end{tikzpicture}
\caption{In the isotropic case, for $\alpha=2$, the exponent $E = 3/[3 + d(\beta-1/2)]$ decays approximately as $1/d$.}
\label{fig:dimcurse}
\end{figure}

\vspace{-0.4cm}

In the \emph{isotropic case} where $\alpha_j \equiv \alpha$ and $\beta_j \equiv \beta > 1/2$ for all $j$, the exponent becomes
\[
E = \frac{\alpha+1}{\alpha+1 + d(\beta-\frac12)}=\frac{1}{1 + d\frac{(\beta-\frac12)}{\alpha+1}}\sim \frac{C}{d} \quad \text{as } d\to\infty,
\]
with $C \coloneqq (\alpha+1)/(\beta-1/2)$. This polynomial decay explains the \emph{curse of dimensionality}: the convergence rate becomes very slow in high dimensions. Figure~\ref{fig:dimcurse} illustrates this decay for $\alpha=2$ and two $\beta$ values.

To conclude, the harmonic-mean structure of $E$ reveals that both smoothness
and noise characteristics must be favorable in every coordinate to achieve fast 
convergence rates. The positive-part operation $(\beta_j-1/2)_+$ provides 
a mechanism for handling non-identifiable directions. The exponent 
suffers from both a bottleneck effect (the worst direction governs \(H_{\textnormal{CF}}\)) and 
a curse of dimensionality ($E \sim C/d$). Fast convergence therefore requires favorable smoothness-to-noise ratios 
$(\alpha_j+1)/(\beta_j-1/2)$ in each coordinate. Table~\ref{tab:effects} 
summarizes how different conditions affect $E$ and the resulting convergence rate.

\vspace{-0.35cm}

\begin{table}[H]
\caption{Effects on convergence exponent $E = (1 + d/H_{\textnormal{CF}})^{-1}$.}
\label{tab:effects}
\centering
\footnotesize
\setlength{\tabcolsep}{4pt}
\renewcommand{\arraystretch}{1.13}
\begin{tabular}{@{}lll@{}}
\toprule
\textbf{Condition} & \textbf{Effect on $E$} & \textbf{Rate consequence} \\
\midrule
Large \(H_{\textnormal{CF}}\) & Increases & Faster convergence \\
One small $a_j/b_j$ & Drastically decreases & Bottleneck effect \\
Increasing dimension \(d\) & Decreases & Curse of dimensionality \\
$\beta_j \leq \frac12$ for some \(j\) & Term excluded & Rate independent of that direction \\
All \(\alpha_j\) large & Increases \(H_{\textnormal{CF}}\) & Faster convergence \\
All \(\beta_j\) near \(\frac12\) & Increases \(H_{\textnormal{CF}}\) & Faster convergence \\
\bottomrule
\end{tabular}
\end{table}

\subsection{Comparison with density estimation rates}\label{rem:rate-comparison}
The convergence rate obtained for the cumulative distribution function estimator allows a transparent comparison with the minimax rate for anisotropic density deconvolution when both are expressed in terms of harmonic means. For density estimation under ordinary smooth noise, \citet{ComteLacour2013}, p.~575, obtained the rate $\psi_n^{\mathrm{dens}} \coloneqq n^{-E_{\mathrm{dens}}/2}$, where
\[
E_{\mathrm{dens}} \coloneqq \left[1+\sum_{j=1}^d\frac{\beta_j+ \frac{1}{2}}{\alpha_j}\right]^{-1}=\left[1 + \frac{d}{H_{\mathrm{dens}}}\right]^{-1}
\quad
\text{and}
\quad
H_{\mathrm{dens}} \coloneqq \operatorname{HM}\!\left\{\frac{\alpha_j}{\beta_j+\frac{1}{2}}\right\}_{j=1}^d 
      = \frac{d}{\sum_{j=1}^d \frac{\beta_j+\frac{1}{2}}{\alpha_j}},
\]
denotes the harmonic mean of the ratios $\alpha_j/(\beta_j+{1}/{2})$ for $j=1,\,\ldots,\,d$.

For the cumulative distribution function we have $
\psi_n^{\boldsymbol\alpha,\boldsymbol\beta} = n^{- E/2}$, with $E =(1 + {d}/ H_{\textnormal{CF}})^{-1}$,
where \(H_{\mathrm{CF}}\) is defined in \eqref{eq:H_b}.
If \(\alpha_j>0\) and \(\beta_j>{1}/{2}\) for all $j$, we have
\((\beta_j-1/2)/(\alpha_j+1)<(\beta_j+1/2)/\alpha_j\), $j=1,\,\ldots,\,d$.
Consequently, \(H_{\textnormal{CF}} > H_{\mathrm{dens}}\) and therefore \(E > E_{\mathrm{dens}}\). 
Since both rates are of the form \(n^{-(\cdot)/2}\), it follows that
$
\psi_n^{\boldsymbol\alpha,\boldsymbol\beta}=o(\psi_n^{\mathrm{dens}})$
as $n\to\infty$,
that is, the rate for the cumulative distribution function estimator is faster than the corresponding minimax rate for anisotropic density estimation in ordinary smooth deconvolution problems, as is typical when estimating integrated functionals. 
The improvement stems from two combined effects:\\[-15pt]
\begin{itemize}
    \item \emph{Smoothing by integration:} estimating the cumulative distribution function 
    \(F_X\) rather than the density \(f_X\) increases the regularity 
    of the target object, reflected in the shift from 
    \(\alpha_j\) to \(\alpha_j+1\) in each coordinate direction;\\[-7pt]
    \item \emph{Reduced sensitivity to noise:} estimating \(F_X\) rather than \(f_X\) reduces the effective noise exponent from $\beta_j$ to $\beta_j-1/2$ in each coordinate direction for which $\beta_j>1/2$. This reduction follows from the Fourier relationship between a cumulative distribution function and its density: the characteristic function of the cumulative distribution function equals that of the density divided by $\mathrm{i} t_j$ (up to a constant). Thus, its Fourier transform decays as $|t_j|^{-1}$ relative to that of the density, cancelling one power of the high-frequency amplification factor per coordinate. Consequently, the variance contribution for each coordinate involves $h_j^{-(\beta_j-1/2)_+}$ instead of $h_j^{-(\beta_j+1/2)}$. This shift modifies the overall rate exponent, yielding a strictly faster rate than the density version. Taking the product over all $d$ coordinates gives a global factor $\prod_{j=1}^d (\mathrm{i} t_j)^{-1}$. Hence, the full variance of the cumulative distribution function estimator is proportional to $n^{-1/2}\prod_{j=1}^d h_j^{-(\beta_j-1/2)_+}$, whereas for density estimation the analogous term is $n^{-1/2}\prod_{j=1}^d h_j^{-(\beta_j+1/2)}$, polynomially larger by a factor $\prod_{j=1}^d h_j^{-1}$. This is captured by the shift from \(\beta_j+{1}/{2}\) to \(\beta_j-{1}/{2}\) in the numerator.
\end{itemize}
Therefore, cumulative distribution function deconvolution achieves uniformly faster convergence 
than density deconvolution in anisotropic ordinary smooth models when \(\alpha_j>0\) and \(\beta_j>{1}/{2}\) for all $j$. If some $\alpha_j=0$ or $\beta_j\leq1/2$, the comparison may differ.


\subsection{Example}
In this section, we consider a family of distributions for the error density used by \citet{Caillerie2011}, pp.~1407--1408, in the context of $2$-Wasserstein ($W_2$) multivariate deconvolution. This family provides a tractable framework for analysing how the smoothness of an error distribution affects convergence rates.
For a shape parameter $\gamma \in (0,\,2]$, consider the density defined by
\(
f_{\gamma}(u) = \gamma/[2\,\Gamma(1/\gamma)]\,\mathrm{e}^{-|u|^{\gamma}}\), $u\in\mathbb{R}$.
This density is a symmetric exponential power density, which includes the Laplace ($\gamma=1$) and Gaussian ($\gamma=2$) cases. Its Fourier transform $\mathcal{F}[f_{\gamma}]$ has closed forms in these two cases. For $\gamma=1$ we obtain the Laplace density \(f_{1}(u)=\mathrm{e}^{-|u|}/2\) with Fourier transform \(\mathcal{F}[f_{1}](t)=1/(1+t^{2})\), while for $\gamma=2$ we obtain the Gaussian density \(f_{2}(u)=\mathrm{e}^{-u^{2}}/\sqrt{\pi}\) with Fourier transform \(\mathcal{F}[f_{2}](t)=\mathrm{e}^{-t^{2}/4}\), for \(u,\,t\in\mathbb{R}\). For $\gamma>2$, $f_{\gamma}$ does not have a polynomially decaying Fourier transform, so the “ordinary smooth” assumption restricts $\gamma\in(0,\,2)$. For $\gamma=2$ the decay is exponential (super-smooth). The function $\mathcal{F}[f_{\gamma}]$ is positive, symmetric and infinitely differentiable because
$\int_{\mathbb{R}} |x|^{j} \mathrm{e}^{-|x|^{\gamma}}\,\mathrm{d}x<\infty$ for every $j\in\mathbb{N}_{0}$. Its derivatives are given by
\((\mathcal{F}[f_{\gamma}])^{(j)}(t) = \gamma /[2\Gamma(1/\gamma)] \int_{\mathbb{R}} \mathrm{e}^{\mathrm{i}tx}\,(\mathrm{i}x)^{j}\,\mathrm{e}^{-|x|^{\gamma}}\,\mathrm{d}x\), $t\in\mathbb{R}$.
By the asymptotic expansion of \citet{Bergstrom1952}, for every $\gamma\in(0,\,2)$ there is a constant $C(\gamma)>0$ such that
$\mathcal{F}[f_{\gamma}](t)=C(\gamma)\,t^{-(\gamma+1)}+O(t^{-(2\gamma+1)})$ as $t\to\infty$.
Differentiating term-by-term yields, for any $j\in\mathbb{N}$,
\[
(\mathcal{F}[f_{\gamma}])^{(j)}(t)=(-1)^{j}\,C(\gamma)\,\frac{\Gamma(\gamma+1+j)}{\Gamma(\gamma+1)}\,t^{-(\gamma+1+j)}+O(t^{-(2\gamma+1+j)})\quad\text{as }t\to\infty.
\]

We now consider a setting where each component $\varepsilon_j$ of the error vector $\boldsymbol{\varepsilon}=(\varepsilon_1,\,\dots,\,\varepsilon_d)^\top$ is distributed according to the $k_j$-fold convolution of $f_{\gamma_j}$. In other words, the characteristic function of $\varepsilon_j$ is
$\varphi_{\varepsilon_j}(t)=(\mathcal{F}[f_{\gamma_j}](t))^{k_j}$, $t\in\mathbb{R}$.
Using the asymptotic expansion above, valid for $\gamma_j\in(0,\,2)$, we obtain the decay rate
$\varphi_{\varepsilon_j}(t)=(C(\gamma_j))^{k_j}\,t^{-(\gamma_j+1)k_j}+O(t^{-(\gamma_j+1)k_j-\gamma_j})$ as $t\to\infty$. This decay defines the smoothness parameter of the error distribution as
$\beta_j \coloneqq (\gamma_j+1)k_j$, $j=1,\,\dots,\,d$. Since we require ordinary smooth noise, we take \(\gamma_j\in(0,\,2)\) and \(k_j\ge 1\) so that \(\beta_j>0\) with no further upper bound (the exponent can be arbitrarily large, \emph{e.g.}, $\gamma_j=0.5$, $k_j=4$ gives $\beta_j=(0.5+1)4=6$).

If $f_X$ belongs to the anisotropic Nikol'skii class with $\alpha_j=0$ for all $j$ (by convention, this means that $f_X\in L^1(\mathbb R)$ with no further differentiability; the class can be extended to include $\alpha_j=0$ as the $L^1$-space), then the convergence rate under the present noise model becomes
\(
n^{-1/\{2\sum_{j=1}^d[(\gamma_j+1)k_j]-(d-2)\}}
\).
Define the \emph{total smoothness index} $I(d)\coloneqq\sum_{j=1}^d\beta_j=\sum_{j=1}^d(\gamma_j+1)k_j$.
Assuming $k_j\ge 1$ for all $j$, so that each error component is at least once convolved, the rate simplifies to $n^{-{1}/{[2I(d)-(d-2)]}}$.


\vspace*{0.2cm}

The following table summarises the convergence rates for several configurations, illustrating the interplay between dimension $d$, convolution orders $k_j$ and shape parameters $\gamma_j$, with all $\alpha_j=0$.

\vspace*{-0.15cm}

\begin{table}[H]
\centering
\scriptsize
\setlength{\tabcolsep}{4pt}
\renewcommand{\arraystretch}{1.0}
\caption{Convergence rates for different noise configurations under the convolution model.}
\label{tab:rates}
\begin{tabular}{@{}lcc@{}}
\toprule
Case & $I(d)$ & Rate \\
\midrule
Univariate ($d=1$) & $(\gamma_1+1)k_1$ & $n^{-1/[2(\gamma_1+1)k_1+1]}$ \\
Isotropic & $d(\gamma+1)$ & $n^{-1/[2d(\gamma+1)-(d-2)]}$ \\
Single component & $d(\gamma+1)$ & $n^{-1/[2d(\gamma+1)+1]}$ \\
General anisotropic & $\sum_j(\gamma_j+1)k_j$ & $n^{-1/\{2\sum_j[(\gamma_j+1)k_j]-(d-2)\}}$ \\
\bottomrule
\end{tabular}
\end{table}

\vspace*{-0.5cm}

Table \ref{tab:rates} reveals the \emph{curse of dimensionality}: the exponent of $n$ deteriorates as $d$ increases unless the total smoothness $I(d)$ grows sufficiently fast. The rate is not determined solely by the total smoothness \(I(d)\); it also depends on the distribution of the noise across coordinates. For instance, while the isotropic case and the single-component case both have \(I(d)=d(\gamma+1)\), they yield different rates. The isotropic case gives \(n^{-1/[2d(\gamma+1)-(d-2)]}\), whereas the single-component case gives \(n^{-1/[2d(\gamma+1)+1]}\) because the effective number of ``active'' coordinates (those with \(\beta_j>1/2\)) enters the denominator. Thus, concentrating noise smoothness in a single coordinate is less favorable than spreading it isotropically across all coordinates, since in the latter case the denominator contains $d$ instead of a constant.

\section{Adaptive distribution function deconvolution via oracle bounds}\label{sec:adaptivity}
In this section, we develop a fully data-driven procedure for recovering \(F_X\) that automatically attains the optimal bias-variance trade-off without any prior knowledge of the mixing measure \(\mu_X\) or the smoothness regularity of \(F_X\). Since this regularity is unknown, the optimal bandwidth vector \(\mathbf{h}_{\text{opt}}\) from Theorem~\ref{thm:ubrate}, which depends on the unknown exponents \(\alpha_j\), cannot be computed in practice.
Our goal is therefore to construct a data-driven bandwidth selector \(\hat{\mathbf{h}}\) over a finite collection \(\mathcal H_{M_n}\) such that the resulting kernel-based estimator \(\widehat{F}_{n,\hat{\mathbf{h}}}\) performs as well as an oracle that knows the best bandwidth choice in \(\mathcal H_{M_n}\), up to logarithmic factors. Specifically, we aim to select \(\hat{\mathbf{h}} \in \mathcal H_{M_n}\) so that the expected $L^2$-error of \(\widehat{F}_{n,\hat{\mathbf{h}}}\) in estimating \(F_X\) is close to the oracle risk
$
\inf_{\mathbf{h} \in \mathcal H_{M_n}} \mathsf{E}_{(\mu_X\ast \mu_\varepsilon)^{\otimes n}}[\|\widehat{F}_{n,\mathbf{h}}-F_X\|_{L^2(\mathbb R^d)}]$.
An estimator satisfying an inequality of the form
\begin{equation*}\label{eq:oracle_ineq}
\mathsf{E}_{(\mu_X\ast \mu_\varepsilon)^{\otimes n}}[\|\widehat{F}_{n,\hat{\mathbf{h}}}-F_X\|_{L^2(\mathbb R^d)}] \leq C \inf_{\mathbf{h} \in \mathcal H_{M_n}} \mathsf{E}_{(\mu_X\ast \mu_\varepsilon)^{\otimes n}}\bigl[\|\widehat{F}_{n,\mathbf{h}}-F_X\|_{L^2(\mathbb R^d)}\bigr] + R_n
\end{equation*}
for some constant \(C>0\) and a remainder term 
$R_n = o(\inf_{\mathbf{h}\in\mathcal H_{M_n}} \mathsf{E}_{(\mu_X\ast \mu_\varepsilon)^{\otimes n}}[\|\widehat{F}_{n,\mathbf{h}}-F_X\|_{L^2(\mathbb R^d)}])$,
is said to satisfy an \emph{oracle inequality}. When such an inequality holds uniformly over a collection of function classes (\emph{e.g.}, anisotropic H\"older, Nikol'skii, or Sobolev balls), the procedure is called \emph{adaptive}, as its convergence rate automatically adjusts to the unknown regularity of \(F_X\).

Our approach is analogous to the model-selection methodology developed by \cite{GoldenshlugerLepski2011}, later adapted by \cite{Chagny2013} to conditional density estimation. We extend this strategy to the present multivariate cumulative distribution function deconvolution setting and prove an oracle inequality for the resulting estimator.
To describe the procedure, we introduce some notation. 
For a bandwidth vector \(\mathbf{h}=(h_1,\,\ldots,\,h_d)^\top \in \mathbb{R}_+^d\), recall the definition of a rescaled product kernel $K_{\mathbf{h}}(\mathbf{x}) \coloneqq \prod_{j=1}^d h_j^{-1} K_j({x_j}/{h_j})$, $\mathbf{x}=(x_1,\,\ldots,\,x_d)^\top \in \mathbb{R}^d$, cf. \eqref{eq:productkern}. 
The cumulative distribution function estimator is defined as the integral of the corresponding kernel density estimator
\[
\widehat{F}_{n,\mathbf{h}}(\mathbf{x}) \coloneqq
\int_{-\infty}^{x_1} \cdots \int_{-\infty}^{x_d} 
\mathcal{F}^{-1}\!\left[\frac{\varphi_K(\mathbf{h} \odot \mathbf{t}
) \,\varphi_n(\mathbf{t})}{\varphi_\varepsilon(\mathbf{t})}\right](\mathbf{u}) \; \mathrm{d}\mathbf{u}, \quad \mathbf{x} \in \mathbb{R}^d,
\]
where \(\varphi_K\), \(\varphi_\varepsilon\), and \(\varphi_n\) denote the characteristic functions of the kernel, the error distribution, and the empirical measure, respectively; the latter is defined as
$\varphi_n(\mathbf{t}) \coloneqq {n}^{-1} \sum_{k=1}^{n} \mathrm{e}^{\mathrm{i} \langle \mathbf{t},\, Y_k \rangle}$, $\mathbf{t}\in\mathbb R^d$.
Also, \(\mathbf{h}\odot\mathbf{t} \coloneqq (h_1 t_1,\, \ldots,\, h_d t_d)^\top\) denotes the componentwise (Hadamard) product, which corresponds to the action of the diagonal bandwidth matrix \(\mathbf{H}=\operatorname{diag}(h_1,\,\ldots,\,h_d)\) on the vector \(\mathbf{t}\), \emph{i.e.}, \(\mathbf{H}\mathbf{t} = \mathbf{h}\odot\mathbf{t}\).
The bandwidth selector \(\hat{\mathbf{h}}\) is chosen from a finite collection \(\mathcal H_{M_n}\) (defined below in \eqref{eq:setH_M}):
\begin{equation}\label{eq:hopt}
\hat{\mathbf{h}} \coloneqq \argmin_{\mathbf{h} \in \mathcal H_{M_n}} \bigl\{ D(\mathbf{h}) + \tilde V(\mathbf{h}) \bigr\},
\end{equation}
where the term
\begin{equation}\label{eq:Ah}
D(\mathbf{h}) \coloneqq \sup_{\mathbf{h}' \in \mathcal H_{M_n}} \Bigl(\|\widehat{F}_{n,\mathbf{h}'} - (K_{\mathbf{h}'}\ast \widehat{F}_{n,\mathbf{h}})\|_{L^2(\mathbb R^d)} - \tilde V(\mathbf{h}')\Bigr)_+
\end{equation}
serves as an empirical proxy of the bias contribution, and the squared penalty term is given by
\begin{equation}\label{eq:proxyvar}
\tilde V^2(\mathbf{h}) \coloneqq C_{d,\boldsymbol{\beta}} \mathcal V^2(\mathbf{h}),
\end{equation}
with $\mathcal V(\mathbf{h})$ defined in \eqref{eq:varprox},
and $C_{d,\boldsymbol{\beta}}>0$ a constant depending only on \(d\) and the exponents \(\beta_j\), see \eqref{eq:defvtilde} in \hyperref[sec:S4]{Section~S4 of the Supplement} for the exact expression. 

To obtain sharp control on the expected value of \(D(\mathbf{h})\), we first establish in Proposition \ref{prop:oracle} a concentration inequality for the empirical process. Applying this result then yields a non-asymptotic upper bound for \(\mathsf{E}_{(\mu_X\ast \mu_\varepsilon)^{\otimes n}}[D(\mathbf{h})]\). That proposition provides a \emph{fixed-bandwidth} concentration inequality for \(\widehat F_{n,\mathbf{h}}\) and forms the core technical ingredient from which the adaptive oracle inequality of Theorem~\ref{thm:oracle_ineq} is derived. It controls the stochastic fluctuations of the \(L^2\)-error uniformly over \(\mathbf{h}\) in terms of the standard deviation proxy \(\mathcal V(\mathbf{h})\), enabling a data-driven bandwidth selector to mimic the oracle performance.

For a given sequence of positive integers $M_n\to\infty$ such that $M_n \le n$, the adaptive procedure selects the bandwidth vector from the anisotropic grid
\begin{equation}\label{eq:setH_M}
\mathcal{H}_{M_n}\coloneqq
\Bigl\{
\mathbf{h}^{(\mathbf{k})}\in\mathbb{R}_+^d:\; 
h_j^{(\mathbf{k})}=k_j^{-1},\; j=1,\,\dots,\,d,\; 
\mathbf{k}\in\{1,\,\dots,\,M_n\}^d,\; 
V(\mathbf{h}^{(\mathbf{k})})\le 1
\Bigr\}.
\end{equation}
We give conditions on $(M_n)_{n\in\mathbb N}$ ensuring the grid is rich enough to approximate the optimal bandwidth vector $\mathbf{h}_{\mathrm{opt}}$. The optimal bandwidth coordinates satisfy $h_{\mathrm{opt},j}\asymp n^{-\zeta_j}$, where $\zeta_j\coloneqq E/[2(\alpha_j+1)]\in(0,\,1/2]$. To approximate $\mathbf{h}_{\mathrm{opt}}$ by a grid point, it suffices to choose, for each coordinate $j$, an integer $k_j \asymp n^{\zeta_j}$. This yields $h_j^{(k_j)} = k_j^{-1} \asymp n^{-\zeta_j} \asymp h_{\mathrm{opt},j}$ for every coordinate $j$ simultaneously. Since $\zeta_j \le 1/2$ for all $j$, the approximation condition holds if $\liminf_{n\to\infty} (M_n/n^{1/2})>0$. Since $\mathbf{h}_{\mathrm{opt}}$ satisfies $V(\mathbf{h}_{\mathrm{opt}}) \le 1$ up to constants, the approximating grid point $\mathbf{h}^{(\mathbf{k})}$ can be chosen to satisfy $V(\mathbf{h}^{(\mathbf{k})}) \le 1$ by adjusting the proportionality constant, absorbed into the universal constants of the oracle inequality.
The cardinality of the grid must also be controlled for the exponential sums in the proof of Theorem~\ref{thm:oracle_ineq}. Since $\operatorname{card}(\mathcal{H}_{M_n}) = M_n^d$, we require $\log M_n = O(\log n)$. The simplest choice satisfying both conditions is $M_n = \lfloor n^{1/2} \rfloor$.\footnote{Note that \(M_n\) as either \(\lfloor n^{1/2} \rfloor\) or \(\lceil n^{1/2} \rceil\) differs by at most \(1\), asymptotically negligible. Both choices yield \(M_n \asymp n^{1/2}\), all that is required for the approximation condition \(\liminf_{n\to\infty} (M_n/n^{1/2})>0\) and the summability condition \(\log M_n = O(\log n)\).} Indeed, for each coordinate, setting $k_j = \lfloor n^{\zeta_j}\rfloor$ gives $k_j \le n^{1/2} \asymp M_n$.
A rigorous justification that the infimum over the discrete grid $\mathcal{H}_{M_n}$ is equivalent, up to universal constants, to the infimum over $(0,\,1)^d$ is deferred to the proof of Theorem~\ref{thm:oracle_ineq}, where the grid approximation is shown to lose only a constant factor, absorbed into the universal constant of the oracle inequality.

Recalling the definition of $\hat{\mathbf{h}}$ in \eqref{eq:hopt},
the kernel-based estimator is defined as
\begin{equation}\label{eq:defKestim}
\check{F}_n \coloneqq \widehat{F}_{n,\hat{\mathbf{h}}}.
\end{equation}
When combined with the deterministic bias bound (Proposition~\ref{prop:bias}) and the definition of $\hat{\mathbf{h}}$ as the minimizer of the empirical criterion $D(\mathbf{h})+\tilde V(\mathbf{h})$, the concentration inequality of Proposition~\ref{prop:oracle} implies the oracle inequality
\(
\|\check{F}_n - F_X\|_{L^2(\mathbb R^d)} 
\;\lesssim\; 
\inf_{\mathbf{h}\in\mathcal{H}_{M_n}}
\{ \|B_{\mathbf{h}}\|_{L^2(\mathbb R^d)} 
+ \mathcal V(\mathbf{h})\}
+ {n}^{-1/2}.
\)
Thus, $\check{F}_n$ achieves the optimal bias-variance trade-off adaptively, without prior knowledge of the smoothness parameters. Proposition~\ref{prop:oracle} is a \emph{fixed-bandwidth} deviation bound, while the term ``oracle inequality'' is reserved for the adaptive result of Theorem~\ref{thm:oracle_ineq} below, which compares the data-driven estimator to the oracle infimum over $\mathbf{h}$. Thus, the adaptive kernel-based estimator \(\check{F}_n\) achieves the oracle rate. Then, 
\begin{equation}\label{eq:reductionstep}
\|\widehat F_{n,\hat{\mathbf{h}}}^\ast - F_X\|_{L^2(\mathbb R^d)} \le 2\|\check{F}_n - F_X\|_{L^2(\mathbb R^d)} + \varrho_n,\end{equation}
with $\varrho_n=O({n}^{-1/2})$, the rate is transferred to the minimum-\(L^2\)-distance estimator \(\widehat F_{n,\hat{\mathbf{h}}}^\ast\).

\begin{theorem}[Oracle inequality]\label{thm:oracle_ineq}
Assume that the noise conditions  \ref{eq:E1indep}, \ref{it:E21} and \ref{eq:E3} hold with \(\beta_j>0\) for all \(j=1,\,\ldots,\,d\). Consider the kernel-based estimator \(\widehat F_{n,\mathbf{h}}\) constructed with a product kernel \(K(\mathbf{x})=\prod_{j=1}^d K_j(x_j)\), \(\mathbf{x}\in\mathbb R^d\), where each \(K_j\) satisfies the conditions (i)--(iii) listed in Proposition~\ref{prop:variance}.
Then, for any cumulative distribution function \(F_X\) for which the bias \(B_{\mathbf{h}} \coloneqq F_X - (K_{\mathbf{h}}\ast F_X)\) belongs to \(L^2(\mathbb R^d)\) for all \(\mathbf{h}\in\mathcal H_{M_n}\), where \(\mathcal H_{M_n}\) is the bandwidth grid defined in \eqref{eq:setH_M} with $M_n=\lfloor n^{1/2} \rfloor$, the minimum-\(L^2\)-distance estimator \(\widehat F_{n,\hat{\mathbf{h}}}^\ast\) (defined in \eqref{eq:defminimizer} but with respect to the \(L^2\)-distance instead of the \(L^1\)-distance) satisfies, with constants \(D_1,\,D_2>0\) depending only on the kernels, the noise distribution, and the dimension \(d\),
\begin{equation}\label{eq:discretegrid}
\mathsf E_{(\mu_X\ast\mu_\varepsilon)^{\otimes n}}\Bigl[\|\widehat F_{n,\hat{\mathbf{h}}}^\ast-F_X\|_{L^2(\mathbb R^d)}\Bigr]
\leq D_1 \inf_{\mathbf{h}\in\mathcal H_{M_n}} \bigl\{\|B_{\mathbf{h}}\|_{L^2(\mathbb R^d)} + \mathcal V(\mathbf{h})\bigr\} + \frac{D_2}{\sqrt{n}},
\end{equation}
where \(\mathcal V^2(\mathbf{h})\) is the variance proxy (cf. \eqref{eq:varprox}), and the approximation error \(\varrho_n\) arising from the projection of the kernel-based estimator onto the class of all cumulative distribution functions $\mathfrak F$ on $\mathbb R^d$ satisfies \(\varrho_n = O({n}^{-1/2})\), cf. \eqref{eq:reductionstep}; this term has been absorbed into the constant \(D_2/\sqrt{n}\).

Since the grid $\mathcal H_{M_n}$ is chosen so that the discrete infimum is equivalent, up to constants, to the continuous infimum over $(0,\,1)^d$ (see the grid approximation argument in the proof), inequality \eqref{eq:discretegrid} implies the oracle inequality
\(
\mathsf E_{(\mu_X\ast\mu_\varepsilon)^{\otimes n}}[\|\widehat F_{n,\hat{\mathbf{h}}}^\ast-F_X\|_{L^2(\mathbb R^d)}]
\lesssim
\mathcal R_n
\), where 
\begin{equation}\label{eq:defRn}
\mathcal R_n \coloneqq \inf_{\mathbf{h} \in (0,1)^d} \bigl\{ \|B_{\mathbf{h}}\|_{L^2(\mathbb R^d)} + \mathcal V(\mathbf{h}) \bigr\} + \frac{1}{\sqrt{n}}.
\end{equation}
\end{theorem}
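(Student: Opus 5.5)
We will follow the Goldenshluger--Lepski scheme as adapted by \citet{ComteLacour2013} and \citet{Chagny2013}, working throughout in $L^2(\mathbb R^d)$. We first reduce to the kernel-based estimator. By \eqref{eq:reductionstep}, $\|\widehat F^\ast_{n,\hat{\mathbf h}}-F_X\|_{L^2}\le 2\|\check F_n-F_X\|_{L^2}+\varrho_n$ with $\varrho_n=O(n^{-1/2})$, so it suffices to bound $\mathsf E\|\check F_n-F_X\|_{L^2}$.

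Fix an arbitrary $\mathbf h\in\mathcal H_{M_n}$. Using the commutativity $K_{\mathbf h}\ast\widehat F_{n,\hat{\mathbf h}}=K_{\hat{\mathbf h}}\ast\widehat F_{n,\mathbf h}$ (in Fourier terms both equal $\varphi_K(\mathbf h\odot\mathbf t)\varphi_K(\hat{\mathbf h}\odot\mathbf t)\varphi_n/\varphi_\varepsilon$ divided by $\prod_j(-\mathrm i t_j)$), we decompose
\[
\|\widehat F_{n,\hat{\mathbf h}}-F_X\|\le \|\widehat F_{n,\hat{\mathbf h}}-K_{\hat{\mathbf h}}\ast\widehat F_{n,\mathbf h}\|+\|K_{\mathbf h}\ast\widehat F_{n,\hat{\mathbf h}}-\widehat F_{n,\mathbf h}\|+\|\widehat F_{n,\mathbf h}-F_X\|.
\]
By the definition \eqref{eq:Ah}, the first two terms are at most $D(\mathbf h)+\tilde V(\hat{\mathbf h})$ and $D(\hat{\mathbf h})+\tilde V(\mathbf h)$, respectively. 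The minimality in \eqref{eq:hopt} then gives
\[
\|\check F_n-F_X\|\le 2\{D(\mathbf h)+\tilde V(\mathbf h)\}+\|\widehat F_{n,\mathbf h}-F_X\|.
\]
The last term is bounded in expectation by $\|B_{\mathbf h}\|+\mathcal V(\mathbf h)$, via Proposition~\ref{prop:variance} with $q=2$.

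The key step is bounding $\mathsf E[D(\mathbf h)]$. Writing $\widehat F_{n,\mathbf h'}-K_{\mathbf h'}\ast\widehat F_{n,\mathbf h}$ as a centred stochastic part plus the deterministic part $K_{\mathbf h'}\ast F_X-K_{\mathbf h'}\ast K_{\mathbf h}\ast F_X=K_{\mathbf h'}\ast B_{\mathbf h}$, we obtain
\[
D(\mathbf h)\le \|K\|_{L^1}\|B_{\mathbf h}\|+\sup_{\mathbf h'}\bigl(\|Z_{\mathbf h'}\|-\tilde V(\mathbf h')/2\bigr)_+ +\|K\|_{L^1}\|Z_{\mathbf h}\|,
\]
where $Z_{\mathbf h'}=\widehat F_{n,\mathbf h'}-K_{\mathbf h'}\ast F_X$. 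The centred part of $K_{\mathbf h'}\ast\widehat F_{n,\mathbf h}$ is bounded the same way, since $\|K_{\mathbf h'}\|_{L^1}=\|K\|_{L^1}$.

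For the supremum we use the concentration result Proposition~\ref{prop:oracle}. We write $\|Z_{\mathbf h'}\|=\sup_{g\in\mathcal B}\nu_n(g)$ over a countable dense subset of the $L^2$ unit ball, which turns it into the supremum of an empirical process. Talagrand's inequality (Klein--Rio version) then requires three quantities. The expectation bound $H^2\lesssim\mathcal V^2(\mathbf h')$ follows from Plancherel and the bounds \ref{it:E21}, \ref{eq:E3} already used in Proposition~\ref{prop:variance}. We also need a variance bound $v$ and a sup-norm bound $b\lesssim$ a power of $\prod_j h_j'^{-1}$ controlled via the $L^1$-norm of the Fourier transform on $[-1/h_j',1/h_j']$. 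Provided $C_{d,\boldsymbol\beta}$ in \eqref{eq:proxyvar} is large enough, this yields
\[
\mathsf E\bigl(\|Z_{\mathbf h'}\|^2-\tilde V^2(\mathbf h')/4\bigr)_+\lesssim \frac{v}{n}\mathrm e^{-c\,nH^2/v}+\frac{b^2}{n^2}\mathrm e^{-c\,nH/b}.
\]

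We then sum over $\mathbf h'\in\mathcal H_{M_n}$, whose cardinality is $M_n^d\le n^{d/2}$. The constraint $V(\mathbf h')\le1$ in \eqref{eq:setH_M} keeps $b/(nH)$ bounded and makes the exponential terms summable to $O(1/n)$, so the expected supremum is $O(n^{-1/2})$. The hardest part of the argument is this step: checking that $v$ and $b$ are of the right order uniformly in the anisotropic bandwidth, with care needed when $\beta_j\le 1/2$ or $\beta_j=1/2$ (the log factor). Collecting the bounds, we get $\mathsf E\|\check F_n-F_X\|\lesssim \|B_{\mathbf h}\|+\mathcal V(\mathbf h)+n^{-1/2}$, and we then take the infimum over $\mathbf h$.

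Finally, to pass from the grid to $(0,1)^d$, take any $\mathbf h\in(0,1)^d$ and set $k_j=\lceil 1/h_j\rceil$ (capped at $M_n$). Then $k_j^{-1}\le h_j$, so the bias bound of Proposition~\ref{prop:bias}, monotone in $h_j$ up to constants, does not increase. We also have $k_j^{-1}\ge h_j/2$, so $\mathcal V$ changes by at most a constant factor. The bandwidths excluded by capping or by $V\le1$ already give a value of the criterion exceeding $n^{-1/2}$ or a constant, and are therefore dominated. This yields \eqref{eq:defRn}.
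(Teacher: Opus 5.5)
Your architecture matches the paper's: reduction via \eqref{eq:reductionstep}, the commutativity decomposition giving $2\{D(\mathbf h)+\tilde V(\mathbf h)\}$, a stochastic/bias split of $D(\mathbf h)$, Talagrand plus a union bound over $\mathcal H_{M_n}$, and grid approximation. You bound the cross term by $\|K\|_{L^1}\|Z_{\mathbf h}\|$, whereas the paper uses the symmetry to bound it by $C_K\|Z_{\mathbf h'}\|$; that difference is harmless.

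\textbf{The genuine gap is the summation step.} With $\tilde V$ a fixed multiple of $\mathcal V$, the Gaussian term $\frac vn\mathrm e^{-cnH^2/v}$ is summable over $M_n^d$ grid points only if $nH^2/v$ grows with every $k_j$. This is what happens in density deconvolution, where $nH^2/v\asymp\prod_jk_j$. For the distribution function it fails in each direction with $\beta_j<1/2$.
\begin{itemize}
\item In such a direction, $\mathcal V(\mathbf h')$, and hence $H$, does not depend on $h_j'$.
\item Meanwhile $v\ge\sup_g\operatorname{Var}[\psi_g(\mathbf Y_1)]$ stays bounded away from zero.
\item So each such coordinate contributes a factor $M_n\asymp n^{1/2}$ to the sum. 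Enlarging $C_{d,\boldsymbol\beta}$ only buys a fixed factor $\mathrm e^{-cC_{d,\boldsymbol\beta}}$.
\end{itemize}
Already for $d=1$ and $\beta<1/2$, your route leaves a remainder of order $n^{-1/4}$ rather than $n^{-1/2}$. This is exactly the case you flag but do not resolve.

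The paper does not rely on any decay of this ratio. Its $\upsilon\asymp n\mathcal V^2(\mathbf h')$ makes $n\Gamma^2/\upsilon$ depend on $\mathbf h'$ only through $C_0(\mathbf h')$. It then lets the threshold factor $C_0(\mathbf h')=\sum_jC_{0j}(h_j')$ grow, with $C_{0j}(h_j')\propto\{[(\beta_j-\frac12)_++1+\delta]\log k_j\}^{1/2}$. This turns the Gaussian tail into $\prod_jk_j^{-(1+\delta)-(\beta_j-1/2)_+}$ and makes the grid sum converge in every direction. That is your missing ingredient.

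Be aware of its price: $\tilde V(\mathbf h)$ then exceeds $\mathcal V(\mathbf h)$ by a factor of order $(\sum_j\log(1/h_j))^{1/2}$, and the final bound inherits it. The paper's closing remark that $\Gamma$ is of the order of $\mathcal V(\mathbf h)$ overlooks this.

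A log-free alternative in the directions with $\beta_j<1/2$ is to skip the union bound there. Uniformly in $h_j$, $|\varphi_{K_j}(h_jt)/\varphi_{\varepsilon_j}(t)|\le\|K_j\|_{L^1}(1+|t|)^{\beta_j}/c_{0,j}$. After division by $|t|$ this bound is square-integrable at infinity, so the supremum over those $h_j'$ is dominated by a single $h_j'$-free quantity.

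\textbf{Two further problems are shared with the paper's argument.}

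First, your $b$ is infinite as set up. Since $G_{j,h_j}(s)\to\int_{\mathbb R}\tilde K_{j,h_j}=1$ as $s\to+\infty$, the Fourier multiplier of $\widehat F_{n,\mathbf h'}$ carries a factor $1/t_j$ that is neither integrable nor square-integrable at the origin. Hence $\sup_{\|g\|_{L^2}\le1}\|\psi_g\|_\infty=\prod_j\|G_{j,h_j'}(\cdot/h_j')\|_{L^2(\mathbb R)}=\infty$; the paper asserts these norms are finite. Your bound via the $L^1$-norm of the Fourier transform diverges for the same reason. In $d=1$ the cure is to split off the $h$-free indicator part $\mathbf 1_{\{Y_i\le x\}}$ first, as in \citet{Dedecker2015}. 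That requires moment conditions the theorem does not list.

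Second, and more seriously, for $d\ge2$ the centred process is not in $L^2(\mathbb R^d)$ at all. As $x_1\to+\infty$, $\widehat F_{n,\mathbf h}(\mathbf x)-(K_{\mathbf h}\ast F_X)(\mathbf x)$ converges to the nonzero stochastic error of the corresponding estimator of the distribution function of $(X_2,\dots,X_d)$. Its $L^2(\mathbb R^d)$-norm is therefore almost surely infinite. Consequently your Plancherel bound $H^2\lesssim\mathcal V^2(\mathbf h')$ fails there. So does Proposition~\ref{prop:variance} itself, whose proof replaces $G_{j,h_j}(s)$ by $G_{j,h_j}(s)-\mathbf 1_{\{s>0\}}$ inside the variance. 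For $d\ge2$, neither your argument nor the paper's can go through without changing the loss.
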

 
While the proof of Theorem~\ref{thm:oracle_ineq}, given in \hyperref[sec:S4]{Section~S4 of the Supplement}, uses the same Goldenshluger--Lepski (G--L) type of selector $D(\mathbf{h})$ as \citet{ComteLacour2013}, it differs in three main respects. First, we estimate the cumulative distribution function rather than the density. This changes the Fourier deconvolution kernel and the bias-variance trade-off, requiring care with the logarithmic factors appearing when $\beta_j=1/2$. Second, the estimator of interest is not the kernel-based estimator $\widehat F_{n,\hat{\mathbf{h}}}$, but the minimum-$L^2$-distance estimator $\widehat F_{n,\hat{\mathbf{h}}}^\ast$. This adds an extra step to the proof, bounding the $L^2$-error of $\widehat F_{n,\hat{\mathbf{h}}}^\ast$ by that of $\widehat F_{n,\hat{\mathbf{h}}}$ via the inequality in \eqref{eq:reductionstep}.
Third, the bandwidth grid \(\mathcal H_{M_n}\) is anisotropic, in contrast to the isotropic grid in \citet{ComteLacour2013}. Consequently, the summability analysis over this grid requires carefully chosen functions \(C_{0j}(h_j')\) depending on the individual bandwidth components \(h_j'\), tailored to the anisotropic exponents, so that no artificial losses are introduced.
In practice, however, the regularity parameters $\alpha_j$ are unknown, preventing the direct use of a kernel of a prescribed finite order tailored to $\boldsymbol{\alpha}$. As discussed in Section~\ref{sec:convergence}, this difficulty can be circumvented by a superkernel, whose infinite order ensures that the bias bound holds uniformly over all regularity levels, rendering the adaptive estimator fully data-driven and free from any prior knowledge of the unknown smoothness.


We now establish that the G--L selector, designed for \(L^2\)-estimation, automatically leads to optimal rates simultaneously in both the \(L^1\)- and \(L^2\)-norms, and that this robustness extends to both approximate \(L^1\)- and \(L^2\)-projection estimators. The proof is given in \hyperref[sec:S4]{Section~S4 of the Supplement}.

\begin{theorem}[Unified oracle inequalities for \(L^1\)- and \(L^2\)-norms]\label{thm:unified_oracle}
Under the assumptions of Theorem~\ref{thm:oracle_ineq}, assume further that there is a constant \(0<C_1<\infty\) such that, for all \(\mathbf{h} \in (0,\,1)^d\), the bias comparability condition 
\begin{equation}\label{eq:l1l2bias}
\|B_{\mathbf{h}}\|_{L^1(\mathbb R^d)} \le C_1 \|B_{\mathbf{h}}\|_{L^2(\mathbb R^d)}
\end{equation}
holds. 
Let \(\check{F}_n \coloneqq \widehat{F}_{n,\hat{\mathbf{h}}}\) be the kernel-based estimator defined in \eqref{eq:defKestim} with bandwidth vector $\hat{\mathbf{h}}$ selected by the G--L method. For \(q \in \{1,\,2\}\), let
$\widehat F_{n,\hat{\mathbf{h}}}^{\ast,q}$
be a $\varrho_{n,q}$-approximate \(L^q\)-minimizer, with $\varrho_{n,q}=O({n}^{-1/2})$, defined in \eqref{eq:defminimizer} for $q=1$ and in \eqref{eq:reductionstep} for $q=2$. Then,
$\mathsf{E}_{(\mu_X\ast\mu_\varepsilon)^{\otimes n}}[\|\check{F}_n - F_X\|_{L^q(\mathbb R^d)}]
\lesssim
\mathcal R_n$ and
$\mathsf{E}_{(\mu_X\ast\mu_\varepsilon)^{\otimes n}}[\|\widehat F_{n,\hat{\mathbf{h}}}^{\ast,q} - F_X\|_{L^q(\mathbb R^d)}]
\lesssim
\mathcal R_n$ for $q= 1,\, 2$, where $\mathcal R_n$ is defined in \eqref{eq:defRn}.
\end{theorem}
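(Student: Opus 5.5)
The plan is to reduce everything to the \(L^2\)-oracle inequality of Theorem~\ref{thm:oracle_ineq} and to the intermediate deviation bounds from its proof, adding an \(L^1\)-transfer step on the bias and a separate argument for the stochastic part. For \(q=2\) there is nothing new: the bound for \(\check F_n\) is the intermediate oracle bound established before the reduction step \eqref{eq:reductionstep}, and the bound for \(\widehat F^{\ast,2}_{n,\hat{\mathbf h}}\) is Theorem~\ref{thm:oracle_ineq} itself. The work is therefore the case \(q=1\).

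For \(q=1\) I would fix an arbitrary \(\mathbf h\in\mathcal H_{M_n}\) and decompose
\[
\check F_n-F_X=\bigl(\widehat F_{n,\hat{\mathbf h}}-K_{\hat{\mathbf h}}\ast F_X\bigr)-B_{\hat{\mathbf h}} .
\]
The G--L comparison is the same as in the \(L^2\) proof: insert \(K_{\hat{\mathbf h}}\ast\widehat F_{n,\mathbf h}\) and \(K_{\mathbf h}\ast\widehat F_{n,\hat{\mathbf h}}\), and use \(\|K_{\mathbf h'}\ast g\|\le \|K\|_{L^1}\|g\|\) (Young's inequality, valid for both \(L^1\) and \(L^2\)). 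This bounds the error by \(D(\mathbf h)+\tilde V(\hat{\mathbf h})+D(\hat{\mathbf h})+\tilde V(\mathbf h)\le 2\{D(\mathbf h)+\tilde V(\mathbf h)\}\) plus a remainder controlled by the \(L^1\)-stochastic term at \(\mathbf h\).

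The difficulty is that \(D\) is defined in \(L^2\), so these \(L^2\) comparisons do not directly control \(L^1\)-norms. I would therefore avoid converting norms globally and instead bound the \(L^1\)-error in three pieces.

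\begin{itemize}
\item \emph{Bias.} Use \eqref{eq:l1l2bias} to get \(\|B_{\hat{\mathbf h}}\|_{L^1}\le C_1\|B_{\hat{\mathbf h}}\|_{L^2}\), and bound \(\|B_{\hat{\mathbf h}}\|_{L^2}\) via the selection rule, exactly as in the \(L^2\) proof, by \(\lesssim D(\mathbf h)+\tilde V(\mathbf h)+\|B_{\mathbf h}\|_{L^2}\).
\item \emph{Stochastic term.} Bound \(\|\widehat F_{n,\hat{\mathbf h}}-K_{\hat{\mathbf h}}\ast F_X\|_{L^1}\) by a supremum over the grid of \(\bigl(\|\widehat F_{n,\mathbf h'}-K_{\mathbf h'}\ast F_X\|_{L^1}-\tilde V(\mathbf h')\bigr)_+\), plus \(\tilde V(\hat{\mathbf h})\).
\item \emph{Selection term.} Note that \(\tilde V(\hat{\mathbf h})\le D(\mathbf h)+\tilde V(\mathbf h)\) by the definition \eqref{eq:hopt} of \(\hat{\mathbf h}\).
\end{itemize}

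For the supremum in the stochastic term I need an \(L^1\)-analogue of the concentration in Proposition~\ref{prop:oracle}. My first attempt is a weighted Cauchy--Schwarz argument,
\[
\|g\|_{L^1}\le \Bigl\|\prod_j(1+x_j^2)^{p/2}g\Bigr\|_{L^2}\,\Bigl\|\prod_j(1+x_j^2)^{-p/2}\Bigr\|_{L^2}, \qquad p>1/2,
\]
exactly as Proposition~\ref{prop:variance} handles \(q=1\) under the moment conditions \eqref{eq:assonX} and \eqref{eq:assoneps}. I would then apply Talagrand's inequality to the weighted empirical process, with the weighted variance still of order \(\mathcal V^2(\mathbf h')\). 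Summing the exponential tails over the \(M_n^d\) grid points, with \(\log M_n=O(\log n)\), gives \(\mathsf E[\sup_{\mathbf h'}(\cdots)_+]\lesssim n^{-1/2}\), after enlarging \(C_{d,\boldsymbol\beta}\) if necessary. This step is the main obstacle: \(D\) and the penalty are calibrated for \(L^2\), so the \(L^1\) fluctuations must be dominated by the same penalty, and the weighted-norm concentration is where the real work lies.

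Taking expectations and the infimum over \(\mathbf h\) then gives \(\mathsf E\|\check F_n-F_X\|_{L^1}\lesssim \inf_{\mathbf h}\{\|B_{\mathbf h}\|_{L^2}+\mathcal V(\mathbf h)\}+n^{-1/2}\). The grid approximation argument of Theorem~\ref{thm:oracle_ineq} converts this to \(\lesssim\mathcal R_n\). Finally, for the projection estimator \(\widehat F^{\ast,1}_{n,\hat{\mathbf h}}\), I would use that \(F_X\in\mathfrak F\) and the triangle inequality with \eqref{eq:defminimizer}:
\[
\|\widehat F^{\ast,1}_{n,\hat{\mathbf h}}-F_X\|_{L^1}\le 2\|\check F_n-F_X\|_{L^1}+\varrho_{n,1}.
\]
Since \(\varrho_{n,1}=O(n^{-1/2})\), this term is absorbed into \(\mathcal R_n\), which completes the case \(q=1\).
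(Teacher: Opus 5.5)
\paragraph{Overall structure.} Your architecture is the paper's. You read off $q=2$ from Theorem~\ref{thm:oracle_ineq}. For $q=1$ you split $\check F_n-F_X=(\widehat F_{n,\hat{\mathbf h}}-K_{\hat{\mathbf h}}\ast F_X)-B_{\hat{\mathbf h}}$ and handle the bias through \eqref{eq:l1l2bias} plus $L^2$-control of $B_{\hat{\mathbf h}}$. You then pass to the projection estimators via $\|\widehat F^{\ast,q}_{n,\hat{\mathbf h}}-F_X\|_{L^q}\le 2\|\check F_n-F_X\|_{L^q}+\varrho_{n,q}$.

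You depart from the paper only on the stochastic term at the data-driven bandwidth. The paper bounds $\mathsf E\|\widehat F_{n,\hat{\mathbf h}}-K_{\hat{\mathbf h}}\ast F_X\|_{L^1(\mathbb R^d)}$ by $\mathsf E[\mathcal V(\hat{\mathbf h})]$ directly from Proposition~\ref{prop:variance}, which is a fixed-bandwidth statement. You rightly insist that a deviation bound uniform over $\mathcal H_{M_n}$ is needed instead. But that uniform $L^1$ bound is the whole content of the case $q=1$. You leave it as ``the main obstacle'', and the route you sketch does not go through.

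\paragraph{Why the Talagrand step fails as stated.} Lemma~\ref{lemma:talagrand} needs a bounded envelope $\sup_f\|\psi_f\|_\infty\le M<\infty$. For your weighted process, with $w(\mathbf v)=\prod_j(1+v_j^2)^{p/2}$, the quantity $\sup_{\|f\|_{L^2}=1}|\psi^w_f(\mathbf y)|$ is the $L^2$-norm of $w(\cdot)$ times a fixed profile translated by $\mathbf y$. Before centering this is already infinite, because $G_{j,h_j}(s)\to1$ as $s\to+\infty$. Even for the localized part of the profile it grows like $w(\mathbf y)$, since $w$ is not translation invariant. So $M=\infty$.

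To make Talagrand applicable you would have to split off the indicator part and truncate the $\mathbf Y_i$. You would then need to bound the truncated remainder uniformly over up to $M_n^d\asymp n^{d/2}$ grid points. The truncation level is squeezed between the Bernstein term in \eqref{eq:Talineq} and the $O(n^{-1/2})$ target for the remainder. In this route that costs moment assumptions beyond \eqref{eq:assonX}--\eqref{eq:assoneps}. Note that even those two conditions are not hypotheses of Theorem~\ref{thm:oracle_ineq}, so you are already adding assumptions.

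Relatedly, do not absorb the moment-dependent constant into $C_{d,\boldsymbol\beta}$, since that would make the selector depend on $\mu_X$. Compare instead with $c\,\tilde V(\mathbf h')$ for an analysis-only constant $c$, and use $\tilde V(\hat{\mathbf h})\le D(\mathbf h)+\tilde V(\mathbf h)$.

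\paragraph{The obstruction for $d\ge2$.} More fundamentally, for $d\ge2$ your premise that the weighted variance is still of order $\mathcal V^2(\mathbf h')$ is false. Let a single coordinate $x_k\to+\infty$. Then $\widehat F_{n,\mathbf h'}(\mathbf x)-(K_{\mathbf h'}\ast F_X)(\mathbf x)$ tends to the $(d-1)$-dimensional marginal stochastic error, because $G_{k,h'_k}\to1$. That limit is nonzero with positive probability. Hence the stochastic term has infinite $L^1(\mathbb R^d)$- and $L^2(\mathbb R^d)$-norm with positive probability, weighted or not.

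The same obstruction applies to the fixed-bandwidth bound of Proposition~\ref{prop:variance} when $d\ge2$, so you cannot borrow it either. As it stands, your argument can at best be completed for $d=1$.
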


The uniform bias comparability condition \eqref{eq:l1l2bias} holds, for instance, if \(F_X\) has compact support or sufficiently fast decaying tails uniformly in \(\mathbf{h}\). Theorem~\ref{thm:unified_oracle} conveys a unified message for adaptive deconvolution of distribution functions. First, it shows that the G--L selector, despite being designed for squared error loss, is not confined to that metric: the very same bandwidth vector achieves optimal rates for the \(L^1\)-risk as well, under only the bias comparability assumption \eqref{eq:l1l2bias}. Second, it carries over to the approximate \(L^1\)- and \(L^2\)-minimum distance estimators: the projection step preserves the corresponding oracle inequality without loss in the rate. This unification reveals that the G--L method is robust across norms, so the practitioner need not choose between \(L^1\)- and \(L^2\)-estimation at the bandwidth selection stage. In practice, one can safely use the \(L^2\)-G--L selector, confident of good performance in both norms, and, depending on the application, project the estimator onto the space of distribution functions in either norm without compromising the rate. In all cases the same oracle rate \(\mathcal R_n\) is achieved, so the projection norm can be chosen for computational convenience rather than statistical trade-offs. This robustness, with the simple bandwidth selection rule, highlights the practical appeal of the G--L methodology for nonparametric deconvolution of cumulative distribution functions.


\section{Minimax \(1\)-Wasserstein projection rate for product densities}\label{sec:equivalence}
In this section, we establish minimax optimality of a projection estimator for the Kantorovich--Wasserstein distance on product signal probability measures with anisotropic Nikol'skii marginals. Building on the $L^1$-cumulative distribution function deconvolution theory of the preceding sections, we prove matching lower and upper bounds, showing that an approximate minimum-$W_1$-distance estimator achieves the minimax $L^1$-rate $\psi_n^{\boldsymbol{\alpha},\boldsymbol{\beta}}$ in \eqref{eq:rate}. The estimator is tailored to $W_1$, which in dimension one coincides with the \(L^1\)-distance between cumulative distribution functions, motivating the \(L^1\)-based construction. Although these distances are not equivalent in general when $d\geq2$, the minimax rate on product measures reveals that they share the same optimal rate, pointing to a deeper structural connection and shedding light on why these metrics exhibit the same asymptotic behavior in the product setting.

We begin by recalling the definition of the \(1\)-Wasserstein distance. For \(d \ge 1\), the \(1\)-Wasserstein distance between two probability measures \(\mu\) and \(\nu\) on \(\mathbb R^d\) with finite first moments is defined by
\[
\We(\mu,\,\nu)
\coloneqq \inf_{\pi \in \Pi(\mu,\nu)} \int_{\mathbb R^d \times \mathbb R^d} \|\mathbf x - \mathbf y\|_{\ell^2} \, \di\pi(\mathbf x,\,\mathbf y),
\]
where \(\Pi(\mu,\,\nu)\) denotes the set of all couplings of \(\mu\) and \(\nu\), and \(\|\cdot\|_{\ell^2}\) is the Euclidean norm on \(\mathbb R^d\), see, \emph{e.g.}, \citet{villani}, for a comprehensive treatment of Wasserstein distances. The cost function in the definition above is the Euclidean distance; however, one could equally well consider the $\ell^1$-distance as cost. For the proofs below, it is in fact more convenient to work with the \(\ell^1\)-norm \(\|\mathbf z\|_{\ell^1}\coloneqq \sum_{j=1}^d|z_j|\), defining the corresponding distance
$\Wl(\mu,\,\nu) \coloneqq \inf_{\pi \in \Pi(\mu,\nu)} \int_{\mathbb R^d \times \mathbb R^d} \|\mathbf x - \mathbf y\|_{\ell^1} \, \di\pi(\mathbf x,\,\mathbf y)$. Since $
\|\mathbf z\|_{\ell^2}\le \|\mathbf z\|_{\ell^1}\le \sqrt{d}\,\|\mathbf z\|_{\ell^2}$, for $\mathbf z \in \mathbb R^d$,
the two distances satisfy 
\(
\We(\mu,\,\nu) \le \Wl(\mu,\,\nu) \le \sqrt{d}\,\We(\mu,\,\nu)\),
so the minimax rates obtained for \(\Wl\) coincide with those for \(\We\) up to constants depending only on \(d\).
In the case \(d = 1\), both norms coincide with the absolute value, and the above definitions reduce to the well-known representation; see \citet{Dall'Aglio},
\begin{equation}\label{eq:identity}
\We(\mu,\,\nu) = \Wl(\mu,\,\nu) = \int_{\mathbb R} |F_\mu(x) - F_\nu(x)| \, \di x,
\end{equation}
where \(F_\mu\) and \(F_\nu\) denote the cumulative distribution functions of \(\mu\) and \(\nu\), respectively.
We next introduce the class of signal probability measures for which we establish minimax optimality.

\begin{definition}[Product class with finite first moment]\label{def:prodclass}
Let \(\mathcal P_{\text{prod}}\) denote the class of signal probability measures \(\mu_X\) on \(\mathbb R^d\) with product densities
\(
f_X(\mathbf x) \coloneqq \prod_{j=1}^d f_{X,j}(x_j)\), $\mathbf x = (x_1,\,\dots,\,x_d)^\top \in \mathbb R^d$,
where each marginal density \(f_{X,j}\) belongs to the one-dimensional anisotropic Nikol'skii class \(\mathcal N_1^1(\alpha_j,\, L_j)\) as in Definition~\ref{def:anisotropy} (with \(d=1\)), for some \(\alpha_j > 0\) and \(L_j > 0\), and additionally satisfies
$\int_{\mathbb R} |x| f_{X,j}(x) \, \di x < \infty$.
\end{definition}
The finite first moment condition ensures that the \(1\)-Wasserstein distance between any two probability measures in \(\mathcal P_{\text{prod}}\) is finite. This class is motivated by both practical and theoretical considerations: it arises, for instance, in applications with independent signal components, such as multi-channel signal processing and image analysis, and its product structure enables a coordinate-wise analysis via the factorization \(F_{X}(\mathbf x) = \prod_{j=1}^d F_{X,j}(x_j)\). A more detailed discussion of the role and relevance of this class is deferred to after the statement of Theorem~\ref{th:W1minimax}. We now define the estimator. For each coordinate \(j = 1,\,\dots,\,d\), let \(\widehat{F}_{j,n,\mathbf{h}}\) be the univariate integrated estimator. Since this estimator may lack a finite first moment, required for the $1$-Wasserstein distance to be finite, we project each \(\widehat{F}_{j,n,\mathbf{h}}\) onto the set of univariate cumulative distribution functions with compact support \([-M_n,\,M_n]\), obtaining the restricted \(L^1\)-projection estimator \(\widehat{F}_{j,n,\mathbf{h}}^\ast\), with \(\varrho_{j,n} = O({n}^{-1/2})\) and \(M_n\) chosen so that it achieves the same convergence rate as the unrestricted estimator; see Corollary~\ref{cor:finiteMomentRestricted} for details. The multivariate estimator is then the product of the univariate restricted estimators
\(
\widehat{F}_{n,\mathbf{h}}^*(\mathbf x) \coloneqq \prod_{j=1}^d \widehat{F}_{j,n,\mathbf{h}}^\ast(x_j)\), $\mathbf x = (x_1,\,\dots,\,x_d)^\top \in \mathbb R^d$.
Let \(\widehat{\mu}_{n,\mathbf{h}}^\ast \coloneqq \bigotimes_{j=1}^d \widehat{\mu}_{j,n,\mathbf{h}}^\ast\) denote the associated product probability measure. We now state the main result of this section, which establishes the minimax optimality of the estimator \(\widehat{F}_{n,\mathbf{h}}^\ast\) for \(\We\) on \(\mathcal P_{\text{prod}}\). The proof is deferred to \hyperref[sec:S5]{Section~S5 of the Supplement}.

\begin{theorem}[Minimax optimality of the projection estimator for \(\We\) on \(\mathcal P_{\text{prod}}\)]\label{th:W1minimax}
Let \(\mathcal P_{\textnormal{prod}}\) be the class of probability measures introduced in Definition~\ref{def:prodclass}, and suppose there is at least one active coordinate, i.e., \(|\mathcal P| \ge 1\), where \(\mathcal P\) is defined in \eqref{eq:setactivecoord}. Let \(\widehat{\mu}_{n,\mathbf{h}}^\ast\) be the probability measure above. Then,
\begin{enumerate}[label={(\emph{\roman*})}]
    \item \emph{Lower bound} \quad \(\displaystyle 
    \inf_{\hat{\mu}_n} \sup_{\mu_X \in \mathcal P_{\textnormal{prod}}} 
    \mathsf{E}_{(\mu_X\ast \mu_\varepsilon)^{\otimes n}} \big[ \We(\hat{\mu}_n,\, \mu_X) \big] \gtrsim \psi_n^{\boldsymbol{\alpha},\boldsymbol{\beta}}.
    \)

    \item \emph{Upper bound} \quad \(\displaystyle 
    \sup_{\mu_X \in \mathcal P_{\textnormal{prod}}} 
    \mathsf{E}_{(\mu_X\ast \mu_\varepsilon)^{\otimes n}} \big[\We(\widehat{\mu}_{n,\mathbf{h}}^\ast,\, \mu_X) \big] \lesssim \psi_n^{\boldsymbol{\alpha},\boldsymbol{\beta}}.
    \)

    \item \emph{Minimax rate} \quad \(\displaystyle 
    \inf_{\hat{\mu}_n} \sup_{\mu_X \in \mathcal P_{\textnormal{prod}}} 
    \mathsf{E}_{(\mu_X\ast \mu_\varepsilon)^{\otimes n}} \big[ \We(\hat{\mu}_n,\, \mu_X) \big] \asymp \psi_n^{\boldsymbol{\alpha},\boldsymbol{\beta}}.
    \)
\end{enumerate}
\end{theorem}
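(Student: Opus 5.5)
The plan is to prove (ii) and (i) separately; (iii) then follows by combining them. Both parts rest on two elementary comparisons. The first is \(\We\le\Wl\le\sqrt d\,\We\). The second is the behaviour of \(\Wl\) on product measures. On the upper side, gluing the \(d\) optimal one-dimensional couplings of the marginals into a single coupling of the product measures gives the \emph{coordinate-wise decoupling}
\[
\Wl\Bigl(\textstyle\bigotimes_j\mu_j,\,\bigotimes_j\nu_j\Bigr)\le\sum_{j=1}^d W_1(\mu_j,\,\nu_j)=\sum_{j=1}^d\|F_{\mu_j}-F_{\nu_j}\|_{L^1(\mathbb R)},
\]
where the equality is \eqref{eq:identity}. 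On the lower side I will use the reverse comparison with the \(L^1\)-distance between multivariate distribution functions, described below.

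\emph{Upper bound.} Under \ref{eq:E1indep} and the product form of \(f_X\), the coordinates \(Y_{i,j}=X_{i,j}+\varepsilon_{i,j}\) form a univariate deconvolution sample with signal density \(f_{X,j}\in\mathcal N_1^1(\alpha_j,L_j)\) and noise exponent \(\beta_j\). Applying Theorem~\ref{thm:ubrate} with \(d=1\), with the bandwidth \(h_j\asymp n^{-E_j/[2(\alpha_j+1)]}\) and \(E_j\coloneqq[1+(\beta_j-\frac12)_+/(\alpha_j+1)]^{-1}\), gives a univariate \(L^1\)-risk of order \(n^{-E_j/2}\). Corollary~\ref{cor:finiteMomentRestricted} transfers this rate to the restricted projection \(\widehat F^\ast_{j,n,\mathbf h}\) on \([-M_n,M_n]\), which also guarantees a finite first moment. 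Summing over \(j\) through the decoupling inequality gives a risk of order \(\sum_j n^{-E_j/2}\). Each summand is \(\lesssim\psi_n^{\boldsymbol\alpha,\boldsymbol\beta}\), because \(E_j\ge E\): the sum defining \(E^{-1}\) in \eqref{eq:rate} contains the single term defining \(E_j^{-1}\) plus nonnegative terms. The supremum over \(\mathcal P_{\rm prod}\) needs the constants to be uniform. I will make this explicit by reading the class as having uniformly bounded \(2p\)-moments, as required by \eqref{eq:assonX}. Logarithmic factors can arise only from coordinates with \(\beta_j=\tfrac12\). For such a coordinate \(E_j=1\), so its term is \(n^{-1/2}\sqrt{\log n}\). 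This is \(o(\psi_n^{\boldsymbol\alpha,\boldsymbol\beta})\) precisely because \(|\mathcal P|\ge1\) forces \(E<1\), and this is where that hypothesis enters the upper bound.

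\emph{Lower bound.} Projecting onto one coordinate only recovers the faster rate \(n^{-E_j/2}\), so a genuinely \(d\)-dimensional argument is needed. I will use Kantorovich duality for \(\Wl\). If \(\mu\) and \(\nu\) are supported in a box \(Q\) of unit volume, then for every measurable \(w\) with \(|w|\le1\) the function \(g_w(\mathbf x)\coloneqq\int_Q\mathbf 1\{\mathbf x\le\mathbf u\}w(\mathbf u)\,\di\mathbf u\) satisfies \(|g_w(\mathbf x)-g_w(\mathbf y)|\le\|\mathbf x-\mathbf y\|_{\ell^1}\). Indeed, moving one coordinate by \(\delta\) changes the indicator only on a slab of volume at most \(|\delta|\). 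Fubini gives \(\int g_w\,\di(\mu-\nu)=\int_Q(F_\mu-F_\nu)w\), and taking \(w=\operatorname{sign}(F_\mu-F_\nu)\) yields
\[
\|F_\mu-F_\nu\|_{L^1(Q)}\le\Wl(\mu,\nu)\le\sqrt d\,\We(\mu,\nu).
\]
The estimator \(\hat\mu_n\) need not be supported in \(Q\). I will handle this by first replacing \(\hat\mu_n\) with its \(\We\)-projection onto probability measures on \(Q\), which at most doubles the loss by the triangle inequality, and then applying the comparison. The problem therefore reduces to exhibiting two hypotheses \(f_0,f_1\in\mathcal P_{\rm prod}\), supported in \(Q\) or with negligible mass outside it, such that \(\|F_1-F_0\|_{L^1(Q)}\gtrsim\psi_n^{\boldsymbol\alpha,\boldsymbol\beta}\) and \(\chi^2(f_1\ast f_\varepsilon,f_0\ast f_\varepsilon)=O(n^{-1})\), and then invoking the same two-point argument as in Theorem~\ref{th:LB}.

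\emph{Main obstacle.} The hypotheses of Theorem~\ref{th:LB}, built with the active/inactive device, need not be product densities, so they cannot be reused directly. My first attempt will be \(f_0=\prod_j f_{0,j}\) and \(f_1=\prod_j(f_{0,j}+\gamma_j\,g_{j,h_j})\). Here each \(g_{j,h_j}(x)=h_j^{\alpha_j}g_j(x/h_j)\) has mean zero and Fourier support near \(|t|\asymp h_j^{-1}\), the \(f_{0,j}\) have heavy polynomial tails so that \(f_1\ge0\), and \(h_j\asymp n^{-E/[2(\alpha_j+1)]}\) as in Theorem~\ref{thm:ubrate}. The danger is that \(f_1-f_0\) expands into \(2^d-1\) cross terms. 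For the separation, I will choose the \(g_j\) so that the top term \(\prod_j\gamma_j g_{j,h_j}\) integrates to a distribution-function difference of order \(\prod_j h_j^{\alpha_j+1}\asymp\psi_n^{\boldsymbol\alpha,\boldsymbol\beta}\), and absorb the lower-order terms, which involve integrating at least one \(g_j\) against the smooth \(f_{0,j}\). For the \(\chi^2\)-bound, Plancherel together with \ref{it:E22} gives a factor \(h_j^{\beta_j-1/2}\) per perturbed coordinate, and the product over all \(j\) must be \(\lesssim n^{-1/2}\). If the lower-order cross terms spoil the \(\chi^2\)-bound, I would fall back on perturbing only the active coordinates and freezing the inactive ones at \(f_{0,j}\). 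Here \(|\mathcal P|\ge1\) again ensures that at least one perturbation exists and that the resulting product rate is exactly \(\psi_n^{\boldsymbol\alpha,\boldsymbol\beta}\).
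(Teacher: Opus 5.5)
Your upper bound follows the paper's route: coordinatewise decoupling, Theorem~\ref{thm:ubrate} with $d=1$, Corollary~\ref{cor:finiteMomentRestricted}, and $E_j\ge E$. Making the moment uniformity explicit and checking the $\beta_j=\tfrac12$ logarithms are genuine improvements, since the paper's proof uses those constants uniformly without saying so.

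\textbf{The gap is the lower bound, and under your own reading of the class it cannot be closed.} Your upper-bound argument actually proves the bound $\sum_j n^{-E_j/2}\asymp n^{-\min_j E_j/2}$. If $|\mathcal P|\ge 2$, then $E^{-1}>E_j^{-1}$ for every $j\in\mathcal P$, and $E_j=1>E$ for $j\notin\mathcal P$. So $E<\min_j E_j$ and your bound is $o(\psi_n^{\boldsymbol\alpha,\boldsymbol\beta})$. With uniformly bounded moments, (i) is therefore false whenever $|\mathcal P|\ge 2$. Your remark that one coordinate ``only recovers the faster rate'' was the warning sign: on product measures that faster rate is the true one.

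The construction also breaks at two concrete steps.
\begin{itemize}
\item With $h_j\asymp n^{-E/[2(\alpha_j+1)]}$, each factor $h_j^{\alpha_j+1}$ is already $\asymp\psi_n^{\boldsymbol\alpha,\boldsymbol\beta}$. The top term therefore contributes a separation of order at most $\psi_n^d$, not $\psi_n$.
\item For product hypotheses and product noise, $\Wl(\mu_0,\mu_1)=\sum_j W_1(\mu_{0,j},\mu_{1,j})$ and $1+\chi^2=\prod_j(1+\chi^2_j)$. Hence the first-order terms $\gamma_j g_{j,h_j}\otimes\bigotimes_{\ell\ne j}f_{0,\ell}$ dominate. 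For $j\in\mathcal P$ they give $\chi^2_j$ of order $\gamma_j^2h_j^{2\alpha_j+2\beta_j+1}=\gamma_j^2 n^{-E/E_j}\gg n^{-1}$. Your product of factors $h_j^{\beta_j-1/2}$ accounts only for the top term.
\end{itemize}
The fallback works only when $|\mathcal P|=1$, where $E=E_{j_0}$ and a single univariate perturbation already gives $\psi_n$.

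The paper reaches $\psi_n$ by a different device. It perturbs one coordinate $j_0$, but dilates $\omega$ and $\xi$ by $R_n^{\alpha_{j_0}+1}=n^{(\alpha_{j_0}+1)/E}$ and uses amplitude $\lambda=n^{-E/2}R_n^{-(\alpha_{j_0}+1)}$. The separation comes entirely from the dilation. $W_1$ scales linearly with the dilation. By contrast, $\chi^2$ is dilation-invariant and can only decrease under convolution with $f_\varepsilon$, so \ref{it:E22} is never used. This is admissible only because Definition~\ref{def:prodclass} requires a finite, not a uniformly bounded, first moment, which is exactly what your reading excludes.

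The same device with $\lambda\asymp n^{-1/2}$ and an arbitrary dilation factor $s$ gives hypotheses in $\mathcal P_{\textnormal{prod}}$ with $\chi^2\lesssim n^{-1}$ and separation $\asymp s\,n^{-1/2}$. So on the literal class the minimax risk is infinite, and (ii) fails.

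Neither reading yields (iii) as stated. With bounded moments the correct rate is $n^{-\min_j E_j/2}$. Its lower bound follows because $\We(\hat\mu_n,\mu_X)$ dominates the $W_1$ distance between $j$-th marginals, combined with a univariate two-point argument. That rate coincides with $\psi_n^{\boldsymbol\alpha,\boldsymbol\beta}$ exactly when $|\mathcal P|=1$.
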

The lower and upper bounds together show that \(\widehat{\mu}_{n,\mathbf{h}}^\ast\) attains the rate \(\psi_n^{\boldsymbol{\alpha},\boldsymbol{\beta}}\), minimax-optimal for \(\We\) on \(\mathcal P_{\text{prod}}\). Notably, this coincides with the rate for \(L^1\)-deconvolution of cumulative distribution functions on anisotropic Nikol'skii classes \(\mathcal N_1^d(\boldsymbol\alpha,\,L)\).
In dimension one, the \(1\)-Wasserstein distance admits the representation \eqref{eq:identity}. In dimension \(d \ge 2\), this identity fails, since optimal transport can exploit couplings between coordinates. On product measures, however, the transport problem decouples: since the product structure rules out cross-couplings, the total cost is the sum of the one-dimensional costs,
\begin{equation}\label{eq:W1decouple}
\Wl(\mu,\,\nu) = \sum_{j=1}^d \Wl(\mu_j,\,\nu_j)
= \sum_{j=1}^d \|F_{\mu_j} - F_{\nu_j}\|_{L^1(\mathbb R)},
\end{equation}
where \(\mu_j\), \(\nu_j\) denote the \(j\)-th marginals of \(\mu\) and \(\nu\), respectively. Unlike the general tensorisation inequality, which gives an upper bound up to multiplicative constants, the exact identity \eqref{eq:W1decouple} follows from the \(\ell^1\)-cost and the product structure. This decoupling yields sharp minimax rates with no loss in the reduction from \(d\) dimensions to one. It is also algorithmically valuable, reducing a \(d\)-dimensional transport problem to \(d\) scalar problems, each treatable with the one-dimensional theory of cumulative distribution function deconvolution; see \citet{Dedecker2015} and \citet{Rousseau&Scricciolo}. The agreement of the minimax rates follows from two facts. First, Nikol'skii regularity is separable and adapts to the product structure. Second, the optimal rate for recovering a cumulative distribution function in \(L^1(\mathbb R^d)\) with anisotropic regularity equals that for estimating \(\We\) on product measures, since in both cases the difficulty is governed by the aggregate of one-dimensional contributions across the active coordinates. Hence, although \(\We\) and the \(L^1\)-distance between cumulative distribution functions are not equivalent in general, their minimax rates coincide on \(\mathcal P_{\text{prod}}\), revealing a connection between optimal transport and \(L^1\)-deconvolution arising from the product structure and \eqref{eq:identity}.

We now discuss the class $\mathcal P_{\textnormal{prod}}$. The product structure describes real scenarios in multi-channel signal processing and image analysis, where independent components can have different degrees of smoothness. For instance, in medical imaging a smooth tissue-density channel may coexist with a highly irregular edge-detection channel.
This structure makes the product class theoretically useful: the deconvolution problem separates into one-dimensional subproblems, and explicit rates become derivable. It does not reduce the ambient dimension, but it makes the high-dimensional problem solvable, computationally and statistically. The resulting rates depend on the active coordinates rather than the full ambient dimension. An alternative mechanism is Wasserstein GANs, where \citet{Stephanovitch2024} exploit the intrinsic dimension of the support of the target measure rather than a product factorisation, the rate being thus governed by the intrinsic dimension rather than the ambient one. Thus, while the product structure \emph{decomposes} the ambient dimension, the manifold approach \emph{compresses} it.
While the results cover known error distributions, important extensions remain open, such as handling unknown noise densities and inhomogeneous settings where ordinary and supersmooth noise components coexist, as in \citet{ComteLacour2013}. The multivariate \(L^1\)-distance can be regarded as an extension of the \(1\)-Wasserstein distance in dimension one, even though the two distances do not coincide in higher dimensions. While we have shown that their minimax rates coincide on \(\mathcal P_{\textnormal{prod}}\), whether this agreement extends to general multivariate measures remains an open question.



\bibliographystyle{imsart-nameyear} 
\bibliography{bibliography.bib}       

\begin{thebibliography}{43}

\bibitem[\protect\citeauthoryear{Arjovsky, Chintala and
  Bottou}{2017}]{Arjovsky2017}
\begin{binproceedings}[author]
\bauthor{\bsnm{Arjovsky},~\bfnm{Martin}\binits{M.}},
  \bauthor{\bsnm{Chintala},~\bfnm{Soumith}\binits{S.}} \AND
  \bauthor{\bsnm{Bottou},~\bfnm{L{\'e}on}\binits{L.}}
(\byear{2017}).
\btitle{Wasserstein generative adversarial networks}.
In \bbooktitle{International {C}onference on {M}achine {L}earning}
\bpages{214--223}.
\bpublisher{Pmlr}.
\end{binproceedings}
\endbibitem

\bibitem[\protect\citeauthoryear{Bergstr\"om}{1952}]{Bergstrom1952}
\begin{barticle}[author]
\bauthor{\bsnm{Bergstr\"om},~\bfnm{Harald}\binits{H.}}
(\byear{1952}).
\btitle{On some expansions of stable distribution functions}.
\bjournal{Ark. Mat.}
\bvolume{2}
\bpages{375--378}.
\end{barticle}
\endbibitem

\bibitem[\protect\citeauthoryear{Butucea}{2004}]{Butucea2004v}
\begin{barticle}[author]
\bauthor{\bsnm{Butucea},~\bfnm{Cristina}\binits{C.}}
(\byear{2004}).
\btitle{Deconvolution of supersmooth densities with smooth noise}.
\bjournal{Canad. J. Statist.}
\bvolume{32}
\bpages{181--192}.
\end{barticle}
\endbibitem

\bibitem[\protect\citeauthoryear{Butucea and
  Tsybakov}{2007a}]{ButuceaTsybakov2008a}
\begin{barticle}[author]
\bauthor{\bsnm{Butucea},~\bfnm{C.}\binits{C.}} \AND
  \bauthor{\bsnm{Tsybakov},~\bfnm{A.~B.}\binits{A.~B.}}
(\byear{2007}a).
\btitle{Sharp optimality in density deconvolution with dominating bias. {I}}.
\bjournal{Teor. Veroyatn. Primen.}
\bvolume{52}
\bpages{111--128}.
\end{barticle}
\endbibitem

\bibitem[\protect\citeauthoryear{Butucea and
  Tsybakov}{2007b}]{ButuceaTsybakov2008b}
\begin{barticle}[author]
\bauthor{\bsnm{Butucea},~\bfnm{C.}\binits{C.}} \AND
  \bauthor{\bsnm{Tsybakov},~\bfnm{A.~B.}\binits{A.~B.}}
(\byear{2007}b).
\btitle{Sharp optimality in density deconvolution with dominating bias. {II}}.
\bjournal{Teor. Veroyatn. Primen.}
\bvolume{52}
\bpages{336--349}.
\end{barticle}
\endbibitem

\bibitem[\protect\citeauthoryear{Caillerie et~al.}{}]{Caillerie2011}
\begin{barticle}[author]
\bauthor{\bsnm{Caillerie},~\bfnm{Claire}\binits{C.}},
  \bauthor{\bsnm{Chazal},~\bfnm{Fr{\'e}d{\'e}ric}\binits{F.}},
  \bauthor{\bsnm{Dedecker},~\bfnm{J{\'e}r{\^o}me}\binits{J.}} \AND
  \bauthor{\bsnm{Michel},~\bfnm{Bertrand}\binits{B.}}
\btitle{{Deconvolution for the Wasserstein metric and geometric inference}}.
\bjournal{Electron. J. Stat.}
\bvolume{5}.
\end{barticle}
\endbibitem

\bibitem[\protect\citeauthoryear{Carroll and Hall}{1988}]{Carroll1988}
\begin{barticle}[author]
\bauthor{\bsnm{Carroll},~\bfnm{Raymond~J.}\binits{R.~J.}} \AND
  \bauthor{\bsnm{Hall},~\bfnm{Peter}\binits{P.}}
(\byear{1988}).
\btitle{Optimal rates of convergence for deconvolving a density}.
\bjournal{J. Amer. Statist. Assoc.}
\bvolume{83}
\bpages{1184--1186}.
\end{barticle}
\endbibitem

\bibitem[\protect\citeauthoryear{Chagny}{2013}]{Chagny2013}
\begin{barticle}[author]
\bauthor{\bsnm{Chagny},~\bfnm{G.}\binits{G.}}
(\byear{2013}).
\btitle{Warped bases for conditional density estimation}.
\bjournal{Math. Methods Statist.}
\bvolume{22}
\bpages{253--282}.
\end{barticle}
\endbibitem

\bibitem[\protect\citeauthoryear{Comte and
  Genon-Catalot}{2018}]{ComteGenonCatalot2018}
\begin{barticle}[author]
\bauthor{\bsnm{Comte},~\bfnm{F.}\binits{F.}} \AND
  \bauthor{\bsnm{Genon-Catalot},~\bfnm{V.}\binits{V.}}
(\byear{2018}).
\btitle{Laguerre and {H}ermite bases for inverse problems}.
\bjournal{J. Korean Statist. Soc.}
\bvolume{47}
\bpages{273--296}.
\end{barticle}
\endbibitem

\bibitem[\protect\citeauthoryear{Comte and Lacour}{2011}]{ComteLacour2011}
\begin{barticle}[author]
\bauthor{\bsnm{Comte},~\bfnm{F.}\binits{F.}} \AND
  \bauthor{\bsnm{Lacour},~\bfnm{C.}\binits{C.}}
(\byear{2011}).
\btitle{Data-driven density estimation in the presence of additive noise with
  unknown distribution}.
\bjournal{J. R. Stat. Soc. Ser. B Stat. Methodol.}
\bvolume{73}
\bpages{601--627}.
\end{barticle}
\endbibitem

\bibitem[\protect\citeauthoryear{Comte and Lacour}{2013}]{ComteLacour2013}
\begin{barticle}[author]
\bauthor{\bsnm{Comte},~\bfnm{F.}\binits{F.}} \AND
  \bauthor{\bsnm{Lacour},~\bfnm{C.}\binits{C.}}
(\byear{2013}).
\btitle{Anisotropic adaptive kernel deconvolution}.
\bjournal{Ann. Inst. Henri Poincar\'e{} Probab. Stat.}
\bvolume{49}
\bpages{569--609}.
\end{barticle}
\endbibitem

\bibitem[\protect\citeauthoryear{Comte and Mabon}{2017}]{Mabon2017}
\begin{barticle}[author]
\bauthor{\bsnm{Comte},~\bfnm{F.}\binits{F.}} \AND
  \bauthor{\bsnm{Mabon},~\bfnm{G.}\binits{G.}}
(\byear{2017}).
\btitle{Laguerre deconvolution with unknown matrix operator}.
\bjournal{Math. Methods Statist.}
\bvolume{26}
\bpages{237--266}.
\end{barticle}
\endbibitem

\bibitem[\protect\citeauthoryear{Comte, Rozenholc and Taupin}{2006}]{Comte2006}
\begin{barticle}[author]
\bauthor{\bsnm{Comte},~\bfnm{Fabienne}\binits{F.}},
  \bauthor{\bsnm{Rozenholc},~\bfnm{Yves}\binits{Y.}} \AND
  \bauthor{\bsnm{Taupin},~\bfnm{Marie-Luce}\binits{M.-L.}}
(\byear{2006}).
\btitle{Penalized contrast estimator for adaptive density deconvolution}.
\bjournal{Canad. J. Statist.}
\bvolume{34}
\bpages{431--452}.
\end{barticle}
\endbibitem

\bibitem[\protect\citeauthoryear{Dall'Aglio}{1956}]{Dall'Aglio}
\begin{barticle}[author]
\bauthor{\bsnm{Dall'Aglio},~\bfnm{Giorgio}\binits{G.}}
(\byear{1956}).
\btitle{Sugli estremi dei momenti delle funzioni di ripartizione doppia}.
\bjournal{Ann. Scuola Norm. Sup. Pisa Cl. Sci. (3)}
\bvolume{10}
\bpages{35--74}.
\end{barticle}
\endbibitem

\bibitem[\protect\citeauthoryear{Dedecker and
  Michel}{2013}]{dedecker:michel:2013}
\begin{barticle}[author]
\bauthor{\bsnm{Dedecker},~\bfnm{J.}\binits{J.}} \AND
  \bauthor{\bsnm{Michel},~\bfnm{B.}\binits{B.}}
(\byear{2013}).
\btitle{{M}inimax rates of convergence for {W}asserstein deconvolution with
  supersmooth errors in any dimension}.
\bjournal{J. {M}ultivar. {A}nal.}
\bvolume{122}
\bpages{278--291}.
\end{barticle}
\endbibitem

\bibitem[\protect\citeauthoryear{Dedecker and Michel}{2015}]{Dedecker2015}
\begin{barticle}[author]
\bauthor{\bsnm{Dedecker},~\bfnm{J.}\binits{J.}} \AND
  \bauthor{\bsnm{Michel},~\bfnm{B.}\binits{B.}}
(\byear{2015}).
\btitle{Improved rates for {W}asserstein deconvolution with ordinary smooth
  error in dimension one}.
\bjournal{Electron. J. Stat.}
\bvolume{9}
\bpages{234--265}.
\end{barticle}
\endbibitem

\bibitem[\protect\citeauthoryear{Delaigle and Gijbels}{2004}]{Delaigle2004}
\begin{barticle}[author]
\bauthor{\bsnm{Delaigle},~\bfnm{A.}\binits{A.}} \AND
  \bauthor{\bsnm{Gijbels},~\bfnm{I.}\binits{I.}}
(\byear{2004}).
\btitle{Practical bandwidth selection in deconvolution kernel density
  estimation}.
\bjournal{Comput. Statist. Data Anal.}
\bvolume{45}
\bpages{249--267}.
\end{barticle}
\endbibitem

\bibitem[\protect\citeauthoryear{Delaigle, Hall and
  Meister}{2008}]{Delaigle2008}
\begin{barticle}[author]
\bauthor{\bsnm{Delaigle},~\bfnm{Aurore}\binits{A.}},
  \bauthor{\bsnm{Hall},~\bfnm{Peter}\binits{P.}} \AND
  \bauthor{\bsnm{Meister},~\bfnm{Alexander}\binits{A.}}
(\byear{2008}).
\btitle{On deconvolution with repeated measurements}.
\bjournal{Ann. Statist.}
\bvolume{36}
\bpages{665--685}.
\end{barticle}
\endbibitem

\bibitem[\protect\citeauthoryear{Diggle and Hall}{1993}]{Diggle1993}
\begin{barticle}[author]
\bauthor{\bsnm{Diggle},~\bfnm{Peter~J.}\binits{P.~J.}} \AND
  \bauthor{\bsnm{Hall},~\bfnm{Peter}\binits{P.}}
(\byear{1993}).
\btitle{A {F}ourier approach to nonparametric deconvolution of a density
  estimate}.
\bjournal{J. Roy. Statist. Soc. Ser. B}
\bvolume{55}
\bpages{523--531}.
\end{barticle}
\endbibitem

\bibitem[\protect\citeauthoryear{Fan}{1991}]{Fan1991}
\begin{barticle}[author]
\bauthor{\bsnm{Fan},~\bfnm{Jianqing}\binits{J.}}
(\byear{1991}).
\btitle{On the optimal rates of convergence for nonparametric deconvolution
  problems}.
\bjournal{Ann. Statist.}
\bvolume{19}
\bpages{1257--1272}.
\end{barticle}
\endbibitem

\bibitem[\protect\citeauthoryear{Fan and Koo}{2002}]{Fan2002}
\begin{barticle}[author]
\bauthor{\bsnm{Fan},~\bfnm{Jianqing}\binits{J.}} \AND
  \bauthor{\bsnm{Koo},~\bfnm{Ja-Yong}\binits{J.-Y.}}
(\byear{2002}).
\btitle{Wavelet deconvolution}.
\bjournal{IEEE Trans. Inform. Theory}
\bvolume{48}
\bpages{734--747}.
\end{barticle}
\endbibitem

\bibitem[\protect\citeauthoryear{Fan and Truong}{1993}]{Fan&Truong1993}
\begin{barticle}[author]
\bauthor{\bsnm{Fan},~\bfnm{Jianqing}\binits{J.}} \AND
  \bauthor{\bsnm{Truong},~\bfnm{Young~K.}\binits{Y.~K.}}
(\byear{1993}).
\btitle{Nonparametric regression with errors in variables}.
\bjournal{Ann. Statist.}
\bvolume{21}
\bpages{1900--1925}.
\end{barticle}
\endbibitem

\bibitem[\protect\citeauthoryear{Goldenshluger and
  Lepski}{2011a}]{GoldenshlugerLepski2011}
\begin{barticle}[author]
\bauthor{\bsnm{Goldenshluger},~\bfnm{Alexander}\binits{A.}} \AND
  \bauthor{\bsnm{Lepski},~\bfnm{Oleg}\binits{O.}}
(\byear{2011}a).
\btitle{Bandwidth selection in kernel density estimation: oracle inequalities
  and adaptive minimax optimality}.
\bjournal{Ann. Statist.}
\bvolume{39}
\bpages{1608--1632}.
\end{barticle}
\endbibitem

\bibitem[\protect\citeauthoryear{Goldenshluger and
  Lepski}{2011b}]{GoldenLepski2011}
\begin{barticle}[author]
\bauthor{\bsnm{Goldenshluger},~\bfnm{Alexander}\binits{A.}} \AND
  \bauthor{\bsnm{Lepski},~\bfnm{Oleg}\binits{O.}}
(\byear{2011}b).
\btitle{Uniform bounds for norms of sums of independent random functions}.
\bjournal{Ann. Probab.}
\bvolume{39}
\bpages{2318--2384}.
\end{barticle}
\endbibitem

\bibitem[\protect\citeauthoryear{Goodfellow
  et~al.}{2014}]{goodfellow2014generative}
\begin{barticle}[author]
\bauthor{\bsnm{Goodfellow},~\bfnm{Ian~J}\binits{I.~J.}},
  \bauthor{\bsnm{Pouget-Abadie},~\bfnm{Jean}\binits{J.}},
  \bauthor{\bsnm{Mirza},~\bfnm{Mehdi}\binits{M.}},
  \bauthor{\bsnm{Xu},~\bfnm{Bing}\binits{B.}},
  \bauthor{\bsnm{Warde-Farley},~\bfnm{David}\binits{D.}},
  \bauthor{\bsnm{Ozair},~\bfnm{Sherjil}\binits{S.}},
  \bauthor{\bsnm{Courville},~\bfnm{Aaron}\binits{A.}} \AND
  \bauthor{\bsnm{Bengio},~\bfnm{Yoshua}\binits{Y.}}
(\byear{2014}).
\btitle{Generative adversarial nets}.
\bjournal{Advances in {N}eural {I}nformation {P}rocessing {S}ystems}
\bvolume{27}.
\end{barticle}
\endbibitem

\bibitem[\protect\citeauthoryear{Gulrajani et~al.}{2017}]{Gulrajani2017}
\begin{barticle}[author]
\bauthor{\bsnm{Gulrajani},~\bfnm{Ishaan}\binits{I.}},
  \bauthor{\bsnm{Ahmed},~\bfnm{Faruk}\binits{F.}},
  \bauthor{\bsnm{Arjovsky},~\bfnm{Martin}\binits{M.}},
  \bauthor{\bsnm{Dumoulin},~\bfnm{Vincent}\binits{V.}} \AND
  \bauthor{\bsnm{Courville},~\bfnm{Aaron}\binits{A.}}
(\byear{2017}).
\btitle{Improved training of {W}asserstein {GAN}s}.
\bjournal{Advances in {N}eural {I}nformation {P}rocessing {S}ystems}
\bvolume{30}.
\end{barticle}
\endbibitem

\bibitem[\protect\citeauthoryear{Hall and Lahiri}{2008}]{HallLahiri2008}
\begin{barticle}[author]
\bauthor{\bsnm{Hall},~\bfnm{Peter}\binits{P.}} \AND
  \bauthor{\bsnm{Lahiri},~\bfnm{Soumendra~N.}\binits{S.~N.}}
(\byear{2008}).
\btitle{Estimation of distributions, moments and quantiles in deconvolution
  problems}.
\bjournal{Ann. Statist.}
\bvolume{36}
\bpages{2110--2134}.
\end{barticle}
\endbibitem

\bibitem[\protect\citeauthoryear{Hazelton and
  Turlach}{2009}]{HazeltonTurlach2009}
\begin{barticle}[author]
\bauthor{\bsnm{Hazelton},~\bfnm{Martin~L.}\binits{M.~L.}} \AND
  \bauthor{\bsnm{Turlach},~\bfnm{Berwin~A.}\binits{B.~A.}}
(\byear{2009}).
\btitle{Nonparametric density deconvolution by weighted kernel estimators}.
\bjournal{Stat. Comput.}
\bvolume{19}
\bpages{217--228}.
\end{barticle}
\endbibitem

\bibitem[\protect\citeauthoryear{Johannes}{2009}]{Johannes2009}
\begin{barticle}[author]
\bauthor{\bsnm{Johannes},~\bfnm{Jan}\binits{J.}}
(\byear{2009}).
\btitle{Deconvolution with unknown error distribution}.
\bjournal{Ann. Statist.}
\bvolume{37}
\bpages{2301--2323}.
\end{barticle}
\endbibitem

\bibitem[\protect\citeauthoryear{Kantorovitch}{1942}]{Kantorovitch1942}
\begin{barticle}[author]
\bauthor{\bsnm{Kantorovitch},~\bfnm{L.}\binits{L.}}
(\byear{1942}).
\btitle{On the translocation of masses}.
\bjournal{C. R. (Doklady) Acad. Sci. URSS (N.S.)}
\bvolume{37}
\bpages{199--201}.
\end{barticle}
\endbibitem

\bibitem[\protect\citeauthoryear{Lepski and Willer}{2019}]{LepskiWiller2019}
\begin{barticle}[author]
\bauthor{\bsnm{Lepski},~\bfnm{O.~V.}\binits{O.~V.}} \AND
  \bauthor{\bsnm{Willer},~\bfnm{T.}\binits{T.}}
(\byear{2019}).
\btitle{Oracle inequalities and adaptive estimation in the convolution
  structure density model}.
\bjournal{Ann. Statist.}
\bvolume{47}
\bpages{233--287}.
\end{barticle}
\endbibitem

\bibitem[\protect\citeauthoryear{Masry}{1991}]{Masry1991}
\begin{barticle}[author]
\bauthor{\bsnm{Masry},~\bfnm{Elias}\binits{E.}}
(\byear{1991}).
\btitle{Multivariate probability density deconvolution for stationary random
  processes}.
\bjournal{IEEE Trans. Inform. Theory}
\bvolume{37}
\bpages{1105--1115}.
\end{barticle}
\endbibitem

\bibitem[\protect\citeauthoryear{Neumann}{2007}]{Neumann2007}
\begin{barticle}[author]
\bauthor{\bsnm{Neumann},~\bfnm{Michael~H.}\binits{M.~H.}}
(\byear{2007}).
\btitle{Deconvolution from panel data with unknown error distribution}.
\bjournal{J. Multivariate Anal.}
\bvolume{98}
\bpages{1955--1968}.
\end{barticle}
\endbibitem

\bibitem[\protect\citeauthoryear{Pensky and Vidakovic}{1999}]{Pensky1999}
\begin{barticle}[author]
\bauthor{\bsnm{Pensky},~\bfnm{Marianna}\binits{M.}} \AND
  \bauthor{\bsnm{Vidakovic},~\bfnm{Brani}\binits{B.}}
(\byear{1999}).
\btitle{Adaptive wavelet estimator for nonparametric density deconvolution}.
\bjournal{Ann. Statist.}
\bvolume{27}
\bpages{2033--2053}.
\end{barticle}
\endbibitem

\bibitem[\protect\citeauthoryear{Rebelles}{2016}]{Rebelles2016}
\begin{barticle}[author]
\bauthor{\bsnm{Rebelles},~\bfnm{G.}\binits{G.}}
(\byear{2016}).
\btitle{Structural adaptive deconvolution under {$\Bbb{L}_p$}-losses}.
\bjournal{Math. Methods Statist.}
\bvolume{25}
\bpages{26--53}.
\end{barticle}
\endbibitem

\bibitem[\protect\citeauthoryear{Rousseau and
  Scricciolo}{2024}]{Rousseau&Scricciolo}
\begin{barticle}[author]
\bauthor{\bsnm{Rousseau},~\bfnm{Judith}\binits{J.}} \AND
  \bauthor{\bsnm{Scricciolo},~\bfnm{Catia}\binits{C.}}
(\byear{2024}).
\btitle{{Wasserstein convergence in Bayesian and frequentist deconvolution
  models}}.
\bjournal{Ann. Statist.}
\bvolume{52}
\bpages{1691--1715}.
\end{barticle}
\endbibitem

\bibitem[\protect\citeauthoryear{Scricciolo}{2026}]{Scricciolo25b}
\begin{barticle}[author]
\bauthor{\bsnm{Scricciolo},~\bfnm{Catia}\binits{C.}}
(\byear{2026}).
\btitle{Adaptive minimax-optimal Wasserstein deconvolution with unknown error
  distributions}.
\bjournal{Statistics \& Probability Letters}
\bvolume{230}
\bpages{110589}.
\end{barticle}
\endbibitem

\bibitem[\protect\citeauthoryear{Stefanski and
  Carroll}{1990}]{StefanskiCarroll1990}
\begin{barticle}[author]
\bauthor{\bsnm{Stefanski},~\bfnm{Leonard}\binits{L.}} \AND
  \bauthor{\bsnm{Carroll},~\bfnm{Raymond~J.}\binits{R.~J.}}
(\byear{1990}).
\btitle{Deconvoluting kernel density estimators}.
\bjournal{Statistics}
\bvolume{21}
\bpages{169--184}.
\end{barticle}
\endbibitem

\bibitem[\protect\citeauthoryear{St\'ephanovitch, Aamari and
  Levrard}{2024}]{Stephanovitch2024}
\begin{barticle}[author]
\bauthor{\bsnm{St\'ephanovitch},~\bfnm{Arthur}\binits{A.}},
  \bauthor{\bsnm{Aamari},~\bfnm{Eddie}\binits{E.}} \AND
  \bauthor{\bsnm{Levrard},~\bfnm{Cl\'ement}\binits{C.}}
(\byear{2024}).
\btitle{Wasserstein generative adversarial networks are minimax optimal
  distribution estimators}.
\bjournal{Ann. Statist.}
\bvolume{52}
\bpages{2167--2193}.
\end{barticle}
\endbibitem

\bibitem[\protect\citeauthoryear{Tsybakov}{2009}]{tsybakov2009introduction}
\begin{bbook}[author]
\bauthor{\bsnm{Tsybakov},~\bfnm{Alexandre~B.}\binits{A.~B.}}
(\byear{2009}).
\btitle{Introduction to nonparametric estimation}.
\bseries{Springer Series in Statistics}.
\bpublisher{Springer, New York}
\bnote{Revised and extended from the 2004 French original, Translated by
  Vladimir Zaiats}.
\end{bbook}
\endbibitem

\bibitem[\protect\citeauthoryear{Villani}{2009}]{villani}
\begin{bbook}[author]
\bauthor{\bsnm{Villani},~\bfnm{C\'edric}\binits{C.}}
(\byear{2009}).
\btitle{Optimal transport}.
\bseries{Grundlehren der mathematischen Wissenschaften [Fundamental Principles
  of Mathematical Sciences]}
\bvolume{338}.
\bpublisher{Springer-Verlag, Berlin}
\bnote{Old and new}.
\end{bbook}
\endbibitem

\bibitem[\protect\citeauthoryear{Youndj\'e and Wells}{2008}]{YoundjeWells2008}
\begin{barticle}[author]
\bauthor{\bsnm{Youndj\'e},~\bfnm{\'Elie}\binits{E.}} \AND
  \bauthor{\bsnm{Wells},~\bfnm{Martin~T.}\binits{M.~T.}}
(\byear{2008}).
\btitle{Optimal bandwidth selection for multivariate kernel deconvolution
  density estimation}.
\bjournal{TEST}
\bvolume{17}
\bpages{138--162}.
\end{barticle}
\endbibitem

\bibitem[\protect\citeauthoryear{Zhang}{1990}]{Zhang1990}
\begin{barticle}[author]
\bauthor{\bsnm{Zhang},~\bfnm{Cun-Hui}\binits{C.-H.}}
(\byear{1990}).
\btitle{Fourier methods for estimating mixing densities and distributions}.
\bjournal{Ann. Statist.}
\bvolume{18}
\bpages{806--831}.
\end{barticle}
\endbibitem

\end{thebibliography}









\clearpage
\markboth{}{}

\begin{supplement}
\stitle{Supplement to ``Title of the Main Article''}\label{Sup:A}
\sdescription{This supplement contains all proofs and auxiliary results for the main article.}

\setcounter{page}{1}

\emph{\Large\bf{Supplement to ``Adaptive minimax multivariate \(L^1\)-deconvolution: harmonic-mean rates under noise filtering, and a \(1\)-Wasserstein equivalence'' by \normalsize{CATIA SCRICCIOLO}}}\\[-5pt]

This supplement contains all proofs and auxiliary results for the main article. Throughout, equation, section, subsection, and theorem numbers are preceded by an ``S'' (\emph{e.g.}, Section~S1, Equation~(S2.1), Lemma~S2.1) to distinguish them from those of the main article.

\section{Proof of the unbiasedness of the estimator $\widehat F_{n,\mathbf{H}}$}\phantomsection\label{sec:S1}
We establish the key unbiasedness property of the preliminary kernel-based estimator $\widehat F_{n,\mathbf{H}}$ for the cumulative distribution function defined in \hyperref[eq:defcumestim]{\eqref{eq:defcumestim} of the Main Article}, namely, that its expectation equals the convolution of the true cumulative distribution function with the rescaled kernel.

\begin{proof}[Proof of Proposition~\ref{prop:unbiasedness} (Unbiasedness of the estimator $\widehat F_{n,\mathbf{H}}$)]
Let \(\mathbf{x}\in\mathbb{R}^d\) be fixed. By the definition of \(\widehat{F}_{n,\mathbf{H}}\) as the integral of \(\hat{f}_{n,\mathbf{H}}\) and Fubini's theorem,
\[
\mathsf{E}_{(\mu_X \ast \mu_\varepsilon)^{\otimes n}}[\widehat{F}_{n,\mathbf{H}}(\mathbf{x})]
= \int_{-\infty}^{x_1}\!\!\cdots\!\!\int_{-\infty}^{x_d} \mathsf{E}_{(\mu_X \ast \mu_\varepsilon)^{\otimes n}}[\hat{f}_{n,\mathbf{H}}(\mathbf{u})] \,\mathrm{d}\mathbf{u}.
\]
The expectation of the deconvolution kernel density estimator is
\[
\mathsf{E}_{(\mu_X \ast \mu_\varepsilon)^{\otimes n}}[\hat{f}_{n,\mathbf{H}}(\mathbf{u})]
= \int_{\mathbb{R}^d} \frac{1}{|\mathbf{H}|} \tilde{K}_{\mathbf{H}}(\mathbf{H}^{-1}(\mathbf{u}-\mathbf{y})) \, (\mu_X \ast \mu_\varepsilon)(\mathrm{d}\mathbf{y}).
\]
Writing the convolution \(\mu_X \ast \mu_\varepsilon\) in terms of characteristic functions and applying the Fourier inversion formula, we obtain
\begin{align*}
\mathsf{E}_{(\mu_X \ast \mu_\varepsilon)^{\otimes n}}[\hat{f}_{n,\mathbf{H}}(\mathbf{u})]
&= \frac{1}{(2\pi)^d} \int_{\mathbb{R}^d} \mathrm{e}^{-\mathrm{i}\langle \mathbf{t},\, \mathbf{u}\rangle} 
\frac{\varphi_K(\mathbf{H}\mathbf{t})}{\varphi_\varepsilon(\mathbf{t})} \,
\varphi_\varepsilon(\mathbf{t}) \varphi_X(\mathbf{t}) \,\mathrm{d}\mathbf{t} \\
&= \frac{1}{(2\pi)^d} \int_{\mathbb{R}^d}  \mathrm{e}^{-\mathrm{i}\langle \mathbf{t},\, \mathbf{u}\rangle}  
\varphi_K(\mathbf{H}\mathbf{t}) \varphi_X(\mathbf{t}) \,\mathrm{d}\mathbf{t} \\
&= \int_{\mathbb{R}^d} K_{\mathbf{H}}(\mathbf{u}-\mathbf{z}) \, \mu_X(\mathrm{d}\mathbf{z}),
\end{align*}
where the last equality uses the fact that \(\mathcal{F}[K_{\mathbf{H}}](\mathbf{t})=\varphi_K(\mathbf{H}\mathbf{t})\), $\mathbf{t}\in\mathbb R^d$ and \(\mathcal{F}^{-1}[\varphi_K(\mathbf{H}\mathbf{t})\varphi_X(\mathbf{t})](\mathbf{u}) = \int_{\mathbb{R}^d} K_{\mathbf{H}}(\mathbf{u}-\mathbf{z}) \mu_X(\mathrm{d}\mathbf{z})\), $\mathbf{u}\in\mathbb R^d$. Substituting this expression into the integrated expectation and applying Fubini's theorem (since $K_{\mathbf{H}}\in L^1(\mathbb R^d)$ and $\mu_X$ is a finite measure) gives
\begin{align*}
\mathsf{E}_{(\mu_X \ast \mu_\varepsilon)^{\otimes n}}[\widehat{F}_{n,\mathbf{H}}(\mathbf{x})]
&= \int_{-\infty}^{x_1}\!\!\cdots\!\!\int_{-\infty}^{x_d} \int_{\mathbb{R}^d} K_{\mathbf{H}}(\mathbf{u}-\mathbf{z}) \, \mu_X(\mathrm{d}\mathbf{z}) \,\mathrm{d}\mathbf{u}\\
&=\int_{\mathbb{R}^d} \left( \int_{-\infty}^{x_1}\!\!\cdots\!\!\int_{-\infty}^{x_d} K_{\mathbf{H}}(\mathbf{u}-\mathbf{z}) \,\mathrm{d}\mathbf{u} \right) \mu_X(\mathrm{d}\mathbf{z}).
\end{align*}
For a fixed \(\mathbf{z}\), change variables \(\mathbf{v} = \mathbf{u}-\mathbf{z}\). Then, \(\mathrm{d}\mathbf{u} = \mathrm{d}\mathbf{v}\) and the condition \(\mathbf{u}\le \mathbf{x}\) (understood coordinatewise) becomes \(\mathbf{v}\le \mathbf{x}-\mathbf{z}\). Thus,
\[
\int_{\{\mathbf{u}\le \mathbf{x}\}} K_{\mathbf{H}}(\mathbf{u}-\mathbf{z}) \,\mathrm{d}\mathbf{u}
= \int_{\mathbb{R}^d} K_{\mathbf{H}}(\mathbf{v})\,\mathbf{1}_{\{\mathbf{v}\le \mathbf{x}-\mathbf{z}\}}\,\mathrm{d}\mathbf{v},
\]
where \[
\mathbf{1}_{\{\mathbf{v}\le \mathbf{x}-\mathbf{z}\}} =
\begin{cases}
1, & \text{if } v_j \le x_j - z_j \text{ for all } j = 1,\,\dots,\,d, \\
0, & \text{otherwise.}
\end{cases}
\]
Inserting this into the integral and applying Fubini's theorem once more to swap the order of integration, we obtain
\begin{align*}
\mathsf{E}_{(\mu_X \ast \mu_\varepsilon)^{\otimes n}}[\widehat{F}_{n,\mathbf{H}}(\mathbf{x})]
&= \int_{\mathbb{R}^d} \left( \int_{\mathbb{R}^d} K_{\mathbf{H}}(\mathbf{v})\,\mathbf{1}_{\{\mathbf{v}\le \mathbf{x}-\mathbf{z}\}}\,\mathrm{d}\mathbf{v} \right) \mu_X(\mathrm{d}\mathbf{z})\\
&= \int_{\mathbb{R}^d} K_{\mathbf{H}}(\mathbf{v}) \left( \int_{\mathbb{R}^d} \mathbf{1}_{\{\mathbf{z}\le \mathbf{x}-\mathbf{v}\}} \,\mu_X(\mathrm{d}\mathbf{z}) \right) \mathrm{d}\mathbf{v}.
\end{align*}
The inner integral is the measure of the set \(\{\mathbf{z}\in\mathbb R^d:\, \mathbf{z}\le \mathbf{x}-\mathbf{v}\}\), which is precisely the cumulative distribution function \(F_X(\mathbf{x}-\mathbf{v})\). Therefore,
\[
\mathsf{E}_{(\mu_X \ast \mu_\varepsilon)^{\otimes n}}[\widehat{F}_{n,\mathbf{H}}(\mathbf{x})]
= \int_{\mathbb{R}^d} K_{\mathbf{H}}(\mathbf{v}) \, F_X(\mathbf{x}-\mathbf{v}) \,\mathrm{d}\mathbf{v}
= (K_{\mathbf{H}} \ast F_X)(\mathbf{x}).
\]
This completes the proof.
\end{proof}

\section{Proofs for the upper bound on the $L^1$-risk of the estimator $\widehat{F}^\ast_{n,\mathbf{h}}$}\phantomsection\label{sec:S2}
This section collects the proofs of the bias bound, the stochastic error control, and the resulting upper bound on the $L^1$-risk of the $\varrho_n$-approximate minimum-$L^1$-distance estimator defined by condition \eqref{eq:defminimizer} \hyperref[eq:defminimizer]{of the Main Article}. A construction of a superkernel is also provided.

The following proof establishes the bias bound for the multivariate cumulative distribution function under the anisotropic Nikol'skii conditions \ref{it:diff}--\ref{it:smooth} of \hyperref[def:anisotropy]{Definition~\ref*{def:anisotropy} of the Main Article}. The extension to the \(L^2\)-setting is discussed in Remark~\ref{rem:adjusted_proof}.

\begin{proof}[Proof of Proposition~\ref{prop:bias} (Upper bound on the $L^1$-bias for anisotropic Nikol'skii classes).]
We begin by expressing the bias in an integral form and then decomposing it into a sum of coordinatewise differences via a telescoping identity. For \(\mathbf{x}\in\mathbb R^d\), the bias is
\[
B_{\mathbf{h}}(\mathbf{x})
= [F_X- (K_{\mathbf{h}}\ast F_X)](\mathbf{x})=
\int_{\mathbb R^d} K(\mathbf{z})
[F_X(\mathbf{x})-F_X(\mathbf{x}-\mathbf{h}\odot\mathbf{z})]\,\mathrm{d}\mathbf{z},
\]
where \(\mathbf{h}\odot\mathbf{z}:=(h_1z_1,\,\dots,\,h_dz_d)^\top\) and \(K(\mathbf{z})=\prod_{j=1}^d K_j(z_j)\).
For fixed \(\mathbf{x},\,\mathbf{z}\in\mathbb R^d\), define the points
\[
\mathbf{x}^{(0)}\coloneqq\mathbf{x},\qquad 
\mathbf{x}^{(j)}\coloneqq(x_1-h_1z_1,\,\dots,\,x_j-h_jz_j,\,x_{j+1},\,\dots,\,x_d)^\top.
\]
These points form a chain from \(\mathbf{x}\) to \(\mathbf{x}-\mathbf{h}\odot\mathbf{z}\). The difference between the values of the cumulative distribution function at the endpoints of the chain can be decomposed, by telescoping, as
\[
F_X(\mathbf{x})-F_X(\mathbf{x}-\mathbf{h}\odot\mathbf{z})
=
\sum_{j=1}^d
[F_X(\mathbf{x}^{(j-1)})-F_X(\mathbf{x}^{(j)})].
\]
Using the triangle inequality and Fubini's theorem, we obtain
\begin{equation}\label{eq:biasnorm}
\|B_{\mathbf{h}}\|_{L^1(\mathbb R^d)}
\le
\sum_{j=1}^d
\int_{\mathbb R^d}|K(\mathbf{z})|
\int_{\mathbb R^d}
|F_X(\mathbf{x}^{(j-1)})-F_X(\mathbf{x}^{(j)})|\,\mathrm{d}\mathbf{x}\,\mathrm{d}\mathbf{z}.
\end{equation}
Next, we isolate the contribution of the \(j\)-th coordinate by performing a change of variables that shifts the \(j\)-th coordinate only. For a fixed \(j\in\{1,\,\dots,\,d\}\), define
\[
v_r\coloneqq
\begin{cases}
x_r-h_rz_r,& r<j,\\
x_r,& r\ge j.
\end{cases}
\]
Then, \(\mathbf{x}^{(j-1)}=\mathbf{v}\) and \(\mathbf{x}^{(j)}=\mathbf{v}-h_jz_j\mathbf e_j\), and the Jacobian of the transformation is \(1\). The inner integral in \eqref{eq:biasnorm} therefore becomes
\[
\int_{\mathbb R^d}
|F_X(\mathbf{x}^{(j-1)})-F_X(\mathbf{x}^{(j)})|\,\mathrm{d}\mathbf{x}=\int_{\mathbb R^d}
|F_X(\mathbf{v})-F_X(\mathbf{v}-h_jz_j\mathbf e_j)|\,\mathrm{d}\mathbf{v}.
\]
The integrals over \(z_r\) for \(r\ne j\) factor out as the constants \(\|K_r\|_{L^1(\mathbb R)}\), yielding
\begin{equation}\label{eq:coord}
\|B_{\mathbf{h}}\|_{L^1(\mathbb R^d)}
\le
\sum_{j=1}^d
\left(\prod_{\substack{r=1 \\ r\ne j}}^d \|K_r\|_{L^1(\mathbb R)}\right)
\int_{\mathbb R}|K_j(z_j)|
\int_{\mathbb R^d}
|F_X(\mathbf{v})-F_X(\mathbf{v}-h_jz_j\mathbf e_j)|\,\mathrm{d}\mathbf{v}\,\mathrm{d}z_j.
\end{equation}
The difference of the cumulative distribution function along the \(j\)-th coordinate admits the exact integral representation
\[
F_X(\mathbf{v})-F_X(\mathbf{v}-h_jz_j\mathbf e_j)
=\int_{v_j - h_j z_j}^{v_j}
f_X(v_1,\,\dots,\,v_{j-1},\, t,\,v_{j+1},\,\dots,\,v_d)\,\mathrm{d}t=
\int_{v_j-h_jz_j}^{v_j} f_X(\mathbf{v}_{-j},\, t)\,\mathrm{d}t,
\]
where, in the joint density \(f_X(\mathbf{v}_{-j},\, t)\), the notation \(\mathbf{v}_{-j}\) indicates that all non-\(j\) coordinates are held fixed, while \(t\) varies along the \(j\)-th axis. Substituting this into \eqref{eq:coord}, we find that
\begin{equation}\label{eq:biasint}
\|B_{\mathbf{h}}\|_{L^1(\mathbb R^d)}
\le
\sum_{j=1}^d
\left(\prod_{\substack{r=1 \\ r\ne j}}^d \|K_r\|_{L^1(\mathbb R)}\right)
\int_{\mathbb R}|K_j(z_j)|
\int_{\mathbb R^d}
\bigg|
\int_{v_j-h_jz_j}^{v_j} f_X(\mathbf{v}_{-j},\, t)\,\mathrm{d}t
\bigg|
\mathrm{d}\mathbf{v}\,\mathrm{d}z_j.
\end{equation}
We now expand the density \(f_X\) in the \(j\)-th coordinate using Taylor's theorem with integral remainder. For each fixed \(\mathbf{v}_{-j}\), the function \(t \mapsto f_X(\mathbf{v}_{-j},\, t)\) is expanded around \(t=v_j\) up to order \(m_j \coloneqq \lceil \alpha_j\rceil - 1\). This is justified because the Nikol'skii condition guarantees the existence of derivatives up to order \(m_j\), and the remainder can be controlled in \(L^1(\mathbb R^d)\). For any \(t\in\mathbb R\), writing \(s \coloneqq t - v_j\), we have
\begin{equation}\label{eq:dev}
f_X(\mathbf{v}_{-j},\, v_j+s)
=
\sum_{k=0}^{m_j}
\frac{s^k}{k!}\partial_j^k f_X(\mathbf{v}_{-j},\, v_j)
+
R_j(\mathbf{v},\, s),
\end{equation}
with the remainder
\[
R_j(\mathbf{v},\, s)\coloneqq
\frac{1}{(m_j-1)!}
\int_0^{s} (s-u)^{m_j-1}
[
\partial_j^{m_j} f_X(\mathbf{v}_{-j},\, v_j+u)
-
\partial_j^{m_j} f_X(\mathbf{v}_{-j},\, v_j)
]\, \mathrm{d}u
\]
for \(m_j\ge1\). For \(m_j=0\), the sum over \(k=0,\,\dots,\,m_j\) reduces to the term \(k=0\), and the remainder is defined as
\[
R_j(\mathbf{v},\, s) \coloneqq f_X(\mathbf{v}_{-j},\, v_j+s) - f_X(\mathbf{v}_{-j},\,v_j),
\]
so that the expansion takes the form \(f_X(\mathbf{v}_{-j},\, v_j+s) = f_X(\mathbf{v}_{-j},\, v_j) + R_j(\mathbf{v},\, s)\). 

The remainder satisfies the \(L^1\)-bound
\begin{equation}\label{eq:remainder2}
\int_{\mathbb R^d} |R_j(\mathbf{v},\, s)|\,\mathrm{d}\mathbf{v}
\le C_{\alpha_j} L |s|^{\alpha_j},
\end{equation}
where 
\[C_{\alpha_j} \coloneqq
\begin{cases}
\frac{\Gamma(\alpha_j - m_j + 1)}{\Gamma(\alpha_j + 1)}, & m_j \ge 1,\\[5pt]
1, & m_j = 0.
\end{cases}
\]
Indeed, for \(m_j\ge1\), taking the \(L^1\)-norm over \(\mathbf{v}_{-j}\) and \(v_j\) yields
\begin{align*}\label{eq:remainderterm}
\int_{\mathbb R^d} |R_j(\mathbf{v},\, s)|\,\mathrm{d}\mathbf{v}
&\le
\frac{1}{(m_j-1)!}
\int_0^{|s|} (|s|-u)^{m_j-1}
\|
\Delta_{u \mathbf{e}_j} \partial_j^{m_j} f_X
\|_{L^1(\mathbb R^d)}
\,\mathrm{d}u \nonumber\\
&\le
\frac{L}{(m_j-1)!}
\int_0^{|s|} (|s|-u)^{m_j-1} |u|^{\eta_j}\,\mathrm{d}u \nonumber\\
&=
C_{\alpha_j} L |s|^{\alpha_j},
\end{align*}
where the second inequality follows from condition \ref{it:smooth} \hyperref[it:smooth]{of the Main Article} with \(\eta_j \coloneqq \alpha_j - m_j\). For \(m_j=0\), the same bound is a direct consequence of \ref{it:smooth} \hyperref[it:smooth]{of the Main Article} with \(\eta_j=\alpha_j\). This establishes \eqref{eq:remainder2}.
Substituting the Taylor expansion \eqref{eq:dev} into the integral over \(t\) in \eqref{eq:biasint}, we obtain
\[\begin{split}
\int_{v_j-h_jz_j}^{v_j} f_X(\mathbf{v}_{-j},\, t)\,\mathrm{d}t
&=
-\sum_{k=0}^{m_j}
\frac{(-h_jz_j)^{k+1}}{(k+1)!}
\partial_j^k f_X(\mathbf{v}_{-j},\, v_j)
+
\int_{v_j-h_jz_j}^{v_j} R_j(\mathbf{v},\, t-v_j)\,\mathrm{d}t\\
&=
-\sum_{k=0}^{m_j}
\frac{(-h_jz_j)^{k+1}}{(k+1)!}
\partial_j^k f_X(\mathbf{v})
+
\int_{v_j-h_jz_j}^{v_j} R_j(\mathbf{v},\, t-v_j)\,\mathrm{d}t.
\end{split}
\]
Multiplying by \(K_j(z_j)\), integrating over \(z_j\), taking the \(L^1\)-norm in \(\mathbf{v}\), and using assumption \ref{it:L1} \hyperref[it:L1]{of the Main Article} that $\|\partial_j^k f_X\|_{L^1(\mathbb R^d)}<\infty$ for all $k=0,\,\ldots,\,m_j$, the polynomial part becomes
\[
\sum_{k=0}^{m_j}
\frac{h_j^{k+1}}{(k+1)!}
\left|\int_{\mathbb R} z_j^{k+1} K_j(z_j)\,\mathrm{d}z_j\right|
\int_{\mathbb R^d} |\partial_j^k f_X(\mathbf{v})|\,\mathrm{d}\mathbf{v}=
\sum_{k=0}^{m_j}
\frac{h_j^{k+1}}{(k+1)!}
\left|\int_{\mathbb R} z_j^{k+1} K_j(z_j)\,\mathrm{d}z_j\right|
\|\partial_j^k f_X\|_{L^1(\mathbb R^d)}.
\]
For each \(k=0,\,\dots,\,m_j\), the exponent satisfies \(k+1 \le m_j+1 = \lceil \alpha_j\rceil\). By the moment condition in \eqref{eq:kernelcond} \hyperref[eq:kernelcond]{of the Main Article}, the integrals \(\int_{\mathbb R} z_j^{k+1} K_j(z_j)\,\mathrm{d}z_j\) vanish for all such \(k\). Hence, all polynomial terms disappear identically, leaving only the remainder.
Using the remainder bound \eqref{eq:remainder2}, we estimate the contribution of the remainder as
\begin{align}\label{eq:intremaind}
\int_{\mathbb R} |K_j(z_j)|
\int_{\mathbb R^d}
\bigg|
\int_{v_j-h_jz_j}^{v_j} R_j(\mathbf{v},\, t-v_j)\,\mathrm{d}t
\bigg|
\mathrm{d}\mathbf{v}\,\mathrm{d}z_j
&\le
\int_{\mathbb R} |K_j(z_j)|
\int_0^{|h_jz_j|}
\int_{\mathbb R^d} |R_j(\mathbf{v}, \,u)|\,\mathrm{d}\mathbf{v}\,\mathrm{d}u\,\mathrm{d}z_j \nonumber\\
&\le
C_{\alpha_j} L
\int_{\mathbb R} |K_j(z_j)|
\int_0^{|h_jz_j|} |u|^{\alpha_j}\,\mathrm{d}u\,\mathrm{d}z_j \nonumber\\
&=
\frac{C_{\alpha_j}}{\alpha_j+1} L h_j^{\alpha_j+1}
\int_{\mathbb R} |z_j|^{\alpha_j+1} |K_j(z_j)|\,\mathrm{d}z_j.
\end{align}
Finally, combining \eqref{eq:biasint}, the cancellation of the polynomial terms, and the remainder estimate \eqref{eq:intremaind}, we arrive at the bound
\[
\|B_{\mathbf{h}}\|_{L^1(\mathbb R^d)}
\le
L\sum_{j=1}^d
\left(\prod_{\substack{r=1 \\ r\ne j}}^d \|K_r\|_{L^1(\mathbb R)}\right)
\frac{C_{\alpha_j}}{\alpha_j+1}
\left(\int_{\mathbb R} |z_j|^{\alpha_j+1} |K_j(z_j)|\,\mathrm{d}z_j\right)
h_j^{\alpha_j+1}.
\]
Recalling the definition of $C_j$ in \eqref{eq:constantc_j} \hyperref[eq:constantc_j]{of the Main Article}
as
\[
C_j\coloneqq
\left(\prod_{\substack{r=1 \\ r\ne j}}^d \|K_r\|_{L^1(\mathbb R)}\right)
\frac{C_{\alpha_j}}{\alpha_j+1}
\int_{\mathbb R} |z_j|^{\alpha_j+1} |K_j(z_j)|\,\mathrm{d}z_j<\infty,
\]
we obtain the bound
\[
\|F_X-(K_{\mathbf{h}}\ast F_X)\|_{L^1(\mathbb R^d)}
\le
L\sum_{j=1}^d C_j h_j^{\alpha_j+1}.
\]
This completes the proof.
\end{proof}


\begin{remark}\label{rem:adjusted_proof}
As noted in Remark~\ref{rem:samebias}, under the \(L^2\)-based Nikol'skii condition $f_X\in\mathcal N_2^d(\boldsymbol{\alpha},\, L)                    
$, the proof proceeds identically up to the remainder estimate in \eqref{eq:intremaind}. Consider the remainder contribution in the \(j\)-th coordinate. By applying Minkowski's integral inequality twice, we obtain
\[
\begin{aligned}
\left\| \int_{\mathbb R} |K_j(z_j)| 
\int_{0}^{h_j z_j} R_j(\cdot,\, u)\,\mathrm{d}u \,\mathrm{d}z_j \right\|_{L^2(\mathbb R^d)}
&\le \int_{\mathbb R} |K_j(z_j)| 
\left\| \int_{0}^{h_j z_j} R_j(\cdot,\, u)\,\mathrm{d}u \right\|_{L^2(\mathbb R^d)} \,\mathrm{d}z_j \\
&\le \int_{\mathbb R} |K_j(z_j)| 
\int_{0}^{|h_j z_j|} \| R_j(\cdot,\, u) \|_{L^2(\mathbb R^d)} \,\mathrm{d}u \,\mathrm{d}z_j\\
&\le
\frac{C_{\alpha_j} L}{\alpha_j+1} h_j^{\alpha_j+1}
\int_{\mathbb R} |z_j|^{\alpha_j+1} |K_j(z_j)|\,\mathrm{d}z_j,
\end{aligned}
\]
where, in the last line, we have substituted the \(L^2\)-remainder bound $
\| R_j(\cdot,\, u) \|_{L^2(\mathbb R^d)} \le C_{\alpha_j} L |u|^{\alpha_j}$,
which follows from the Nikol'skii condition in the \(L^2\)-case. 
The cancellation of the polynomial terms is independent of the chosen norm. Therefore, we arrive at the bound
\[
\|B_{\mathbf{h}}\|_{L^2(\mathbb R^d)}
\le
L \sum_{j=1}^d C_j h_j^{\alpha_j+1},
\]
where the constants \(C_j>0\) are exactly those defined in \eqref{eq:constantc_j} \hyperref[eq:constantc_j]{of the Main Article} because the \(C_{\alpha_j}\) (and hence \(C_j\)) are determined solely by the regularity exponent \(\alpha_j\), the kernel moments, and the \(L^1\)-norms of the univariate kernels: they are independent of whether the underlying norm is the \(L^1\) or the \(L^2\). Thus, replacing the \(L^1\)-assumption by the \(L^2\)-assumption does not introduce any new constants into the final bound and the same inequality holds in both settings.
\end{remark}



In what follows, we establish the key technical results needed to control the stochastic error and to prove the upper bound on the risk of the $\varrho_n$-approximate minimum-$L^1$-distance estimator. We first prove a moment bound for the observations $\mathbf{Y}_i$ in terms of the moments of $\mathbf{X}_i$ and $\boldsymbol{\varepsilon}_i$ (Lemma~\ref{lem:boundmomY}). This bound is then used in the proof of the stochastic error control (Proposition~\ref{prop:variance}). Finally, we prove Theorem~\ref{thm:ubrate}, which combines the approximation and stochastic error bounds to obtain the convergence rate of the estimator. We conclude this part with the construction of a superkernel (Proposition~\ref{prop:exsuperkern}), which is useful for adaptive estimation procedures.

\begin{lemma}\label{lem:boundmomY}
Consider the $d$-dimensional convolution model \eqref{eq:model} \hyperref[eq:model]{of the Main Article}
$\mathbf{Y}_i = \mathbf{X}_i + \boldsymbol{\varepsilon}_i$, $i=1,\,\dots,\,n$,
where $(\mathbf{X}_i)_{i\in\mathbb N}$ are i.i.d.~random vectors in $\mathbb{R}^d$ with unknown distribution $\mu_X$, and $(\boldsymbol{\varepsilon}_i)_{i\in\mathbb N}$ are i.i.d.~random vectors in $\mathbb{R}^d$ with known distribution $\mu_\varepsilon$; the two sequences are independent. 
Assume further that the components \(\varepsilon_{1,1},\,\ldots,\,\varepsilon_{1,d}\) of the error vector \(\boldsymbol{\varepsilon}_1\in\mathbb{R}^d\) are independent random variables (this is assumption \ref{eq:E1indep} \hyperref[eq:E1indep]{of the Main Article}). Assume the following moment conditions hold for some $p>0$:
\begin{align}
&\mathsf E_{\mu_X}\Biggl[\prod_{j=1}^{d}(1\vee |X_{1,j}|^{2p})\Biggr] < \infty,\label{(A1)}\\
&\mathsf E_{\mu_{\varepsilon_j}}[|\varepsilon_{1,j}|^{2p}] < \infty \quad\text{for every } j=1,\,\dots,\,d,\label{(A2)}
\end{align}
(these are conditions \eqref{eq:assonX} and \eqref{eq:assoneps} \hyperref[eq:assonX]{of the Main Article}, respectively, except that no constraint on $p$ is assumed here).
Then, there exists a constant $C(d,\,p)$, depending only on $d$ and $p$, such that
\begin{equation}\label{eq:boundprodY}
\mathsf E_{\mu_X\ast\mu_\varepsilon}\Biggl[\prod_{j=1}^{d}(1\vee |Y_{1,j}|^{2p})\Biggr]
\le C(d,\,p)\;
\mathsf E_{\mu_X}\Biggl[\prod_{j=1}^{d}(1\vee |X_{1,j}|^{2p})\Biggr]\;
\prod_{j=1}^{d}\bigl(1\vee \mathsf E_{\mu_{\varepsilon_j}}[|\varepsilon_{1,j}|^{2p}]\bigr).
\end{equation}
\end{lemma}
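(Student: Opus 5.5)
The argument is pointwise in each coordinate, followed by independence to factorize the expectation. Fix $j$ and write $Y_{1,j}=X_{1,j}+\varepsilon_{1,j}$. By the elementary inequality $|a+b|^{2p}\le c_p(|a|^{2p}+|b|^{2p})$ with $c_p\coloneqq\max\{1,2^{2p-1}\}$, valid for every $p>0$ (concavity when $2p\le1$, convexity when $2p>1$), we get
\[
1\vee|Y_{1,j}|^{2p}\le 1+c_p|X_{1,j}|^{2p}+c_p|\varepsilon_{1,j}|^{2p}\le 3c_p\,(1\vee|X_{1,j}|^{2p})\,(1\vee|\varepsilon_{1,j}|^{2p}).
\]
The last step uses that each of the three summands is bounded by the product $(1\vee|X_{1,j}|^{2p})(1\vee|\varepsilon_{1,j}|^{2p})$, since both factors are at least $1$.

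Next, take the product over $j=1,\,\dots,\,d$. This gives the pointwise bound
\[
\prod_{j=1}^d(1\vee|Y_{1,j}|^{2p})\le(3c_p)^d\prod_{j=1}^d(1\vee|X_{1,j}|^{2p})\prod_{j=1}^d(1\vee|\varepsilon_{1,j}|^{2p}).
\]
Now take expectations. Because $\mathbf{X}_1$ is independent of $\boldsymbol{\varepsilon}_1$, the expectation of the product of the $X$-part and the $\varepsilon$-part splits into the product of their expectations. This is legitimate for nonnegative variables by Tonelli, with no integrability issue. By \ref{eq:E1indep}, the components of $\boldsymbol{\varepsilon}_1$ are independent, so the $\varepsilon$-part factorizes further as $\prod_j\mathsf E[1\vee|\varepsilon_{1,j}|^{2p}]$.

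Finally, note that $\mathsf E[1\vee|\varepsilon_{1,j}|^{2p}]\le1+\mathsf E|\varepsilon_{1,j}|^{2p}\le2\,(1\vee\mathsf E|\varepsilon_{1,j}|^{2p})$. This yields \eqref{eq:boundprodY} with $C(d,p)\coloneqq(6c_p)^d$. Finiteness of the right-hand side follows from \eqref{(A1)} and \eqref{(A2)}.

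No step is a real obstacle. The only point needing care is that the $X$-components need not be independent. For that reason the $X$-factor must be kept as the joint product expectation rather than split coordinatewise. The decomposition above does exactly this, and it is why the hypothesis \eqref{(A1)} is stated on the product.
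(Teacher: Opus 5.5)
Your proof is correct and follows essentially the same route as the paper: an elementary power inequality in each coordinate, independence of $\mathbf X_1$ and $\boldsymbol\varepsilon_1$ together with \ref{eq:E1indep} to factorize the expectation (keeping the $X$-part as a joint product expectation), and $\mathsf E[1\vee Z]\le 2(1\vee \mathsf E Z)$ at the end. The only difference is that you bound each coordinate factor multiplicatively by $3c_p(1\vee|X_{1,j}|^{2p})(1\vee|\varepsilon_{1,j}|^{2p})$ before taking the product over $j$. This avoids the paper's squaring and expansion into $3^d$ terms, whose coefficients actually sum to $4^d2^{2pd}$ rather than the stated $3^d2^{2pd}$ (harmless for the lemma, which only asserts existence of a constant). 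Your route also yields the clean constant $(6c_p)^d$.
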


\begin{proof}
By the independence of the vectors \(\mathbf{X}_1=(X_{1,1},\,\dots,\,X_{1,d})^{\!\top}\) and \(\boldsymbol{\varepsilon}_1=(\varepsilon_{1,1},\,\dots,\,\varepsilon_{1,d})^{\!\top}\), together with assumptions \eqref{(A1)} and \eqref{(A2)},
\begin{align}\label{eq:dismax}
\mathsf E_{\mu_X\ast\mu_\varepsilon}\Biggl[\prod_{j=1}^{d}(1\vee |Y_{1,j}|^{2p})\Biggr]&=\mathsf E_{\mu_X\ast\mu_\varepsilon}\Biggl[\prod_{j=1}^d(1\vee |Y_{1,j}|^{p})^2\Biggr]\nonumber\\
&\lesssim 
\mathsf E_{\mu_X}\Biggl[\prod_{j=1}^d(1\vee |X_{1,j}|^{2p})\Biggr]\,
\prod_{j=1}^d\bigl(1\vee \mathsf E_{\mu_{\varepsilon_j}}[|\varepsilon_{1,j}|^{2p}]\bigr)<\infty,
\end{align}
where the implicit constant in ``\(\lesssim\)'' depends on \(d\) and \(p\) and is finite under the given assumptions. 

We now verify the inequality in~\eqref{eq:dismax} with an explicit constant. For any real numbers \(a,\,b\) and \(p>0\), the inequality \(|a+b|^{p}\le 2^{p}(|a|^{p}+|b|^{p})\) holds. Applying it with \(a=X_{1,j}\) and \(b=\varepsilon_{1,j}\) gives
$
|Y_{1,j}|^{p}=|X_{1,j}+\varepsilon_{1,j}|^{p}\le 2^{p}(|X_{1,j}|^{p}+|\varepsilon_{1,j}|^{p})$.
Hence,
$
1\vee |Y_{1,j}|^{p}\le 1\vee[2^{p}(|X_{1,j}|^{p}+|\varepsilon_{1,j}|^{p})]
\le 2^{p}[(1\vee |X_{1,j}|^{p})+(1\vee |\varepsilon_{1,j}|^{p})]$.
Squaring yields
$
(1\vee |Y_{1,j}|^{p})^{2}\le 2^{2p}[(1\vee |X_{1,j}|^{p})^{2}+(1\vee |\varepsilon_{1,j}|^{p})^{2}
+2(1\vee |X_{1,j}|^{p})(1\vee |\varepsilon_{1,j}|^{p})]$.
Taking the product over \(j=1,\,\dots,\,d\), we obtain
\[
\prod_{j=1}^{d}(1\vee |Y_{1,j}|^{p})^{2}
\le 2^{2pd}
\prod_{j=1}^{d}[(1\vee |X_{1,j}|^{p})^{2}+(1\vee |\varepsilon_{1,j}|^{p})^{2}
+2(1\vee |X_{1,j}|^{p})(1\vee |\varepsilon_{1,j}|^{p})].
\]
The right‑hand side expands into a sum of \(3^{d}\) terms. For each index \(j\), we choose one of the three summands inside the brackets. If we choose the cross term (the third summand) for exactly \(k\) of the \(d\) coordinates, the corresponding coefficient becomes
\(2^{2pd}2^{k}\). Summing over all choices, the total coefficient is
\[
2^{2pd}\sum_{k=0}^{d}\binom{d}{k}2^{k}=2^{2pd}(1+2)^{d}=3^{d}2^{2pd}.
\]
Each term in the expansion is of the form
\[
2^{2pd} 2^{k}\prod_{j=1}^{d}(1\vee |X_{1,j}|^{p})^{\nu_j}(1\vee |\varepsilon_{1,j}|^{p})^{q_j},
\]
where \(\nu_j,\,q_j\in\{0,\,1,\,2\}\), with \(\nu_j+q_j\le 2\), and \(k\) is the number of indices for which \(\nu_j=q_j=1\). 
Because \((1\vee |X_{1,j}|^{p})^{\nu_j}\le 1\vee |X_{1,j}|^{2p}\) and \((1\vee |\varepsilon_{1,j}|^{p})^{q_j}\le 1\vee |\varepsilon_{1,j}|^{2p}\) for every \(j\), each term is bounded by
\[
2^{2pd}2^k\prod_{j=1}^{d}(1\vee |X_{1,j}|^{2p})\,
        \prod_{j=1}^{d}(1\vee |\varepsilon_{1,j}|^{2p}).
\]
Taking expectation and using the independence between the vectors \(\mathbf{X}_1\) and \(\boldsymbol{\varepsilon}_1\) gives 
\[\begin{split}
&\mathsf E_{\mu_X\ast\mu_\varepsilon}\Biggl[2^{2pd}2^k\prod_{j=1}^{d}(1\vee |X_{1,j}|^{p})^{\nu_j}(1\vee |\varepsilon_{1,j}|^{p})^{q_j}\Biggr]\\
&\hspace*{5cm}\le 2^{2pd}2^k\,
   \mathsf E_{\mu_X}\Biggl[\prod_{j=1}^{d}(1\vee |X_{1,j}|^{2p})\Biggr]\,
   \prod_{j=1}^{d}\mathsf E_{\mu_{\varepsilon_j}}[1\vee |\varepsilon_{1,j}|^{2p}].
\end{split}
\]
Summing over all \(3^{d}\) terms gives
\[
\mathsf E_{\mu_X\ast\mu_\varepsilon}\Biggl[\prod_{j=1}^{d}(1\vee |Y_{1,j}|^{p})^{2}\Biggr]
\le 3^{d}2^{2pd}\,
   \mathsf E_{\mu_X}\Biggl[\prod_{j=1}^{d}(1\vee |X_{1,j}|^{2p})\Biggr]\,
   \prod_{j=1}^{d}\mathsf E_{\mu_{\varepsilon_j}}[1\vee |\varepsilon_{1,j}|^{2p}]<\infty
\]
because, by \eqref{(A1)}, \(\mathsf E_{\mu_X}[\prod_{j=1}^{d}(1\vee |X_{1,j}|^{2p})]<\infty\), and, by \eqref{(A2)}, \(\mathsf E_{\mu_{\varepsilon_j}}[|\varepsilon_{1,j}|^{2p}]<\infty\) for every \(j\). Hence, the right-hand side is finite.
Finally, we relate \(\mathsf E_{\mu_{\varepsilon_j}}[1\vee |\varepsilon_{1,j}|^{2p}]\) to \((1\vee\mathsf E_{\mu_{\varepsilon_j}}[|\varepsilon_{1,j}|^{2p}])\). For any non‑negative random variable \(Z\),
\[
\mathsf E[1\vee Z]\le 1+\mathsf E[Z]\le 2(1\vee\mathsf E[Z]).
\]
Applying this with \(Z=|\varepsilon_{1,j}|^{2p}\) yields $
\mathsf E_{\mu_{\varepsilon_j}}[1\vee |\varepsilon_{1,j}|^{2p}]\le 2(1\vee \mathsf E_{\mu_{\varepsilon_j}}[|\varepsilon_{1,j}|^{2p}])$.
Hence,
\[
\prod_{j=1}^{d}\mathsf E_{\mu_{\varepsilon_j}}[1\vee |\varepsilon_{1,j}|^{2p}]
\le 2^{d}\prod_{j=1}^{d}(1\vee \mathsf E_{\mu_{\varepsilon_j}}[|\varepsilon_{1,j}|^{2p}]).
\]
Thus, with the constant $C(d,\,p):=3^{d}2^{2pd} 2^{d}=3^{d}2^{d(2p+1)}$,
\begin{align*}
\mathsf E_{\mu_X\ast\mu_\varepsilon}\Biggl[\prod_{j=1}^{d}(1\vee |Y_{1,j}|^{2p})\Biggr]&\le C(d,\,p)\,
   \mathsf E_{\mu_X}\Biggl[\prod_{j=1}^{d}(1\vee |X_{1,j}|^{2p})\Biggr]\,
   \prod_{j=1}^{d}(1\vee \mathsf E_{\mu_{\varepsilon_j}}[|\varepsilon_{1,j}|^{2p}]),
\end{align*}
which is \eqref{eq:boundprodY} with a constant depending only on \(d\) and \(p\). The proof is thus complete.
\end{proof}


We now use this bound to derive an upper bound on the stochastic error control. 

\begin{proof}[Proof of Proposition~\ref{prop:variance} (Upper bound on the stochastic error)]
We begin by exhibiting an example of a kernel $K_j$ satisfying requirements (\emph{i})--(\emph{iii}) from the statement \hyperref[prop:variance]{in the Main Article}. Let $q_j \ge 2$ be a positive integer parameter. Define the Fourier transform of $K_j$ as
$\varphi_{K_j}(t) \coloneqq (1-t^2)^{q_j}\mathbf{1}_{[-1,1]}(t)$, $t\in\mathbb R$.
Then, $K_j$ is given by the inverse Fourier transform
$K_j(x) \coloneqq (2\pi)^{-1}\int_{\mathbb R} \mathrm{e}^{-\mathrm{i}tx}\varphi_{K_j}(t)\,\di t$, $x\in\mathbb R$.
This choice of $K_j$ is precisely the explicit example referenced in the footnote of Proposition~\ref{prop:variance}.
We now verify that $K_j$ satisfies all the required properties.
\begin{enumerate}[label=(\emph{\roman*}), align=left, leftmargin=*]
\item \emph{Integrability:} Since $\varphi_{K_j}$ is compactly supported and $q_j\ge 2$, it is twice continuously differentiable on $\mathbb R$. Therefore, its inverse Fourier transform $K_j$ decays as $|x|^{-2}$ as $|x|\to\infty$. This follows by integrating by parts twice in the Fourier inversion formula, using that $\varphi_{K_j}\in C_c^2(\mathbb R)$. In particular, $K_j\in L^1(\mathbb R)\cap L^2(\mathbb R)$.\\[-8pt]
\item \emph{Symmetry:} Since $\varphi_{K_j}$ is real and even, its inverse Fourier transform $K_j$ is real and even.\\[-8pt]
\item \emph{Support and regularity of $\varphi_{K_j}$:} By construction, $\varphi_{K_j}$ is supported on $[-1,\,1]$ and, for $q_j\ge 2$, is twice continuously differentiable on $\mathbb R$.
\end{enumerate}
Thus, $K_j$ satisfies requirements (\emph{i})--(\emph{iii}) \hyperref[prop:variance]{in the Main Article}.

\smallskip

We now introduce an auxiliary cutoff function $\phi$. Consider a bump function $\phi \colon \mathbb{R} \to [0,\,1]$ satisfying the following properties:
\begin{itemize}
    \item[(a)] $\phi$ is even, \emph{i.e.}, $\phi(-t) = \phi(t)$ for all $t \in \mathbb{R}$;
    \item[(b)] $\phi(t) = 1$ for $|t| \leq 1$;
    \item[(c)] $\phi(t) = 0$ for $|t| \geq 2$;
    \item[(d)] $\phi \in C^2(\mathbb{R})$.
\end{itemize}
Such a function exists. For instance, define the smoothing polynomial 
$\vartheta(u) = 6u^5 - 15u^4 + 10u^3$, $u\in[0,\,1]$,
which satisfies $\vartheta(0)=0$, $\vartheta^{(1)}(0)=0$, $\vartheta^{(2)}(0)=0$
and $\vartheta(1)=1$, $\vartheta^{(1)}(1)=0$, $\vartheta^{(2)}(1)=0$.
Extend \(\vartheta\) to \(\mathbb R\) by setting \(\vartheta(u)=0\) for \(u\le 0\) and \(\vartheta(u)=1\) for \(u\ge 1\). Then, for \(t\ge 0\), set
\[
\phi(t) \coloneqq
\begin{cases}
1, & 0\le t\le 1,\\[2pt]
1 - \vartheta(t-1), & 1\le t\le 2,\\[2pt]
0, & t\ge 2,
\end{cases}
\]
and extend \(\phi\) to all real \(t\) by evenness, \emph{i.e.}, \(\phi(-t)=\phi(t)\) for \(t\in\mathbb R\). This function \(\phi\) is of class \(C^2(\mathbb{R})\), takes values in \([0,\,1]\), equals \(1\) on \([-1,\,1]\), and vanishes outside \((-2,\,2)\). Properties (a)--(d) are thus verified.

\smallskip

Fix $\mathbf{h}\in(0,\,1)^d$. For each $j=1,\,\ldots,\,d$, define 
\begin{align*}
\tilde K_{1,j,h_j}(u)&\coloneqq\mathcal F^{-1}\!\left[\frac{\varphi_{K_j}(t)}{\varphi_{\varepsilon_j}(t/h_j)}\phi(t/h_j)\right]\!(u),\quad u\in\mathbb R,\\
\shortintertext{and}
\tilde K_{2,j,h_j}(u)&\coloneqq\mathcal F^{-1}\!\left[\frac{\varphi_{K_j}(t)}{\varphi_{\varepsilon_j}(t/h_j)}[1-\phi(t/h_j)]\right]\!(u),\quad u\in\mathbb R.
\end{align*}
The full kernel $$\tilde K_{j,h_j}(u)\coloneqq\mathcal F^{-1}\!\left[\frac{\varphi_{K_j}(t)}{\varphi_{\varepsilon_j}(t/h_j)}\right]\!(u),\quad u\in\mathbb R,$$ satisfies the decomposition 
$\tilde K_{j,h_j}(u)=\tilde K_{1,j,h_j}(u)+\tilde K_{2,j,h_j}(u)$, $u\in\mathbb R$.
Define the integrated kernels
$$G_{1,j,h_j}(x)\coloneqq\int_{-\infty}^x
\tilde K_{1,j,h_j}(u)\,\di u, \quad 
G_{2,j,h_j}(x)\coloneqq\int_{-\infty}^x
\tilde K_{2,j,h_j}(u)\,\di u,\quad x\in\mathbb R,
$$
and their sum
\begin{equation}\label{eq:gfun}
G_{j,h_j}(x)\coloneqq\int_{-\infty}^x\tilde K_{j,h_j}(u)\,\di u=G_{1,j,h_j}(x)+G_{2,j,h_j}(x),\quad x\in\mathbb R.
\end{equation}
Because the function \([\varphi_{K_j}(\cdot)\phi(\cdot/h_j)/\varphi_{\varepsilon_j}(\cdot/h_j)]\) is compactly supported and twice continuously differentiable, its inverse Fourier transform \(\tilde K_{1,j,h_j}\) belongs to \(L^1(\mathbb R)\) and
\[
\int_{\mathbb R} \tilde K_{1,j,h_j}(u) \, \mathrm{d}u 
= \mathcal F[\tilde K_{1,j,h_j}](0) 
= \left. \frac{\varphi_{K_j}(t)}{\varphi_{\varepsilon_j}(t/h_j)} \phi(t/h_j) \right|_{t=0} 
= 1,
\]
so that
\[
\lim_{x\to\infty} G_{1,j,h_j}(x) = \int_{\mathbb R} \tilde K_{1,j,h_j}(u) \, \mathrm{d}u = 1.\]

For a real number $p>1/2$ as in the statement, we define the integral 
$$I_{2p}\coloneqq\int_{\mathbb R}(1\vee |x|^{p})^{-2}\,\di x=
\int_{\mathbb R}(1\vee |x|^{2p})^{-1}\,\di x=\frac{4p}{2p-1}<\infty.$$ 
Then,
\begin{align}
&\mathsf E_{(\mu_X\ast\mu_\varepsilon)^{\otimes n}}[\|\widehat F_{n,\mathbf{h}}-(K_{\mathbf {h}}\ast F_X)\|_{L^1(\mathbb R^d)}]\nonumber\\
&\,\,\,=
\int_{\mathbb R^d}\mathsf E_{(\mu_X\ast\mu_\varepsilon)^{\otimes n}}[|\widehat F_{n,\mathbf{h}}(\mathbf x)-(K_{\mathbf {h}}\ast F_X)(\mathbf x)|]\,\mathrm{d}{\mathbf{x}}\nonumber\\
&\,\,\,=
\int_{\mathbb R^d}\mathsf E_{(\mu_X\ast\mu_\varepsilon)^{\otimes n}}[|\widehat F_{n,\mathbf{h}}(\mathbf x)-\mathsf E_{(\mu_X\ast\mu_\varepsilon)^{\otimes n}}[\widehat F_{n,\mathbf{h}}(\mathbf x)]|]\,\mathrm{d}{\mathbf{x}}\nonumber\\
&\,\,\,
\leq
\int_{\mathbb R^d}\sqrt{\mathrm{Var}[\widehat F_{n,\mathbf{h}}(\mathbf x)]}\,\mathrm{d}{\mathbf{x}}\nonumber\tag{\text{by Lyapunov's inequality}}\\[5pt]
&\,\,\,\leq
\frac{1}{\sqrt{n}}\int_{\mathbb R^d}\sqrt{\mathrm{Var}\Biggl[\prod_{j=1}^d\int_{-\infty}^{(x_j-Y_{1,j})/h_j}\tilde K_{j,h_j}(u_j)\,\mathrm{d} u_j\Biggr]}\,\mathrm{d}{\mathbf{x}}\nonumber\tag{\text{variance of an empirical mean of i.i.d. terms and change of variable}}\\[5pt]
&\,\,\,=\frac{1}{\sqrt{n}}\int_{\mathbb R^d}\sqrt{\mathrm{Var}\Biggl[\prod_{j=1}^d
G_{j,h_j}\left(\frac{x_j-Y_{1,j}}{h_j}\right)\Biggr]}\,\mathrm{d}{\mathbf{x}}\nonumber\\
&\,\,\,=\frac{1}{\sqrt{n}}\int_{\mathbb R^d}\sqrt{\mathrm{Var}\Biggl[\prod_{j=1}^d
(G_{1,j,h_j}+G_{2,j,h_j})\left(\frac{x_j-Y_{1,j}}{h_j}\right)\Biggr]}\,\mathrm{d}{\mathbf{x}}\nonumber\tag{\text{by identity \eqref{eq:gfun}}}\\[5pt]
&\,\,\,=\frac{1}{\sqrt{n}}\int_{\mathbb R^d}\sqrt{\mathrm{Var}\Biggl[\prod_{j=1}^d
\bigl[G_{1,j,h_j}(s)\mathbf{1}_{\{s\leq0\}}-(1-G_{1,j,h_j}(s))\mathbf{1}_{\{s>0\}}+G_{2,j,h_j}(s)
\bigl]_{s=\frac{x_j-Y_{1,j}}{h_j}}\Biggr]}\,\mathrm{d}{\mathbf{x}}\nonumber\\
&\,\,\,\leq
\frac{1}{\sqrt{n}}\int_{\mathbb R^d}\sqrt{\mathsf E_{\mu_X\ast\mu_\varepsilon}\Biggl[\prod_{j=1}^d\Biggl(
\bigl[G_{1,j,h_j}(s)\mathbf{1}_{\{s\leq0\}}-(1-G_{1,j,h_j}(s))\mathbf{1}_{\{s>0\}}+G_{2,j,h_j}(s)\bigr]_{s=\frac{x_j-Y_{1,j}}{h_j}}\Biggr)^2\Biggr]}\,\mathrm{d}{\mathbf{x}}\nonumber\\
&\tag{\text{by the inequality $\mathsf{V}\textrm{ar}[\cdot]\leq\mathsf E[|\cdot|^2]$}}\\[5pt]
&\,\,\,\leq\frac{1}{\sqrt{n}}
\Biggl(\int_{\mathbb R^d}\prod_{j=1}^d(1\vee |x_j|^{p})^{-2}\,{\mathbf{\di x}}\Biggr)^{1/2}\nonumber\\
&\qquad\qquad\,\,\times\Biggl\{\mathsf{E}_{\mu_X\ast\mu_\varepsilon}\Biggl[\prod_{j=1}^d\int_{\mathbb R}(1\vee |x_j|^{p})^2\nonumber\\
&\qquad\qquad\qquad\quad\times
\Biggl(\bigl[G_{1,j,h_j}(s)\mathbf{1}_{\{s\leq0\}}-(1-G_{1,j,h_j}(s))\mathbf{1}_{\{s>0\}}+G_{2,j,h_j}(s)\bigr]_{s=\frac{x_j-Y_{1,j}}{h_j}}\Biggr)^2{\di x_j}\Biggr]\Biggr\}^{1/2}\nonumber\\
&\tag{\text{by the Cauchy--Schwarz inequality}}\\[5pt]
&\,\,\,=\frac{1}{\sqrt{n}}I_{2p}^{d/2}
\Biggl\{\mathsf{E}_{\mu_X\ast\mu_\varepsilon}\Biggl[\prod_{j=1}^d\int_{\mathbb R}(1\vee |x_j|^{2p})
\Biggl(\biggl[\mathbf{1}_{\{v\leq0\}}\int_{-\infty}^{v}\frac{1}{h_j}\tilde K_{1,j,h_j}\Bigl(\frac{z_j}{h_j}\Bigr)\,\di z_j\nonumber\\
&\qquad\qquad\qquad\qquad\qquad\qquad\qquad\qquad\qquad+\mathbf{1}_{\{v>0\}}\int_{v}^{\infty}\frac{1}{h_j}\tilde K_{1,j,h_j}\Bigl(\frac{z_j}{h_j}\Bigr)\,\di z_j\nonumber\\
&\qquad\qquad\qquad\qquad\qquad\qquad\qquad\qquad\qquad+\int_{-\infty}^{v}\frac{1}{h_j}\tilde K_{2,j,h_j}\Bigl(\frac{z_j}{h_j}\Bigr)\,\di z_j\biggr]_{v=x_j-Y_{1,j}}\Biggr)^2\,{\di x_j}\Biggr]\Biggr\}^{1/2}\nonumber\\
&\tag{\text{by the change of variable $z_j=h_j u_j$}}\\
&\,\,\,\lesssim\frac{1}{\sqrt{n}}
\Biggl\{\mathsf{E}_{\mu_X\ast\mu_\varepsilon}\Biggl[\prod_{j=1}^d\int_{\mathbb R}(1\vee |x_j|^{2p})
\Biggl[\mathbf{1}_{\{v\leq0\}}\biggl(\int_{-\infty}^{v}\frac{1}{h_j}\tilde K_{1,j,h_j}\Bigl(\frac{z_j}{h_j}\Bigr)\,\di z_j\biggr)^2\nonumber\\
&\qquad\qquad\qquad\qquad\qquad\qquad\qquad\qquad\qquad+\mathbf{1}_{\{v>0\}}\biggl(\int_{v}^{\infty}\frac{1}{h_j}\tilde K_{1,j,h_j}\Bigl(\frac{z_j}{h_j}\Bigr)\,\di z_j\biggr)^2\nonumber\\
&\qquad\qquad\qquad\qquad\qquad\qquad\qquad\qquad\qquad+\biggl(\int_{-\infty}^{v}\frac{1}{h_j}\tilde K_{2,j,h_j}\Bigl(\frac{z_j}{h_j}\Bigr)\,\di z_j\biggr)^2\Biggr]_{v=x_j-Y_{1,j}}\,{\di x_j}\Biggr]\Biggr\}^{1/2}\nonumber\\
&
&\tag{\text{neglecting $I_{2p}^{d/2}$ and squaring}}\\[5pt]
&\,\,\,\lesssim\frac{1}{\sqrt{n}}\Biggl\{\mathsf{E}_{\mu_X\ast\mu_\varepsilon}\Biggl[\prod_{j=1}^d\int_{\mathbb R}(1\vee |w_j+Y_{1,j}|^{2p})
\Biggl[\mathbf{1}_{\{w_j\leq0\}}\biggl(\int_{-\infty}^{w_j}\frac{1}{h_j}\tilde K_{1,j,h_j}\Bigl(\frac{z_j}{h_j}\Bigr)\,\di z_j\biggr)^2\nonumber\\&\qquad\qquad\qquad\qquad\qquad\qquad\qquad\qquad\qquad+\mathbf{1}_{\{w_j>0\}}\biggl(\int_{w_j}^{\infty}\frac{1}{h_j}\tilde K_{1,j,h_j}\Bigl(\frac{z_j}{h_j}\Bigr)\,\di z_j\biggr)^2\nonumber\\
&\qquad\qquad\qquad\qquad\qquad\qquad\qquad\qquad\qquad+\biggl(\int_{-\infty}^{w_j}\frac{1}{h_j}\tilde K_{2,j,h_j}\Bigl(\frac{z_j}{h_j}\Bigr)\,\di z_j\biggr)^2\Biggr]\,\di w_j\Biggr]\Biggr\}^{1/2}\nonumber\\
&\tag{by the change of variable {$w_j=x_j-Y_{1,j}$}}\\[5pt]
&\,\,\,\lesssim\frac{1}{\sqrt{n}}\sqrt{\mathsf{E}_{\mu_X\ast\mu_\varepsilon}\Biggl[\prod_{j=1}^d(1\vee |Y_{1,j}|^{2p})\Biggr]}
\Biggl\{\prod_{j=1}^d\int_{\mathbb R}(1\vee |w_j|^{p})^2
\Biggl[\mathbf{1}_{\{w_j\leq0\}}\biggl(\int_{-\infty}^{w_j}\frac{1}{h_j}\tilde K_{1,j,h_j}\Bigl(\frac{z_j}{h_j}\Bigr)\,\di z_j\biggr)^2\nonumber
\\&\qquad\qquad\qquad\qquad\qquad\qquad\qquad\qquad\qquad+\mathbf{1}_{\{w_j>0\}}\biggl(\int_{w_j}^{\infty}\frac{1}{h_j}\tilde K_{1,j,h_j}\Bigl(\frac{z_j}{h_j}\Bigr)\,\di z_j\biggl)^2\nonumber\\
&\qquad\qquad\qquad\qquad\qquad\qquad\qquad\qquad\qquad+\biggl(\int_{-\infty}^{w_j}\frac{1}{h_j}\tilde K_{2,j,h_j}\Bigl(\frac{z_j}{h_j}\Bigr)\,\di z_j\biggr)^2
\Biggr]\,\di w_j
\Biggr\}^{1/2}\nonumber\\
&\,\,\,\lesssim\frac{1}{\sqrt{n}}
\Biggl\{\prod_{j=1}^d\int_{\mathbb R}(1\vee |w_j|^{p})^2
\Biggl[\mathbf{1}_{\{w_j\leq0\}}\biggl(\int_{-\infty}^{w_j}\frac{1}{h_j}\tilde K_{1,j,h_j}\Bigl(\frac{z_j}{h_j}\Bigr)\,\di z_j\biggr)^2\\
&\qquad\qquad\qquad\qquad\qquad\qquad\qquad\qquad\qquad+\mathbf{1}_{\{w_j>0\}}\biggl(\int_{w_j}^{\infty}\frac{1}{h_j}\tilde K_{1,j,h_j}\Bigl(\frac{z_j}{h_j}\Bigr)\,\di z_j\biggl)^2\nonumber\\
&\qquad\qquad\qquad\qquad\qquad\qquad\qquad\qquad\qquad+\biggl(\int_{-\infty}^{w_j}\frac{1}{h_j}\tilde K_{2,j,h_j}\Bigl(\frac{z_j}{h_j}\Bigr)\,\di z_j\biggr)^2\Biggr]\,\di w_j\Biggr\}^{1/2}\nonumber\\
&\nonumber\tag{\text{by inequality \eqref{eq:boundprodY} of Lemma \ref{lem:boundmomY}}}\\[5pt]
&\,\,\,\lesssim\frac{1}{\sqrt{n}}\prod_{j=1}^d\Biggl\{\underbrace{\int_{-\infty}^0(1\vee |w_j|^{p})^2
\biggl(\int_{-\infty}^{w_j}\frac{1}{h_j}\tilde K_{1,j,h_j}\Bigl(\frac{z_j}{h_j}\Bigr)\,\di z_j\biggr)^2\,\di w_j}_{\eqqcolon I^-_{1,j,h_j}}\nonumber\\&\qquad\qquad\qquad\qquad\qquad\qquad\qquad\qquad\qquad
+\underbrace{\int_0^{\infty}(1\vee |w_j|^{p})^2
\biggl(\int_{w_j}^{\infty}\frac{1}{h_j}\tilde K_{1,j,h_j}\Bigl(\frac{z_j}{h_j}\Bigr)\,\di z_j\biggr)^2\,\di w_j}_{\eqqcolon I^+_{1,j,h_j}}\nonumber\\
&\qquad\qquad\qquad\qquad\qquad\qquad\qquad\qquad\qquad+\underbrace{\int_{\mathbb R}(1\vee |w_j|^{p})^2\biggl(\int_{-\infty}^{w_j}\frac{1}{h_j}\tilde K_{2,j,h_j}\Bigl(\frac{z_j}{h_j}\Bigr)\,\di z_j\biggr)^2\,\di w_j}_{\eqqcolon I_{2,j,h_j}}\Biggr\}^{1/2}\nonumber\\
&\,\,\,\lesssim \frac{1}{\sqrt{n}}\prod_{j=1}^d(I^-_{1,j,h_j}+
I^+_{1,j,h_j}+I_{2,j,h_j})^{1/2}.\label{eq:var}
\end{align}
Under noise conditions \ref{eq:E1indep}, \ref{it:E21}, and \ref{eq:E3} \hyperref[eq:E1indep]{of the Main Article}, the terms $I^-_{1,j,h_j}$, $I^+_{1,j,h_j}$, and $I_{2,j,h_j}$ can be bounded in the same way as the terms $I_2^-$, $I^+$ and $J$, respectively, in the proof of Proposition~3.1 in \citet{Dedecker2015}, pp.~248--253. Specifically, for each $j=1,\,\dots,\,d$, the following asymptotic bounds hold as $h_j \to 0^+$:
\begin{equation}\label{eq:ub}
I^-_{1,j,h_j}=O(1),\quad I^+_{1,j,h_j}=O(1),\quad I_{2,j,h_j}=O(h_j^{-2(\beta_j-\frac{1}{2})_+}|\log h_j|^{\mathbf{1}_{\{\beta_j=1/2\}}}).
\end{equation}
From \eqref{eq:var} and \eqref{eq:ub}, recalling \eqref{eq:defV} \hyperref[eq:defV]{of the Main Article}, we obtain\footnote{The symmetry of the kernel $K_j$ (equivalently, the fact that $\varphi_{K_j}$ is real and even) ensures that the quotient $Q(t) \coloneqq \varphi_{K_j}(t)\phi(t/h_j)/\varphi_{\varepsilon_j}(t/h_j)$ satisfies $Q(-t) = \overline{Q(t)}$, which guarantees that the inverse Fourier transform $\tilde K_{1,j,h_j}$ is real-valued. This is essential because the variance of a complex-valued random variable involves the modulus squared, which would introduce additional cross-terms and prevent the factorization of the variance into the three separate integral terms $I^-_{1,j,h_j}$, $I^+_{1,j,h_j}$, and $I_{2,j,h_j}$ used in the proof. Without this real-valued property, the standard variance arguments for real-valued random variables would not apply.}
\begin{align*}
\mathsf{E}_{(\mu_X\ast\mu_\varepsilon)^{\otimes n}}[\|\widehat F_{n,\mathbf{h}}-(K_{\mathbf{h}}\ast F_X)\|_{L^1(\mathbb R^d)}]
&\lesssim \frac{1}{\sqrt{n}}\prod_{j=1}^d(I^-_{1,j,h_j}+I^+_{1,j,h_j}+I_{2,j,h_j})^{1/2}\\
&\lesssim \frac{1}{\sqrt{n}}\prod_{j=1}^d \sqrt{I_{2,j,h_j}} \tag{\text{because $I_{2,j,h_j}$ is the dominant term
for small $h_j$}}\\
&\lesssim \frac{1}{\sqrt{n}}\prod_{j=1}^d h_j^{-(\beta_j-\frac12)_+}|\log h_j|^{\mathbf{1}_{\{\beta_j=1/2\}}/2}\\
&=V(\mathbf{h})\prod_{j=1}^d |\log h_j|^{\mathbf{1}_{\{\beta_j=1/2\}}/2}\\
&=\mathcal V(\mathbf{h}),
\end{align*}
which concludes the proof for the $L^1$-case. 

\smallskip

For the $L^2$-extension, by Jensen's inequality,
\[\begin{split}
\mathsf E_{(\mu_X\ast\mu_\varepsilon)^{\otimes n}}[\|\widehat F_{n,\mathbf{h}} - (K_{\mathbf{h}} \ast F_X)\|_{L^2(\mathbb R^d)}]
&\le
\Bigl( \mathsf E_{(\mu_X\ast\mu_\varepsilon)^{\otimes n}}[\|\widehat F_{n,\mathbf{h}} - (K_{\mathbf{h}} \ast F_X)\|_{L^2(\mathbb R^d)}^2] \Bigr)^{1/2}\\
&=
\left( \int_{\mathbb R^d} \mathrm{Var}[\widehat F_{n,\mathbf{h}}(\mathbf{x})] \, \di\mathbf{x} \right)^{1/2}.
\end{split}
\]
The same variance bound as in the $L^1$-case yields
$
\mathsf E_{(\mu_X\ast\mu_\varepsilon)^{\otimes n}}[\|\widehat F_{n,\mathbf{h}} - (K_{\mathbf{h}} \ast F_X)\|_{L^2(\mathbb R^d)}]
\lesssim
\mathcal V(\mathbf{h})$,
which completes the proof.
\end{proof}


We now combine the bias and stochastic error upper bounds to derive an upper bound on the convergence rate for the $L^1$-risk of the $\varrho_n$-approximate minimum-$L^1$-distance estimator $\widehat F_{n,\mathbf{h}}^\ast$.

\begin{proof}[Proof of Theorem~\ref{thm:ubrate} (Upper bound on the 
$L^1$-risk of $\widehat F_{n,\mathbf{h}}^\ast$)]
The proof proceeds in two steps. First, we verify that the kernel constructed in Proposition~\ref{prop:variance} also satisfies the moment conditions required for the bias bound. Second, we combine the resulting bias and stochastic error bounds and optimize over the bandwidth vector $\mathbf{h}$.

Since the kernel estimator must simultaneously satisfy the conditions required by the bias Proposition~\ref{prop:bias} and the variance Proposition~\ref{prop:variance}, the kernel $K_j$ must be constructed so that both requirements (\emph{i})--(\emph{iii}) \hyperref[prop:variance]{of the Main Article} and the moment conditions in \eqref{eq:kernelcond} \hyperref[eq:kernelcond]{of the Main Article} are simultaneously verified. We now show that the kernel constructed above fulfills all these conditions. Set $\kappa_j \coloneqq \lceil \alpha_j\rceil$ and let $q_j \coloneqq \kappa_j + 3$. Since $\alpha_j > 0$, we have $\kappa_j = \lceil \alpha_j\rceil \ge 1$, hence $q_j = \kappa_j + 3 \ge 4 > 2$, which satisfies the requirement $q_j \ge 2$ of Proposition~\ref{prop:variance}. 

For a function $K_j\in L^1(\mathbb R)$, the \vir{moments} are related to the derivatives of its Fourier transform at the origin by
$\int_{\mathbb R} z^r K_j(z)\,\di z = \mathrm{i}^r \varphi_{K_j}^{(r)}(0)$, $r=0,\,1,\,\dots$,
where $\varphi_{K_j}(t)=(1-t^2)^{q_j}\mathbf{1}_{[-1,\,1]}(t)$.
Since $\varphi_{K_j}$ is a polynomial of degree $2q_j$ on $[-1,\,1]$, its derivatives at $0$ satisfy
$\varphi_{K_j}^{(r)}(0) = 0$ for $r=1,\,\dots,\,q_j-1$,
and $\varphi_{K_j}(0)=1$. In particular, with $q_j=\kappa_j+3$, we have
$\varphi_{K_j}^{(r)}(0)=0$ for $r=1,\,\dots,\,\kappa_j+2$.
Therefore, we have $\int_{\mathbb R} z^r K_j(z)\,\di z = 0$ for $r=1,\,\dots,\,\kappa_j+2$,
and $\int_{\mathbb R} K_j(z)\,\di z = 1$. 
Thus, in particular, 
\begin{equation}\label{(M2)}
\int_{\mathbb R} z^r K_j(z)\,\di z = \delta_{0,r},\quad r=0,\,\dots,\,\lceil \alpha_j\rceil.
\end{equation}
This verifies the moment condition in \eqref{eq:kernelcond} \hyperref[eq:kernelcond]{of the Main Article}.

We now verify that 
\begin{equation}\label{(M3)}
\int_{\mathbb R} |z|^{\alpha_j+1}|K_j(z)|\,\di z < \infty. 
\end{equation}
Since $\varphi_{K_j}\in C^{q_j-1}(\mathbb R) = C^{\kappa_j+2}(\mathbb R)$, its inverse Fourier transform $K_j$ satisfies the decay estimate $
|K_j(x)| \lesssim (1+|x|)^{-(\kappa_j+2)}$, $x\in\mathbb R$.
Since $\kappa_j+2 > \alpha_j+2$ (because $\kappa_j = \lceil \alpha_j\rceil > \alpha_j$), we get
\[
\int_{\mathbb R} |z|^{\alpha_j+1}|K_j(z)|\,\di z
\lesssim \int_{\mathbb R} |z|^{\alpha_j+1}(1+|z|)^{-(\kappa_j+2)}\,\di z < \infty.
\]
Thus, \eqref{(M3)} is satisfied.

The parameter $q_j$ is therefore chosen sufficiently large to guarantee that conditions \eqref{(M2)} and \eqref{(M3)} are satisfied. Consequently, the kernel $K_j$ is a valid univariate kernel of order $\lceil \alpha_j\rceil$ for the anisotropic Nikol'skii regularity $\alpha_j$ in direction $j$.

Since $\widehat F_{n,\mathbf{h}}^\ast$ is, by definition, a $\varrho_n$-approximate $L^1$-minimizer of $\|F - \widehat F_{n,\mathbf{h}}\|_{L^1(\mathbb R^d)}$ over all cumulative distribution functions $F$ on $\mathbb R^d$, see condition \eqref{eq:defminimizer} \hyperref[eq:defminimizer]{of the Main Article}, for any $\mathbf{h} \in (0,\,1)^d$ and any sequence $\varrho_n \to 0$ as $n\to\infty$,
we have $\|\widehat F_{n,\mathbf{h}}^\ast - \widehat F_{n,\mathbf{h}}\|_{L^1(\mathbb R^d)} = \inf_{F \in\mathfrak F} \|F - \widehat F_{n,\mathbf{h}}\|_{L^1(\mathbb R^d)} + \varrho_n$.
Because $F_X$ is a cumulative distribution function, $\inf_{F \in\mathfrak F} \|F - \widehat F_{n,\mathbf{h}}\|_{L^1(\mathbb R^d)}\leq \|F_X - \widehat F_{n,\mathbf{h}}\|_{L^1(\mathbb R^d)}$, hence 
$$\|\widehat F_{n,\mathbf{h}}^\ast - \widehat F_{n,\mathbf{h}}\|_{L^1(\mathbb R^d)} \leq \|F_X - \widehat F_{n,\mathbf{h}}\|_{L^1(\mathbb R^d)} + \varrho_n.$$
Consequently,
\begin{align}\label{eq:stimdist}
\|\widehat F_{n,\mathbf{h}}^\ast - F_X\|_{L^1(\mathbb R^d)} 
&\leq \|\widehat F_{n,\mathbf{h}}^\ast - \widehat F_{n,\mathbf{h}}\|_{L^1(\mathbb R^d)} + \|\widehat F_{n,\mathbf{h}} - F_X\|_{L^1(\mathbb R^d)} \nonumber\\
&\leq \|F_X - \widehat F_{n,\mathbf{h}}\|_{L^1(\mathbb R^d)} + \varrho_n + \|\widehat F_{n,\mathbf{h}} - F_X\|_{L^1(\mathbb R^d)} \nonumber\\
&= 2\|\widehat F_{n,\mathbf{h}} - F_X\|_{L^1(\mathbb R^d)} + \varrho_n.
\end{align}
Thus, the $L^1$-error of $\widehat F_{n,\mathbf{h}}^\ast$ is bounded by twice the $L^1$-error of $\widehat F_{n,\mathbf{h}}$ plus $\varrho_n$. Hence, analyzing $\|\widehat F_{n,\mathbf{h}}^\ast - F_X\|_{L^1(\mathbb R^d)}$ reduces to studying $\|\widehat F_{n,\mathbf{h}} - F_X\|_{L^1(\mathbb R^d)}$ and then choosing $\mathbf{h}$ to minimize this upper bound. The choice 
$\varrho_n=O({n}^{-1/2})$ is seen to preserve the minimax-optimal rates of $\widehat F_{n,\mathbf{h}}$. Noting that $n^{-1/2}\lesssim\psi_n^{\boldsymbol{\alpha}, \boldsymbol{\beta}}$, we have $\varrho_n=O(\psi_n^{\boldsymbol{\alpha}, \boldsymbol{\beta}})$. Taking expectations in \eqref{eq:stimdist} and applying the bound (which will be proved below)
\begin{equation}\label{eq:ratebound}
\mathsf{E}_{(\mu_X \ast \mu_\varepsilon)^{\otimes n}}[\|\widehat F_{n,\mathbf{h}} - F_X\|_{L^1(\mathbb R^d)}] \lesssim \psi_n^{\boldsymbol{\alpha}, \boldsymbol{\beta}}(\log n)^{\frac12\sum_{j=1}^d\mathbf{1}_{\{\beta_j=1/2\}}},
\end{equation}
we obtain
\[\begin{split}
\mathsf E_{(\mu_X \ast \mu_\varepsilon)^{\otimes n}}[ \|\widehat F^\ast_{n,\mathbf{h}} - F_X\|_{L^1(\mathbb R^d)}] &\leq 2\mathsf E_{(\mu_X \ast \mu_\varepsilon)^{\otimes n}}[ \|\widehat F_{n,\mathbf{h}} - F_X\|_{L^1(\mathbb R^d)}] + \varrho_n\\ &\lesssim \psi_n^{\boldsymbol{\alpha}, \boldsymbol{\beta}}\;(\log n)^{\frac12\sum_{j=1}^d\mathbf{1}_{\{\beta_j=1/2\}}}.
\end{split}
\]

We now prove inequality \eqref{eq:ratebound}.
By Proposition~\ref{prop:bias} (approximation error) and Proposition~\ref{prop:variance} (stochastic error), for every \(\mathbf{h}\in(0,\,1)^d\),
\begin{align*}
&\mathsf{E}_{(\mu_X\ast\mu_\varepsilon)^{\otimes n}}[\|\widehat F_{n,\mathbf{h}}-F_X\|_{L^1(\mathbb R^d)}]\\
&\hspace*{3cm}\leq 
\underbrace{\mathsf{E}_{(\mu_X\ast\mu_\varepsilon)^{\otimes n}}[\|\widehat F_{n,\mathbf{h}}-(K_{\mathbf{h}}\ast F_X)\|_{L^1(\mathbb R^d)}]}_{\text{stochastic error}} 
\;+\;
\underbrace{\|F_X-(K_{\mathbf{h}}\ast F_X)\|_{L^1(\mathbb R^d)}}_{\text{approximation error}}\\
&\hspace*{3cm}\lesssim V(\mathbf{h})\;\prod_{j=1}^d |\log h_j|^{\mathbf{1}_{\{\beta_j=1/2\}}/2}
\;+\;
\sum_{j=1}^d h_j^{\alpha_j+1}\\
&\hspace*{3cm}=\mathcal V(\mathbf{h})
\;+\;
\sum_{j=1}^d h_j^{\alpha_j+1}
\eqqcolon\psi_n(\mathbf{h}),
\end{align*}
where $V(\mathbf{h})$ and $\mathcal V(\mathbf{h})$ are defined in \eqref{eq:defV} and  \eqref{eq:varprox} \hyperref[eq:defV]{of the Main Article}, respectively.

We first ignore the logarithmic factors. 
Let \(\mathbf{h}_{\text{opt}}\) be a minimizer of \(\psi_n(\mathbf{h})\). 
For coordinates with \(\beta_j > 1/2\), the first-order optimality conditions give
\[
(\alpha_j+1)h_{\text{opt},j}^{\alpha_j+1} 
= \frac{\beta_j-1/2}{\sqrt{n}}\prod_{k=1}^d h_{\text{opt},k}^{-(\beta_k-1/2)}.
\]
To derive this, we set
\[
\psi_n(\mathbf{h}) =  \frac{1}{\sqrt{n}}\prod_{k=1}^d h_k^{-(\beta_k-1/2)_+} + \sum_{k=1}^d h_k^{\alpha_k+1},
\]
where for coordinates with \(\beta_k \le 1/2\) we have \((\beta_k-1/2) = 0\) so that the corresponding factor is \(h_k^0 = 1\). For coordinates with \(\beta_j > 1/2\), differentiating with respect to \(h_j\) gives
\[
\frac{\partial \psi_n(\mathbf{h})}{\partial h_j}
= -\frac{\beta_j-1/2}{\sqrt{n}} h_j^{-(\beta_j-1/2)-1}
\prod_{\begin{subarray}{l} k=1 \\ k\ne j \end{subarray}}^d h_k^{-(\beta_k-1/2)}
+ (\alpha_j+1)h_j^{\alpha_j}.
\]
Setting this equal to zero gives
\[
(\alpha_j+1)h_j^{\alpha_j+1}
= \frac{\beta_j-1/2}{\sqrt{n}}\prod_{k=1}^d h_k^{-(\beta_k-1/2)}.
\]
Solving this system yields
\[
h_{\text{opt},j} \asymp n^{-\frac{E}{2(\alpha_j+1)}}, \quad j=1,\,\dots,\,d,
\]
where the exponent $E$ is as defined in \eqref{eq:rate} \hyperref[eq:rate]{of the Main Article}. For these bandwidths, the bias and stochastic terms balance:
\[
\sum_{j=1}^d h_{\text{opt},j}^{\alpha_j+1} \asymp V(\mathbf{h}_{\text{opt}}).
\]
Substituting \(\mathbf{h}_{\text{opt}}\) into \(V(\mathbf{h})\) gives
\[
V(\mathbf{h}_{\text{opt}}) = n^{-E/2} 
= \psi_n^{\boldsymbol{\alpha},\boldsymbol{\beta}}.
\]

When $\beta_j = 1/2$ for some $j$, the factor $|\log h_j|$ satisfies $|\log h_j| \asymp \log n$, since $h_j \asymp n^{-E/[2(\alpha_j+1)]}$ and the exponent is positive. Therefore, the rate acquires the extra logarithmic factor
\[
(\log n)^{\frac12\sum_{j=1}^d\mathbf{1}_{\{\beta_j=1/2\}}}.
\]
This completes the proof. 
\end{proof}

We now explicitly construct a superkernel, thereby demonstrating the existence of a kernel of infinite order. The construction proceeds via the Fourier transform: we choose a smooth, compactly supported, even function that equals $1$ near the origin. Its inverse Fourier transform then inherits the required regularity, integrability, and moment-vanishing properties. Such kernels are particularly useful for adaptive estimation procedures, as they avoid the need to know the regularity of the density in advance. Recall that $C^\infty(\mathbb{R})$ denotes the space of infinitely differentiable functions $f:\mathbb{R}\to\mathbb{R}$.

\begin{proof}[Proof of Proposition \ref{prop:exsuperkern} (Existence of a superkernel)]
The assertion is proved by explicit construction. 
Let $\Phi:\, \mathbb R \to [0,\,1]$ be a function of class $C^\infty(\mathbb R)$ such that
\[
\Phi(t) \coloneqq
\begin{cases}
1, &\quad\,\,\,\, |t| \le 1, \\
\psi(|t|), & 1 < |t| < 2, \\
0, & \quad\,\,\,\, |t| \ge 2,
\end{cases}
\]
where $\psi$ is a function of class $C^\infty(\mathbb R)$ that smoothly connects $1$ to $0$ in a monotone way, with $\psi^{(1)}(1) = \psi^{(1)}(2) = 0$ and all derivatives vanishing at $1$ and $2$. Define $\mathcal F[K](t) \coloneqq \Phi(t)$ for $t\in\mathbb R$. By construction, $\mathcal F[K]$ is even, of class $C^\infty(\mathbb R)$, compactly supported in $[-2,\,2]$, and satisfies $\mathcal F[K](t) = 1$ for $|t| \le 1$.

The kernel $K$ is then defined as the inverse Fourier transform of $\Phi$
\[
K(x) \coloneqq \mathcal F^{-1}[\Phi(t)](x)= \frac{1}{2\pi}\int_{\mathbb R}\mathrm{e}^{-\mathrm{i} tx} \Phi(t) \,\di t,\quad x\in\mathbb R.
\]
Since $\Phi$ is even, $K$ is symmetric about zero:
\[
K(-x) = \frac{1}{2\pi}\int_{\mathbb R} \mathrm{e}^{-\mathrm{i} t(-x)} \Phi(t) \, \di t
= \frac{1}{2\pi}\int_{\mathbb R}\mathrm{e}^{-\mathrm{i} ux} \Phi(u)  \, \di u
= K(x),\quad x\in\mathbb R,
\]
where we used the change of variable $u = -t$ and the evenness of $\Phi$.
Since $\mathcal F[K]=\Phi$ and $\Phi\in L^2(\mathbb R)$ (because it is bounded and compactly supported), by Plancherel's theorem we have $K \in L^2(\mathbb R)$ and
\[
\|K\|_{L^2(\mathbb R)}^2 =\frac{1}{2\pi} \|\mathcal F[K]\|_{L^2(\mathbb R)}^2  =\frac{1}{2\pi} \|\Phi\|_{L^2(\mathbb R)}^2 < \infty.
\]
Moreover, since $\Phi$ is of class $C^\infty(\mathbb R)$ with compact support, integration by parts shows that its inverse Fourier transform $K$ decays faster than any power of $1/|x|$, that is, for every $N \in \mathbb N_0$ there exists $C_N > 0$ such that $
|K(x)| \le C_N (1 + |x|)^{-N}$. In particular, $K \in L^1(\mathbb R)$.

The condition $\Phi(t) = 1$ for $|t| \le 1$ implies that $\mathcal F[K](0) =\Phi(0)= 1$ and 
$(\mathcal F[K])^{(j)}(0) = \Phi^{(j)}(0)=0$ for all $j \ge 1$. 
From the relation between moments and derivatives of the Fourier transform,
$\int_{\mathbb R} x^j K(x) \, \di x =\mathrm{i}^j (\mathcal F[K])^{(j)}(0) = 0$ for all $j \ge 1$, and
$\int_{\mathbb R} K(x) \, \di x = \mathcal F[K](0) = 1$. 
Thus, $K$ annihilates all positive moments, that is, it is a kernel of infinite order. 
A kernel $K$ has thus been constructed satisfying all the required properties and the proof is complete.
\end{proof}

\section{Proofs for the minimax lower bound (Theorem~\ref{th:LB})}\phantomsection\label{sec:S3}

To establish a lower bound on the minimax $L^1$-risk for estimating the cumulative distribution function of a density belonging to an anisotropic Nikol'skii class, we introduce some further notation and preliminary results. Following the classical approach, we define a function $H$ which will be used to construct an additive perturbation of a reference density $f_0$.
\begin{definition}\label{defHH}
Let $H:\,\mathbb R\rightarrow\mathbb R$ be a bounded function such that $H\in C^\infty(\mathbb R)\cap L^1(\mathbb R)$ and the following conditions hold:
\begin{enumerate}[label={(\textbf{B\arabic*})}, leftmargin=*, itemsep=0pt, parsep=0pt]
    \item \label{B1} \(\int_{\mathbb{R}} H(x)\,\di x = 0;\)\\[-8pt]
    \item \label{B2} \(\int_0^1 |H^{(-1)}(z)|\,\di z > 0\), where 
          \(H^{(-1)}(z)\coloneqq \int_{-\infty}^z H(x)\,\di x,\; z\in\mathbb{R}\), is a primitive of \(H;\)\\[-8pt]
    \item \label{B3} \(\mathcal{F}[H](t) = 0\) for \(|t| \notin [1,\,2];\)\\[-8pt]
    \item \label{B4} \((\mathcal{F}[H])^{(\ell)}\) is continuous and \(\|(\mathcal{F}[H])^{(\ell)}\|_\infty\coloneqq \sup_{t\in\mathbb R}|(\mathcal{F}[H])^{(\ell)}(t)|<\infty\) 
          for every \(\ell \in \mathbb{N}_0;\)\\[-8pt]
    \item \label{B5} for a given \(J \in \mathbb{N}\), \(|H(x)| = O(|x|^{-2J})\) as \(|x| \to \infty\).
\end{enumerate}
\end{definition}
For the reader's convenience, Lemma~3 of the Supplementary Material in \citet{Scricciolo25b}, pp.~13--14, provides a detailed proof of the existence of a function $H$ satisfying Definition~\ref{defHH}. The proof follows the approach of \citet{Fan1991}, p.~1268, and \citet{Fan&Truong1993}, p.~1921. This result will be useful in our subsequent analysis.

We also consider the density $g_s$ of a symmetric stable distribution with index $s \in (0,\,1]$, scale parameter $1$ and location parameter $0$. Its characteristic function is given by $\mathcal{F}[g_s](t) = \mathrm{e}^{-|t|^s}$ for $t \in \mathbb{R}$. The relevance of these densities is illustrated by the following result, which appears in the proof of Theorem~1 in \cite{ComteLacour2013}, pp.~593--594.
\begin{lemma}[\citet{ComteLacour2013}]\label{lem:chiquad}
Suppose that noise conditions~\ref{eq:E1indep} and~\ref{it:E22} \hyperref[eq:E1indep]{of the Main Article} hold with $\beta_j > 0$ for $j = 1,\, \dots,\, d$. Moreover, assume that each $\varphi_{\varepsilon_j}$ is differentiable and satisfies, for some exponent $\beta'_j \geq \beta_j$ and a constant $B_j>0$,
\begin{equation*}\label{eq:condder}
|t|^{\beta'_j}|\varphi^{(1)}_{\varepsilon_j}(t)| \leq B_j, \quad t \in \mathbb{R},
\end{equation*}
where $\varphi^{(1)}_{\varepsilon_j}$ denotes the first derivative of $\varphi_{\varepsilon_j}$.

For each $j = 1,\, \dots,\, d$, define
\[
q_j(x_j) \coloneqq \left(\frac{1}{c_j} g_{s_j}\!\left(\frac{\cdot}{c_j}\right) \ast f_{\varepsilon_j}\right)(x_j)
= \int_{\mathbb{R}} \frac{1}{c_j} g_{s_j}\!\left(\frac{x_j-u}{c_j}\right) f_{\varepsilon_j}(u)\,\di u, \quad x_j \in \mathbb{R},
\]
for $s_j \in (0,\,1)$ and $c_j > 0$.
Then, for any $h_j \in (0,\,1)$,
\[
\int_{\mathbb{R}} \left[\int_{\mathbb{R}} H\!\left(\frac{x_j - u}{h_j}\right) f_{\varepsilon_j}(u) \, \mathrm{d}u \right]^2 q^{-1}_j(x_j) \, \mathrm{d}x_j = O(h_j^{2\beta_j + 1}).
\]
\end{lemma}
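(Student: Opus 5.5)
The plan is to bound the weight $q_j^{-1}$ by a polynomial, then reduce the weighted integral to two unweighted $L^2$-norms, each computed by Plancherel using the frequency localisation of $H$. Fix $j$ and drop the index $j$. Set
\[
\Delta_h(x)\coloneqq\int_{\mathbb R}H\!\left(\tfrac{x-u}{h}\right)f_{\varepsilon}(u)\,\mathrm{d}u=\bigl(H(\cdot/h)\ast f_\varepsilon\bigr)(x),
\]
so that $\mathcal F[\Delta_h](t)=h\,\mathcal F[H](ht)\,\varphi_\varepsilon(t)$. By \ref{B3}, this transform is supported in $\{1/h\le |t|\le 2/h\}$.

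\emph{Step 1: lower bound on $q$.} The key ingredient is a polynomial lower bound for $q$. The symmetric stable density $g_s$ with $s\in(0,1)$ is continuous, strictly positive, and satisfies $g_s(y)\sim C_s|y|^{-(1+s)}$ as $|y|\to\infty$. Hence $c^{-1}g_s(y/c)\gtrsim (1+|y|)^{-(1+s)}$ for all $y\in\mathbb R$. Choose $A>0$ with $\mu_\varepsilon([-A,A])\ge 1/2$. Restricting the convolution integral defining $q$ to $|u|\le A$ and using $1+|x-u|\le (1+A)(1+|x|)$ there gives
\[
q(x)\;\ge\;\tfrac12\min_{|u|\le A}c^{-1}g_s\!\left(\tfrac{x-u}{c}\right)\;\gtrsim\;(1+|x|)^{-(1+s)}.
\]
Since $s<1$, this yields $q^{-1}(x)\lesssim 1+x^2$. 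I expect this step to be the main (though mild) obstacle: it is where the heavy stable tails are used, and it is the reason the weighted $\chi^2$-type integral can be bounded by second moments at all.

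\emph{Step 2: reduction to two $L^2$-norms.} By Step 1,
\[
\int_{\mathbb R}\Delta_h^2\,q^{-1}\;\lesssim\;\|\Delta_h\|_{L^2(\mathbb R)}^2+\|x\,\Delta_h\|_{L^2(\mathbb R)}^2 .
\]

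\emph{Step 3: the first norm.} By Plancherel, \ref{B4}, the support restriction $|t|\in[1/h,2/h]$, and \ref{it:E22},
\[
\|\Delta_h\|_{L^2(\mathbb R)}^2\lesssim h^2\int_{1/h\le|t|\le2/h}|\varphi_\varepsilon(t)|^2\,\mathrm{d}t\lesssim h^2\cdot h^{-1}\cdot h^{2\beta}=h^{2\beta+1}.
\]

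\emph{Step 4: the second norm.} Since $\mathcal F[x\Delta_h]=-\mathrm{i}\,(\mathcal F[\Delta_h])'$, we have
\[
\bigl(\mathcal F[\Delta_h]\bigr)'(t)=h^2(\mathcal F[H])'(ht)\,\varphi_\varepsilon(t)+h\,\mathcal F[H](ht)\,\varphi_\varepsilon^{(1)}(t).
\]
Differentiability of $\varphi_\varepsilon$ makes this legitimate, and \ref{B4} keeps $(\mathcal F[H])'$ bounded. On the support $|t|\in[1/h,2/h]$:
\begin{itemize}
\item the first term has squared $L^2$-norm $\lesssim h^4\cdot h^{-1}\cdot h^{2\beta}=h^{2\beta+3}$;
\item the second term, using $|\varphi_\varepsilon^{(1)}(t)|\le B|t|^{-\beta'}\lesssim h^{\beta'}$, has squared $L^2$-norm $\lesssim h^2\cdot h^{-1}\cdot h^{2\beta'}\le h^{2\beta+1}$, because $\beta'\ge\beta$ and $h<1$.
\end{itemize}
Combining Steps 2–4 gives the claimed $O(h^{2\beta+1})$. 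Independence of the noise components, \ref{eq:E1indep}, is not needed in this one-dimensional lemma; it only enters later, when the factors are multiplied across coordinates.
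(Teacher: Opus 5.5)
Your proof is correct: the heavy stable tails give $q_j^{-1}(x)\lesssim 1+x^2$, and Plancherel applied to $\Delta_h$ and to $(\mathcal F[\Delta_h])'$ on the annulus $1/h\le |t|\le 2/h$, using the upper bound (E2.2) and the derivative condition, yields $O(h^{2\beta+1})$. The paper does not prove this lemma itself but defers to Comte and Lacour, and your weighted-Plancherel argument is the standard route behind it, inherited from Fan's lower-bound technique.
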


We begin by establishing an algebraic expansion that will be used repeatedly in the construction of the test densities and in the analysis of their Fourier transforms.

\begin{lemma}[First-order expansion of a perturbed product]\label{lem:foexp}
Let $m \ge 2$ be an integer. For each $\ell \in \{1,\, \dots,\, m\}$, let $A_\ell$ and $B_\ell$ be real numbers (or real-valued functions on some domain $\mathcal X$) with
$\max_{1 \le \ell \le m} |A_\ell| \le M_A$ and 
$\max_{1 \le \ell \le m} |B_\ell| \le M_B$
for some constants $0<M_A,\, M_B < \infty$. Let $\delta \in \mathbb R$ satisfy $|\delta| \le 1$. Then,
\[
\prod_{\ell=1}^m (A_\ell + \delta B_\ell)
= \prod_{\ell=1}^m A_\ell
+ \delta \sum_{r=1}^m B_r \prod_{\substack{\ell=1 \\ \ell \ne r}}^m A_\ell
+ R_m(\delta),
\]
where the remainder 
\begin{equation}\label{eq:remainder}
R_m(\delta)
\coloneqq \delta^2\sum_{k=2}^m \,\sum_{\substack{S \subset \{1,\dots,m\} \\ |S| = k}}\, \delta^{k-2} \Bigg( \prod_{j \in S} B_j \Bigg) \prod_{\substack{\ell=1 \\ \ell \notin S}}^m A_\ell,
\end{equation}
with the convention that products over an empty set are equal to $1$. 
Moreover, the remainder satisfies
\begin{equation}\label{eq:remainder1}
|R_m(\delta)| \le \delta^2 \sum_{k=2}^m \binom{m}{k} M_B^k M_A^{m-k}.
\end{equation}
In particular, $R_m(\delta) = O(\delta^2)$ as $\delta \to 0$ for fixed $m,\, M_A,\, M_B$. If the $A_\ell$ and $B_\ell$ are functions, the bound holds pointwise for every $x \in \mathcal X$.
\end{lemma}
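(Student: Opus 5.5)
The proof is a direct multinomial expansion of the product. Since each factor is a binomial $A_\ell+\delta B_\ell$, distributivity gives
\[
\prod_{\ell=1}^m (A_\ell+\delta B_\ell)=\sum_{S\subset\{1,\dots,m\}} \delta^{|S|}\Bigl(\prod_{j\in S}B_j\Bigr)\prod_{\ell\notin S}A_\ell ,
\]
where $S$ records the indices at which the $B$-summand is chosen. This identity is proved by induction on $m$: multiply the expansion for $m-1$ by $A_m+\delta B_m$ and split the subsets of $\{1,\dots,m\}$ according to whether they contain $m$. The empty-product convention covers the case $S=\varnothing$.

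Next, group the terms by $k=|S|$.
\begin{itemize}
\item The term $k=0$ is $\prod_\ell A_\ell$.
\item The terms with $k=1$, i.e.\ $S=\{r\}$, give $\delta\sum_{r}B_r\prod_{\ell\ne r}A_\ell$.
\item The terms with $k\ge2$ carry the factor $\delta^k=\delta^2\delta^{k-2}$. Factoring out $\delta^2$ gives exactly $R_m(\delta)$ as defined in \eqref{eq:remainder}.
\end{itemize}
This proves the identity. Since $m\ge2$, the sum defining the remainder is nonempty, although this is not essential.

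For the bound \eqref{eq:remainder1}, apply the triangle inequality to \eqref{eq:remainder}. Use $|\delta|^{k-2}\le 1$, which holds because $|\delta|\le1$ and $k\ge2$, together with $|\prod_{j\in S}B_j|\le M_B^k$ and $|\prod_{\ell\notin S}A_\ell|\le M_A^{m-k}$. There are $\binom{m}{k}$ subsets of size $k$, which yields $|R_m(\delta)|\le\delta^2\sum_{k=2}^m\binom mk M_B^kM_A^{m-k}$. The right-hand side equals $\delta^2$ times a constant depending only on $m,M_A,M_B$, so $R_m(\delta)=O(\delta^2)$.

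For functions on $\mathcal X$, apply the scalar argument at each fixed $x$. The constants $M_A,M_B$ are uniform bounds, so the estimate holds pointwise with the same constant. There is no real obstacle here; the only care needed is bookkeeping, namely tracking the empty-product convention and the step $|\delta|^{k-2}\le1$, which is exactly where the hypothesis $|\delta|\le1$ is used.
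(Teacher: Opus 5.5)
Your proof is correct, but it obtains the bound \eqref{eq:remainder1} differently from the paper. The paper proves the expansion and the estimate by induction on $m$, treating $R_m(\delta)$ only implicitly as the product minus its first two terms. It multiplies $P_m$ by $A_{m+1}+\delta B_{m+1}$ and writes $R_{m+1}(\delta)$ recursively in terms of $R_m(\delta)$. It then bounds the three resulting pieces using the induction hypothesis and $|\delta|\le 1$, and recombines the coefficients with Pascal's rule. Only afterwards does it derive the closed form \eqref{eq:remainder} from the subset expansion, exactly as you do.

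You instead use the subset expansion once for both claims. You read the bound directly off the closed form via the triangle inequality, $|\delta|^{k-2}\le 1$, and the count of $\binom{m}{k}$ subsets of size $k$. This is shorter and makes the induction on the remainder redundant.

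It also avoids the coefficient bookkeeping in the paper's inductive step. There the $k=2$ coefficient should be combined as $m+\binom{m}{2}=\binom{m+1}{2}$. The paper instead first enlarges $m$ to $\binom{m+1}{2}$ and then asserts equality with the final sum, which overcounts that coefficient by $\binom{m}{2}$; the stated bound is nevertheless true.

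The recursive argument would only be preferable if one wanted the $O(\delta^2)$ conclusion without the explicit form of $R_m$. Since the closed form is needed later in the lower-bound proof anyway, your route is the more economical one.
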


\begin{proof}
We prove the lemma by induction on \(m\), using the implicit definition of the remainder \(R_m(\delta)\) as the difference between the full product and the first two terms of the expansion. We begin with the base case $m = 2$. 
We have
\[
\begin{aligned}
\prod_{\ell=1}^2 (A_\ell + \delta B_\ell) 
&= (A_1 + \delta B_1)(A_2 + \delta B_2) \\
&= A_1 A_2 + \delta A_1 B_2 + \delta B_1 A_2 + \delta^2 B_1 B_2 \\
&= \prod_{\ell=1}^2 A_\ell
+ \delta \sum_{r=1}^2 B_r \prod_{\substack{\ell=1 \\ \ell \ne r}}^2 A_\ell
+ \delta^2 B_1 B_2.
\end{aligned}
\]
Thus, $R_2(\delta) = \delta^2 B_1 B_2$. This agrees with the general formula for the remainder term in \eqref{eq:remainder}.
Using the bound $\max_{1 \le \ell \le 2} |B_\ell| \le M_B$, we obtain 
$|R_2(\delta)| = \delta^2 |B_1 B_2| \le \delta^2 M_B^2$, which 
matches the bound claimed in \eqref{eq:remainder1} for $m=2$.

We now consider the inductive step. Assume the lemma holds for $m \ge 2$, \emph{i.e.}, the expansion and the bound \eqref{eq:remainder1} hold for $m$. We prove it for $m+1$.
Let $P_m \coloneqq \prod_{\ell=1}^m (A_\ell + \delta B_\ell)$. 
Multiplying by the $(m+1)$-th factor and using the induction hypothesis, we obtain
\[
\begin{aligned}
P_{m+1} &= P_m (A_{m+1} + \delta B_{m+1})\\
&=\biggl(\prod_{\ell=1}^m A_\ell
+ \delta \sum_{r=1}^m B_r \prod_{\substack{\ell=1 \\ \ell \ne r}}^m A_\ell
+ R_m(\delta)\biggr)(A_{m+1} + \delta B_{m+1})\\
&= \left( \prod_{\ell=1}^m A_\ell \right) A_{m+1}
+ \delta \left( \prod_{\ell=1}^m A_\ell \right) B_{m+1} + \delta \left( \sum_{r=1}^m B_r \prod_{\substack{\ell=1 \\ \ell \ne r}}^m A_\ell \right) A_{m+1}
+ \delta^2 \left( \sum_{r=1}^m B_r \prod_{\substack{\ell=1 \\ \ell \ne r}}^m A_\ell \right) B_{m+1} \\
&\hspace*{8.7cm} + R_m(\delta) A_{m+1} + \delta R_m(\delta) B_{m+1}\\
&=\prod_{\ell=1}^{m+1} A_\ell+\delta \left[ B_{m+1} \prod_{\substack{\ell=1 \\ \ell \ne m+1}}^{m+1} A_\ell + \sum_{r=1}^m B_r \prod_{\substack{\ell=1 \\ \ell \ne r}}^{m+1} A_\ell\right]+
 \delta^2 \left( \sum_{r=1}^m B_r \prod_{\substack{\ell=1 \\ \ell \ne r}}^m A_\ell \right) B_{m+1}\\
&\hspace*{8.7cm}+ R_m(\delta) A_{m+1} + \delta R_m(\delta) B_{m+1}\\
&=\prod_{\ell=1}^{m+1} A_\ell+\delta\sum_{r=1}^{m+1} B_r \prod_{\substack{\ell=1 \\ \ell \ne r}}^{m+1} A_\ell+
\underbrace{ \delta^2 \left( \sum_{r=1}^m B_r \prod_{\substack{\ell=1 \\ \ell \ne r}}^m A_\ell \right) B_{m+1}+ R_m(\delta) A_{m+1} + \delta R_m(\delta) B_{m+1}}_{\eqqcolon R_{m+1}(\delta)},
\end{aligned}
\]
that is, the desired expansion
\[
P_{m+1}
= \prod_{\ell=1}^{m+1} A_\ell
+ \delta \sum_{r=1}^{m+1} B_r \prod_{\substack{\ell=1 \\ \ell \ne r}}^{m+1} A_\ell
+ R_{m+1}(\delta).
\]
We now bound the remainder term $R_{m+1}(\delta)$. Using the bounds on $A_\ell$ and $B_r$, we obtain
\[
\left| \delta^2 \left( \sum_{r=1}^m B_r \prod_{\substack{\ell=1 \\ \ell \ne r}}^m A_\ell \right) B_{m+1} \right|
\le \delta^2 m M_B^2 M_A^{m-1}.
\]
Also, using the induction bound on $R_m(\delta)$, we get
\[
|R_m(\delta) A_{m+1}| \le \delta^2 \sum_{k=2}^m \binom{m}{k} M_B^k M_A^{m+1-k}
\quad\text{and}\quad
|\delta R_m(\delta) B_{m+1}| 
\le \delta^2 \sum_{k=2}^m \binom{m}{k} M_B^{k+1} M_A^{m-k},
\]
where we used $|\delta| \le 1$ in the last inequality on the right-hand side. Combining these bounds,
\[
\begin{aligned}
|R_{m+1}(\delta)| 
&\le \delta^2 \Biggl[ m M_B^2 M_A^{m-1} + \sum_{k=2}^m \binom{m}{k} M_B^k M_A^{m+1-k} + \sum_{k=2}^m \binom{m}{k} M_B^{k+1} M_A^{m-k} \Biggr] \\
&\le \delta^2 \Biggl[ \binom{m+1}{2} M_B^2 M_A^{m-1} + \sum_{k=2}^m \binom{m}{k} M_B^k M_A^{m+1-k} + \sum_{k'=3}^{m+1} \binom{m}{k'-1} M_B^{k'} M_A^{m+1-k'} \Biggr] \\
&= \delta^2 \sum_{k=2}^{m+1} \binom{m+1}{k} M_B^k M_A^{m+1-k},
\end{aligned}
\]
where
\[
m \le \binom{m+1}{2} \quad \text{for } m \ge 2
\text{ and }
\binom{m}{k} + \binom{m}{k-1} = \binom{m+1}{k}.
\] 
This completes the induction.

Since $m$, $M_A$ and $M_B$ are fixed (independent of $\delta$), the remainder satisfies $R_m(\delta) = O(\delta^2)$ as $\delta \to 0$. The bound is pointwise if the $A_\ell$ and $B_\ell$ are functions, because the inequalities hold for each fixed $x\in\mathcal X$.

We now prove relationship \eqref{eq:remainder}.
The remainder is defined implicitly as
\[
R_m(\delta) = \prod_{\ell=1}^m (A_\ell + \delta B_\ell)
- \prod_{\ell=1}^m A_\ell
- \delta \sum_{r=1}^m B_r \prod_{\substack{\ell=1 \\ \ell \ne r}}^m A_\ell.
\]
Expanding the product, we obtain
\[
\prod_{\ell=1}^m (A_\ell + \delta B_\ell)
= \sum_{S \subseteq \{1,\dots,m\}} \delta^{|S|}
\Bigg( \prod_{j \in S} B_j \Bigg)
\prod_{\substack{\ell=1 \\ \ell \notin S}}^m A_\ell,
\]
which follows by induction on $m\geq 2$.
Separating the terms according to $|S|$, the terms corresponding to $|S|=0$ and $|S|=1$ exactly cancel with the first two terms in the definition of $R_m(\delta)$ (the product of the \(A_\ell\)'s and the first-order correction). Hence,
\[
R_m(\delta)
= \sum_{k=2}^m \sum_{\substack{S \subset \{1,\dots,m\} \\ |S| = k}} \delta^k
\Bigg( \prod_{j \in S} B_j \Bigg)
\prod_{\substack{\ell=1 \\ \ell \notin S}}^m A_\ell.
\]
Factoring out $\delta^2$, we get the expression in \eqref{eq:remainder}.
\end{proof}
We are now ready to prove Theorem~\ref{th:LB}.

\begin{proof}[Proof of Theorem~\ref{th:LB} (Lower bound on the minimax $L^1$-risk)]
The strategy is to construct two probability densities \(f_0\) and \(f_1\) in the anisotropic Nikol'skii class \(\mathcal N_1^d(\boldsymbol\alpha,\, L)\) such that their cumulative distribution functions are well separated in the \(L^1\)-distance, while their convolutions with the noise density, \(f_0 \ast f_\varepsilon\) and \(f_1 \ast f_\varepsilon\), are close in \(\chi^2\)-divergence. This makes the two resulting observation distributions difficult to distinguish, allowing Le Cam's two-point method to yield the minimax rate.

We distinguish two cases depending on the cardinality of $\mathcal P$.
If \(|\mathcal P| = 0\), there are no active coordinates; we follow the approach in the literature based on a translation perturbation of the baseline density \(f_0\). The proof is an adaptation to the cumulative distribution function setting of that of \citet{ComteLacour2013}. If \(|\mathcal P| \ge 1\), the translation perturbation used in the previous case does not extend to the case where multiple active coordinates are present, as it would not yield a perturbed density belonging to the anisotropic Nikol'skii class with a constant that is independent of \(n\). A novel construction of the two test densities \(f_0\) and \(f_1\) is therefore required. 

The two cases are treated separately: we first handle \(|\mathcal P| = 0\), and then address \(|\mathcal P| \ge 1\).\\

\noindent\textbf{Case $|\mathcal P| = 0$.}\\[5pt]
The proof follows the general strategy of \cite{ComteLacour2013} for density deconvolution, with adaptations required for the cumulative distribution function setting. Following \citet{dedecker:michel:2013} for the supersmooth case, we exploit the primitive \(H^{(-1)}\) of the kernel \(H\) to control the \(L^1\)-distance between cumulative distribution functions. 
However, the present setting involves ordinary smooth noise and anisotropic Nikol'skii classes; the verification that the perturbed densities belong to \(\mathcal{N}_1^d(\boldsymbol{\alpha},\,L)\) is new and does not appear in either \citet{ComteLacour2013} or \citet{dedecker:michel:2013}. These modifications yield a lower bound for the \(L^1\)-risk over anisotropic Nikol’skii classes.

We now proceed with the detailed construction. As in the proof of Theorem~3 in \citet
{ComteLacour2013}, pp.~599--603, we define the density
\[f_0(\mathbf{x})\coloneqq\prod_{j=1}^d\frac{1}{c_j}g_{s_j}\biggl(\frac{x_j}{c_j}\biggr), \quad\mathbf{x}\in\mathbb R^d,\]
where $g_{s_j}$ is the density of a symmetric stable law with characteristic function $\mathcal F [g_{s_j}](t)=\mathrm{e}^{-|t|^{s_j}}$, $t\in\mathbb R$, with parameters ${s_j}\in(0,\,1)$ and $c_j>0$.

Let $H$ be an even function satisfying Definition~\ref{defHH} with $J\in\mathbb N$ such that \(2J > \sum_{j=1}^d(s_j+1)\), \emph{e.g.}, \(J=d+1\). Then, by condition \ref{B5},
\(|H(x)| = O(|x|^{-2J})\) as \(|x| \to \infty\). 

Since \(\mathcal P = \emptyset\), we have \(\beta_j \le 1/2\) for all \(j=1,\,\dots,\,d\). 
We set \(h_j = 1\) for every \(j=1,\,\dots,\,d\).
For $\mathbf{k}_0=(1,\,\dots,\,1)^\top\in\mathcal{K}$, define the grid points
$x_{\mathbf{k}_0j} \coloneqq a_j + {k_0}_j(b_j-a_j)h_j=b_j$, $j=1,\,\dots,\,d$,
where $a_j,\,b_j$ are chosen so that $b_j-a_j>0$ and
\[
\int_0^{(b_j-a_j)/2} |H^{(-1)}(-z)|\,\mathrm{d}z > 0.
\]
If \(H\) is even, since \(\int_{\mathbb{R}} H(x)\,\mathrm{d}x = 0\) by condition \ref{B1}, then its primitive \(H^{(-1)}\) is odd. Indeed, for any \(z\in\mathbb{R}\),
\[
\begin{aligned}
H^{(-1)}(-z) &= \int_{-\infty}^{-z} H(x)\,\mathrm{d}x \\
&= \int_{\infty}^{z} H(-u)\,(-\mathrm{d}u) \qquad (x = -u,\; \mathrm{d}x = -\mathrm{d}u)\\
&= \int_{z}^{\infty} H(-u)\,\mathrm{d}u \\
&= \int_{z}^{\infty} H(u)\,\mathrm{d}u \qquad (\text{since } H \text{ is even})\\
&= \int_{\mathbb R} H(u)\,\mathrm{d}u - \int_{-\infty}^{z} H(u)\,\mathrm{d}u \\
&= 0 - H^{(-1)}(z) = -H^{(-1)}(z).
\end{aligned}
\]
Thus, \(H^{(-1)}(-z) = -H^{(-1)}(z)\), so \(|H^{(-1)}(-z)| = |H^{(-1)}(z)|\) for all $z\in\mathbb R$.
Condition \ref{B2} gives \(\int_0^1 |H^{(-1)}(z)|\,\mathrm{d}z > 0\). Hence, by choosing \(a_j,\, b_j\) such that \((b_j-a_j)/2= 1\), \emph{e.g.}, \(a_j = 0,\; b_j = 2\), we obtain
\[
\int_{0}^{(b_j-a_j)/2} |H^{(-1)}(-z)|\,\mathrm{d}z =\int_{0}^{(b_j-a_j)/2} |H^{(-1)}(z)|\,\mathrm{d}z= \int_0^1 |H^{(-1)}(z)|\,\mathrm{d}z > 0,
\]
which verifies the required positivity condition. 
The choice \(a_j=0\) and \(b_j=2\) simplifies the construction: indeed, \(x_{\mathbf{k}_0j} = b_j=2\). The shifted arguments become
\begin{equation}\label{eq:translate}
\frac{x_j - x_{\mathbf{k}_0j}}{2h_j} = \frac{x_j - x_{\mathbf{k}_0j}}{2}=\frac{x_j}{2} - {k_0}_j=\frac{x_j}{2} - 1.
\end{equation}
For a constant $c>0$ small enough to be specified later, define the function
\[
f_1(\mathbf{x}) \coloneqq f_0(\mathbf{x}) + cn^{-\frac12}\prod_{j=1}^d  H\!\left(\frac{x_j}{2}-1\right),\quad \mathbf{x}\in\mathbb{R}^d,
\]
which will be proved to be a perturbed probability density. The constant \(c>0\) is chosen sufficiently small, independently of \(n\), to satisfy two requirements: 
first, to ensure that \(f_1\ge 0\) for all sufficiently large \(n\) (this is verified below); 
second, to guarantee that the constant in the \(\chi^2\)-divergence bound 
\(\chi^2(f_1 \ast f_\varepsilon \,\|\, f_0 \ast f_\varepsilon) \le C c^2n^{-1}\) 
can be made arbitrarily small. 
The independence of \(c\) from \(n\) is crucial: it ensures that the separation rate 
\(\|F_1 - F_0\|_{L^1(\mathbb R^d)} \asymp c n^{-1/2}\) 
is of the correct order \(n^{-1/2}\), with a constant that does not vanish as \(n\to\infty\).

Since $H$ integrates to zero by condition \ref{B1}, which implies 
$$\int_{\mathbb R^d}\prod_{j=1}^d  H\!\left(\frac{x_j}{2}-1\right)\,\di \mathbf x=0,$$ 
we have $\int_{\mathbb{R}^d} f_1(\mathbf{x})\,\mathrm{d}\mathbf{x} = \int_{\mathbb{R}^d} f_0(\mathbf{x})\,\mathrm{d}\mathbf{x} = 1$. 
We now show that \(f_1\) is strictly positive for all sufficiently large \(n\).
We have
\[
|f_1(\mathbf{x})-f_0(\mathbf{x})|
\le c n^{-1/2}\prod_{j=1}^d \left|H\!\left(\frac{x_j-2}{2}\right)\right|.
\]
For any fixed \(R>0\), the density \(f_0\) attains a positive minimum on the compact set \(\{\|\mathbf{x}\|_2\le R\}\), where \(\|\mathbf{x}\|_2 \coloneqq (\sum_{j=1}^d x_j^2)^{1/2}\) denotes the Euclidean norm of \(\mathbf{x}\). Thus, for \(c>0\) small enough depending on \(R\), we have 
$
c\, n^{-1/2} (C_H)^d \le (1/2) \inf_{\|\mathbf{x}\|_2\le R} f_0(\mathbf{x})$,
where \(C_H \coloneqq \sup_{z\in\mathbb R} |H(z)| < \infty\).
Fix \(R_0 = 4\sqrt{d}+1\). For \(\|\mathbf{x}\|_2 > R_0\), since \(\|\mathbf{x}\|_2 \le \sqrt{d}\max_{j=1,\dots,d}|x_j|\), we have \(\max_{j=1,\dots,d}|x_j| \ge R_0/\sqrt{d} > 4\). Choose \(j_0\) such that \(|x_{j_0}| = \max_{j=1,\dots,d}|x_j|\). Then, \(|x_{j_0}| > 4\). Therefore,
\[
\left|\frac{x_{j_0}-2}{2}\right| \ge \frac{|x_{j_0}|-2}{2} \ge \frac{|x_{j_0}|}{4}.
\]
Using the decay property of \(H\) from condition \ref{B5}, \(|H(z)| \le C_J (1+|z|)^{-2J}\), we obtain
\[
\left|H\!\left(\frac{x_{j_0}-2}{2}\right)\right|
\le C_J \left(1+\frac{|x_{j_0}|}{4}\right)^{-2J}
\le C_J' |x_{j_0}|^{-2J}.
\]
For the other coordinates \(j \neq j_0\), we use the boundedness of \(H\).
Hence,
\[
\prod_{j=1}^d \left|H\!\left(\frac{x_j-2}{2}\right)\right|
\le C_J' C_H^{d-1} |x_{j_0}|^{-2J}.
\]
Consequently, $|f_1(\mathbf{x})-f_0(\mathbf{x})| \lesssim n^{-1/2} |x_{j_0}|^{-2J}$. Since $|x_{j_0}| \ge \|\mathbf{x}\|_2/\sqrt{d}$, we have $$|x_{j_0}|^{-2J} \le (\sqrt{d})^{2J} \|\mathbf{x}\|_2^{-2J}.$$ Hence, $|f_1(\mathbf{x})-f_0(\mathbf{x})| \lesssim n^{-1/2} \|\mathbf{x}\|_2^{-2J}$.
Now, note that there exists a constant $c_0>0$ such that for all $\|\mathbf{x}\|_2 > R_0$, we have $f_0(\mathbf{x}) \ge c_0 (1+\|\mathbf{x}\|_2)^{-\sum_{j=1}^d (s_j+1)}$. Indeed, for $s_j \in (0,1)$, the symmetric stable density satisfies $g_{s_j}(x) \sim C_{s_j} |x|^{-1-s_j}$ as $|x|\to\infty$.
Therefore,
\[
\frac{|f_1(\mathbf{x})-f_0(\mathbf{x})|}{f_0(\mathbf{x})}
\lesssim n^{-1/2}
\|\mathbf{x}\|_2^{-2J+\sum_{j=1}^d (s_j+1)}.
\]
Since \(J\in\mathbb N\) is chosen so that \(2J > \sum_{j=1}^d (s_j+1)\), the exponent of \(\|\mathbf{x}\|_2\) is negative. Hence, the right-hand side is maximal at \(\|\mathbf{x}\|_2 = R_0\). Thus, for all \(\|\mathbf{x}\|_2 > R_0\),
\[
\frac{|f_1(\mathbf{x})-f_0(\mathbf{x})|}{f_0(\mathbf{x})}
\lesssim n^{-1/2} R_0^{-2J+\sum_{j=1}^d (s_j+1)}.
\]
Since \(n^{-1/2} \le 1\) for \(n\ge 1\), by choosing \(c>0\) sufficiently small (depending on \(C_J'\), \(d\), \(R_0\), \(C_H\), and \(\inf_{\|\mathbf{x}\|_2\le R_0} f_0(\mathbf{x})\)), we can ensure that
$
|f_1(\mathbf{x})-f_0(\mathbf{x})| \le f_0(\mathbf{x})/2$ for all $\|\mathbf{x}\|_2 > R_0$,
for all sufficiently large \(n\). For the compact set \(\{\|\mathbf{x}\|_2 \le R_0\}\), the uniform bound
$
|f_1(\mathbf{x})-f_0(\mathbf{x})|
\le c n^{-1/2} (C_H)^d$
ensures that, for \(c>0\) small enough depending on \(\inf_{\|\mathbf{x}\|_2\le R_0} f_0(\mathbf{x})\) and for all sufficiently large \(n\),
\[
|f_1(\mathbf{x})-f_0(\mathbf{x})|
\le \frac12 \inf_{\|\mathbf{x}\|_2\le R_0} f_0(\mathbf{x})
\le \frac12 f_0(\mathbf{x}),
\]
because \(\inf_{\|\mathbf{x}\|_2\le R_0} f_0(\mathbf{x}) > 0\) is a fixed positive constant.

Combining the two regimes, for all sufficiently large \(n\) and all \(\mathbf{x}\in\mathbb R^d\),
$
f_1(\mathbf{x}) = f_0(\mathbf{x}) + [f_1(\mathbf{x})-f_0(\mathbf{x})]
\ge f_0(\mathbf{x}) - f_0(\mathbf{x})/2
= f_0(\mathbf{x})/2 > 0$.
Thus, \(f_1\) is a strictly positive probability density for all sufficiently large \(n\).

\smallskip

The rest of the proof can be split into three steps.\\[5pt]
\noindent{\em{$\bullet$ $f_0$ and $f_1$ belong to the anisotropic Nikol'skii class $\mathcal N_1^d(\boldsymbol{\alpha},\,L)$.}}\\[3pt]
We begin by showing that $f_0 \in \mathcal N_1^d(\boldsymbol\alpha,\, L')$ for some suitable constant $L'>0$. 
Recalling that \(\partial_j^{m_j} f_0\) denotes the partial derivative of $f_0$ of order \(m_j\coloneqq \lceil \alpha_j \rceil -1\) with respect to the \(j\)-th coordinate, for any $\delta\in\mathbb{R}$, we have
\begin{align*}
\bigl\|\Delta_{\delta\mathbf e_j}\partial^{m_j}_j f_0
\bigr\|_{L^1(\mathbb R^d)}
&= \int_{\mathbb{R}^d}\big| \Delta_{\delta\mathbf e_j}
\partial^{m_j}_j f_0
(\mathbf{x})\big| \, \mathrm{d}\mathbf{x} \\
&=\frac{1}{c_j^{m_j}}\int_{\mathbb{R}^d}
\biggl| g^{(m_j)}_{s_j}\biggl(\frac{x_j+\delta}{c_j}\biggr)-g^{(m_j)}_{s_j}\biggl(\frac{x_j}{c_j}\biggr)\biggr| \prod_{\substack{r=1 \\ r \neq j}}^d\frac{1}{c_r}\biggl|g_{s_r}\!\left(\frac{x_r}{c_r}\right)\biggr|\, \mathrm{d}\mathbf{x}
\\&=\frac{1}{c_j^{m_j}}c_j\bigg\| g^{(m_j)}_{s_j}\bigg(\cdot+\frac{\delta}{c_j}\bigg)-g^{(m_j)}_{s_j}(\cdot)\bigg\|_{L^1(\mathbb R)}\underbrace{\prod_{\substack{r=1 \\ r \neq j}}^d\|g_{s_r}\|_{L^1(\mathbb R)}}_{=1}\\
&=\frac{1}{c_j^{m_j-1}}\bigg\| g^{(m_j)}_{s_j}\bigg(\cdot+\frac{\delta}{c_j}\bigg)-g^{(m_j)}_{s_j}(\cdot)\bigg\|_{L^1(\mathbb R)}.
\end{align*}
We prove that, for every \(j=1,\,\dots,\,d\), setting \(\delta_j\coloneqq \delta/c_j\),
\begin{equation}\label{eq:main-est}
\bigl\| g^{(m_j)}_{s_j}(\cdot+\delta_j)-g^{(m_j)}_{s_j}(\cdot)\bigr\|_{L^1(\mathbb R)}\le C(\alpha_j,\,c_j,\,s_j)\,|\delta|^{\alpha_j-m_j}.
\end{equation}
We now distinguish two cases: \(|\delta|\leq 1\) and \(|\delta|>1\).
\begin{itemize}
\item Case \(|\delta|\leq1\). The proof uses the elementary fact that all derivatives of a symmetric stable density are in \(L^1(\mathbb R)\).
Indeed, for \(s\in(0,\,1)\),
\[
g_s^{(k)}(x) \sim C_{s,k} |x|^{-(1+s+k)} \quad \text{as }|x|\to\infty,
\]
and \(g_s^{(k)}\) is bounded near \(0\), so \(g_s^{(k)}\in L^1(\mathbb R)\) for every \(k\ge 0\).
Using the mean-value theorem,
\[
\bigl\| g^{(m_j)}_{s_j}(\cdot+\delta_j)-g^{(m_j)}_{s_j}(\cdot)\bigr\|_{L^1(\mathbb R)}
\le |\delta_j|\, \bigl\| g^{(m_j+1)}_{s_j}\bigr\|_{L^1(\mathbb R)}
= c_j^{-1}\bigl\| g^{(m_j+1)}_{s_j}\bigr\|_{L^1(\mathbb R)}\, |\delta|.
\]
Because \(|\delta|\le 1\) and \(\alpha_j-m_j\le 1\), we have \(|\delta|\le |\delta|^{\alpha_j-m_j}\). Hence
\[
\bigl\| g^{(m_j)}_{s_j}(\cdot+\delta_j)-g^{(m_j)}_{s_j}(\cdot)\bigr\|_{L^1(\mathbb R)}
\le C(\alpha_j,\,c_j,\,s_j)\, |\delta|^{\alpha_j-m_j},
\]
with 
\[
C(\alpha_j,\,c_j,\,s_j)\coloneqq c_j^{-1}\bigl\| g^{(m_j+1)}_{s_j}\bigr\|_{L^1(\mathbb R)} < \infty.
\]
\item Case \(|\delta|>1\). Recall that, for \(s\in(0,\,1)\), 
\[
g_s(x) \sim C_s \, |x|^{-(1+s)}\,\,\,\text{ as }|x|\to\infty,
\]
and, for any integer \(k\ge 0\), the derivatives behave as
\(
g_s^{(k)}(x) \sim C_{s,k} \, |x|^{-(1+s+k)}\) for \(|x|\to\infty\).
This follows from term-by-term differentiation of the asymptotic expansion.
The function \(|x|^{-(1+s+k)}\) is integrable at infinity when \(
1+s+k > 1\), which is always true because \(s>0\) and \(k\ge0\).  
Near the origin, \(g_s^{(k)}\) is continuous and hence bounded on compact sets, so it is locally integrable.
Consequently, \(\|g_s^{(k)}\|_{L^1(\mathbb R)} <\infty\) for every \(k\ge0\).
In particular, \(\|g_{s_j}^{(m_j)}\|_{L^1(\mathbb R)} < \infty\), which yields
\[
\bigl\| g^{(m_j)}_{s_j}(\cdot+\delta_j)-g^{(m_j)}_{s_j}(\cdot)\bigr\|_{L^1(\mathbb R)}
\le 2\bigl\|g_{s_j}^{(m_j)}\bigr\|_{L^1(\mathbb R)}.
\]
Because \(|\delta|>1\) implies \(|\delta|^{\alpha_j-m_j}> 1\), we obtain
\[
\bigl\| g^{(m_j)}_{s_j}(\cdot+\delta_j)-g^{(m_j)}_{s_j}(\cdot)\bigr\|_{L^1(\mathbb R)}
< 2\bigl\|g_{s_j}^{(m_j)}\bigr\|_{L^1(\mathbb R)}\,|\delta|^{\alpha_j-m_j}.
\]
\end{itemize}
Thus, \eqref{eq:main-est} holds for all \(\alpha_j>0\). We conclude that \(f_0\in\mathcal N_1^d(\boldsymbol\alpha,\,L')\), with \[L'\geq \max_{1\leq j\leq d}\max\big\{C(\alpha_j,\,c_j,\,s_j),\,2\|g_{s_j}^{(m_j)}\|_{L^1(\mathbb R)}\big\}.\]

\smallskip

We now prove that \((f_1-f_0)\in\mathcal N_1^d(\boldsymbol\alpha,\,L'')\) for some suitable constant \(L''>0\). For every \(j=1,\,\dots,\,d\), set
\[
H_{j}(x_j)\coloneqq H\!\left(\frac{x_j - 2}{2}\right)
\quad\text{and}\quad
H_{j}^{(m_j)}(x_j)\coloneqq 2^{-m_j} H^{(m_j)}\!\left(\frac{x_j - 2}{2}\right), \quad x_j\in\mathbb R.
\]
Since the partial derivative \(\partial_j^{m_j}\) acts only on the \(j\)-th factor and \(f_1-f_0 = c n^{-1/2} \prod_{r=1}^d H_r(x_r)\), we have
\[
\partial_j^{m_j} (f_1-f_0)(\mathbf{x})
= c n^{-1/2} H_{j}^{(m_j)}(x_j) 
   \prod_{\substack{r=1 \\ r \neq j}}^d H_{r}(x_r),\quad \mathbf{x}\in\mathbb R^d.
\]
For any \(\delta\in \mathbb R\), 
\[
\Delta_{\delta\mathbf e_j}\partial_j^{m_j} (f_1-f_0)(\mathbf{x})
= c n^{-1/2}
\bigl[ H_{j}^{(m_j)}(x_j + \delta) - H_{j}^{(m_j)}(x_j) \bigr]
\prod_{\substack{r=1 \\ r \neq j}}^d H_{r}(x_r),\quad \mathbf{x}\in\mathbb R^d.
\]
Because the variables separate, the \(L^1\)-norm is bounded by
\begin{equation}\label{eq:diff}
\big\|\Delta_{\delta\mathbf e_j}\partial_j^{m_j} (f_1-f_0)\big\|_{L^1(\mathbb R^d)} \le 
c n^{-1/2}
\big\| H_{j}^{(m_j)}(\cdot + \delta) - H_{j}^{(m_j)}(\cdot)\big\|_{L^1(\mathbb R)}
\prod_{\substack{r=1 \\ r \neq j}}^d \| H_{r} \|_{L^1(\mathbb R)}.
\end{equation}
For \(r \neq j\), we have \(\| H_{r} \|_{L^1(\mathbb R)}= 2\|H\|_{L^1(\mathbb R)} <\infty\).
Because \(\mathcal F[H]\) has compact support (condition \ref{B3}) and is \(C^\infty\) (condition \ref{B4}), its inverse Fourier transform \(H\) is smooth, bounded, and decays faster than any polynomial. The same holds for all derivatives \(H^{(k)}\), so \(H^{(k)}\in L^1(\mathbb R)\) for every \(k\ge 0\). For any \(\delta\in\mathbb R\),
\begin{itemize}
\item if \(|\delta|\le 1\),
    \[
    \|H^{(m_j)}(\cdot+\delta)-H^{(m_j)}(\cdot)\|_{L^1(\mathbb R)}
    \le |\delta|\,\|H^{(m_j+1)}\|_{L^1(\mathbb R)}
    \le \|H^{(m_j+1)}\|_{L^1(\mathbb R)} |\delta|^{\alpha_j-m_j},
    \]
    because \(|\delta|\le |\delta|^{\alpha_j-m_j}\);\\
\item if \(|\delta|> 1\),
    \[
    \|H^{(m_j)}(\cdot+\delta)-H^{(m_j)}(\cdot)\|_{L^1(\mathbb R)}
    \le 2\|H^{(m_j)}\|_{L^1(\mathbb R)} < 2\|H^{(m_j)}\|_{L^1(\mathbb R)} |\delta|^{\alpha_j-m_j}.
    \]
\end{itemize}
Thus, there exists \(L_j>0\) such that
\[
\|H^{(m_j)}(\cdot+\delta)-H^{(m_j)}(\cdot)\|_1 \le L_j|\delta|^{\alpha_j-m_j}.
\]
Hence, for each coordinate direction \(j\), the function \(H\) satisfies the one-dimensional Nikol'skii condition 
\(H\in \mathcal N_1^1(\alpha_j,\, L_j)\). For the \(j\)-th factor, the change of variable \(u_j = x_j - 2\) gives
\[\begin{split}
\big\| H_{j}^{(m_j)}(\cdot + \delta) - H_{j}^{(m_j)}(\cdot) \big\|_{L^1(\mathbb R)} 
&= \frac{2}{2^{m_j}} 
   \bigg\| H^{(m_j)}\bigg(\cdot + \frac{\delta}{2}\bigg) - H^{(m_j)}(\cdot) \bigg\|_{L^1(\mathbb R)}\\
&\le \frac{2}{2^{m_j}} \, L_j 
   \biggl(\frac{|\delta|}{2}\biggr)^{\alpha_j - m_j}\\&= L_j 2^{1-\alpha_j} |\delta|^{\alpha_j - m_j}.
\end{split}
\]
Inserting these bounds in \eqref{eq:diff}, we obtain
\[\begin{split}
\bigl\|\Delta_{\delta\mathbf e_j}\partial_j^{m_j} (f_1-f_0)\bigr\|_{L^1(\mathbb R^d)} 
&\le c n^{-1/2} L_j 2^{1-\alpha_j} |\delta|^{\alpha_j - m_j}
   \prod_{\substack{r=1 \\ r \neq j}}^d 2 \|H\|_{L^1(\mathbb R)}\\
&= c  n^{-1/2}2^{d-\alpha_j} L_j \|H\|_{L^1(\mathbb R)}^{d-1}
   |\delta|^{\alpha_j - m_j}.
\end{split}
\]
Define $
D_j \coloneqq c\,2^{d-\alpha_j} L_j \|H\|_{L^1(\mathbb R)}^{d-1}$.
Then, $
\bigl\|\Delta_{\delta\mathbf e_j}\partial_j^{m_j} (f_1-f_0)\bigr\|_{L^1(\mathbb R^d)} 
\le D_j n^{-1/2} |\delta|^{\alpha_j - m_j}$. Since \(n^{-1/2} \le 1\) for \(n\ge 1\), we have
\[
\bigl\|\Delta_{\delta\mathbf e_j}\partial_j^{m_j} (f_1-f_0)\bigr\|_{L^1(\mathbb R^d)} 
\le D_j |\delta|^{\alpha_j - m_j}.
\]
with \(D_j\) independent of \(n\). Therefore, \((f_1-f_0)\in \mathcal N_1^d(\boldsymbol\alpha,\,L'')\), where $L'' \coloneqq \max_{1\le j\le d} D_j < \infty$.
We conclude that \(f_0,\,f_1\in\mathcal N_1^d(\boldsymbol{\alpha},\,L)\), with 
$L \coloneqq L' + L'' < \infty$. This completes the verification that both densities belong to the anisotropic Nikol'skii class with a constant independent of \(n\) when \(\mathcal P = \emptyset\).\\[5pt]

\noindent{\em{$\bullet$ $\chi^2$-divergence.}}\\[3pt]
We recall that the \(\chi^2\)-divergence between two probability densities \(p\) and \(q\) on \(\mathbb R^d\) is defined as
\[
\chi^2(p\,\|\, q)\coloneqq \int_{\mathbb R^d} \frac{[p(\mathbf{x}) - q(\mathbf{x})]^2}{q(\mathbf{x})}\,\mathrm{d}\mathbf{x},
\]
whenever \(q>0\) on the support of \(p\); otherwise \(\chi^2(p\,\|\, q)=\infty\).

Because the noise components are independent (condition \ref{eq:E1indep} \hyperref[sec:noise]{of the Main Article}), the convolution factorizes coordinate‑wise. 
Condition \ref{it:E22}, together with condition \ref{eq:E3} \hyperref[sec:noise]{of the Main Article}, implies that 
$\varphi_{\varepsilon_j}$ is continuously differentiable and that its derivative satisfies the bound 
$|\varphi^{(1)}_{\varepsilon_j}(t)| = |\varphi_{\varepsilon_j}(t)|^2\, |r^{(1)}_{\varepsilon_j}(t)|
\le c_{1,j}^2\, d_{1,j}\, (1+|t|)^{-\beta_j}
$, $t\in\mathbb R$,
which is precisely condition \eqref{eq:condder} with $\beta'_j=\beta_j$ and $B_j=c_{1,j}^2d_{1,j}$.
Using Lemma~\ref{lem:chiquad}, which gives a bound for each coordinate, it is shown in Step 2 of the proof of Theorem~3 in \cite{ComteLacour2013}, p.~601, that
the $\chi^2$-divergence between the convolved densities \(f_1\ast f_{\varepsilon}\) and \(f_0\ast f_{\varepsilon}\) is of order $O(n^{-1})$. Specifically, 
\[\chi^2\bigl(f_1\ast f_{\varepsilon}\,\|\,f_0\ast f_{\varepsilon}\bigr)\lesssim 
\frac 1n\prod_{j=1}^d h_j^{-(2\beta_j+1)} h_j^{2\beta_j+1}=O(n^{-1}).\]



\noindent{\em{$\bullet$ Separation rate and lower bound rate}}\\[3pt]
Let \(F_i\) denote the cumulative distribution function associated with the density \(f_i\), for \(i = 0,\,1\). 
The following two facts will be used to establish the separation between \(F_0\) and \(F_1\), which is a key ingredient in the lower bound proof.
\begin{enumerate}[label=\textbf{Fact \arabic*.}, leftmargin=*]
\item The absolute difference between the cumulative distribution functions \(F_1\) and \(F_0\) at a point \(\mathbf{x}\) satisfies
\begin{align}
\bigl|F_1(\mathbf{x})-F_0(\mathbf{x})\bigr|
&= c n^{-1/2}\prod_{j=1}^d 
   \left|{\int_{-\infty}^{x_j} H\!\left(\frac{u-2}{2}\right)\mathrm{d}u}\right| \nonumber\\
&= c n^{-1/2} 2^d \prod_{j=1}^d
    \left|{\int_{-\infty}^{\frac{x_j-2}{2}} H(z)\,\mathrm{d}z}\right| \nonumber\\
&= c n^{-1/2} 2^d
   \prod_{j=1}^d
   \left|{H^{(-1)}\!\left(\frac{x_j-2}{2}\right)}\right|,
\quad \mathbf{x}\in\mathbb R^d,\label{eq:diffcdfs}
\end{align}
where we used the change of variable \(z = (u-2)/2\) in each coordinate.\\

\item For the change of variable \(z = (x_j - 2)/2\), we have
\[
\int_{0}^{2} \left|{H^{(-1)}\!\left(\frac{x_j - 2}{2}\right)}\right|\,\mathrm{d}x_j 
= 2 \int_{-1}^{0} |H^{(-1)}(z)|\,\mathrm{d}z
= 2 \int_{0}^{1} |H^{(-1)}(z)|\,\mathrm{d}z > 0,
\]
where the last equality follows from the oddness of \(H^{(-1)}\), which is a consequence of condition \ref{B1} and the evenness of \(H\). The strict positivity is guaranteed by condition \ref{B2}.
\end{enumerate}

Let \(\hat{F}_n\) be any estimator of the cumulative distribution function based on an i.i.d. sample \(\mathbf{Y}_1,\,\dots,\,\mathbf{Y}_n\) drawn from either \(f_1 \ast f_{\varepsilon}\) or \(f_0 \ast f_{\varepsilon}\) (where \(f_{\varepsilon}\) is the noise density). We have already shown that $
\chi^2(f_1 \ast f_{\varepsilon} \,\|\, f_0 \ast f_{\varepsilon}) = O(n^{-1})$,
with a constant that can be made arbitrarily small by choosing \(c>0\) sufficiently small. 
Since the constant in the \(\chi^2\)-divergence bound is proportional to \(c^2\), choosing \(c>0\) sufficiently small ensures that $
\chi^2(f_1 \ast f_\varepsilon \parallel f_0 \ast f_\varepsilon) \le {\eta}{n}^{-1}$
for any prescribed \(\eta > 0\). Then, by tensorization of the \(\chi^2\)-divergence and Pinsker's inequality,
\[
\| (\mu_1 \ast \mu_\varepsilon)^{\otimes n} - (\mu_0 \ast \mu_\varepsilon)^{\otimes n} \|_{\mathrm{TV}}
\le \sqrt{\frac12 \left[ \left(1 + \frac{\eta}{n}\right)^n - 1 \right]}
\to \sqrt{\frac12 (e^\eta - 1)}.
\]
Since \(\eta > 0\) can be chosen arbitrarily small, the total variation distance can be made arbitrarily close to \(0\), uniformly in \(n\), by taking \(c>0\) sufficiently small. Hence, Le Cam's two-point method applies.
Hence, for \(n\) large enough,
\[
\int_{(\mathbb R^d)^n} \min\bigl\{(f_1 \ast f_\varepsilon)^{\otimes n},\,(f_0 \ast f_\varepsilon)^{\otimes n}\bigr\}(\mathbf{y}_1,\,\ldots,\,\mathbf{y}_n)\,
\mathrm{d}\mathbf{y}_1\,\ldots\,\mathrm{d}\mathbf{y}_n \ge \frac12.
\]
Therefore, by Le Cam's two-point method, the minimax risk is bounded below by
\begin{align*}
&\inf_{\hat{F}_n}\sup_{f_X\in\mathcal{N}_1^d(\boldsymbol{\alpha},L)} 
\mathsf E_{(\mu_X\otimes\mu_\varepsilon)^{\otimes n}}[\|\hat{F}_n-F_X\|_{L^1(\mathbb R^d)}]\\
&\hspace*{1.5cm}\ge \frac12 \int_{\mathbb R^d} |F_1(\mathbf{x})-F_0(\mathbf{x})| \, \mathrm{d}\mathbf{x} \\
&\hspace{3.5cm} \times \int_{(\mathbb R^d)^n} \min\{(f_1 \ast f_\varepsilon)^{\otimes n},\,(f_0 \ast f_\varepsilon)^{\otimes n}\}(\mathbf{y}_1,\,\ldots,\,\mathbf{y}_n)\,
\mathrm{d}\mathbf{y}_1\,\ldots\,\mathrm{d}\mathbf{y}_n \\
&\hspace*{1.5cm}\ge \frac14 \int_{\mathbb R^d} |F_1(\mathbf{x})-F_0(\mathbf{x})| \, \mathrm{d}\mathbf{x}.
\end{align*}
Using Fact 1 and Fact 2, we obtain
\[
\begin{aligned}
\int_{\mathbb R^d} |F_1(\mathbf{x})-F_0(\mathbf{x})| \, \mathrm{d}\mathbf{x}
&= c n^{-1/2} 2^d
   \prod_{j=1}^d \int_{\mathbb R}
   \left|{H^{(-1)}\!\left(\frac{x_j-2}{2}\right)}\right| \mathrm{d}x_j \\
&\geq c n^{-1/2} 2^d
   \prod_{j=1}^d
   \left( 2 \int_{0}^{1} |H^{(-1)}(z)|\,\mathrm{d}z \right) \\
&= c n^{-1/2} 2^{2d}
   \left( \int_{0}^{1} |H^{(-1)}(z)|\,\mathrm{d}z \right)^d\gtrsim n^{-1/2}
\end{aligned}
\]
because \(\int_0^1 |H^{(-1)}(z)|\,\mathrm{d}z>0\) by condition \ref{B2}, and \(c>0\) is fixed.
Combining this with the lower bound on the testing integral, we have that
\[
\inf_{\hat{F}_n} \sup_{f_X \in \mathcal{N}^d_1(\boldsymbol{\alpha},L)} 
\mathsf{E}_{(\mu_X \ast \mu_\varepsilon)^{\otimes n}}[\|\hat{F}_n - F_X\|_{L^1(\mathbb R^d)}] 
\gtrsim n^{-1/2}.
\]
Since when \(\mathcal P = \emptyset\) we have \(\psi_n^{\boldsymbol{\alpha},\boldsymbol{\beta}} = n^{-1/2}\), this yields
\[
\inf_{\hat{F}_n} \sup_{f_X \in \mathcal{N}^d_1(\boldsymbol{\alpha},L)} 
\mathsf{E}_{(\mu_X \ast \mu_\varepsilon)^{\otimes n}}[\|\hat{F}_n - F_X\|_{L^1(\mathbb R^d)}] 
\gtrsim \psi_n^{\boldsymbol{\alpha},\boldsymbol{\beta}},
\]
which completes the proof for the case \(\mathcal P = \emptyset\).\\[5pt]


\noindent\textbf{Case $|\mathcal P| \ge 1$.}\\[-10pt]
\begin{enumerate}[label=\bf{$\bullet$}]
\item \emph{Construction of auxiliary functions $\omega$ and $\xi$.}\\[2pt]
Let $\omega \in C_c^\infty(\mathbb R)$ be any function satisfying the following conditions:\\[-8pt]
\begin{enumerate}[label=(\bf{W\arabic*}), align=left, leftmargin=*, itemsep=0pt]
    \item \label{it:W1} $\int_{\mathbb R} \omega(x) \,\di x = 1$;\\[-8pt]
    \item \label{it:W2} $\operatorname{supp}(\omega) \subset [-1, 1]$;\\[-8pt]
    \item \label{it:W3} $\omega(x) \ge 0$ for all $x \in \mathbb R$;\\[-8pt]
    \item \label{it:W4} there exist constants $0 < \gamma_\omega < 1$ and $c_\omega > 0$ such that $\omega(x) \ge c_\omega$ for $|x| \le \gamma_\omega$.\\[-8pt]
\end{enumerate}
We exhibit an example of such a function $\omega$. Let $\psi \in C_c^\infty(\mathbb R)$ be defined by
\[
\psi(x) \coloneqq
\begin{cases}
\exp\left(-\frac{1}{1-x^2}\right), & |x| < 1, \\[-5pt]
0, & |x| \ge 1.
\end{cases}
\]
Let $\omega(x) \coloneqq \psi(x)/\int_{-1}^1 \psi(s)\,\di s$, $x\in\mathbb R$. Properties \ref{it:W1}--\ref
{it:W3} are readily verified. To verify \ref{it:W4}, for $|x| \le 3/4$,
\[
\omega(x) = \frac{\exp\left(-\frac{1}{1-x^2}\right)}{\int_{-1}^1 \psi(s)\,\di s} \ge \frac{\mathrm{e}^{-16/7}}{\int_{-1}^1 \psi(s)\,\di s} \eqqcolon c_\omega > 0.
\]
Thus, we may take $\gamma_\omega = 3/4$.

\smallskip
\noindent
Let $\xi \in C_c^\infty(\mathbb R)$ be any function satisfying the following conditions:\\[-8pt]
\begin{enumerate}[label=(\bf{U\arabic*}), align=left, leftmargin=*, itemsep=0pt]
    \item \label{it:U1} $\operatorname{supp}(\xi) \subset [-\gamma_\xi,\, \gamma_\xi] \subset (-\gamma_\omega,\, \gamma_\omega)$, with $0 < \gamma_\xi < \gamma_\omega < 1$;\\[-8pt]
    \item \label{it:U2} $\xi(x) \ge 0$ for $x \in [1/2,\, 1/2+\eta]$, for some $\eta > 0$ satisfying $1/2+\eta < \gamma_\xi$;\\[-8pt]
    \item \label{it:U3} $\int_{\mathbb R} \xi(x) \,\di x = 0$;\\[-8pt]
    \item \label{it:U4} $\int_{\mathbb R} x^k \xi(x) \,\di x = 0$ for $k = 1,\, \dots,\, K$, for some integer $K \geq0$;\\[-8pt]
    \item \label{it:U5} $\int_{-\infty}^x \xi(s)\,\di s \ge c_\Xi > 0$ for all $x \in [1/2,\, 1/2+\eta]$, for some constant $c_\Xi > 0$.\\[-8pt]
\end{enumerate}
We show how such a function $\xi$ can be constructed. Choose $\eta > 0$ such that $1/2 + \eta < 3/4$.
Let $\phi_1,\, \phi_2,\,\phi_3 \in C_c^\infty(\mathbb R)$ be non-negative bump functions satisfying
\[
\begin{aligned}
\operatorname{supp}(\phi_1) &\subset [1/4,\, 1/2 - \epsilon], & \|\phi_1\|_{L^1(\mathbb R)} &= 1, \\
\operatorname{supp}(\phi_2) &\subset [1/2,\, 1/2 + \eta], & \|\phi_2\|_{L^1(\mathbb R)} &= 1, \quad \phi_2 \ge 0, \\
\operatorname{supp}(\phi_3) &\subset [1/2+\eta,\, 3/4], &  \|\phi_3\|_{L^1(\mathbb R)} &= 1,
\end{aligned}
\]
where $0 < \epsilon < 1/4$ is small. Define the preliminary function
$\xi_0(x) \coloneqq a\phi_1(x) + b\phi_2(x) - (a+b)\phi_3(x)$,
where $a,\, b > 0$ are constants to be chosen later.
Let $K \geq0$. Choose distinct points $x_0,\,\dots,\, x_K \in [1/4,\,3/4] \setminus [1/2,\, 1/2+\eta]$ and bump functions $\zeta_k \in C_c^\infty(\mathbb R)$ with sufficiently small disjoint supports centered at $x_k$, contained in $[1/4,\,3/4] \setminus [1/2,\, 1/2+\eta]$, such that $\int_{\mathbb R} \zeta_k(x)\,\di x = 1$ and the matrix $
V_{j,k} \coloneqq \int_{\mathbb R} x^j \zeta_k(x)\,\di x$, $j,\,k = 0,\,\dots,\,K$,
is invertible (this holds, for instance, if the supports are sufficiently small and the points $x_k$ are distinct, since $V$ is then close to a Vandermonde matrix).
Define $
\xi(x) = \xi_0(x) + \sum_{k=0}^K \theta_k \zeta_k(x)$,
where the constants $\theta_k$ are chosen to satisfy the moment conditions
$
\int_{\mathbb R} x^j \xi(x)\,\di x = 0$, $j = 0,\, \dots,\, K$.
Since the matrix $V$ is invertible, this linear system has a unique solution $(\theta_0,\,\dots,\,\theta_K)$.
It remains to choose $a,\,b > 0$. Observe that the solution $\theta_k$ is linear in $a$ and $b$, \emph{i.e.}, $\theta_k = a \theta_k^{(1)} + b \theta_k^{(2)}$ for constants $\theta_k^{(1)},\, \theta_k^{(2)}$ independent of $a,\,b$. Since the supports of $\zeta_k$ are contained in $[1/4,\,3/4] \setminus [1/2,\, 1/2+\eta]$, the correction terms do not affect $\xi$ on $[1/2,\, 1/2+\eta]$. Moreover, for $x \in [1/2,\, 1/2+\eta]$, we have
\[
\Xi(x) = \int_{-\infty}^x \xi(s)\,\di s = a + b\int_{1/2}^x \phi_2(s)\,\di s + \sum_{k: x_k < x} \theta_k \int_{-\infty}^x \zeta_k(s)\,\di s.
\]
Choose $b > 0$ first, then choose $a > 0$ sufficiently large (\emph{e.g.}, $a = b^3$) so that the positive contribution $a$ dominates any negative contributions from the correction terms. This is possible because the coefficients $\theta_k^{(1)}$ and $\theta_k^{(2)}$ are fixed, and the supports of $\zeta_k$ can be chosen arbitrarily small; we may first choose the supports so small that the total variation of each correction term is less than, say, $1/(K+1)$. Then, choosing $a = b^3$ ensures that $a$ dominates the correction terms. This guarantees that $\Xi(x) \ge c_\Xi > 0$ for all $x \in [1/2,\, 1/2+\eta]$. Also, since $\xi(x) = b\phi_2(x) \ge 0$ on $[1/2,\, 1/2+\eta]$, positivity is automatic.
Finally, taking $\gamma_\xi \in (1/2+\eta,\, 3/4)$, we have $\operatorname{supp}(\xi) \subset [-\gamma_\xi,\, \gamma_\xi] \subset (-\gamma_\omega,\, \gamma_\omega)$ since $\gamma_\omega = 3/4$. Thus, all conditions \ref{it:U1}--\ref{it:U5} are satisfied.

\smallskip

Define the cumulative distribution functions
\[
\Omega(x) \coloneqq \int_{-\infty}^x \omega(s)\,\di s \quad\text{and}\quad \Xi(x) \coloneqq \int_{-\infty}^x \xi(s)\,\di s.
\]
The function $\Omega$ is continuous and strictly increasing on $(-1,\,1)$, and there exist constants $c_\Omega,\,c_\Xi > 0$ such that 
\[
\Omega(x) \ge c_\Omega > 0 \quad\text{and}\quad \Xi(x) \ge c_\Xi > 0 \quad\text{for all } x \in [1/2,\, 1/2+\eta].
\]


\item \emph{Construction of the density with inactive coordinates $f_0^{\mathrm{inactive}}$.}\\[2pt]
Choose $\gamma_M < 1/2$ and let $\rho \in C_c^\infty(\mathbb R)$ be a bump function such that $\rho \ge 0$, $\operatorname{supp}(\rho) \subset [-\gamma_M/8, \,\gamma_M/8]$, and $\int_{\mathbb R} \rho(x)\,\di x = 1$.
For $\tau > 0$, define $\rho_\tau(x) \coloneqq \tau^{-1}\rho(x/\tau)$. We have $\operatorname{supp}(\rho_\tau) \subset [-\tau, \,\tau]$.
For each coordinate $j = 1,\, \dots,\, d-|\mathcal P|$, define
\[
\rho_j(x)\coloneqq  \left(\mathbf{1}_{\left[-\frac{\gamma_M}{2}M + \tau,\,\frac{\gamma_M}{2}M - \tau\right]} \ast \rho_\tau\right)(x),\quad x\in\mathbb R,
\]
with $\tau < (\gamma_M/8)M$. 
Since $\operatorname{supp}(\rho_\tau) \subset [-\tau, \,\tau]$, the integral
\[
\rho_j(x) = \int_{x - \frac{\gamma_M}{2}M + \tau}^{x + \frac{\gamma_M}{2}M - \tau} \rho_\tau(u) \, \mathrm d u
\]
is nonzero only if the interval of integration intersects $[-\tau,\, \tau]$. This occurs if and only if
$x - ({\gamma_M}/{2})M + \tau \le \tau$ and $ x + ({\gamma_M}/{2})M - \tau \ge -\tau$,
which gives $|x| \le ({\gamma_M}/{2})M$.
Hence, $
\operatorname{supp}(\rho_j) \subset [-({\gamma_M}/{2})M, \, ({\gamma_M}/{2})M]$.
Let
\[
f_0^{\mathrm{inactive}}(\mathbf x_{\setminus\mathcal P}) \coloneqq \frac{1}{(\gamma_M M - 2\tau)^{d-|\mathcal P|}} \prod_{j=1}^{d-|\mathcal P|} \rho_j(x_j).
\]
It is easy to verify that $f_0^{\mathrm{inactive}} \in C_c^\infty(\mathbb R^{d-|\mathcal P|})$ and 
$
\int_{\mathbb R^{d-|\mathcal P|}} f_0^{\mathrm{inactive}}(\mathbf x_{\setminus\mathcal P})\,\di\mathbf x_{\setminus\mathcal P} = 1$. Also, 
$\operatorname{supp}(f_0^{\mathrm{inactive}}) \subset [-({\gamma_M}/{2})M,\, ({\gamma_M}/{2})M]^{d-|\mathcal P|}$. 
Besides, for $x \in [-({\gamma_M}/{4})M, \,({\gamma_M}/{4})M]$, we have
\[
\rho_j(x) = \int_{x - \frac{\gamma_M}{2}M + \tau}^{x + \frac{\gamma_M}{2}M - \tau} \rho_\tau(u) \, \mathrm d u=1.
\]
Consequently,
\[\begin{split}
f_0^{\mathrm{inactive}}(\mathbf x_{\setminus\mathcal P}) &= \frac{1}{(\gamma_M M - 2\tau)^{d-|\mathcal P|}} \prod_{j=1}^{d-|\mathcal P|} \rho_j(x_j)\\
&= \frac{1}{(\gamma_M M - 2\tau)^{d-|\mathcal P|}}\eqqcolon c_0>0, \quad \mathbf x_{\setminus\mathcal P} \in\Bigl[-\frac{\gamma_M}{4} M,\, \frac{\gamma_M}{4} M\Bigr]^{d-|\mathcal P}.
\end{split}
\]
Moreover, $f_0^{\mathrm{inactive}} \in \mathcal N_1^{d-|\mathcal P|}(\boldsymbol\alpha_{\setminus\mathcal P},\, L_0)$ for some $L_0 > 0$. This follows from the following Nikol'skii-type estimate: for any $f \in C_c^\infty(\mathbb R)$, any integer $q \ge 0$, and any $r > q$ such that $0 < r - q \le 1$, there exists $C > 0$ such that
\begin{equation}\label{eq:holderinity}
\int_{\mathbb R} |f^{(q)}(u+h) - f^{(q)}(u)|\,\di u \le C |h|^{r - q}
\quad \forall\,h \in \mathbb R.
\end{equation}
Indeed, let $D := \operatorname{supp}(f)$. If $|h| \le 1$, the Mean Value Theorem gives
\[
|f^{(q)}(u+h) - f^{(q)}(u)| \le \|f^{(q+1)}\|_{\infty} |h|,
\]
and the difference is supported in $D \cup (D-h)$, whose measure is bounded by a constant depending only on $f$, yielding
\[
\int_{\mathbb R} |f^{(q)}(u+h) - f^{(q)}(u)|\,\di u \le C_f \|f^{(q+1)}\|_{\infty} |h| \le C_f \|f^{(q+1)}\|_{\infty} |h|^{r-q},
\]
since $|h| \le 1$ and $r-q \le 1$.
If $|h| > 1$, then
\[
\int_{\mathbb R} |f^{(q)}(u+h) - f^{(q)}(u)|\,\di u \le 2\|f^{(q)}\|_{L^1(\mathbb R)} \le 2\|f^{(q)}\|_{L^1(\mathbb R)} |h|^{r-q},
\]
since $|h|^{r-q} \ge 1$.
Combining the two cases yields the desired estimate. Applying this estimate to each $\rho_j$ with $q_j = \lceil \alpha_j \rceil - 1$ and using the product structure of $f_0^{\mathrm{inactive}}$ gives that $f_0^{\mathrm{inactive}}\in \mathcal N_1^{d-|\mathcal P
|}(\boldsymbol\alpha_{\setminus\mathcal P},\, L_0)$.\\

\item \emph{Definition of the test densities $f_0$ and $f_1$.}\\[2pt]
For each $j \in \mathcal P$, let the scaled functions
\[
\omega_j(x) \coloneqq R_n^{-(\alpha_j+1)} \omega(R_n^{-(\alpha_j+1)}x)
\quad \text{and} \quad
\xi_j(x) \coloneqq R_n^{-(\alpha_j+1)} \xi(R_n^{-(\alpha_j+1)}x),\quad x\in\mathbb R,
\]
and their cumulative distribution functions
\[
\Omega_j(x) \coloneqq \int_{-\infty}^x \omega_j(u)\,\di u = \Omega(R_n^{-(\alpha_j+1)}x)
\,\,\, \text{ and } \,\,\,
\Xi_j(x) \coloneqq \int_{-\infty}^x \xi_j(u)\,\di u = \Xi(R_n^{-(\alpha_j+1)}x),\quad x\in\mathbb R.
\]
We can now define the test densities. For every $\mathbf x\in\mathbb R^d$,
\begin{equation}\label{eq:testdensities}
f_0(\mathbf x) \coloneqq \prod_{j\in\mathcal P} \omega_j(x_j)  f_0^{\mathrm{inactive}}(\mathbf x_{\setminus\mathcal P})
\,\, \text{ and } \,\,
f_1(\mathbf x) \coloneqq \prod_{j\in\mathcal P} [\omega_j(x_j) + \lambda_n \xi_j(x_j)]  f_0^{\mathrm{inactive}}(\mathbf x_{\setminus\mathcal P}),
\end{equation}
for some non-negative sequence $(\lambda_n)_{n\in\mathbb N}$ to be chosen later.
The product structure of the densities is preserved when passing to cumulative distribution functions because integration factorizes over the coordinates. Consequently, for every $\mathbf x\in\mathbb R^d$, the cumulative distribution functions are given by
\[
F_0(\mathbf x) \coloneqq \prod_{j\in\mathcal P} \Omega_j(x_j)  F_0^{\mathrm{inactive}}(\mathbf x_{\setminus\mathcal P})
\quad \text{and} \quad
F_1(\mathbf x) \coloneqq \prod_{j\in\mathcal P} [\Omega_j(x_j) + \lambda_n \Xi_j(x_j)]  F_0^{\mathrm{inactive}}(\mathbf x_{\setminus\mathcal P}),
\]
where $F_0^{\mathrm{inactive}}$ is the cumulative distribution function of $f_0^{\mathrm{inactive}}$.\\

\item \emph{Verification that $f_0$ and $f_1$ are probability densities.}\\[2pt]
We now verify that, for sufficiently large $n$, the functions $f_0$ and $f_1$ are probability densities.
Regarding $f_0$, since $f_0$ is a product of non-negative functions, we have $f_0 \ge 0$. Moreover, by Fubini's theorem,
\[
\int_{\mathbb R^d} f_0(\mathbf x)\,\di \mathbf x 
= \prod_{j\in\mathcal P} \left(\int_{\mathbb R} \omega_j (x_j)\,\di x_j\right) 
\left(\int_{\mathbb R^{d-|\mathcal P|}} f_0^{\mathrm{inactive}} (\mathbf x_{\setminus\mathcal P})\,\di \mathbf x_{\setminus\mathcal P}\right) = 1,
\]
because, by construction, $\int_{\mathbb R} \omega_j (x_j)\,\di x_j = 1$ for each $j \in \mathcal P$, and $f_0^{\mathrm{inactive}}$ is a density, hence it integrates to one. Therefore, $f_0$ is a probability density.

For $f_1$, we first note that, since $\int_{\mathbb R} \xi_j(x_j)\,\di x_j = 0$ by construction,
\[
\int_{\mathbb R} [\omega_j(x_j) + \lambda_n \xi_j(x_j)]\,\di x_j
= \int_{\mathbb R} \omega_j(x_j)\,\di x_j + \lambda_n \int_{\mathbb R} \xi_j(x_j)\,\di x_j
= 1.
\]
Therefore, again by Fubini's theorem,
\[
\int_{\mathbb R^d} f_1(\mathbf x)\,\di \mathbf x
= \prod_{j\in\mathcal P} \left(\int_{\mathbb R} [\omega_j(x_j) + \lambda_n \xi_j(x_j)]\,\di x_j\right)
\left(\int_{\mathbb R^{d-|\mathcal P|}} f_0^{\mathrm{inactive}} (\mathbf x_{\setminus\mathcal P})\,\di \mathbf x_{\setminus\mathcal P}\right)
= 1.
\]
It remains to verify that $f_1 \ge 0$. Since $\operatorname{supp}(\xi) \subset [-\gamma_\xi,\, \gamma_\xi] \subset (-\gamma_\omega,\, \gamma_\omega)$ and $\omega \ge c_\omega > 0$ on $[-\gamma_\omega,\, \gamma_\omega]$, it follows that $\omega_j(x) + \lambda_n \xi_j(x) \ge R_n^{-(\alpha_j+1)}(c_\omega - \lambda_n \|\xi\|_\infty)$ for all $x \in \operatorname{supp}(\xi_j)$.
Because $\lambda_n \to 0$ as $n \to \infty$, for sufficiently large $n$, we have $\lambda_n \|\xi\|_\infty \le c_\omega/2$. Hence, $\omega_j(x) + \lambda_n \xi_j(x) \ge R_n^{-(\alpha_j+1)}c_\omega/2 > 0$ on $\operatorname{supp}(\xi_j)$, while outside $\operatorname{supp}(\xi_j)$ we have $\omega_j(x) + \lambda_n \xi_j(x) = \omega_j(x) \ge 0$. Therefore, for each $j \in \mathcal P$, we have $\omega_j(x_j) + \lambda_n \xi_j(x_j) \ge 0$ for all $x_j \in \mathbb R$. Thus, $f_1(\mathbf x) \ge 0$ for all $\mathbf x \in \mathbb R^d$, since it is a product of non-negative factors. Consequently, $f_1$ is also a probability density for sufficiently large $n$.\\

\item \emph{Verification that $f_0, \,f_1 \in \mathcal N_1^d(\boldsymbol\alpha, L)$.}\\[2pt]
We now show that, for sufficiently large $n$, both densities $f_0,\, f_1$ belong to $\mathcal N_1^d(\bm\alpha,\, L)$ for some constant $L > 0$ independent of $n$. We need to verify conditions \ref{it:diff}, \ref{it:L1}, and \ref{it:smooth} from Definition \ref{def:anisotropy} \hyperref[def:anisotropy]{of the Main Article} for both $f_0$ and $f_1$.

We begin by verifying condition \ref{it:diff} \hyperref[it:diff]{of the Main Article}. Since $\omega,\,\xi \in C_c^\infty(\mathbb R)$, the scaled functions $\omega_j$ and $\xi_j$ are also $C^\infty(\mathbb R)$, so $ (\omega_j + \lambda_n \xi_j) \in C^\infty(\mathbb R)$ for each $j \in \mathcal P$. By assumption, $f_0^{\mathrm{inactive}}\in \mathcal N_1^{d-|\mathcal P|}(\bm\alpha_{\setminus\mathcal P},\, L_0)$, so it admits partial derivatives up to order $m_j \coloneqq \lceil \alpha_j \rceil - 1$ in each inactive coordinate. Since the product structure separates active and inactive coordinates, $f_0$ and $f_1$ admit all partial derivatives up to order $m_j$ in each coordinate direction. Thus, condition \ref{it:diff} \hyperref[it:diff]{of the Main Article} holds.

We now consider the verification of \ref{it:L1} \hyperref[it:L1]{of the Main Article}. For each $j \in \mathcal P$ and each $k = 0,\,\dots,\, m_j$, we consider the partial derivative
\[
\partial_j^k f_1(\mathbf x) = g_j^{(k)}(x_j) \prod_{\ell \in \mathcal P \setminus \{j\}} g_\ell(x_\ell) f_0^{\mathrm{inactive}}(\mathbf x_{\setminus\mathcal P}),\quad\mathbf x\in\mathbb R^d, 
\]
where $g_j^{(k)}$ denotes the $k$-th derivative of $g_j\coloneqq \omega_j+\lambda_n\xi_j$.
Now, we need a uniform bound on $\|\partial_j^k f_1\|_{L^1(\mathbb R^d)}$. By Fubini's theorem, and since $f_0^{\mathrm{inactive}}$ is a density, for sufficiently large $n$ such that $\lambda_n \le 1$, we have
\[\begin{aligned}
\|\partial_j^k f_1\|_{L^1(\mathbb R^d)}
&= \left(\int_{\mathbb R} |g_j^{(k)}(x_j)|\,\di x_j\right)
\prod_{\ell \in \mathcal P \setminus \{j\}} \left(\int_{\mathbb R} |g_\ell(x_\ell)|\,\di x_\ell\right)
\left(\int_{\mathbb R^{d-|\mathcal P|}} |f_0^{\mathrm{inactive}}(\mathbf x_{\setminus\mathcal P})|\,\di\mathbf x_{\setminus\mathcal P}\right)\\
&\leq \left(R_n^{-k(\alpha_j+1)} 
\int_{\mathbb R} |\omega^{(k)}(u) + \lambda_n \xi^{(k)}(u)|\,\di u\right)(1 + \lambda_n \|\xi\|_{L^1(\mathbb R)})^{|\mathcal P|-1}\\
&\leq R_n^{-k(\alpha_j+1)}\bigl(\|\omega^{(k)}\|_{L^1(\mathbb R)} + \|\xi^{(k)}\|_{L^1(\mathbb R)}\bigr)\bigl(1 + \|\xi\|_{L^1(\mathbb R)}\bigr)^{|\mathcal P|-1}\\
&\leq \bigl(\|\omega^{(k)}\|_{L^1(\mathbb R)} + \|\xi^{(k)}\|_{L^1(\mathbb R)}\bigr)\bigl(1 + \|\xi\|_{L^1(\mathbb R)}\bigr)^{|\mathcal P|-1}<\infty,
\end{aligned}
\]
where in the last inequality we used the fact that $R_n\geq1$, so $R_n^{-k(\alpha_j+1)}\leq 1$.
Thus, the bound is independent of $n$. A similar reasoning leads to an analogous bound for $f_0$ with $\lambda_n=0$. For inactive coordinates $j \notin \mathcal P$, since $f_0^{\mathrm{inactive}} \in \mathcal N_1^{d-|\mathcal P|}(\bm\alpha_{\setminus\mathcal P},\, L_0)$, we already have $
\|\partial_j^k f_0^{\mathrm{inactive}}\|_{L^1(\mathbb R^{d-|\mathcal P|})} \le L_0$,
and since the active factors integrate to a constant independent of $n$, the same uniform bound holds for the full density.
Thus, condition \ref{it:L1} \hyperref[it:L1]{of the Main Article} holds with some constant $L$ independent of $n$. 

We now verify condition \ref{it:smooth} \hyperref[it:smooth]{of the Main Article}. For each $j \in \mathcal P$ and each $\delta \in \mathbb R$, we need to bound
$\|\Delta_{\delta\mathbf{e}_j}\partial_j^{m_j} f_r\|_{L^1(\mathbb R^d)}$ for $r=0,\,1$. We prove the condition for $f_1$; the proof for $f_0$ is analogous (with $\lambda_n = 0$). Reasoning as before, we get 
\[\begin{aligned}
\|\Delta_{\delta\mathbf{e}_j}\partial_j^{m_j} f_1\|_{L^1(\mathbb R^d)}
&= \left(\int_{\mathbb R} \big|g_j^{(m_j)}(x + \delta) - g_j^{(m_j)}(x)\big|\,\di x\right)\\
&\qquad\qquad\qquad\times 
\prod_{\ell \in \mathcal P \setminus \{j\}} \left(\int_{\mathbb R} |g_\ell(x_\ell)|\,\di x_\ell\right)
\left(\int_{\mathbb R^{d-|\mathcal P|}} |f_0^{\mathrm{inactive}}|\,\di \mathbf x_{\setminus\mathcal P}\right)\\
&\leq \left(\int_{\mathbb R} \big|g_j^{(m_j)}(x + \delta) - g_j^{(m_j)}(x)\big|\,\di x\right)\bigl(1 + \|\xi\|_{L^1(\mathbb R)}\bigr)^{|\mathcal P|-1}.
\end{aligned}
\]
It remains to bound the first factor. We have
\[
\begin{aligned}
\int_{\mathbb R}\big|g_j^{(m_j)}(x+\delta) - g_j^{(m_j)}(x)\big|\,\di x &= R_n^{-(\alpha_j+1)(m_j+1)}\\&
 \qquad \times \int_{\mathbb R} \Big|
\omega^{(m_j)}(R_n^{-(\alpha_j+1)}(x+\delta)) - \omega^{(m_j)}(R_n^{-(\alpha_j+1)}x) \\
&\qquad\quad + \lambda_n \bigl[ \xi^{(m_j)}(R_n^{-(\alpha_j+1)}(x+\delta)) - \xi^{(m_j)}(R_n^{-(\alpha_j+1)}x) \bigr] \Big|\,\di x \\
&\le R_n^{-(\alpha_j+1)(m_j+1)}\\
&\qquad\times  \left[
\int_{\mathbb R} \big|\omega^{(m_j)}(R_n^{-(\alpha_j+1)}(x+\delta)) - \omega^{(m_j)}(R_n^{-(\alpha_j+1)}x)\big |\,\di x \right. \\
&\qquad\quad \left. + \lambda_n \int_{\mathbb R} \big|\xi^{(m_j)}(R_n^{-(\alpha_j+1)}(x+\delta)) - \xi^{(m_j)}(R_n^{-(\alpha_j+1)}x)\big|\,\di x
\right]\\
&=R_n^{-(\alpha_j+1)m_j} \left[
\int_{\mathbb R} \big|\omega^{(m_j)}(u + R_n^{-(\alpha_j+1)}\delta) - \omega^{(m_j)}(u)\big|\,\di u \right. \\
&\qquad\qquad\qquad\quad \left. + \lambda_n \int_{\mathbb R}\big|\xi^{(m_j)}(u + R_n^{-(\alpha_j+1)}\delta) - \xi^{(m_j)}(u)\big|\,\di u
\right],
\end{aligned}
\]
where we used the change of variable $u = R_n^{-(\alpha_j+1)}x$.
Since $\omega,\, \xi \in C_c^\infty(\mathbb R)$, by the Nikol'skii property \eqref{eq:holderinity} there exist constants $C_\omega,\, C_\xi > 0$ such that, for any $\eta \in \mathbb R$,
\[
\int_{\mathbb R} |\omega^{(m_j)}(u+\eta) - \omega^{(m_j)}(u)|\,\di u \le C_\omega |\eta|^{\alpha_j - m_j}\]
and 
\[
\int_{\mathbb R} |\xi^{(m_j)}(u+\eta) - \xi^{(m_j)}(u)|\,\di u \le C_\xi |\eta|^{\alpha_j - m_j}.
\]
Applying these estimates with $\eta \coloneqq R_n^{-(\alpha_j+1)}\delta$, we obtain
\[
\begin{aligned}\int_{\mathbb R}\big|g_j^{(m_j)}(x+\delta) - g_j^{(m_j)}(x)\big|\,\di x
&\le R_n^{-(\alpha_j+1)m_j} \\&\qquad\times\bigl[
C_\omega (R_n^{-(\alpha_j+1)}|\delta|)^{\alpha_j - m_j}
+ \lambda_n C_\xi (R_n^{-(\alpha_j+1)}|\delta|)^{\alpha_j - m_j}
\bigr] \\
&= (C_\omega + \lambda_n C_\xi) R_n^{-(\alpha_j+1)\alpha_j} |\delta|^{\alpha_j - m_j}\\
& \le (C_\omega + C_\xi) |\delta|^{\alpha_j - m_j},
\end{aligned}
\]
since $R_n \ge 1$ and $\lambda_n \le 1$ for sufficiently large $n$.
Combining this with the bound for the product of the other factors yields
\[
\|\Delta_{\delta\mathbf{e}_j}\partial_j^{m_j} f_1\|_{L^1(\mathbb R^d)}
\le (C_\omega + C_\xi) (1 + \|\xi\|_{L^1(\mathbb R)})^{|\mathcal P|-1} |\delta|^{\alpha_j - m_j}.
\]
Thus, condition \ref{it:smooth} \hyperref[it:smooth]{of the Main Article} holds for $f_1$ with constant $
L \coloneqq (C_\omega + C_\xi) (1 + \|\xi\|_{L^1(\mathbb R)})^{|\mathcal P|-1}$
which is independent of $n$. For inactive coordinates $j \notin \mathcal P$, the bound follows directly from $f_0^{\mathrm{inactive}} \in \mathcal N_1^{d-|\mathcal P|}(\bm\alpha_{\setminus\mathcal P},\, L_0)$, with the active factors contributing a constant independent of $n$.
Thus, condition \ref{it:smooth} \hyperref[it:smooth]{of the Main Article} holds with some constant $L$ independent of $n$.

We have verified that both $f_0$ and $f_1$ satisfy conditions \ref{it:diff}, \ref{it:L1}, and \ref{it:smooth} \hyperref[it:diff]{of the Main Article} with constants that are independent of $n$. Therefore, $f_0,\,f_1 \in \mathcal N_1^d(\bm\alpha,\, L)$ for some $L > 0$ independent of $n$.\\


\item \emph{Separation rate of the cumulative distribution functions.}\\[2pt]
We now derive the separation rate of the cumulative distribution functions.  
Let $F_0$ and $F_1$ be the cumulative distribution functions corresponding to $f_0$ and $f_1$.  
We show that there exists a constant $c_1 > 0$ such that, for all sufficiently large $n$,
\begin{equation}\label{eq:separate}
\|F_1 - F_0\|_{L^1(\mathbb R^d)} \ge c_1 n^{-E/2}.
\end{equation}
Using Lemma \ref{lem:foexp}, we obtain
\[
\begin{aligned}
F_1(\mathbf{x}) - F_0(\mathbf{x}) 
&= \int_{(-\infty, x_1] \times \cdots \times (-\infty, x_d]} [f_1(\mathbf{u}) - f_0(\mathbf{u})] \, \di\mathbf{u} \\
&=\Bigg[\prod_{j\in\mathcal P} [\Omega_j(x_j) + \lambda_n \Xi_j(x_j)] - \prod_{j\in\mathcal P} \Omega_j(x_j)\Bigg] F_0^{\mathrm{inactive}}(\mathbf{x}_{\setminus\mathcal P})\\
&= \lambda_n \underbrace{\Bigg[\sum_{j\in\mathcal P} \Xi_j(x_j) \prod_{\substack{\ell\in\mathcal P \\ \ell \ne j}} \Omega_\ell(x_\ell
)\Bigg] F_0^{\mathrm{inactive}}(\mathbf{x}_{\setminus\mathcal P})}_{\eqqcolon S(\mathbf{x})} + O(\lambda_n^2),
\end{aligned}
\]
where $F_0^{\mathrm{inactive}}$ is the cumulative distribution function of $f_0^{\mathrm{inactive}}$, 
\[
\Omega_j(x) \coloneqq \int_{-\infty}^x \omega_j(u)\,\di u = \Omega(R_n^{-(\alpha_j+1)}x)
\,\,\,\text{ and } \,\,\,
\Xi_j(x) \coloneqq \int_{-\infty}^x \xi_j(u)\,\di u = \Xi(R_n^{-(\alpha_j+1)}x),\quad x\in\mathbb R,
\]
with $\Omega(t) \coloneqq \int_{-\infty}^t \omega(s)\,\di s$ and $\Xi(t) \coloneqq \int_{-\infty}^t \xi(s)\,\di s$. Choose $\eta > 0$ such that $1/2 + \eta < 1$. Define the region
\[
\mathcal S_n \coloneqq \prod_{j\in\mathcal P} \left[\frac12 R_n^{\alpha_j+1},\, \left(\frac12+\eta\right) R_n^{\alpha_j+1}\right] \times \left[-\frac M2,\, \frac M2\right]^{d-|\mathcal P|}.
\]
Since $\Omega$ is continuous and strictly positive on $(-1,\,1)$, and the interval $[1/2,\,1/2 + \eta]$ is a compact subset of $(-1,\,1)$, there exists a constant $c_\Omega > 0$ such that $
\Omega(x) \ge c_\Omega$ for all  $x \in [1/2,\,1/2 + \eta]$.
Therefore, for $x_j \in [R_n^{\alpha_j+1}/2,\,(1/2+\eta) R_n^{\alpha_j+1}]$, we have
$
(R_n^{-(\alpha_j+1)}x_j) \in [1/2,\,1/2 + \eta],
$
so $
\Omega_j(x_j) = \Omega(R_n^{-(\alpha_j+1)}x_j) \ge c_\Omega > 0$.
Similarly, since $\Xi$ is continuous and $\int_{-\infty}^1 \xi(u)\,\di u > 0$ by assumption, there exists a constant $c_\Xi> 0$ such that $\Xi(x) \ge c_\Xi$ for all $x \in [1/2,\,1/2 + \eta]$. Therefore, for $x_j$ in the same interval, $
\Xi_j(x_j) = \Xi(R_n^{-(\alpha_j+1)}x_j) \ge c_\Xi > 0$.
Also, for $\mathbf{x}_{\setminus\mathcal P} \in [-M/2,\, M/2]^{d-|\mathcal P|}$, we have 
$F_0^{\mathrm{inactive}}(\mathbf{x}_{\setminus\mathcal P}) \ge \kappa_0 > 0$ for some constant 
$\kappa_0 > 0$. Indeed, since $f_0^{\mathrm{inactive}}$ is continuous and strictly positive on the 
compact set $[-M/2,\, M/2]^{d-|\mathcal P|}$, its cumulative distribution function 
$F_0^{\mathrm{inactive}}$ is also continuous and strictly positive there; hence, it attains a positive minimum $\kappa_0 \coloneqq \min_{\mathbf{x} \in [-M/2,\, M/2]^{d-|\mathcal P|}} F_0^{\mathrm{inactive}}(\mathbf{x}) > 0$ on this compact set. Thus, for $\mathbf{x} \in \mathcal S_n$, each term in the sum is nonnegative. 
Using the reverse triangle inequality, $|A+B| \ge |A| - |B|$, with 
$A = \lambda_n S(\mathbf{x}) \ge 0$ and $B = O(\lambda_n^2)$, we obtain
\[
|F_1(\mathbf{x}) - F_0(\mathbf{x})| 
= |\lambda_n S(\mathbf{x}) + O(\lambda_n^2)| \ge \lambda_n S(\mathbf{x}) - |O(\lambda_n^2)|.
\]
Since $|O(\lambda_n^2)| \le C\lambda_n^2$ for some constant $C > 0$, it follows that
$|F_1(\mathbf{x}) - F_0(\mathbf{x})| \ge \lambda_n S(\mathbf{x}) - C\lambda_n^2$.
On the region $\mathcal S_n$, we have $S(\mathbf{x}) \ge |\mathcal P| c_\Xi c_\Omega^{|\mathcal P|-1} \kappa_0$. Therefore, since $|\mathcal P| \ge 1$,
\[
|F_1(\mathbf{x}) - F_0(\mathbf{x})| \ge \lambda_n |\mathcal P| c_\Xi c_\Omega^{|\mathcal P|-1} \kappa_0 - C\lambda_n^2 \ge \lambda_n c_\Xi c_\Omega^{|\mathcal P|-1} \kappa_0 - C\lambda_n^2,
\]
where $C > 0$ is a constant. For sufficiently large $n$, $\lambda_n$ is small enough so that the linear term dominates, hence there exists a constant $c > 0$ such that $
|F_1(\mathbf{x}) - F_0(\mathbf{x})| \ge c \lambda_n$.
The volume of $\mathcal S_n$ equals
$
\prod_{j\in\mathcal P} (\eta R_n^{\alpha_j+1}) M^{d-|\mathcal P|} = \eta^{|\mathcal P|} M^{d-|\mathcal P|} \prod_{j\in\mathcal P} R_n^{\alpha_j+1}$.
Therefore, since $\lambda_n\coloneqq n^{-E/2}\prod_{j\in\mathcal P} R_n^{-(\alpha_j+1)}$, we get
\[
\|F_1 - F_0\|_{L^1(\mathbb R^d)} 
\ge \int_{\mathcal S_n} |F_1(\mathbf{x}) - F_0(\mathbf{x})|\,\di \mathbf{x} 
\ge c \lambda_n \eta^{|\mathcal P|} M^{d-|\mathcal P|} \prod_{j\in\mathcal P} R_n^{\alpha_j+1} = c \eta^{|\mathcal P|}  M^{d-|\mathcal P|} n^{-E/2}.
\]
Since $\eta^{|\mathcal P|} M^{d-|\mathcal P|} > 0$ is a constant independent of $n$, inequality \eqref{eq:separate} follows. \\

\item \emph{Bound on the $\chi^2$-divergence.}\\[2pt]
We now prove that $\chi^2(f_1 \ast f_\varepsilon \parallel f_0 \ast f_\varepsilon)=o( n^{-1})$.
To this end we refine the construction of $f_0^{\mathrm{inactive}}$ 
to ensure a uniform lower bound on its support. The previous construction gave 
$f_0^{\mathrm{inactive}}$ as a product of smoothed indicator functions, which vanish at the 
boundary of their support. To control the $\chi^2$-divergence, we need a stronger property: 
$f_0^{\mathrm{inactive}}$ must be bounded below by a positive constant times a power of the 
distance to the boundary of its support. This is achieved by replacing the standard bump 
function $\rho$ with a refined bump function $\varrho$ having polynomial decay near the 
boundary.

Let $\zeta \in C_c^\infty(0,\,1)$ be defined by
\[
\zeta(s) \coloneqq
\begin{cases}
[s(1-s)]^m, & s \in (0,\,1), \\
0, & \text{otherwise},
\end{cases}
\]
so that $\zeta > 0$ on $(0,\,1)$ and there exists $c_\zeta > 0$ such that
$
\zeta(s) \ge c_\zeta (1-s)^m$ for all $s \in [1/2,\, 1)$. 
Define
\[
T(y) \coloneqq \frac{\int_{y}^{1} \zeta(s)\,\di s}{\int_{0}^{1} \zeta(t)\,\di t}, \quad y \in [0,\,1].
\]
Then, $T \in C^\infty([0,\,1])$, $T(0)=1$, $T(1)=0$, all derivatives vanish at the endpoints, and there exists a constant $c_T > 0$ such that $
T(y) \ge c_T (1-y)^{m+1}$ for all $y \in [0,\,1]$.
Fix $0 < \delta < 1$ (for instance, $\delta = 1/2$). Define $\varrho_0 \in C_c^\infty(\mathbb R)$ as
\[
\varrho_0(x)\coloneqq
\begin{cases}
1, & |x| \le \delta, \\[4pt]
T\!\left(\dfrac{|x| - \delta}{1 - \delta}\right), & \delta < |x| < 1, \\[4pt]
0, & |x| \ge 1.
\end{cases}
\]
For $x \in [\delta,\, 1]$, we have
\[
\varrho_0(x) = T\!\left(\frac{x - \delta}{1 - \delta}\right)
\ge c_T \left(1 - \frac{x - \delta}{1 - \delta}\right)^{m+1}
= \frac{c_T}{(1-\delta)^{m+1}} (1-x)^{m+1}
\eqqcolon c_\varrho (1-x)^{m+1}.
\]
The same estimate holds symmetrically for $x \in [-1,\, -\delta]$. For $x \in [-\delta,\, \delta]$, the bound follows from the flat top.
Finally, since $\int_{-1}^{1} \varrho_0(x)\,\di x > 0$, we normalize to obtain 
\[
\varrho(x)\coloneqq \frac{\varrho_0(x)}{\int_{-1}^{1} \varrho_0(s)\,\di s}.
\]
Then, $\varrho \in C_c^\infty(\mathbb R)$, $\operatorname{supp}(\varrho) = [-1,\,1]$, $\int_{\mathbb R} \varrho(x)\,\di x = 1$, and there exists a constant $c_\varrho' > 0$ such that
\[
\varrho(x) \ge c_\varrho' \operatorname{dist}(x,\, \partial \operatorname{supp}(\varrho))^{m+1}
\quad \text{for all } x \in [-1,\,1],
\]
where $\partial \operatorname{supp}(\varrho)=\{-1,\,1\}$.
The polynomial decay of $\varrho$ transfers to the density $f_0^{\mathrm{inactive}}$. Indeed, $f_0^{\mathrm{inactive}}$ is constructed as a product of smoothed indicator functions, each built from $\varrho$ via scaling and convolution. By the scaling properties of the distance function and the product structure of $f_0^{\mathrm{inactive}}$, the decay estimate above implies the existence of a constant $c > 0$ such that
\begin{equation}\label{eq:lowerboundonfinactive}
f_0^{\mathrm{inactive}}(\mathbf x_{\setminus\mathcal P})
\ge c \operatorname{dist}(\mathbf x_{\setminus\mathcal P},\, \partial \operatorname{supp}(f_0^{\mathrm{inactive}}))^{m+1},
\end{equation}
where recall that $
\operatorname{supp}(f_0^{\mathrm{inactive}}) = \left[-({\gamma_M}/{2})M,\, ({\gamma_M}/{2})M\right]^{d-|\mathcal P|}$.

We now introduce the regions that are used to localize the $\chi^2$-integral and to lower bound $f_0 \ast f_\varepsilon$.
Choose $\gamma$ such that $\gamma_\xi < \gamma < \gamma_\omega = 3/4$.
We define the full support region
\[
\mathcal K_n^{\mathrm{full}} \coloneqq 
\prod_{j\in\mathcal P} \big[-\gamma R_n^{\alpha_j+1},\, \gamma R_n^{\alpha_j+1}\big]
\times \left[-\frac{\gamma_M}{2}M,\, \frac{\gamma_M}{2}M\right]^{d-|\mathcal P|}.
\]
Since $\gamma_\xi < \gamma$, it follows from the support properties of $\xi$ and $\omega$ that $
\operatorname{supp}(f_1 - f_0) \subset \mathcal K_n^{\mathrm{full}}$.
We also define the region where $f_0^{\mathrm{inactive}}$ is constant as 
\[
\mathcal K_n \coloneqq 
\prod_{j\in\mathcal P} \big[-\gamma R_n^{\alpha_j+1},\, \gamma R_n^{\alpha_j+1}\big]
\times \left[-\frac{\gamma_M}{8}M,\, \frac{\gamma_M}{8}M\right]^{d-|\mathcal P|}.
\]
Define the boundary region
\[
\mathcal B_n \coloneqq 
\bigg(\prod_{j\in\mathcal P} \big[-\gamma R_n^{\alpha_j+1},\, \gamma R_n^{\alpha_j+1}\big]\bigg)
\times \bigg( \left[-\frac{\gamma_M}{2}M,\, \frac{\gamma_M}{2}M\right]^{d-|\mathcal P|}
\setminus \left[-\frac{\gamma_M}{8}M,\, \frac{\gamma_M}{8}M\right]^{d-|\mathcal P|} \bigg).
\]
We have the partition \(\mathcal K_n^{\mathrm{full}} = \mathcal K_n \cup \mathcal B_n\).
On $\mathcal K_n$, we have $f_0^{\mathrm{inactive}}(\mathbf x_{\setminus\mathcal P}) = c_0 > 0$. 
For the active coordinates, since $\gamma < \gamma_\omega$ and $\omega(x) \ge c_\omega > 0$ for $|x| \le \gamma_\omega$, we have $
\omega_j(x_j) \ge c_\omega R_n^{-(\alpha_j+1)}$ for $|x_j| \le \gamma R_n^{\alpha_j+1}$.
Therefore, on $\mathcal K_n$,
\[
f_0(\mathbf x) = \prod_{j\in\mathcal P} \omega_j(x_j) f_0^{\mathrm{inactive}}(\mathbf x_{\setminus\mathcal P})
\ge c_\ast^{\mathrm{core}} \prod_{j\in\mathcal P} R_n^{-(\alpha_j+1)},
\]
where $c_\ast^{\mathrm{core}} \coloneqq c_0 \, c_\omega^{|\mathcal P|} > 0$.
On $\mathcal B_n$, the distance from $\mathbf x_{\setminus\mathcal P}$ to the boundary of $\operatorname{supp}(f_0^{\mathrm{inactive}})$ is at least $(\gamma_M/4) M$ (since the core set extends to $(\gamma_M/8)M$ and the full support to $(\gamma_M/2)M$). Applying the decay estimate \eqref{eq:lowerboundonfinactive}, we obtain a positive lower bound for $f_0^{\mathrm{inactive}}$ on $\mathcal B_n$,
\[
f_0^{\mathrm{inactive}}(\mathbf x_{\setminus\mathcal P}) \ge c \left(\frac{\gamma_M}{4} M\right)^{m+1} \eqqcolon c_0'' > 0.
\]
Consequently, on $\mathcal B_n$, for $c_\ast^{\mathrm{bound}} \coloneqq c_0'' \, c_\omega^{|\mathcal P|} > 0,$
\[
f_0(\mathbf x) \ge c_\ast^{\mathrm{bound}} \prod_{j\in\mathcal P} R_n^{-(\alpha_j+1)}.
\]
Defining $\underline c_\ast \coloneqq \min\{c_\ast^{\mathrm{core}}, \, c_\ast^{\mathrm{bound}}\} > 0$,
we obtain the uniform lower bound on $\mathcal K_n^{\mathrm{full}}$,
\begin{equation}\label{eq:lb}
f_0(\mathbf x) \ge \underline c_\ast \prod_{j\in\mathcal P} R_n^{-(\alpha_j+1)}
\quad \text{for all } \mathbf x \in \mathcal K_n^{\mathrm{full}}.
\end{equation}

We now show that the lower bound \eqref{eq:lb} is preserved under convolution with the noise density $f_\varepsilon$, up to a multiplicative constant. Since $f_\varepsilon$ is a probability density, for any $0 < \varpi < 1$ there exists a compact set $K_\varpi \subset \mathbb R^d$ such that 
$\int_{K_\varpi} f_\varepsilon(\mathbf z) \, \di\mathbf z \ge 1 - \varpi$.
Let 
$
R_{K_\varpi}\coloneqq \max_{\mathbf z \in K_\varpi} \max_{1 \le j \le d} |z_j| < \infty.
$
We choose $K_\varpi$ sufficiently small so that
\begin{equation}\label{eq:boundonR_K}
R_{K_\varpi} \le \min\left\{(\gamma_\omega - \gamma) R_n^{\alpha_j+1},\; \frac{\gamma_M}{16}M \right\}.
\end{equation}
Define the enlarged region for the core set $\mathcal K_n$ as
\[
\mathcal K_n^\ast \coloneqq
\prod_{j\in\mathcal P} \big[-\gamma_\omega R_n^{\alpha_j+1},\, \gamma_\omega R_n^{\alpha_j+1}\big]
\times \left[-\frac{\gamma_M}{4}M,\, \frac{\gamma_M}{4}M\right]^{d-|\mathcal P|}.
\]
We claim that $f_0$ is bounded below on $\mathcal K_n^\ast$. Indeed, on the active coordinates we have the same lower bound $c_\omega R_n^{-(\alpha_j+1)}$. On the inactive coordinates, since
\[
\left[-\frac{\gamma_M}{4}M,\, \frac{\gamma_M}{4}M\right]^{d-|\mathcal P|}
\subset \left[-\frac{\gamma_M}{2}M,\, \frac{\gamma_M}{2}M\right]^{d-|\mathcal P|},
\]
and $f_0^{\mathrm{inactive}}$ is continuous and positive on this compact set 
(it equals $c_0 > 0$ on the set $[-(\gamma_M/4)M, \,(\gamma_M
/4)M]^{d-|\mathcal P|}$), there exists $\tilde c_0 > 0$ such that $
f_0^{\mathrm{inactive}}(\mathbf x_{\setminus\mathcal P}) \ge \tilde c_0 > 0$.
Therefore, on $\mathcal K_n^\ast$,
\begin{equation}\label{eq:lowerboundonKast}
f_0(\mathbf x) \ge c_\ast^{(1)} \prod_{j\in\mathcal P} R_n^{-(\alpha_j+1)},
\end{equation}
where $c_\ast^{(1)} \coloneqq \tilde c_0 \, c_\omega^{|\mathcal P|} > 0$.
For $\mathbf x \in \mathcal K_n$ and $\mathbf z \in K_\varpi$, using \eqref{eq:boundonR_K} for the active coordinates we have
\[
|x_j - z_j| \le |x_j| + |z_j|
\le \gamma R_n^{\alpha_j+1} + R_{K_\varpi}
\le \gamma R_n^{\alpha_j+1} + (\gamma_\omega - \gamma) R_n^{\alpha_j+1}
= \gamma_\omega R_n^{\alpha_j+1},
\]
and for the inactive coordinates
\[
|x_k - z_k| \le |x_k| + |z_k|
\le \frac{\gamma_M}{8}M + \frac{\gamma_M}{16}M
= \frac{3\gamma_M}{16}M < \frac{\gamma_M}{4}M.
\]
Thus, $\mathbf x - \mathbf z \in \mathcal K_n^\ast$. Equivalently, $\mathbf x - K_\varpi \coloneqq \{\mathbf x - \mathbf z:\, \mathbf z\in K_\varpi\}\subset \mathcal K_n^\ast$.
Now, for every $\mathbf x \in \mathcal K_n$, using \eqref{eq:lowerboundonKast} and the fact that $\mathbf x - K_\varpi \subset \mathcal K_n^\ast$, we have 
\begin{align}\label{eq:lbeta}
(f_0 \ast f_\varepsilon)(\mathbf x)
= \int_{\mathbb R^d} f_0(\mathbf u) f_\varepsilon(\mathbf x - \mathbf u) \, \di\mathbf u
&\ge \int_{\mathcal K_n^\ast} f_0(\mathbf u) f_\varepsilon(\mathbf x - \mathbf u) \, \di \mathbf u\nonumber\\
&\geq 
c_\ast^{(1)} \prod_{j\in\mathcal P} R_n^{-(\alpha_j+1)}
\int_{\mathcal K_n^\ast} f_\varepsilon(\mathbf x - \mathbf u) \, \di\mathbf u \nonumber\\
&= c_\ast^{(1)} \prod_{j\in\mathcal P} R_n^{-(\alpha_j+1)}
\int_{K_\varpi} f_\varepsilon(\mathbf z) \, \di \mathbf z \nonumber\\
&\ge c_\ast^{(1)} (1 - \varpi) \prod_{j\in\mathcal P} R_n^{-(\alpha_j+1)}.
\end{align}
For $\mathbf x \in \mathcal B_n$, we proceed similarly. Since $\mathbf x$ is at distance at least $(\gamma_M/4)M$ from the boundary of $\operatorname{supp}(f_0^{\mathrm{inactive}})$, and $R_{K_\varpi} \le (\gamma_M/16)M$, the set $\mathbf x - K_\varpi$ remains within the compact subset of $\operatorname{supp}(f_0^{\mathrm{inactive}})$ where $f_0$ is bounded below by $c_\ast^{(2)} \prod_{j\in\mathcal P} R_n^{-(\alpha_j+1)}$. The same convolution argument yields that for some constant $c_\ast^{(2)} > 0$
\[
(f_0 \ast f_\varepsilon)(\mathbf x)
\ge c_\ast^{(2)} (1 - \varpi) \prod_{j\in\mathcal P} R_n^{-(\alpha_j+1)}
\quad \text{for all } \mathbf x \in \mathcal B_n.
\]
Taking $c_\ast\coloneqq \min\{c_\ast^{(1)},\, c_\ast^{(2)}\} > 0$, we obtain the uniform lower bound 
\[
(f_0 \ast f_\varepsilon)(\mathbf x)
\ge c_\ast (1 - \varpi) \prod_{j\in\mathcal P} R_n^{-(\alpha_j+1)}
\quad \text{for all } \mathbf x \in \mathcal K_n^{\mathrm{full}}.
\]
Since $\operatorname{supp}(f_1 - f_0) \subset \mathcal K_n^{\mathrm{full}}=\mathcal K_n \cup \mathcal B_n$,
the $\chi^2$-divergence between the convolved densities can be written as an integral over $\mathcal K_n^{\mathrm{full}}$,
\[\begin{split}
\chi^2(f_1 \ast f_\varepsilon \parallel f_0 \ast f_\varepsilon)
&= \int_{\mathcal K_n^{\mathrm{full}}}
\frac{[(f_1\ast f_\varepsilon)(\mathbf x) - (f_0\ast f_\varepsilon)(\mathbf x)]^2}{(f_0 \ast f_\varepsilon)(\mathbf x)}
\, \di\mathbf x\\
&= \int_{\mathcal K_n \cup \mathcal B_n}
\frac{[(f_1\ast f_\varepsilon)(\mathbf x) - (f_0\ast f_\varepsilon)(\mathbf x)]^2}{(f_0 \ast f_\varepsilon)(\mathbf x)}
\, \di\mathbf x.
\end{split}
\]
By the lower bound on $f_0 \ast f_\varepsilon$ established above, there exists $c_0\coloneqq c_\ast (1 - \varpi) > 0$ independent of $n$ such that uniformly on $\mathcal K_n \cup \mathcal B_n$,
\[
(f_0 \ast f_\varepsilon)(\mathbf x) \ge c_0 \prod_{j\in\mathcal P} R_n^{-(\alpha_j+1)}.
\]
Consequently,
\[
\chi^2(f_1 \ast f_\varepsilon \parallel f_0 \ast f_\varepsilon) \le C \prod_{j\in\mathcal P} R_n^{\alpha_j+1} \int_{\mathcal K_n \cup \mathcal B_n} [(f_1\ast f_\varepsilon)(\mathbf x) - (f_0\ast f_\varepsilon)(\mathbf x)]^2 \, \di\mathbf x.
\]
Extending the integral to all of $\mathbb R^d$, applying Young's inequality with $\|f_\varepsilon\|_{L^1(\mathbb R^d)} = 1$, and using Plancherel's theorem, we obtain
\begin{align}
\chi^2(f_1 \ast f_\varepsilon \parallel f_0 \ast f_\varepsilon)
&\le C \prod_{j\in\mathcal P} R_n^{\alpha_j+1} \|(f_1 - f_0) \ast f_\varepsilon\|_{L^2(\mathbb R^d)}^2 \notag \\
&\le C \prod_{j\in\mathcal P} R_n^{\alpha_j+1} \|f_1 - f_0\|_{L^2(\mathbb R^d)}^2\|f_\varepsilon\|_{L^1(\mathbb R^d)}  \notag \\
&= C \prod_{j\in\mathcal P} R_n^{\alpha_j+1} \frac{1}{(2\pi)^d}\|\mathcal F[f_1-f_0]\|_{L^2(\mathbb R^d)}^2. \notag
\end{align}
By Lemma \ref{lem:foexp} applied with $m = |\mathcal P|$, $A_\ell = \omega_\ell(x_\ell)$, $B_\ell = \xi_\ell(x_\ell)$, and $\delta = \lambda_n$, we obtain pointwise for each $\mathbf x \in \mathbb R^d$
\[
\begin{aligned}
f_1(\mathbf x) - f_0(\mathbf x)
&= \Bigg[ \prod_{j\in\mathcal P} [\omega_j(x_j) + \lambda_n \xi_j(x_j)] - \prod_{j\in\mathcal P} \omega_j(x_j) \Bigg] f_0^{\mathrm{inactive}}(\mathbf x_{\setminus\mathcal P}) \\
&
= \lambda_n \sum_{j\in\mathcal P} \xi_j(x_j) \prod_{\substack{\ell\in\mathcal P \\ \ell \neq j}}  \omega_\ell(x_\ell) \, f_0^{\mathrm{inactive}}(\mathbf x_{\setminus\mathcal P})
+ R(\lambda_n;\, \mathbf x),
\end{aligned}
\]
where the remainder $R(\lambda_n;\, \mathbf x)$ from \eqref{eq:remainder} in Lemma \ref{lem:foexp} has the explicit form
\[
R(\lambda_n;\, \mathbf x)
= \lambda_n^2 \sum_{k=2}^{|\mathcal P|}\lambda_n^{k-2} \sum_{\substack{S \subset \mathcal P \\ |S| = k}}
\Bigg( \prod_{j \in S} \xi_j(x_j) \Bigg)
\Bigg( \prod_{\ell \notin S} \omega_\ell(x_\ell) \Bigg)
 f_0^{\mathrm{inactive}}(\mathbf x_{\setminus\mathcal P}).
\]
Taking the Fourier transform, which factorizes over independent coordinates, we get
\[
\mathcal F[R(\lambda_n;\, \cdot)](\mathbf t)
= \lambda_n^2 \sum_{k=2}^{|\mathcal P|} \lambda_n^{k-2} \sum_{\substack{S \subset \mathcal P \\ |S| = k}}
\Bigg( \prod_{j \in S} \mathcal F[\xi_j](t_j) \Bigg)
\Bigg( \prod_{\ell \notin S} \mathcal F[\omega_\ell](t_\ell) \Bigg)
\mathcal F[f_0^{\mathrm{inactive}}](\mathbf t_{\setminus\mathcal P}).
\]
Since $\omega \in C_c^\infty(\mathbb R)$ and $f_0^{\mathrm{inactive}}\in C_c^\infty(\mathbb R^{d-|\mathcal P|})$ are smooth and compactly supported, we have $\|\mathcal F[\omega]\|_\infty \le C_\omega$ and 
$\|\mathcal F[f_0^{\mathrm{inactive}}]\|_\infty \le C_0$ for suitable constants $C_0$ and $C_\omega$. 
Using $\| \mathcal F[\xi_j]\|_{L^2(\mathbb R)}^2=R_n ^{-(\alpha_j+1)}\|\mathcal F[\xi]\|_{L^2(\mathbb R)}^2<\infty$, for a suitable constant $C_2>0$ we obtain
\[
\|\mathcal F[R(\lambda_n;\, \cdot)]\|_{L^2(\mathbb R^d)}^2
\le C_2 \lambda_n^4 \prod_{j\in\mathcal P} R_n^{-2(\alpha_j+1)}.
\]
Indeed, the dominant term in the sum over $S$ is $|S|=2$; all higher-order terms are smaller since $R_n \ge 1$ and $\lambda_n \le 1$ for sufficiently large $n$. 

Taking the Fourier transform of $f_1-f_0$, and using the same arguments as those used for bounding the remainder term, we obtain
\[\begin{aligned}
\mathcal F[f_1-f_0](\mathbf t)
&= \lambda_n \sum_{j\in\mathcal P} \mathcal F[\xi_j](t_j)  \prod_{\substack{\ell\in\mathcal P \\ \ell \neq j}} \mathcal F[\omega_\ell](t_\ell)\mathcal F[f_0^{\mathrm{inactive}}](\mathbf t_{\setminus\mathcal P})+\mathcal F[R(\lambda_n;\,\cdot)](\mathbf t)\\
&=\lambda_n \sum_{j\in\mathcal P} \mathcal F[\xi](R_n ^{\alpha_j+1}t_j)  \prod_{\substack{\ell\in\mathcal P \\ \ell \neq j}} \mathcal F[\omega](R_n ^{\alpha_\ell+1}t_\ell) \mathcal F[f_0^{\mathrm{inactive}}](\mathbf t_{\setminus\mathcal P})+\mathcal F[R(\lambda_n;\,\cdot)](\mathbf t).
\end{aligned}
\]
Using the same arguments as those used for bounding the remainder term, we obtain
\[\begin{aligned}
\|\mathcal F[f_1-f_0]\|_{L^2(\mathbb R^d)}^2&\leq 2\lambda_n^2 C_0^2C_\omega^{2(|\mathcal P|-1)}\sum_{j\in\mathcal P} \|\mathcal F[\xi](R_n ^{\alpha_j+1}\cdot)\|_{L^2(\mathbb R)}^2 +2\|\mathcal F[R(\lambda_n;\,\cdot)]\|_{L^2(\mathbb R^d)}^2\\
& =2
\lambda_n^2 C_0^2C_\omega^{2(|\mathcal P|-1)}\|\mathcal F[\xi]\|_{L^2(\mathbb R)}^2\sum_{j\in\mathcal P} R_n ^{-(\alpha_j+1)} +O\!\left(\lambda_n^4 \prod_{j\in\mathcal P} R_n^{-2(\alpha_j+1)}\right).
\end{aligned}\]
Since $R_n\geq1$ and $\lambda_n \to 0$, the remainder term contributes only lower-order terms. Thus, for a suitable constant \(
C_1 \coloneqq 2(2\pi)^{-d} C C_0^2 C_\omega^{2(|\mathcal P|-1)} |\mathcal P| \|\mathcal F[\xi]\|_{L^2(\mathbb R)}^2\), 
we have
\[
\chi^2(f_1 \ast f_\varepsilon \parallel f_0 \ast f_\varepsilon)
\le C_1 \lambda_n^2 \prod_{j\in\mathcal P} R_n^{\alpha_j+1}.
\]
Taking into account that $\lambda_n = n^{-E/2} \prod_{j\in\mathcal P} R_n^{-(\alpha_j+1)}$, it follows that 
$$\lambda_n^2 \prod_{j\in\mathcal P} R_n^{\alpha_j+1}=n^{-E} \prod_{j\in\mathcal P} R_n^{-(\alpha_j+1)}.$$ Choosing $R_n\coloneqq n^{1/E}$, we obtain
\[
\begin{aligned}
\chi^2(f_1 \ast f_\varepsilon \parallel f_0 \ast f_\varepsilon)
&\le C_1 n^{-E} \prod_{j\in\mathcal P} R_n^{-(\alpha_j+1)}
= C_1 n^{-E-\frac{1}{E}\sum_{j\in\mathcal P}(\alpha_j+1)}.
\end{aligned}
\]
Since $\sum_{j\in\mathcal P} (\alpha_j+1) > |\mathcal P| \ge 1$ and $E > 0$, we have
\[
E + \frac{1}{E}\sum_{j\in\mathcal P} (\alpha_j+1)
> E + \frac{1}{E} 
> 1.
\]
Consequently,
$$\chi^2(f_1 \ast f_\varepsilon \parallel f_0 \ast f_\varepsilon)=o( n^{-1}).$$

\item \emph{Application of Le Cam's two-point method.}\\[2pt]
We can now prove the minimax lower bound.
By Le Cam's two-point method, see, \emph{e.g.}, Theorem 2.6 in \citet{tsybakov2009introduction}, p. 100,
\begin{multline*}
\inf_{\hat F_n} \sup_{f_X \in \mathcal N_1^d(\boldsymbol\alpha,\, L)}
\mathsf{E}_{(\mu_X \ast \mu_\varepsilon)^{\otimes n}}
\bigl[ \| \hat F_n - F_X \|_{L^1(\mathbb{R}^d)} \bigr]
\ge \frac{1}{2} \| F_1 - F_0 \|_{L^1(\mathbb{R}^d)} \\
\times [
1 - \| (\mu_1 \ast \mu_\varepsilon)^{\otimes n}
    - (\mu_0 \ast \mu_\varepsilon)^{\otimes n}
\|_{\mathrm{TV}} ],
\end{multline*}
where \(\mu_i\) is the distribution corresponding to the density \(f_i\) for \(i = 0,\, 1\).
By the $\chi^2$ bound established above, we have $\chi^2(f_1 \ast f_\varepsilon \parallel f_0 \ast f_\varepsilon)=o(n^{-1})$.
By tensorization of the $\chi^2$ divergence,
\[
\chi^2\bigl((f_1 \ast f_\varepsilon)^{\otimes n} \parallel (f_0 \ast f_\varepsilon)^{\otimes n}\bigr)
= [1 + \chi^2(f_1 \ast f_\varepsilon \parallel f_0 \ast f_\varepsilon)]^n - 1
= (1 + o(n^{-1}))^n - 1 \to 0.
\]
Then, by Pinsker's inequality,
\[
\| (\mu_1 \ast \mu_\varepsilon)^{\otimes n}
    - (\mu_0 \ast \mu_\varepsilon)^{\otimes n}
\|_{\mathrm{TV}}
\le
\sqrt{\frac12 \chi^2\bigl((f_1 \ast f_\varepsilon)^{\otimes n} \parallel (f_0 \ast f_\varepsilon)^{\otimes n}\bigr)}
\to 0.
\]
Therefore,
$1 - \| (\mu_1 \ast \mu_\varepsilon)^{\otimes n}
    - (\mu_0 \ast \mu_\varepsilon)^{\otimes n}
\|_{\mathrm{TV}} \to 1$.
Hence, from Le Cam's lower bound, since \(\|F_1 - F_0\|_{L^1(\mathbb R^d)}\geq c_1 n^{-E/2}\) from \eqref{eq:separate}, for $n$ sufficiently large we obtain
\[
\inf_{\hat F_n} \sup_{f_X \in \mathcal N_1^d(\boldsymbol\alpha,\, L)}
\mathsf{E}_{(\mu_X\ast\mu_\varepsilon)^{\otimes n}} [\|\hat F_n - F_X\|_{L^1(\mathbb R^d)}]
\ge \frac12 \|F_1 - F_0\|_{L^1(\mathbb R^d)} [1 - o(1)]\geq\frac{c_1}{4}n^{-\frac E2}.
\]
\end{enumerate}
This completes the proof.
\end{proof}

\subsection{Linear transformations of errors with independent components}\label{sec:lineartransf}
In this section, we consider error vectors $\boldsymbol{\eta}$ whose components may be correlated. Our goal is to derive rates of convergence for deconvolution estimators under such dependence. We argue that, if the dependence structure is known, we can without loss of generality assume that the error components are independent, provided that $\boldsymbol{\eta}$ can be expressed as a linear transformation of a vector with independent components, that is, we assume there exists an invertible matrix $\mathbf{A}$ (which is known) such that $\boldsymbol{\eta} = \mathbf{A}\boldsymbol{\varepsilon}$, where the components of $\boldsymbol{\varepsilon}$ are independent, though not necessarily identically distributed. Because $\mathbf{A}$ is invertible, we can transform the model and transfer convergence rates, rendering the independence assumption non-restrictive in this setting.

Consider the observation model 
$\mathbf{Y}_i=\mathbf{X}_i+\boldsymbol{\eta}_i$, $i=1,\,\ldots,\,n$.
Premultiplying both sides by $\mathbf{A}^{-1}$ gives $
\mathbf{A}^{-1}\mathbf{Y}_i=\mathbf{A}^{-1}\mathbf{X}_i+\mathbf{A}^{-1}\boldsymbol{\eta}_i=\mathbf{A}^{-1}\mathbf{X}_i+\boldsymbol{\varepsilon}_i$, $i=1,\,\ldots,\,n$.
Hence, by applying the linear transformation $\mathbf{A}^{-1}$ to the observed data, the original model with correlated errors $\boldsymbol{\eta}_i$ is reduced to a convolution model with independent errors $\boldsymbol{\varepsilon}_i$.
Because the transformation is invertible, any convergence-rate results obtained for the transformed model directly transfer back to the original model. This justifies the claim that the independence assumption is not restrictive.

To estimate the probability measure of the random vector $\mathbf{A}^{-1}\mathbf{X}_1$, we use the density estimator 
\[
\hat f_{n,\mathbf{h}}^{\mathbf{A}^{-1}}(\mathbf{z})
=\frac{1}{n}\sum_{i=1}^n\prod_{j=1}^d\frac{1}{h_j}\tilde K_{j,h_j}\biggl(\frac{z_j-(\mathbf{A}^{-1}\mathbf{Y}_i)_j}{h_j}\biggr),\quad \mathbf{z}\in\mathbb R^d,
\]
and the corresponding distribution function estimator 
\[
\widehat F_{n,\mathbf{h}}^{\mathbf{A}^{-1}}(\mathbf{z})=\int_{-\infty}^{z_1}\!\!\cdots\int_{-\infty}^{z_d}
\hat f_{n,\mathbf{h}}^{\mathbf{A}^{-1}}(\mathbf{v})\,\mathrm{d}\mathbf{v},\quad \mathbf{z}\in\mathbb R^d.
\]
Since \(F_X(\mathbf{x}) = F_X^{\mathbf{A}^{-1}}(\mathbf{A}^{-1}\mathbf{x})\), where $F_X^{\mathbf{A}^{-1}}$ is the cumulative distribution function of $\mathbf{A}^{-1}\mathbf{X}_1$, we define the estimator for \(F_X\) by
$\widehat F_{n,\mathbf{h}}(\mathbf{x}) \coloneqq \widehat F_{n,\mathbf{h}}^{\mathbf{A}^{-1}}(\mathbf{A}^{-1}\mathbf{x})$, $\mathbf{x}\in\mathbb R^d$. 

Let $|\mathbf{A}|\coloneqq|\det(\mathbf{A})|$. Then,
\[
\|\widehat F_{n,\mathbf{h}}-F_X\|_{L^1(\mathbb{R}^d)}
=|\mathbf{A}|\;\|\widehat F_{n,\mathbf{h}}^{\mathbf{A}^{-1}}-F_X^{\mathbf{A}^{-1}}\|_{L^1(\mathbb{R}^d)}.
\]
This follows from the change of variables $\mathbf{w}=\mathbf{A}^{-1}\mathbf{x}$ since
\[
\int_{\mathbb{R}^d}|\widehat F_{n,\mathbf{h}}(\mathbf{x})-F_X(\mathbf{x})|\,\mathrm{d}\mathbf{x}
=\int_{\mathbb{R}^d}|\widehat F_{n,\mathbf{h}}^{\mathbf{A}^{-1}}(\mathbf{w})-F_X^{\mathbf{A}^{-1}}(\mathbf{w})|\;|\mathbf{A}|\,\mathrm{d}\mathbf{w}.
\]
Thus, rates of convergence for the transformed problem transfer to the original problem, up to the constant factor 
$|\mathbf{A}|$.

In practice, however, the correlation structure of the errors, the covariance matrix $\boldsymbol{\Sigma} = \mathsf{E}[\boldsymbol{\eta}\boldsymbol{\eta}^\top]$, is unknown. Because $\boldsymbol{\Sigma}$ determines $\mathbf{A}$ up to right multiplication by an orthogonal matrix (equivalently, up to a diagonal scaling if the variances are fixed, since $\boldsymbol{\Sigma} = \mathbf{A} \,\mathrm{diag}(\operatorname{Var}(\varepsilon_j))\, \mathbf{A}^\top$), the matrix $\mathbf{A}$ is also unknown, and the above transformation cannot be applied. Several approaches may circumvent this limitation, though they are beyond the scope of the present work. If independent replicates of the noise are available from a separate pure-noise sample, then $\boldsymbol{\Sigma}$ can be consistently estimated and plugging this estimate into the transformation introduces additional estimation error, possibly leading to slower convergence rates. If $\mathbf{A}$ is completely unknown, the problem becomes a \emph{blind deconvolution}, which is not addressed here, as this work assumes the error distribution is fully known. Extending the theoretical results to unknown correlation remains an open problem.

\section{Proofs for adaptive deconvolution}\phantomsection\label{sec:S4}

In what follows, we provide the proof of Theorem~\ref{thm:oracle_ineq}, which establishes an oracle inequality for the adaptive $\varrho_n$-approximate minimum-$L^2$-distance estimator $\widehat F_{n,\hat{\mathbf{h}}}^\ast$. The proof is based on the fixed-bandwidth concentration inequality provided in Proposition~\ref{prop:oracle}, combined with an analysis of the Goldenshluger--Lepski-type bandwidth selector defined in \eqref{eq:hopt} \hyperref[eq:hopt]{of the Main Article}. 

For the reader's convenience, we recall below a version of Talagrand's inequality as derived in~\citet{ComteLacour2013}, p.~587.
\begin{lemma}[Talagrand's inequality from~\citet{ComteLacour2013}]\label{lemma:talagrand}
Let \(\mathbf{Y}_1,\,\dots,\,\mathbf{Y}_n\) be i.i.d. random vectors in \(\mathbb R^d\), and let \(\bar{B}\) be a countable subset of a function space. For each \(f\in \bar{B}\), let 
$
\psi_f:\mathbb R^d \to \mathbb R
$
be a real-valued measurable function indexed by \(f\). Define the centered empirical process
\[
\nu_n(f) \coloneqq \frac{1}{n} \sum_{i=1}^n 
[ \psi_f(\mathbf{Y}_i) - \mathsf{E}[\psi_f(\mathbf{Y}_1)]],
\quad f\in \bar{B}.
\]
Assume that there exist constants \(0<\Gamma,\,M,\,\upsilon<\infty\) such that
\[
\mathsf{E}\bigg[\sup_{f\in \bar{B}} |\nu_n(f)|\bigg]\le \Gamma,\qquad
\sup_{f\in \bar{B}} \| \psi_f\|_{\infty} \le M, \qquad
\sup_{f\in \bar{B}} \operatorname{Var}
[\psi_f(\mathbf{Y}_1)]\le \upsilon,
\]
where \(\| \psi_f\|_{\infty}\coloneqq\sup_{\mathbf{y}\in\mathbb R^d}|\psi_f(\mathbf{y})|\) denotes the supremum norm. Then, for any \( \eta > 0 \),
\begin{equation}\label{eq:Talineq}
\begin{split}
\mathsf{E}\bigg[\bigg( \sup_{f\in \bar{B}} |\nu_n(f)| - (1+2\eta)\Gamma \bigg)_+ \bigg]
&\le \sqrt{\frac{3\pi}{2}} \sqrt{\frac{\upsilon}{n}} \,
\exp\left(- \frac{\eta^{2}}{6} \cdot \frac{n \Gamma^{2}}{\upsilon}\right)\\
&\hspace*{3cm} + \frac{21}{\eta\wedge 1} \cdot\frac{M}{n}
\exp\left(- \frac{\eta (\eta\wedge 1)}{21} \cdot \frac{n \Gamma}{M}\right),
\end{split}
\end{equation}
with \((x)_+ \coloneqq \max\{x,\,0\}\) the positive part of $x$. 
\end{lemma}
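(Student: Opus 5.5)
The plan is to obtain \eqref{eq:Talineq} by integrating a deviation inequality of Talagrand type. Let \(Z\coloneqq\sup_{f\in\bar B}|\nu_n(f)|\). First we reduce to a supremum without absolute values. Replace \(\bar B\) by the symmetrized countable class \(\{\psi_f,-\psi_f:\,f\in\bar B\}\). This leaves the bounds \(M\) and \(\upsilon\) unchanged and turns \(Z\) into a plain supremum of centred empirical sums. Countability of \(\bar B\) makes \(Z\) measurable, so no separability issues arise.

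Next we invoke a known concentration inequality for suprema of bounded empirical processes, in the form of Bousquet's or Klein--Rio's version of Talagrand's inequality. It gives, for all \(x>0\) and every \(\eta>0\),
\[
\mathsf P\Bigl(Z\ge (1+\eta)\mathsf E[Z]+\sqrt{2\upsilon x/n}+\kappa(\eta)\,Mx/n\Bigr)\le \mathrm e^{-x},
\]
with an explicit \(\kappa(\eta)\asymp 1/(\eta\wedge1)\). Since \(\mathsf E[Z]\le\Gamma\), the same bound holds with \(\mathsf E[Z]\) replaced by \(\Gamma\).

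We then use the layer-cake formula
\[
\mathsf E\bigl[(Z-(1+2\eta)\Gamma)_+\bigr]=\int_0^\infty \mathsf P\bigl(Z>(1+\eta)\Gamma+\eta\Gamma+t\bigr)\,\di t .
\]
For each \(t\), we split the surplus \(\eta\Gamma+t\) between the two deviation terms. If \(\sqrt{2\upsilon x/n}\) and \(\kappa Mx/n\) are each at most \((\eta\Gamma+t)/2\), the event is contained in the one above. This yields
\[
\mathsf P\bigl(Z>(1+2\eta)\Gamma+t\bigr)\le \exp\Bigl(-\frac{n(\eta\Gamma+t)^2}{8\upsilon}\Bigr)+\exp\Bigl(-\frac{n(\eta\Gamma+t)}{2\kappa M}\Bigr).
\]
In the Gaussian part we use \((\eta\Gamma+t)^2\ge \eta^2\Gamma^2+t^2\) and integrate. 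This gives a term of order \(\sqrt{\upsilon/n}\,\exp(-c\,\eta^2 n\Gamma^2/\upsilon)\), and the Gaussian integral produces the factor \(\sqrt{\pi}\). Integrating the exponential part gives \((2\kappa M/n)\exp(-\eta n\Gamma/(2\kappa M))\). Substituting \(\kappa(\eta)\asymp 1/(\eta\wedge1)\) then yields the exact form of \eqref{eq:Talineq}.

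The main obstacle is bookkeeping rather than ideas. We have to recover precisely the constants \(\sqrt{3\pi/2}\), \(1/6\) and \(21\) quoted from \citet{ComteLacour2013}. This requires using the sharp Klein--Rio constants, replacing \(\sqrt{2\upsilon x/n}\) by the version with variance term \(\upsilon+2M\mathsf E[Z]\), and choosing the split of \(\eta\Gamma+t\) optimally rather than in halves. We would first carry out the argument with generic constants \(c_1,c_2,c_3\) to verify the structure. Afterwards we would calibrate the split to match the stated values, which are inherited from Lemma~5 of \citet{ComteLacour2013}.
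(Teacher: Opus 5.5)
The paper does not prove this lemma; it quotes it from \citet{ComteLacour2013}, p.~587, and uses it as a black box in Proposition~\ref{prop:oracle}. Your route is correct and makes the statement self-contained, which the citation does not. You symmetrize to $\{\pm(\psi_f-\mathsf E\psi_f)\}$ and apply Bousquet's (or Klein--Rio's) deviation bound, absorbing the cross term of order $\sqrt{x\,M\,\mathsf E[Z]/n}$ into $\eta\,\mathsf E[Z]$. You then replace $\mathsf E[Z]$ by $\Gamma$ and integrate the tail. This also lets you check that the quoted constants are attainable, which the citation does not.

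The calibration you defer needs care.
\begin{itemize}
\item The centred class has envelope $2M$, so Bousquet's inequality gives $\kappa(\eta)=2(1/3+1/\eta)\le 8/[3(\eta\wedge1)]$.
\item The split in halves does \emph{not} imply \eqref{eq:Talineq}. It yields $\sqrt{2\pi}\sqrt{\upsilon/n}\exp(-\eta^2n\Gamma^2/(8\upsilon))$, which is larger than the first term there.
\item Instead, give the fraction $1/\sqrt3$ of $\lambda\coloneqq\eta\Gamma+t$ to the Gaussian term. Take $x=\min\{n\lambda^2/(6\upsilon),\,(1-1/\sqrt3)n\lambda/(\kappa M)\}$. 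Then $\mathsf P(Z>(1+2\eta)\Gamma+t)\le \exp(-n\lambda^2/(6\upsilon))+\exp(-(1-1/\sqrt3)n\lambda/(\kappa M))$.
\item Using $\lambda^2\ge\eta^2\Gamma^2+t^2$, the first piece integrates to at most $\tfrac12\sqrt{6\pi\upsilon/n}\,\exp(-\eta^2n\Gamma^2/(6\upsilon))$. This is exactly the first term of \eqref{eq:Talineq}. Within this argument, both $\sqrt{3\pi/2}$ and $1/6$ force a fraction of at least $1/\sqrt3$, so this part is tight.
\item The second piece integrates to $\frac{\kappa M}{(1-1/\sqrt3)n}\exp\bigl(-\frac{(1-1/\sqrt3)\eta n\Gamma}{\kappa M}\bigr)$. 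This is dominated by the second term as soon as $\kappa\le 21(1-1/\sqrt3)/(\eta\wedge1)\approx 8.87/(\eta\wedge1)$.
\end{itemize}
That condition holds with Bousquet's constant. It holds even with a linear coefficient $3x$ in place of $x/3$, which gives $\kappa\le 8/(\eta\wedge1)$. So you do not need sharpened Klein--Rio constants; the only tight choice is the $1/\sqrt3$ split.
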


While \cite{ComteLacour2013} established adaptivity of anisotropic kernel estimators for the \emph{density} \(f_X\), our result provides a concentration bound for the corresponding \emph{cumulative distribution function} estimator \(\widehat F_{n,\mathbf{h}}\). This distinction is substantial, as it requires a fine control of the integrated variance, yielding the leading variance term \(V^2(\mathbf{h}) = n^{-1}\prod_{j=1}^d h_j^{-2(\beta_j - 1/2)_+}\).
The bound improves upon the analogous result in Proposition~7 of \cite{ComteLacour2013}, pp.~604--606, by explicitly accommodating the critical smoothness exponents \(\beta_j = 1/2\), from which the log-factor 
\(\prod_{j=1}^d |\log h_j|^{\mathbf{1}_{\{\beta_j=1/2\}}}\)
emerges.
The proof relies on an application of Talagrand's concentration inequality for empirical processes over \(L^2\)-unit balls, leading to exponential deviation bounds with explicit constants.

\begin{proposition}[Concentration inequality for the $L^2$-error of $\widehat F_{n,\mathbf{h}}$]\label{prop:oracle}
Assume that the noise conditions \ref{eq:E1indep}, \ref{it:E21} and \ref{eq:E3} \hyperref[eq:E1indep]{of the Main Article} hold with \(\beta_j>0\) for all \(j=1,\,\ldots,\,d\). Consider the kernel-based estimator \(\widehat F_{n,\mathbf{h}}\) constructed with a product kernel \(K(\mathbf{x})=\prod_{j=1}^d K_j(x_j)\), \(\mathbf{x}\in\mathbb R^d\), where each univariate kernel \(K_j\) satisfies the conditions (i)--(iii) listed in Proposition~\ref{prop:variance}.
Then, there exist constants \(C_1,\,C_2,\,C_3>0\), depending only on the kernels \(K_j\) and the noise distribution, such that, for every bandwidth vector \(\mathbf{h}\in(0,\,1)^d\), for any sequence $C_0(\mathbf{h})\geq1$ (which may depend on $\mathbf{h}$), and for every \(\eta>0\), the following inequality holds:
\begin{align*}
&\mathsf E_{(\mu_X\ast\mu_\varepsilon)^{\otimes n}}\Bigl[\Bigl(
\|\widehat F_{n,\mathbf{h}}-(K_{\mathbf{h}}\ast F_X)\|_{L^2(\mathbb R^d)} 
- (1+2\eta) C_0(\mathbf{h})C_1
\mathcal V(\mathbf{h})
\Bigr)_+\Bigr] \\
&\qquad\qquad\qquad\qquad\le \sqrt{\frac{3\pi}{2}} \sqrt{\frac{\upsilon}{n}} \,
\exp\biggl(- \frac{\eta^{2}C_1^2 }{6C_3^2}C_0^2(\mathbf{h})\biggr)
+ \frac{21}{\eta\wedge 1} C_2
\frac{\mathcal V(\mathbf{h})}{\sqrt{n}}
\exp\biggl(- \frac{\eta (\eta\wedge 1)C_1}{21C_2} \sqrt{n}\biggr),
\end{align*}
where $\mathcal V(\mathbf{h})$ is defined in \eqref{eq:varprox} \hyperref[eq:varprox]{of the Main Article} and \(\upsilon \coloneqq C_3^2 n \mathcal V^2(\mathbf{h})\).
\end{proposition}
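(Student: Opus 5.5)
The plan is to apply Lemma~\ref{lemma:talagrand} (Talagrand's inequality) to the centered empirical process obtained by dualizing the $L^2$-norm. Since $\widehat F_{n,\mathbf{h}}-(K_{\mathbf{h}}\ast F_X)=n^{-1}\sum_{i=1}^n[\Psi_{\mathbf h}(\cdot-\mathbf Y_i)-\mathsf E\Psi_{\mathbf h}(\cdot-\mathbf Y_1)]$, with $\Psi_{\mathbf h}(\mathbf x)\coloneqq\prod_{j=1}^d G_{j,h_j}(x_j/h_j)$ the product of the integrated deconvolution kernels of \eqref{eq:gfun}, we can write
\[
\|\widehat F_{n,\mathbf{h}}-(K_{\mathbf{h}}\ast F_X)\|_{L^2(\mathbb R^d)}=\sup_{f\in\bar B}|\nu_n(f)|,\qquad \psi_f(\mathbf y)\coloneqq\langle \Psi_{\mathbf h}(\cdot-\mathbf y),\,f\rangle_2 ,
\]
where $\bar B$ is a countable dense subset of the unit ball of $L^2(\mathbb R^d)$. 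In Fourier terms, $\psi_f(\mathbf y)=(2\pi)^{-d}\int \mathrm e^{\mathrm i\langle\mathbf t,\mathbf y\rangle}\,\overline{\mathcal F[f](\mathbf t)}\,\prod_j\varphi_{K_j}(h_jt_j)/[\varphi_{\varepsilon_j}(t_j)(-\mathrm i t_j)]\,\mathrm d\mathbf t$, up to sign conventions and the handling of the singularity at $t_j=0$. This singularity is treated exactly as in the proof of Proposition~\ref{prop:variance}, by splitting with the cutoff $\phi$ into a low-frequency part, whose primitive is bounded, and a high-frequency part.

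Next, I verify the three Talagrand constants. First, $\Gamma$ follows from Jensen's inequality and the variance computation in Proposition~\ref{prop:variance}: $\mathsf E[\sup_f|\nu_n(f)|]\le(\int\mathrm{Var}[\widehat F_{n,\mathbf h}(\mathbf x)]\,\mathrm d\mathbf x)^{1/2}\le C_1\mathcal V(\mathbf h)$. We take $\Gamma\coloneqq C_0(\mathbf h)C_1\mathcal V(\mathbf h)$, which is admissible because $C_0(\mathbf h)\ge1$. Second, $M$ follows from Cauchy--Schwarz, $|\psi_f(\mathbf y)|\le\|\Psi_{\mathbf h}\|_{L^2}$. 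Using Plancherel on each factor (the high-frequency part contributes $\int_{1<|t|\le1/h_j}|r_{\varepsilon_j}(t)|^2t^{-2}\,\mathrm dt\lesssim h_j^{-2(\beta_j-1/2)_+}|\log h_j|^{\mathbf 1_{\{\beta_j=1/2\}}}$, and the low-frequency part is controlled by the bounds \eqref{eq:ub}), this gives $M\le C_2\sqrt n\,\mathcal V(\mathbf h)$. Third, $\upsilon$: we use the crude bound $\mathrm{Var}[\psi_f(\mathbf Y_1)]\le\mathsf E[\psi_f^2]\le M^2$, which yields $\upsilon=C_3^2n\mathcal V^2(\mathbf h)$ as in the statement.

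Plugging in, $n\Gamma^2/\upsilon=C_0^2(\mathbf h)C_1^2/C_3^2$ and $n\Gamma/M\ge C_1\sqrt n/C_2$ (using $C_0\ge1$). Substituting these into \eqref{eq:Talineq} reproduces both exponential terms exactly, with $M/n=C_2\mathcal V(\mathbf h)/\sqrt n$.

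I expect the main obstacle to be the bound on $M$, and specifically the $L^2$ control of the integrated kernel when $t_j\to0$. The primitive $G_{j,h_j}$ tends to $1$ at $+\infty$ and so is not itself in $L^2(\mathbb R)$. I would handle this by working with the recentred low-frequency piece $G_{1,j,h_j}(s)\mathbf 1_{\{s\le0\}}-(1-G_{1,j,h_j}(s))\mathbf 1_{\{s>0\}}$, as in Proposition~\ref{prop:variance}. The resulting constant shift cancels in the centred process $\nu_n$, but it must be checked that the dual representation remains valid after this recentring. If that fails, I would restrict to $f$ with $\mathcal F[f]$ vanishing near the origin (a dense subclass) and justify the sup through it.
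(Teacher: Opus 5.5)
Your route is the paper's own: dualize the $L^2$-norm over a countable dense subset of the unit ball, take $\Gamma$ from Jensen and the variance bound, $M$ from Cauchy--Schwarz, and $\upsilon\le M^2$, then substitute into \eqref{eq:Talineq}. That final substitution is correct. The step you flag as the main obstacle, however, is a genuine gap, and neither of your repairs closes it.

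Since $G_{j,h_j}(s)\to\int_{\mathbb R}\tilde K_{j,h_j}=1$ as $s\to+\infty$, we have $\Psi_{\mathbf h}\notin L^2(\mathbb R^d)$; in your Plancherel computation, the low-frequency factor $|t_j|^{-2}$ is not integrable at $t_j=0$. Hence $\sup\{\|\psi_f\|_\infty:\ f\in L^1\cap L^2,\ \|f\|_{L^2}\le 1\}=\|\Psi_{\mathbf h}\|_{L^2(\mathbb R^d)}=+\infty$. The bounds \eqref{eq:ub} do not help, because they concern the recentred function $G_{1,j,h_j}-\mathbf 1_{(0,\infty)}$, not $G_{1,j,h_j}$ itself.

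Your first repair fails because the recentring is not a constant shift. The removed piece $\mathbf 1_{\{v_j>y_j\}}$ depends on $\mathbf y$, and its contribution to $\nu_n(f)$ is the pairing of $f$ with the centred empirical distribution function of the $\mathbf Y_i$. That pairing is not uniformly bounded over the unit ball.

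Your second repair, passing to a dense subclass such as $f$ with $\mathcal F[f]$ vanishing near the singular set, also fails. Talagrand's inequality needs an envelope uniform over the whole index class. For $d=1$ one has $|\psi_f(y)-\int_y^\infty \bar f|\le\sqrt{h_1}\,\|G_{1,h_1}-\mathbf 1_{(0,\infty)}\|_{L^2(\mathbb R)}$. Moreover, any $f$ with $\sup_y|\int_y^\infty f|\le C$ lies at $L^2$-distance at least $1-2C/\sqrt N$ from $N^{-1/2}\mathbf 1_{[0,N]}$. So no class dense in the unit ball has a finite envelope.

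The obstruction is in fact fatal to the statement, not only to the method.

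For $d\ge2$, let $x_2,\dots,x_d\to+\infty$. Bounded convergence and $G_{k,h_k}\to1$ show that $\widehat F_{n,\mathbf h}-(K_{\mathbf h}\ast F_X)$ converges pointwise to the centred marginal process $x_1\mapsto n^{-1}\sum_{i}G_{1,h_1}((x_1-Y_{i,1})/h_1)-\mathsf E[G_{1,h_1}((x_1-Y_{1,1})/h_1)]$. This process is almost surely not identically zero: if it were, its derivative would vanish, forcing the empirical characteristic function of the $Y_{i,1}$ to equal the true one near $0$. By Fatou's lemma, the $L^2(\mathbb R^d)$-norm is then $+\infty$ almost surely, so the left-hand side is infinite.

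For $d=1$, the process equals $F_n^{Y}-F_Y$ (the empirical distribution function of the $Y_{i,1}$ minus its mean) plus a remainder of $L^2$-norm at most $2\sqrt{h_1}\,\|G_{1,h_1}-\mathbf 1_{(0,\infty)}\|_{L^2(\mathbb R)}$. But $\mathsf E\|F_n^{Y}-F_Y\|_{L^2(\mathbb R)}=\infty$ whenever $\mathsf E|Y_{1,1}|^{1/2}=\infty$, and the hypotheses permit this since nothing is assumed on $X$. The same term also makes your $C_1$ in the $\Gamma$ step depend on the law of $X$.

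The paper's proof has exactly this hole: it asserts $\|G_{j,h_j}\|_{L^2(\mathbb R)}<\infty$. So your suspicion was well founded. A correct version needs three things: a different loss (weighted, or with the marginal and empirical-distribution-function components removed), moment assumptions to control those components separately, and Talagrand's inequality applied only to the remaining square-integrable part.
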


\begin{proof}
The proof proceeds by applying Talagrand's inequality~\eqref{eq:Talineq}, stated in Lemma~\ref{lemma:talagrand}. Let 
\[
B(0,\,1)\coloneqq \bigl\{ f\in L^1(\mathbb R^d)\cap L^2(\mathbb R^d):\, \|f\|_{L^2(\mathbb R^d)} = 1 \bigr\}
\]
and let \(\bar{B}(0,\,1)\) be a countable dense subset of \(B(0,\,1)\), whose existence follows from the separability of \(L^2(\mathbb R^d)\). For \(f\in \bar{B}(0,\,1)\), define
\begin{align*}
\psi_f(\mathbf{y})
&\coloneqq \int_{\mathbb R^d} \overline{f(\mathbf{v})}
\left(
\int_{-\infty}^{v_1} \cdots \int_{-\infty}^{v_d}
\mathcal F^{-1}\!\left[
\frac{\varphi_K(\mathbf{h}\odot \mathbf{t})\, \mathrm{e}^{\mathrm{i} \langle \mathbf{t},\, \mathbf{y} \rangle}}
{\varphi_\varepsilon(\mathbf{t})}
\right](\mathbf{u})
\,\mathrm{d}\mathbf{u}
\right)
\,\mathrm{d}\mathbf{v},
\quad \mathbf{y}\in\mathbb R^d,
\end{align*}
where \(\mathbf{h}\odot\mathbf{t} \coloneqq (h_1 t_1,\, \ldots,\, h_d t_d)^\top\) denotes the componentwise (Hadamard) product of the two vectors $\mathbf{h}$ and $\mathbf{t}$\footnote{Recall that the componentwise product corresponds to the action of the diagonal bandwidth matrix \(\mathbf{H}=\operatorname{diag}(h_1,\,\ldots,\,h_d)\) on \(\mathbf{t}\), \emph{i.e.}, \(\mathbf{H}\mathbf{t} = \mathbf{h}\odot\mathbf{t}\).}, and \(\overline{z}\) denotes the complex conjugate of \(z\in\mathbb C\).

Recalling the definition of \(G_{j,h_j}\) in \eqref{eq:gfun}, we note that \(\psi_f(\mathbf{y})\) can be equivalently written as
\begin{equation}\label{eq:termproc}
\psi_f(\mathbf{y})
= \int_{\mathbb R^d} \overline{f(\mathbf{v})}
\prod_{j=1}^d G_{j,h_j}\!\left(\frac{v_j-y_j}{h_j}\right)
\,\mathrm{d}\mathbf{v},
\quad \mathbf{y}\in\mathbb R^d.
\end{equation}
We define the empirical process indexed by \(f\in \bar{B}(0,\,1)\) as follows:
\[
\nu_n(f) \coloneqq \langle \widehat F_{n,\mathbf{h}} - (K_{\mathbf{h}}\ast F_X),\, f \rangle_2
= \frac{1}{n}\sum_{i=1}^n [ \psi_f(\mathbf{Y}_i) - \mathsf E_{(\mu_X\ast\mu_\varepsilon)}[\psi_f(\mathbf{Y}_1)] ].
\]
The continuity of the mapping \(f \mapsto \nu_n(f)\) is required to justify the equality of suprema
\[
\sup_{f \in B(0,1)} |\nu_n(f)| = \sup_{f \in \bar{B}(0,1)} |\nu_n(f)|,
\]
which is used below in \eqref{eq:taldiseq} to write
\[
\|\widehat F_{n,\mathbf{h}}-(K_{\mathbf{h}}\ast F_X)\|_{L^2(\mathbb R^d)} = \sup_{f \in \bar{B}(0,1)} |\nu_n(f)|.
\]
This continuity is essential for the application of Talagrand's inequality. Indeed, Talagrand's concentration inequality for empirical processes is typically stated for separable function classes; the supremum over a non-separable index set may fail to be measurable, and the constants in the inequality may depend on separability assumptions. By constructing the countable dense subset \(\bar{B}(0,\,1)\) and establishing continuity of the empirical process, we ensure that the supremum over the countable subclass equals that over the entire unit ball. This allows us to apply Talagrand's inequality to the countable class \(\bar{B}(0,\,1)\) without measurability issues, while still controlling the supremum over the whole ball \(B(0,\,1)\).

To prove that the mapping \(f \mapsto \nu_n(f)\) is continuous on \(\bar{B}(0,\,1)\subset L^2(\mathbb R^d)\) equipped with the \(L^2\)-norm, we proceed as follows.
First, note that \(\nu_n(f)\) is linear in \(f\), since \(\psi_f\) is linear in \(f\) as seen from its integral definition. Thus, it suffices to show that \(\nu_n\) is bounded on the unit ball of \(L^2(\mathbb R^d)\), \emph{i.e.}, there exists a constant \(B<\infty\) such that for all \(f\in B(0,\,1)\), we have \(|\nu_n(f)|\le B\). Then, linearity combined with boundedness implies continuity.
Using the representation \eqref{eq:termproc} and the Cauchy--Schwarz inequality, for any \(\mathbf{y}\in\mathbb R^d\),
\[
|\psi_f(\mathbf{y})|
\le \|f\|_{L^2(\mathbb R^d)}
\bigg(
\int_{\mathbb R^d}
\prod_{j=1}^d
\left| G_{j,h_j}\!\left(\frac{v_j-y_j}{h_j}\right) \right|^2
\,\mathrm{d}\mathbf{v}
\bigg)^{1/2}
=
\|f\|_{L^2(\mathbb R^d)}
\prod_{j=1}^d
\bigg(
\int_{\mathbb R}
\left| G_{j,h_j}\!\left(\frac{u_j}{h_j}\right) \right|^2
\,\mathrm{d}u_j
\bigg)^{1/2},
\]
where we used the change of variables \(u_j=v_j-y_j\) and the product structure. The last expression is independent of \(\mathbf{y}\). Therefore,
\[
\sup_{\mathbf{y}\in\mathbb R^d} |\psi_f(\mathbf{y})|
\le \|f\|_{L^2(\mathbb R^d)} \prod_{j=1}^d \|G_{j,h_j}\|_{L^2(\mathbb R)},
\]
where each \(\|G_{j,h_j}\|_{L^2(\mathbb R)}<\infty\) under the assumptions on \(K_j\) as in Proposition~\ref{prop:variance}. Consequently, since \(\|f\|_{L^2(\mathbb R^d)}=1\),
\[\begin{split}
|\nu_n(f)|
&\le \frac{1}{n}\sum_{i=1}^n
[|\psi_f(\mathbf{Y}_i)| + \mathsf E[|\psi_f(\mathbf{Y}_1)|]]\\
&\le 2\|f\|_{L^2(\mathbb R^d)} \prod_{j=1}^d \|G_{j,h_j}\|_{L^2(\mathbb R)}\\
&=2\prod_{j=1}^d \|G_{j,h_j}\|_{L^2(\mathbb R)}\eqqcolon B<\infty.
\end{split}
\]
Thus, \(\nu_n\) is bounded on \(B(0,\,1)\) and, by linearity, it is continuous. Since \(\bar{B}(0,\,1)\) is dense in \(B(0,\,1)\), the continuity extends to the closure, and the supremum over the dense subset equals the supremum over the whole ball.

We now compute the constants $\Gamma$, $M$ and $\upsilon$ required in Lemma~\ref{lemma:talagrand}.
\begin{enumerate}[label=\bf{$\bullet$}]
    \item \emph{Evaluation of the constant $\Gamma$.} Note that 
\begin{eqnarray}\label{eq:taldiseq}
\|\widehat F_{n,\mathbf{h}} - (K_{\mathbf{h}} \ast F_X)\|_{L^2(\mathbb R^d)}
&=& \sup_{f \in B(0,1)} |\langle \widehat F_{n,\mathbf{h}} - (K_{\mathbf{h}} \ast F_X),\, f \rangle_2|
\nonumber\\&=& \sup_{f \in \bar{B}(0,1)} |\langle \widehat F_{n,\mathbf{h}} - (K_{\mathbf{h}} \ast F_X),\, f \rangle_2|=\sup_{f \in \bar{B}(0,1)} |\nu_n(f)|.
\end{eqnarray}
Then, by Jensen's inequality and using the identity $ \mathsf{E}_{(\mu_X\ast\mu_\varepsilon)^{\otimes n}}[\widehat F_{n,\mathbf{h}}(\mathbf{x})]=(K_{\mathbf{h}} \ast F_X)(\mathbf{x})$, which holds pointwise for every $\mathbf{x} \in \mathbb{R}^d$ as established in \eqref{eq:bias} \hyperref[eq:bias]{of the Main Article}, we proceed analogously to Proposition~\ref{prop:variance} to obtain 
\begin{align}\label{eq:ineqproc}
&\Bigg(\mathsf E_{(\mu_X\ast\mu_\varepsilon)^{\otimes n}}\Bigg[\sup_{f\in\bar{B}(0,1)}|\nu_n(f)|\Bigg]\Bigg)^2\\
&\hspace*{4cm}\leq 
\mathsf E_{(\mu_X\ast\mu_\varepsilon)^{\otimes n}}\Bigg[\sup_{f\in\bar{B}(0,1)}|\nu_n(f)|^2\Bigg]\nonumber\\
&\hspace*{4cm}=
\mathsf E_{(\mu_X\ast\mu_\varepsilon)^{\otimes n}}\Bigg[\sup_{f\in\bar{B}(0,1)}|\langle \widehat F_{n,\mathbf{h}}-(K_{\mathbf{h}}\ast F_X),\, f \rangle_2|^2\Bigg]\nonumber\\
&\hspace*{4cm}\leq\mathsf E_{(\mu_X\ast\mu_\varepsilon)^{\otimes n}}\Bigg[\sup_{f\in\bar{B}(0,1)}
\Big(\|\widehat F_{n,\mathbf{h}}-(K_{\mathbf{h}}\ast F_X)\|_{L^2(\mathbb R^d)}^2 \|f\|_{L^2(\mathbb R^d)}^2\Big)\Bigg]\nonumber\\
&\hspace*{4cm}=\mathsf E_{(\mu_X\ast\mu_\varepsilon)^{\otimes n}}\Bigl[
\|\widehat F_{n,\mathbf{h}}-(K_{\mathbf{h}}\ast F_X)\|_{L^2(\mathbb R^d)}^2\Bigr]\nonumber\\
&\hspace*{4cm}=\mathsf E_{(\mu_X\ast\mu_\varepsilon)^{\otimes n}}\Bigl[
\|\widehat F_{n,\mathbf{h}}-\mathsf E_{(\mu_X\ast\mu_\varepsilon)^{\otimes n}}[\widehat F_{n,\mathbf{h}}]\|_{L^2(\mathbb R^d)}^2\Bigr]\nonumber\\
&\hspace*{4cm}=\int_{\mathbb R^d}\mathsf E_{(\mu_X\ast\mu_\varepsilon)^{\otimes n}}\bigl[
|\widehat F_{n,\mathbf{h}}(\mathbf{x})-\mathsf E_{(\mu_X\ast\mu_\varepsilon)^{\otimes n}}[\widehat F_{n,\mathbf{h}}(\mathbf{x})]|^2\bigr]\,\di \mathbf{x}\nonumber\\
&\hspace*{4cm}=\int_{\mathbb R^d}
\operatorname{Var}[\widehat F_{n,\mathbf{h}}(\mathbf{x})]
\,\di \mathbf{x}\nonumber\\
&\hspace*{4cm}\lesssim \mathcal V^2(\mathbf{h}).
\end{align}
We set $C_1(\mathbf{h})\coloneqq C_1 \prod_{j=1}^d |\log h_j|^{\mathbf{1}_{\{\beta_j=1/2\}}/2}$,
where \(C_1^2>0\) is the multiplicative constant omitted in \eqref{eq:ineqproc}. We obtain
\[
\Bigg(\mathsf{E}_{(\mu_X\ast\mu_\varepsilon)^{\otimes n}}\Bigg[\sup_{f\in\bar{B}(0,1)} |\nu_n(f)|\Bigg]\Bigg)^2 \leq [C_1(\mathbf{h}) V(\mathbf{h})]^2,
\]
and, for any function $C_0(\mathbf{h})\geq1$, 
\[
\mathsf{E}_{(\mu_X\ast\mu_\varepsilon)^{\otimes n}}\Bigg[\sup_{f\in\bar{B}(0,1)} |\nu_n(f)|\Bigg] \leq C_1(\mathbf{h}) V(\mathbf{h})\leq C_0(\mathbf{h})C_1(\mathbf{h}) V(\mathbf{h}).
\]
Define 
\begin{equation*}\label{eq:defH}
\Gamma \coloneqq C_0(\mathbf{h})C_1(\mathbf{h}) V(\mathbf{h}).
\end{equation*}
Conclude that $$\mathsf{E}_{(\mu_X\ast\mu_\varepsilon)^{\otimes n}}\Bigg[\sup_{f\in\bar{B}(0,1)} |\nu_n(f)|\Bigg]
\leq \Gamma.$$
\item \emph{Evaluation of the constant $M$.} Using the expression for $\psi_f$ given in \eqref{eq:termproc} and applying the Cauchy--Schwarz inequality, we reason as in Proposition~\ref{prop:variance} to obtain 
\begin{align}\label{eq:constsupnorm}
\sup_{f\in\bar{B}(0,1)}\|\psi_f\|_\infty&=
\sup_{f\in\bar{B}(0,1)}
\sup_{\mathbf{y}\in\mathbb R^d}
|\psi_f(\mathbf{y})|\nonumber\\
&=\sup_{f\in\bar{B}(0,1)}
\sup_{\mathbf{y}\in\mathbb R^d}
\biggl|\int_{\mathbb R^d}\overline{f(\mathbf{v})}\prod_{j=1}^dG_{j,h_j}\bigg(\frac{v_j-y_j}{h_j}\bigg)\,\di \mathbf{v}\biggr|\nonumber\\
&\leq\biggl(\sup_{f\in\bar{B}(0,1)}\|f\|_{L^2(\mathbb R^d)}\biggr)
\sup_{\mathbf{y}\in\mathbb R^d}
\Biggl(\int_{\mathbb R^d}\prod_{j=1}^d\biggl|G_{j,h_j}\bigg(\frac{v_j-y_j}{h_j}\bigg)\biggr|^2\,\di \mathbf{v}\Biggr)^{1/2}\nonumber\\
&=\prod_{j=1}^d\Bigg(\sup_{y_j\in\mathbb R}\int_{\mathbb R}\bigg|G_{j,h_j}\bigg(\frac{v_j-y_j}{h_j}\bigg)\bigg|^2\di v_j\Bigg)^{1/2}\nonumber\\
&\lesssim 
\sqrt{n}\,\mathcal V(\mathbf{h}).
\end{align}
Let $C_2(\mathbf{h})\coloneqq C_2 \prod_{j=1}^d |\log h_j|^{\mathbf{1}_{\{\beta_j=1/2\}}/2}$, where $C_2$ is the multiplicative constant omitted in \eqref{eq:constsupnorm}. We obtain
\[
\sup_{f\in\bar{B}(0,1)}\|\psi_f\|_\infty \leq C_2(\mathbf{h}) \sqrt{n}V(\mathbf{h}).
\]
We can therefore define $M\coloneqq C_2(\mathbf{h})\sqrt{n}V(\mathbf{h})$ and conclude that 
\[
\sup_{f\in\bar{B}(0,1)}\|\psi_f\|_\infty \leq M.
\]
\item \emph{Evaluation of the constant $\upsilon$.} Following a similar approach to the one used to identify the constant $M$, we obtain 
\begin{align}\label{eq:disvar}
\sup_{f\in\bar{B}(0,1)}\operatorname{Var}
[\psi_f(\mathbf{Y}_1)]&\leq
\sup_{f\in\bar{B}(0,1)}\mathsf E_{(\mu_X\ast\mu_\varepsilon)}[|\psi_f(\mathbf{Y}_1)|^2]\nonumber\\
&\leq\biggl(\sup_{f\in\bar{B}(0,1)}\|f\|_{L^2(\mathbb R^d)}^2\biggr)
\mathsf E_{(\mu_X\ast\mu_\varepsilon)}\Biggl[\prod_{j=1}^d\int_{\mathbb R}\biggl|G_{j,h_j}\biggl(\frac{v_j-Y_{1,j}}{h_j}\biggr)\biggr|^2\di v_j\Biggr]\nonumber\\
&=\mathsf E_{(\mu_X\ast\mu_\varepsilon)}\Biggl[\prod_{j=1}^d\int_{\mathbb R}\biggl|G_{j,h_j}\biggl(\frac{v_j-Y_{1,j}}{h_j}\biggr)\biggr|^2\di v_j\Biggr]\nonumber\\
&\leq\prod_{j=1}^d\biggl(\sup_{y_j\in\mathbb R}\int_{\mathbb R}\biggl|G_{j,h_j}\biggl(\frac{v_j-y_j}{h_j}\biggr)\biggr|^2\di v_j\biggr)\nonumber\\
&\lesssim n\, \mathcal V^2(\mathbf{h}).
\end{align}
Set $C_3(\mathbf{h}) \coloneqq C_3\prod_{j=1}^d |\log h_j|^{\mathbf{1}_{\{\beta_j=1/2\}}/2}$,
where $C_3^2>0$ is the multiplicative constant omitted in \eqref{eq:disvar}. We obtain
\[
\sup_{f\in\bar{B}(0,1)}\operatorname{Var}
[\psi_f(\mathbf{Y}_1)] \leq C^2_3(\mathbf{h}) nV^2(\mathbf{h}).
\]
Hence, we define 
\begin{equation}\label{eq:defupsilon}
\upsilon\coloneqq C^2_3(\mathbf{h})nV^2(\mathbf{h})=C_3^2n \,\mathcal V^2(\mathbf{h}).
\end{equation}
Before proceeding, we note that the quantities $\Gamma$, $M$, and $\upsilon$ defined above all depend on the bandwidth vector $\mathbf{h}$ through $V(\mathbf{h})$, $C_0(\mathbf{h})$, $C_1(\mathbf{h})$, $C_2(\mathbf{h})$, and $C_3(\mathbf{h})$. To ease the notation, we suppress this dependence and write $\Gamma$, $M$, and $\upsilon$ instead of $\Gamma(\mathbf{h})$, $M(\mathbf{h})$, and $\upsilon(\mathbf{h})$ in what follows. However, it should be kept in mind that these quantities do depend on $\mathbf{h}$; this dependence will always be clear from the context and is essential for the subsequent application of Talagrand's inequality.
\item \emph{Application of Talagrand's inequality.}
Having evaluated all the constants $\Gamma,\,M,\,\upsilon$, and recalling from \eqref{eq:taldiseq} that
$\sup_{f\in\bar{B}(0,1)}|\nu_n(f)|=\|\widehat F_{n,\mathbf{h}}-(K_{\mathbf{h}}\ast F_X)\|_{L^2(\mathbb R^d)}$, Talagrand's inequality \eqref{eq:Talineq} yields, for any $\eta>0$,
\begin{align*}
&\mathsf{E}\Big[\Big(\|\widehat F_{n,\mathbf{h}}-(K_{\mathbf{h}}\ast F_X)\|_{L^2(\mathbb R^d)} -(1+2\eta) C_0(\mathbf{h})C_1(\mathbf{h})V(\mathbf{h})\Big)_+\Big]\\
&\,\,\,\,\,\,\,\,\,\,\,\,\le \sqrt{\frac{3\pi}{2}} \sqrt{\frac{\upsilon}{n}} \,
\exp\bigg(- \frac{\eta^{2}C_1^2}{6C_3^2}C^2_0(\mathbf{h})\bigg)
+ \frac{21}{\eta\wedge 1} C_2(\mathbf{h})\frac{V(\mathbf{h})}{\sqrt{n}}
\exp\bigg(- \frac{\eta (\eta\wedge 1)}{21} \frac{C_0(\mathbf{h})C_1(\mathbf{h})}{C_2(\mathbf{h})}\sqrt{n}\bigg)\\
&\,\,\,\,\,\,\,\,\,\,\,\,\leq\sqrt{\frac{3\pi}{2}} \sqrt{\frac{\upsilon}{n}} \,
\exp\bigg(- \frac{\eta^{2}C_1^2}{6C_3^2}C^2_0(\mathbf{h})\bigg)+ \frac{21}{\eta\wedge 1} C_2\frac{\mathcal V(\mathbf{h})}{\sqrt{n}}\exp\bigg(- \frac{\eta (\eta\wedge 1)C_1}{21C_2} \sqrt{n}\bigg),
\end{align*}
where in the last inequality we used \(C_0(\mathbf{h})\ge 1\) in the exponent of the second addendum.
\end{enumerate}
The proof is complete.
\end{proof}

Proposition~\ref{prop:oracle} establishes a sharp fixed-bandwidth concentration inequality for the kernel-based estimator $\widehat F_{n,\mathbf{h}}$. It controls the expected positive part of the discrepancy between 
\(\|\widehat F_{n,\mathbf{h}}-(K_{\mathbf{h}}\ast F_X)\|_{L^2(\mathbb R^d)}\) and the deterministic proxy
$(1+2\eta)C_0(\mathbf{h})C_1 \mathcal V(\mathbf{h})$, which turns out to be equal to $(1+2\eta) C_0(\mathbf{h}) C_1V(\mathbf{h})
\prod_{j=1}^d |\log h_j|^{\mathbf{1}_{\{\beta_j = 1/2\}}/2}$,
where \(V(\mathbf{h})\) is the leading variance term, while the logarithmic factors arise from the boundary case \(\beta_j=1/2\). The right-hand side of the concentration inequality consists of two exponential terms: the first decays like \(\exp(-c_1 \eta^2 C_0(\mathbf{h}))\), reflecting the Gaussian-like concentration of the empirical process around its expectation; the second decays like \(\exp(-c_2 \eta \sqrt{n})\), which corresponds to the Bernstein-type tail of the supremum of the empirical process. The inclusion of the arbitrary sequence \(C_0(\mathbf{h})\ge 1\) is a flexibility tool, allowing us to later tune its choice to satisfy summability constraints on the grid $\mathcal H_{M_n}$ when proving the final oracle inequality. 
Crucially, increasing \(C_0(\mathbf{h})\) inflates the variance proxy on the left-hand side but sharpens the exponential rate of the first tail term on the right. It is this trade-off that underpins the bandwidth selection procedure and enables it to achieve adaptive minimax rates. Indeed, by minimizing the penalized criterion
\(
D(\mathbf{h}) + \tilde V(\mathbf{h})
\) over $\mathbf{h}$, one balances the empirical bias proxy \(D(\mathbf{h})\) against the penalty term $\tilde V(\mathbf{h})$, which is proportional to the standard deviation proxy \(\mathcal V(\mathbf{h})\). The concentration bound above then guarantees that the selected bandwidth performs as well as the best fixed bandwidth, up to universal constants.

The following theorem establishes an oracle inequality for the adaptive $\varrho_n$-approximate minimum-$L^2$-distance estimator $\widehat F_{n,\hat{\mathbf{h}}}^\ast$. The key steps of the proof are: 
reduction to the adaptive kernel-based estimator $\check{F}_n = \widehat F_{n,\hat{\mathbf{h}}}$,
an error decomposition using the convolution symmetry of the kernel,
bounding the empirical bias proxy $D(\mathbf{h})$ via the concentration inequality,
proving summability over the anisotropic grid $\mathcal H_{M_n}$, and
transferring the resulting bound to the $\varrho_n$-approximate minimum-$L^2$-distance estimator via a projection argument.
The oracle inequality shows that the adaptive estimator attains the optimal bias-variance trade-off up to universal constants and a negligible remainder term.

\begin{proof}[Proof of Theorem~\ref{thm:oracle_ineq} (Oracle inequality)]
The proof is split into steps.
\begin{steps}
\item \emph{Reduction to the adaptive kernel-based estimator.}\label{st:reduc}\\[2pt]
Recall the definition of the adaptive kernel-based estimator
\[
\check{F}_n \coloneqq \widehat{F}_{n,\hat{\mathbf{h}}}, \quad \text{with }\,
\hat{\mathbf{h}} := \argmin_{\mathbf{h} \in \mathcal H_{M_n}} \big\{D(\mathbf{h}) + \tilde V(\mathbf{h}) \big\},
\]
where \(D(\mathbf{h})\) and \(\tilde V(\mathbf{h})\) are defined in \eqref{eq:Ah} and \eqref{eq:proxyvar} \hyperref[eq:Ah]{of the Main Article} (see also \eqref{eq:defvtilde}), respectively.

For an arbitrary bandwidth vector \(\mathbf{h} \in \mathcal H_{M_n}\), we will show that the \(L^2\)-error of \(\check{F}_n\) is bounded, up to universal constants, by the oracle risk associated with this fixed \(\mathbf{h}\), plus a negligible remainder. Since \(\mathbf{h}\in\mathcal H_{M_n}\) is arbitrary, this will allow us to take the infimum over all \(\mathbf{h}\in\mathcal H_{M_n}\) at the end of the proof.

By the same arguments used to obtain~\eqref{eq:stimdist} in Theorem~\ref{thm:ubrate}, for every \(\mathbf{h}\in\mathcal H_{M_n}\), the minimum-$L^2$-distance estimator \(\widehat F_{n,\mathbf{h}}^\ast\) satisfies
\[
\|\widehat F_{n,\mathbf{h}}^\ast - F_X\|_{L^2(\mathbb R^d)}
\leq 2\|\widehat F_{n,\mathbf{h}} - F_X\|_{L^2(\mathbb R^d)} + \varrho_n.
\]
In particular, choosing \(\mathbf{h} = \hat{\mathbf{h}}\) yields (see also \eqref{eq:reductionstep} \hyperref[eq:reductionstep]{of the Main Article})
\[
\|\widehat F_{n,\hat{\mathbf{h}}}^\ast - F_X\|_{L^2(\mathbb R^d)}
\leq 2\|\check{F}_n - F_X\|_{L^2(\mathbb R^d)} + \varrho_n.
\]
Therefore, it suffices to prove an oracle inequality for the adaptive kernel-based estimator \(\check{F}_n\). This is the goal of the following steps.
\item \emph{Error decomposition via convolution symmetry.}\label{st:decomp}\\[2pt]
For every pair of bandwidth vectors $\mathbf{h},\,\mathbf{h}'\in\mathbb R_+^d$, the following convolution identity holds.\footnote{Recall that for $\mathbf{a}=(a_1,\,\dots,\,a_d)^{\!\top}$ and $\mathbf{t}=(t_1,\,\dots,\,t_d)^{\!\top}$, 
we denote by $\mathbf{a}\odot\mathbf{t}$ the componentwise (Hadamard) product 
$
\mathbf{a}\odot\mathbf{t}=(a_1t_1,\,\dots,\,a_dt_d)^{\!\top}$.} For each $\mathbf{x}\in\mathbb R^d$,
\begin{align}\label{eq:identityhh'}
(K_{\mathbf{h}'}\ast \widehat F_{n,\mathbf{h}})(\mathbf{x})
&= \biggl(K_{\mathbf{h}'}\ast \mathcal F^{-1}\biggl[\frac{\varphi_n(\mathbf{t})\varphi_K(\mathbf{h}\odot \mathbf{t})}{\varphi_\varepsilon(\mathbf{t})}\biggr]\biggr)(\mathbf{x}) \nonumber\\
&= \mathcal F^{-1}\bigg[\frac{\varphi_K(\mathbf{h}'\odot \mathbf{t})\varphi_n(\mathbf{t})\varphi_K(\mathbf{h}\odot \mathbf{t})}{\varphi_\varepsilon(\mathbf{t})}\bigg](\mathbf{x}) \nonumber\\
&= \bigg(K_{\mathbf{h}}\ast \mathcal F^{-1}\bigg[\frac{\varphi_n(\mathbf{t})\varphi_K(\mathbf{h}'\odot \mathbf{t})}{\varphi_\varepsilon(\mathbf{t})}\bigg]\bigg)(\mathbf{x}) = (K_{\mathbf{h}}\ast \widehat F_{n,\mathbf{h}'})(\mathbf{x}).
\end{align}
For any fixed $\mathbf{h}\in\mathcal H_{M_n}$, the triangle inequality gives
\begin{align*}
\|\check{F}_n-F_X\|_{L^2(\mathbb R^d)} &\leq \underbrace{\|\check{F}_n - (K_{\hat{\mathbf{h}}}\ast \widehat F_{n,\mathbf{h}})\|_{L^2(\mathbb R^d)}}_{=:D_3}+\underbrace{\|(K_{\hat{\mathbf{h}}}\ast \widehat F_{n,\mathbf{h}})-\widehat F_{n,\mathbf{h}}\|_{L^2(\mathbb R^d)}}_{=:D_4}+\|\widehat F_{n,\mathbf{h}} - F_X\|_{L^2(\mathbb R^d)}\\
&=D_3+D_4+\|\widehat F_{n,\mathbf{h}} - F_X\|_{L^2(\mathbb R^d)}.
\end{align*}
\item \emph{Bounding $D_3$ and $D_4$ by the oracle discrepancy.}\label{st:oracle}\\[2pt]
By adding and subtracting $\tilde V({\hat{\mathbf{h}}})$ and by definition of $D(\mathbf{h})$ in \eqref{eq:Ah} \hyperref[eq:Ah]{of the Main Article}), one obtains
 \begin{align*}
D_3\coloneqq\|\check{F}_n-(K_{\hat{\mathbf{h}}}\ast \widehat F_{n,\mathbf{h}})\|_{L^2(\mathbb R^d)}&=\bigl[\|\check{F}_n-(K_{\hat{\mathbf{h}}}\ast \widehat F_{n,\mathbf{h}})\|_{L^2(\mathbb R^d)}-\tilde V(\hat{\mathbf{h}})\bigr]+\tilde V(\hat{\mathbf{h}})\leq D(\mathbf{h})+\tilde V({\hat{\mathbf{h}}}).
\end{align*}
Analogously, using the symmetry in \eqref{eq:identityhh'} 
and the definition of $D(\hat{\mathbf{h}})$,
\begin{align*}
D_4\coloneqq\|(K_{\hat{\mathbf{h}}}\ast \widehat{F}_{n,\mathbf{h}})-\widehat F_{n,\mathbf{h}}\|_{L^2(\mathbb R^d)}
&=\bigl[\|(K_{\hat{\mathbf{h}}}\ast \widehat{F}_{n,\mathbf{h}})-\widehat F_{n,\mathbf{h}}\|_{L^2(\mathbb R^d)}-\tilde  V(\mathbf{h})\bigr]+\tilde V(\mathbf{h})\\
&=\bigl[\|(K_{{\mathbf{h}}}\ast \widehat{F}_{n,\hat{\mathbf{h}}})-\widehat F_{n,\mathbf{h}}\|_{L^2(\mathbb R^d)}-\tilde V(\mathbf{h})\bigr]+\tilde V(\mathbf{h})\\
&\leq D(\hat{\mathbf{h}})+\tilde V(\mathbf{h}).
\end{align*}
Thus,
\[D_4\leq D(\hat{\mathbf{h}})+\tilde V(\mathbf{h}).\]
Combining the upper bounds on $D_3$ and $D_4$, and recalling the definition of $\hat{\mathbf{h}}$ in \eqref{eq:hopt} \hyperref[eq:hopt]{of the Main Article}, we have 
\begin{align*}
D_3+D_4&\leq D(\mathbf{h})+\tilde V(\hat{\mathbf{h}})+D(\hat{\mathbf{h}})+\tilde V(\mathbf{h})\\
&= \bigl[D(\mathbf{h})+\tilde V(\mathbf{h})\bigr]+ \bigl[D(\hat{\mathbf{h}})+\tilde V(\hat{\mathbf{h}})\bigr]\\
&\leq \bigl[D(\mathbf{h})+\tilde V(\mathbf{h})\bigr]+ \bigl[D(\mathbf{h})+\tilde V(\mathbf{h})\bigr]=2\big[D(\mathbf{h})+\tilde V(\mathbf{h})\big].
\end{align*}
\item \emph{Bias-variance decomposition for the kernel estimator.}\label{st:bias-variance}\\[2pt]
Recalling that $B_{\mathbf{h}}\coloneqq F_X-(K_{\mathbf{h}}\ast F_X)$, the $L^2$-error of the estimator $\widehat F_{n,\mathbf{h}}$ is decomposed via the triangle inequality as
\begin{align*}
\|\widehat F_{n,\mathbf{h}}-F_X\|_{L^2(\mathbb R^d)}
&\leq \|B_{\mathbf{h}}\|_{L^2(\mathbb R^d)}+
\|\widehat F_{n,\mathbf{h}}-\mathsf E_{(\mu_X\ast\mu_\varepsilon)^{\otimes n}}[\widehat F_{n,\mathbf{h}}]\|_{L^2(\mathbb R^d)}.
\end{align*}
Using the reverse Jensen inequality $\mathsf E[\sqrt{X}]\leq \sqrt{\mathsf E[X]}$ for a non-negative random variable $X$ because \(x \mapsto \sqrt{x}\) is concave, and Proposition \ref{prop:variance}, we obtain 
\begin{align*}
\mathsf E_{(\mu_X\ast\mu_\varepsilon)^{\otimes n}}[\|\widehat F_{n,\mathbf{h}}-\mathsf E_{(\mu_X\ast\mu_\varepsilon)^{\otimes n}}[\widehat F_{n,\mathbf{h}}]\|_{L^2(\mathbb R^d)}]\leq \int_{\mathbb R^d}\sqrt{\operatorname{Var}[\widehat F_{n,\mathbf{h}}(\mathbf{x})]}\,\di \mathbf{x}
\lesssim \mathcal V(\mathbf{h}).
\end{align*}
Consequently, from the error decomposition via convolution symmetry and the oracle discrepancy bounds on $D_3$ and $D_4$, for a suitable constant $C>0$,
\begin{align}\label{eq:return}
\mathsf E_{(\mu_X\ast\mu_\varepsilon)^{\otimes n}}[\|\check{F}_n-F_X\|_{L^2(\mathbb R^d)}]
&\leq 2\mathsf E_{(\mu_X\ast\mu_\varepsilon)^{\otimes n}}[D(\mathbf{h})]\nonumber\\&\hspace*{2cm}+
\|B_{\mathbf{h}}\|_{L^2(\mathbb R^d)}+2\tilde V(\mathbf{h})+C
\mathcal V(\mathbf{h}).
\end{align}
\item \emph{Study of the oracle discrepancy $D(\mathbf{h})$.}\label{st:ordisc} \\[2pt]
We first derive an upper bound on $\mathsf E_{(\mu_X\ast\mu_\varepsilon)^{\otimes n}}{[D(\mathbf{h})]}$ 
using Proposition~\ref{prop:oracle}. To this end, observe that
\begin{align*}
\|\widehat F_{n,\mathbf{h}'} - (K_{\mathbf{h}'}\ast \widehat F_{n,\mathbf{h}})\|_{L^2(\mathbb R^d)}
&\leq \|\widehat F_{n,\mathbf{h}'} - (K_{\mathbf{h}'}\ast F_X)\|_{L^2(\mathbb R^d)}\\
&\hspace*{3cm}+ \underbrace{\|K_{\mathbf{h}'}\ast \widehat F_{n,\mathbf{h}} - (K_{\mathbf{h}'}\ast F_X\ast K_{\mathbf{h}})\|_{L^2(\mathbb R^d)}}_{\eqqcolon T_1}\\ &\hspace*{3cm}+ \underbrace{\|(K_{\mathbf{h}'}\ast F_X) - (K_{\mathbf{h}'}\ast F_X\ast K_{\mathbf{h}})\|_{L^2(\mathbb R^d)}}_{\eqqcolon T_2}\\
&=\|\widehat F_{n,\mathbf{h}'} - \mathsf E_{(\mu_X\ast\mu_\varepsilon)^{\otimes n}}[\widehat F_{n,\mathbf{h}'}]\|_{L^2(\mathbb R^d)}+T_1+T_2.
\end{align*}
For the term $T_1$, using the symmetry \eqref{eq:identityhh'}, we obtain
\begin{align}\label{eq:term1}
T_1\coloneqq\|K_{\mathbf{h}'}\ast \widehat F_{n,\mathbf{h}} - (K_{\mathbf{h}'}\ast F_X\ast K_{\mathbf{h}})\|_{L^2(\mathbb R^d)}
&= \|K_{\mathbf{h}}\ast \widehat F_{n,\mathbf{h}'} - K_{\mathbf{h}}\ast (K_{\mathbf{h}'}\ast F_X)\|_{L^2(\mathbb R^d)}\nonumber\\
&= \|K_{\mathbf{h}}\ast [\widehat F_{n,\mathbf{h}'} - (K_{\mathbf{h}'}\ast F_X)]\|_{L^2(\mathbb R^d)} \nonumber\\
&\leq \|K_{\mathbf{h}}\|_{L^1(\mathbb R^d)} \|\widehat F_{n,\mathbf{h}'} - (K_{\mathbf{h}'}\ast F_X)\|_{L^2(\mathbb R^d)} \nonumber\\
&= C_K \|\widehat F_{n,\mathbf{h}'} -\mathsf E_{(\mu_X\ast\mu_\varepsilon)^{\otimes n}}[\widehat F_{n,\mathbf{h}'}]\|_{L^2(\mathbb R^d)},
\end{align}
where $$C_K\coloneqq \prod_{j=1}^d\|K_j\|_{L^1(\mathbb R)}<\infty\quad\text{(recall assumption~(i) \hyperref[prop:variance]{of the main Article} on $K_j$).}$$ 
For the term $T_2$, we have
\begin{align}\label{eq:term2}
T_2\coloneqq\|(K_{\mathbf{h}'}\ast F_X) - (K_{\mathbf{h}'}\ast F_X\ast K_{\mathbf{h}})\|_{L^2(\mathbb R^d)}
&= \|K_{\mathbf{h}'}\ast [F_X - (K_{\mathbf{h}}\ast F_X)]\|_{L^2(\mathbb R^d)} \nonumber\\
&\leq \|K_{\mathbf{h}'}\|_{L^1(\mathbb R^d)} \|B_{\mathbf{h}}\|_{L^2(\mathbb R^d)}.
\end{align}
Combining the bounds \eqref{eq:term1} and \eqref{eq:term2} yields
\begin{align*}
\|\widehat F_{n,\mathbf{h}'} - (K_{\mathbf{h}'}\ast \widehat F_{n,\mathbf{h}})\|_{L^2(\mathbb R^d)}
&\leq (1 +C_K) \Big[\|\widehat F_{n,\mathbf{h}'} - \mathsf E_{(\mu_X\ast\mu_\varepsilon)^{\otimes n}}[\widehat F_{n,\mathbf{h}'}]\|_{L^2(\mathbb R^d)}\\
&\hspace*{5cm}+ \frac{C_K}{1+C_K}\|B_{\mathbf{h}}\|_{L^2(\mathbb R^d)}\Big].
\end{align*}
Since the function $f(x)=\max\{0,\,x\}$ is non-decreasing, substituting this bound into the definition of $D(\mathbf{h})$ in \eqref{eq:Ah} \hyperref[eq:Ah]{of the Main Article}, we obtain
\begin{align*}
D(\mathbf{h})&\coloneqq\sup_{\mathbf{h}'\in\mathcal H_{M_n}}
\Bigl(\|\widehat F_{n,\mathbf{h}'}-(K_{\mathbf{h}'}\ast\widehat F_{n,\mathbf{h}})\|_{L^2(\mathbb R^d)}- \tilde V(\mathbf{h}')\Bigr)_+\\
&\leq \sup_{\mathbf{h}'\in\mathcal H_{M_n}}\Bigl((1+C_K) \|\widehat F_{n,\mathbf{h}'}-\mathsf E_{(\mu_X\ast\mu_\varepsilon)^{\otimes n}}[\widehat F_{n,\mathbf{h}'}]\|_{L^2(\mathbb R^d)}-\tilde V(\mathbf{h}')\Bigr)_+
+C_K\|B_{\mathbf{h}}\|_{L^2(\mathbb R^d)}\\
&=(1 + C_K)
\bigg[\sup_{\mathbf{h}'\in\mathcal H_{M_n}} \bigg(\|\widehat F_{n,\mathbf{h}'}-\mathsf E_{(\mu_X\ast\mu_\varepsilon)^{\otimes n}}[\widehat F_{n,\mathbf{h}'}]\|_{L^2(\mathbb R^d)}
-\frac{\tilde V(\mathbf{h}')}{1+C_K}\bigg)_+\\
&\hspace*{8.5cm}+\frac{C_K}{1+C_K}\|B_{\mathbf{h}}\|_{L^2(\mathbb R^d)}\bigg].
\end{align*}
Thus,
\begin{equation*}
D(\mathbf{h})\leq(1 + C_K)
\bigg[\sup_{\mathbf{h}'\in\mathcal H_{M_n}} \bigg(\|\widehat F_{n,\mathbf{h}'}-\mathsf E_{(\mu_X\ast\mu_\varepsilon)^{\otimes n}}[\widehat F_{n,\mathbf{h}'}]\|_{L^2(\mathbb R^d)}
-\frac{\tilde V(\mathbf{h}')}{1+C_K}\bigg)_+
+\frac{C_K}{1+C_K}\|B_{\mathbf{h}}\|_{L^2(\mathbb R^d)}\bigg]
\end{equation*}
and 
\begin{align}\label{eq:abound}
\mathsf E_{(\mu_X\ast\mu_\varepsilon)^{\otimes n}}[D(\mathbf{h})]
&\leq (1 + C_K)\nonumber\\
&\,\,\,\,\,
\times\bigg\{
\mathsf E_{(\mu_X\ast\mu_\varepsilon)^{\otimes n}}
\bigg[
\sup_{\mathbf{h}'\in\mathcal H_{M_n}}
\bigg(
\|\widehat F_{n,\mathbf{h}'}-\mathsf E_{(\mu_X\ast\mu_\varepsilon)^{\otimes n}}[\widehat F_{n,\mathbf{h}'}]\|_{L^2(\mathbb R^d)}
- \frac{\tilde V(\mathbf{h}')}{1+C_K}
\bigg)_+
\bigg] \nonumber \\
&\hspace*{5.5cm}
 + \frac{C_K}{1+C_K}\|B_{\mathbf{h}}\|_{L^2(\mathbb R^d)}
\bigg\}.
\end{align}
\item \emph{Application of the concentration inequality.}\label{st:concentration}\\[2pt]
Let $C_V > 1$ be a universal constant to be specified below, and set
\[
\eta \coloneqq \frac{C_V - 1}{2} > 0.
\]
Note that $\eta$ does not depend on $\mathbf{h}'$.
For each $\mathbf{h}' \in \mathcal H_{M_n}$, recall from the proof of Proposition~\ref{prop:oracle} that
$
\Gamma \coloneqq C_0(\mathbf{h}') C_1(\mathbf{h}') V(\mathbf{h}')$,
where $C_0(\mathbf{h}') \ge 1$ will be chosen to ensure the summability over the grid, and with $C_1(\mathbf{h}') = C_1 \prod_{j=1}^d |\log h_j'|^{\mathbf{1}_{\{\beta_j=1/2\}}/2}$
from the proof of Proposition~\ref{prop:oracle}.
Define the variance penalty term 
\[
\tilde V^2(\mathbf{h}') \coloneqq C_V^2 (1+C_K)^2 \Gamma^2.
\]
Then, for every $\mathbf{h}' \in \mathcal H_{M_n}$,
\begin{equation}\label{eq:defvtilde}
\frac{\tilde V(\mathbf{h}')}{1+C_K}
= C_V \Gamma
= (1+2\eta)\Gamma.
\end{equation}
The purpose of this definition is to ensure that the penalty term 
$\tilde V(\mathbf{h}')/(1+C_K)$ in the definition of $D(\mathbf{h})$ 
matches the shifted expectation term $(1+2\eta)\Gamma$ from 
Talagrand's inequality (Lemma~\ref{lemma:talagrand}). This allows us to apply 
the concentration inequality uniformly over the grid $\mathcal H_{M_n}$. 
The constant $C_V > 1$ is chosen so that $\eta > 0$, which is required for 
the exponential decay in Talagrand's inequality.


Substituting the bound \eqref{eq:abound} into \eqref{eq:return} yields
\begin{align}\label{eq:finalbound}
&\mathsf E_{(\mu_X\ast\mu_\varepsilon)^{\otimes n}}[\|\check{F}_n-F_X\|_{L^2(\mathbb R^d)}]\nonumber\\
&\qquad\quad\leq 2(1+C_K)
\mathsf E_{(\mu_X\ast\mu_\varepsilon)^{\otimes n}}\bigg[\sup_{\mathbf{h}'\in\mathcal H_{M_n}} \bigg(\|\widehat F_{n,\mathbf{h}'}-\mathsf E_{(\mu_X\ast\mu_\varepsilon)^{\otimes n}}[\widehat F_{n,\mathbf{h}'}]\|_{L^2(\mathbb R^d)}
-\frac{\tilde V(\mathbf{h}')}{1+C_K}\bigg)_+\bigg]\nonumber\\
&\hspace*{5cm}+(1+2C_K)\|B_{\mathbf{h}}\|_{L^2(\mathbb R^d)}+2\tilde V(\mathbf{h})+C\mathcal V(\mathbf{h}).
\end{align}
Applying Proposition~\ref{prop:oracle}, we therefore obtain
\begin{align}\label{eq:meanA}
&\mathsf E_{(\mu_X\ast\mu_\varepsilon)^{\otimes n}}\bigg[\sup_{\mathbf{h}'\in\mathcal H_{M_n}}\bigg(\|\widehat F_{n,\mathbf{h}'}-\mathsf E_{(\mu_X\ast\mu_\varepsilon)^{\otimes n}}[\widehat F_{n,\mathbf{h}'}]\|_{L^2(\mathbb R^d)}
-\frac{\tilde V(\mathbf{h}')}{1+C_K}\bigg)_+\bigg]\nonumber\\
&\hspace{0.8cm}=
\mathsf E_{(\mu_X\ast\mu_\varepsilon)^{\otimes n}}\bigg[\sup_{\mathbf{h}'\in\mathcal H_{M_n}}\bigg(\|\widehat F_{n,\mathbf{h}'}-\mathsf E_{(\mu_X\ast\mu_\varepsilon)^{\otimes n}}[\widehat F_{n,\mathbf{h}'}]\|_{L^2(\mathbb R^d)}
-(1+2\eta) \Gamma\bigg)_+\bigg]\nonumber\\
&\hspace{0.8cm}\leq
\sum_{\mathbf{h}'\in\mathcal H_{M_n}}\mathsf E_{(\mu_X\ast\mu_\varepsilon)^{\otimes n}}\bigg[\bigg(\|\widehat F_{n,\mathbf{h}'}-\mathsf E_{(\mu_X\ast\mu_\varepsilon)^{\otimes n}}[\widehat F_{n,\mathbf{h}'}]\|_{L^2(\mathbb R^d)}
-(1+2\eta) \Gamma\bigg)_+\bigg]\nonumber\\
&\hspace{0.8cm}\leq\sqrt{\frac{3\pi}{2}}
\frac{1}{\sqrt{n}}\sum_{\mathbf{h}'\in\mathcal H_{M_n}}\sqrt{\upsilon}\exp\bigg(- \frac{\eta^{2}C_1^2}{6C_3^2}C^2_0(\mathbf{h}')\bigg)\nonumber\\&\hspace{3cm}+ \frac{21}{\eta\wedge 1} 
\frac{C_2}{\sqrt{n}}
\exp\bigg(- \frac{\eta (\eta\wedge 1)C_1}{21C_2} \sqrt{n}\bigg)
\sum_{\mathbf{h}'\in\mathcal H_{M_n}}\mathcal V(\mathbf{h}'),
\end{align}
where $\upsilon$ is defined in \eqref{eq:defupsilon}
and $C_1,\,C_2,\,C_3>0$ denote the constants introduced in the proof of Proposition~\ref{prop:oracle}. 
\item \emph{Summability over the discrete grid $\mathcal H_{M_n}$.}\label{st:summability}\\[2pt]
Proposition~\ref{prop:oracle} leaves the choice of $C_0(\mathbf{h})$ unspecified. We now prove that, for a suitable choice of $C_0(\mathbf{h}')$, the following summability bounds hold:
\begin{equation}\label{eq:bound1}
\sum_{\mathbf{h}'\in\mathcal H_{M_n}}\sqrt{\upsilon}\exp\bigg(- \frac{\eta^{2}C_1^2}{6C_3^2}C^2_0(\mathbf{h}')\bigg)=O(1)
\end{equation}
and
\begin{equation}\label{eq:bound2}
\exp\bigg(- \frac{\eta (\eta\wedge 1)C_1}{21C_2} \sqrt{n}\bigg)
\sum_{\mathbf{h}'\in\mathcal H_{M_n}}\mathcal V(\mathbf{h}')
= O(1),
\end{equation}
where we recall from \eqref{eq:defupsilon} that
$
\upsilon\coloneqq C_3^2(\mathbf{h}')nV^2(\mathbf{h}')
= C_3^2\mathcal V^2(\mathbf{h}')$.
Let \(C_0(\mathbf{h}')\coloneqq\sum_{j=1}^d C_{0j}(h_j')\), where we choose, for each \(j=1,\,\ldots,\,d\),
\[
C_{0j}(h_j') \coloneqq \frac{\sqrt{6}C_3}{\eta C_1}
\sqrt{\bigl[ \bigl(\beta_j-\tfrac12\bigr)_+ + (1 + \delta) \bigr] |\log h_j'|},
\quad \delta>0.
\]
Then,
\[
C_0^2(\mathbf h') = \sum_{j=1}^d C_{0j}^2(h_j')
= \frac{6C_3^2}{\eta C_1^2}
\sum_{j=1}^d \bigl[ \bigl(\beta_j-\tfrac12\bigr)_+ + (1 + \delta) \bigr] |\log h_j'|.
\]
On the grid \(\mathcal H_{M_n}\), we have \(h_j' = k_j^{-1}\), hence \(|\log h_j'| = \log k_j\). Thus,
\[\begin{split}
\sum_{\mathbf h'\in\mathcal H_{M_n}}
\sqrt{\upsilon}
\exp\!\left(-\frac{\eta^2 C_1^2}{6C_3^2} C^2_0(\mathbf h')\right)
&=
\frac{C_3}{n}
\sum_{k_1=1}^{M_n}\cdots\sum_{k_d=1}^{M_n} \prod_{j=1}^dk_j^{-(1+\delta)}
(\log k_j)^{\mathbf 1_{\{\beta_j=1/2\}}/2}\\
&=\frac{C_3}{n}\prod_{j=1}^d\Biggl(\sum_{k=1}^{M_n} k^{-(1+\delta)}
(\log k)^{\mathbf 1_{\{\beta_j=1/2\}}/2}\Biggr)=O(1),
\end{split}\]
since \(\delta>0\) and $\sum_{k=1}^\infty k^{-(1+\delta)}(\log k)^{\gamma_j}<\infty$ for every $\gamma_j\in\{0,\,1/2\}$. 

We now show that also the second summability bound in \eqref{eq:bound2} holds. 
Since on the anisotropic grid \(\mathcal H_{M_n}\) we have \(h_j' = k_j^{-1}\) for \(j=1,\,\dots,\,d\), so that \(|\log h_j'| = \log k_j\), and since 
\(V(\mathbf h') \le 1\) by definition of the grid, it follows that
\[
\begin{aligned}
&\exp\!\left(- \frac{\eta (\eta\wedge 1)C_1}{21C_2} \sqrt{n}\right)
\sum_{\mathbf h'\in\mathcal H_{M_n}}
V(\mathbf h')
\prod_{j=1}^d |\log h_j'|^{\mathbf 1_{\{\beta_j=1/2\}}/2}\\
&\qquad\qquad\qquad\qquad\qquad\qquad
\le
\exp\!\left(- \frac{\eta (\eta\wedge 1)C_1}{21C_2} \sqrt{n}\right)
\sum_{k_1=1}^{M_n} \cdots \sum_{k_d=1}^{M_n}
\prod_{j=1}^d (\log k_j)^{\mathbf 1_{\{\beta_j=1/2\}}/2} \\
&\qquad\qquad\qquad\qquad\qquad\qquad
=
\exp\!\left(- \frac{\eta (\eta\wedge 1)C_1}{21C_2} \sqrt{n}\right)
\prod_{j=1}^d \biggl( \sum_{k=1}^{M_n} (\log k)^{\mathbf 1_{\{\beta_j=1/2\}}/2} \biggr) \\
&\qquad\qquad\qquad\qquad\qquad\qquad
\le
\exp\!\left(- \frac{\eta (\eta\wedge 1)C_1}{21C_2} \sqrt{n}\right)
M_n^d (\log M_n)^{\frac12\sum_{j=1}^d \mathbf 1_{\{\beta_j=1/2\}}} \\
&\qquad\qquad\qquad\qquad\qquad\qquad
\le
\exp\!\left(- \frac{\eta (\eta\wedge 1)C_1}{21C_2} \sqrt{n}\right)
n^{d/2} (\log n)^{\frac12\sum_{j=1}^d \mathbf 1_{\{\beta_j=1/2\}}}
= O(1),
\end{aligned}
\]
where in the penultimate inequality we used the choice \(M_n = \lfloor n^{1/2} \rfloor\), and in the last step we used that the exponential decay in \(\sqrt{n}\) dominates any polynomial growth. Hence, \eqref{eq:bound2} follows.

Combining \eqref{eq:bound1} and \eqref{eq:bound2} with \eqref{eq:meanA}, we obtain
\[
\mathsf E_{(\mu_X\ast\mu_\varepsilon)^{\otimes n}}\bigg[\sup_{\mathbf{h}'\in\mathcal H_{M_n}}\bigg(\|\widehat F_{n,\mathbf{h}'}-\mathsf E_{(\mu_X\ast\mu_\varepsilon)^{\otimes n}}[\widehat F_{n,\mathbf{h}'}]\|_{L^2(\mathbb R^d)}
-\frac{\tilde V(\mathbf{h}')}{1+C_K}\bigg)_+\bigg]\lesssim\frac{1}{\sqrt{n}}.
\]
Substituting this into \eqref{eq:finalbound} yields, for the arbitrary fixed \(\mathbf{h}\in\mathcal H_{M_n}\),
\[
\begin{aligned}
\mathsf E_{(\mu_X\ast\mu_\varepsilon)^{\otimes n}}[\|\check{F}_n-F_X\|_{L^2(\mathbb R^d)}]
&\lesssim
\|B_{\mathbf{h}}\|_{L^2(\mathbb R^d)}
+\tilde V(\mathbf{h})
+\mathcal V(\mathbf{h})
+\frac{1}{\sqrt{n}} \\
&\lesssim
\|B_{\mathbf{h}}\|_{L^2(\mathbb R^d)}
+
\mathcal V(\mathbf{h})
+\frac{1}{\sqrt{n}}.
\end{aligned}
\]
Since \(\tilde V(\mathbf{h}) = C_V(1+C_K)\Gamma\) and, by construction, \(\Gamma\) is of the same order as \(\mathcal V(\mathbf{h})\) up to universal constants (see the definition of \(\Gamma\) in the proof of Proposition~\ref{prop:oracle}), the term \(2\tilde V(\mathbf{h})\) in \eqref{eq:finalbound} is absorbed into the \(\mathcal V(\mathbf{h})\)-term up to a multiplicative constant.
Because the above inequality holds for every fixed \(\mathbf{h}\in\mathcal H_{M_n}\), we may take the infimum over \(\mathbf{h}\in\mathcal H_{M_n}\) on the right-hand side. This gives the oracle inequality for the adaptive kernel estimator:
\[
\mathsf E_{(\mu_X\ast\mu_\varepsilon)^{\otimes n}}[\|\check{F}_n-F_X\|_{L^2(\mathbb R^d)}]
\lesssim\inf_{\mathbf{h}\in\mathcal H_{M_n}}\big\{\|B_{\mathbf{h}}\|_{L^2(\mathbb R^d)}+
\mathcal V(\mathbf{h})\big\}+\frac{1}{\sqrt{n}}.
\]
Finally, recalling the reduction step, namely$
\|\widehat F_{n,\hat{\mathbf{h}}}^\ast - F_X\|_{L^2(\mathbb R^d)}
\leq 2\|\check{F}_n - F_X\|_{L^2(\mathbb R^d)} + \varrho_n$,
and taking expectations, we obtain the oracle inequality for the original $\varrho_n$-approximation minimum-$L^2$-distance estimator
\[
\mathsf E_{(\mu_X\ast\mu_\varepsilon)^{\otimes n}}\Bigl[\|\widehat F_{n,\hat{\mathbf{h}}}
^\ast-F_X\|_{L^2(\mathbb R^d)}\Bigr]
\lesssim
\inf_{\mathbf{h}\in\mathcal H_{M_n}}
\big\{
\|B_{\mathbf{h}}\|_{L^2(\mathbb R^d)}
+
\mathcal V(\mathbf{h})
\big\}
+\frac{1}{\sqrt{n}} + \varrho_n.
\]
\item \emph{Grid approximation.}\\[2pt]
It remains to justify that the infimum over the discrete grid $\mathcal H_{M_n}$ is equivalent, up to universal constants, to the infimum over $(0,\,1)^d$. Let $\mathbf{h}_{\mathrm{opt}} \in (0,\,1)^d$ be a minimizer of
$\|B_{\mathbf{h}}\|_{L^2(\mathbb R^d)}
+ \mathcal V(\mathbf{h})$. 
From the bias-variance trade-off, the optimal bandwidths satisfy
\[
h_{\mathrm{opt},j} \asymp n^{-\frac{E}{2(\alpha_j+1)}} \times (\log n)^{\omega_j},
\]
with $\omega_j \ge 0$ because the logarithmic factors in the variance appear with non-negative exponents (they are zero when \(\beta_j \ne 1/2\) and positive when \(\beta_j = 1/2\)); hence they can only increase the bandwidth relative to the polynomial rate. Consequently,
\[
h_{\mathrm{opt},j} \ge c \, n^{-\frac{E}{2(\alpha_j+1)}} \ge c \, n^{-1/2}
\]
for some universal constant $c > 0$. Since $\mathcal H_{M_n}$ contains all bandwidths of the form $k_j^{-1}$ with $1 \le k_j \le M_n = \lfloor n^{1/2} \rfloor$, for each coordinate $j$ there exists an integer $k_j \le M_n$ such that $k_j^{-1} \asymp h_{\mathrm{opt},j}$.
Set $\mathbf{h}^{(\mathbf{k})} \coloneqq (k_1^{-1},\, \ldots,\, k_d^{-1}) \in \mathcal H_{M_n}$. Then, $h_j^{(\mathbf{k})} \asymp h_{\mathrm{opt},j}$ for all $j$. Consequently,
\[
\frac{V(\mathbf{h}^{(\mathbf{k})})}{V(\mathbf{h}_{\mathrm{opt}})}
=
\prod_{j=1}^d
\biggl(
\frac{h_{\mathrm{opt},j}}{h_j^{(\mathbf{k})}}
\biggr)^{(\beta_j - \frac12)_+}
= O(1),
\]
so $V(\mathbf{h}^{(\mathbf{k})}) \le D_1 V(\mathbf{h}_{\mathrm{opt}})$ for some universal constant $D_1 > 0$. Similarly, since $h_j^{(\mathbf{k})} \asymp h_{\mathrm{opt},j}$, we have $|\log h_j^{(\mathbf{k})}| \asymp |\log h_{\mathrm{opt},j}|$, hence the product of the logarithmic factors differ by at most a multiplicative constant $D_2$. For the bias, the asymptotic equivalence $h_j^{(\mathbf{k})} \asymp h_{\mathrm{opt},j}$, together with the bound $\|B_{\mathbf{h}}\|_{L^2(\mathbb R^d)} \le L\sum_{j=1}^d C_j h_j^{\alpha_j+1}$ valid for every $\mathbf{h} \in (0,1)^d$, gives
$
\|B_{\mathbf{h}^{(\mathbf{k})}}\|_{L^2(\mathbb R^d)}
\le D_3 \|B_{\mathbf{h}_{\mathrm{opt}}}\|_{L^2(\mathbb R^d)}
$
for some universal constant $D_3 > 0$. Hence, taking $D := \max\{D_1 D_2,\, D_3\}$, we obtain
\[
\begin{aligned}
&\inf_{\mathbf{h} \in \mathcal H_{M_n}}
\bigl\{
\|B_{\mathbf{h}}\|_{L^2(\mathbb R^d)}
+
\mathcal V(\mathbf{h})
\bigr\}\le
D
\inf_{\mathbf{h} \in (0,1)^d}
\bigl\{
\|B_{\mathbf{h}}\|_{L^2(\mathbb R^d)}
+
\mathcal V(\mathbf{h})
\bigr\}.
\end{aligned}
\]
The reverse inequality is trivial since $\mathcal H_{M_n} \subset (0,\,1)^d$. This proves the equivalence up to universal constants.
\end{steps}
The proof is thus concluded.
\end{proof}

\smallskip

We now establish the unified oracle inequalities stated in Theorem~\ref{thm:unified_oracle}. The proof proceeds in three steps: first, we recall the \(L^2\)-oracle for the unprojected estimator; second, we derive the corresponding \(L^1\)-oracle using the bias comparability assumption; and third, we transfer these results to the approximate \(L^1\)- and \(L^2\)-projection estimators.

\begin{proof}[Proof of Theorem~\ref{thm:unified_oracle} (Unified oracle inequalities for \(L^1\)- and \(L^2\)-norms)]
We establish the oracle inequalities in three steps.\\[-5pt]
\begin{steps}
\item\emph{$L^2$-oracle for \(\check{F}_n\).}\label{st:oraclekern}\\[2pt]
This is Theorem~\ref{thm:oracle_ineq}, which gives
\begin{equation}
\mathsf{E}_{(\mu_X\ast\mu_\varepsilon)^{\otimes n}}[\|\check{F}_n - F_X\|_{L^2(\mathbb R^d)}]
\lesssim
\mathcal R_n.
\label{eq:L2oracle}
\end{equation}

\item\emph{$L^1$-oracle for \(\check{F}_n\).}\label{st:l1oracle}\\[2pt]
Decompose the $L^1$-risk to obtain
\begin{multline}\label{eq:L1decomp}
\mathsf{E}_{(\mu_X\ast\mu_\varepsilon)^{\otimes n}}[\|\check{F}_n - F_X\|_{L^1(\mathbb R^d)}]
\le
\mathsf{E}_{(\mu_X\ast\mu_\varepsilon)^{\otimes n}}[\|\check{F}_n - \mathsf{E}_{(\mu_X\ast\mu_\varepsilon)^{\otimes n}}[\check{F}_n]\|_{L^1(\mathbb R^d)}] \\
+ \mathsf{E}_{(\mu_X\ast\mu_\varepsilon)^{\otimes n}}[\|B_{\hat{\mathbf{h}}}\|_{L^1(\mathbb R^d)}].
\end{multline}
The first term on the right-hand side of \eqref{eq:L1decomp} is controlled by the stochastic error bound from Proposition~\ref{prop:variance} and the property \(\mathsf{E}_{(\mu_X\ast\mu_\varepsilon)^{\otimes n}}[\mathcal V(\hat{\mathbf{h}})] \lesssim \mathcal R_n\) established in Theorem~\ref{thm:oracle_ineq}, which yield
\[\mathsf{E}_{(\mu_X\ast\mu_\varepsilon)^{\otimes n}}[\|\check{F}_n - \mathsf{E}_{(\mu_X\ast\mu_\varepsilon)^{\otimes n}}[\check{F}_n]\|_{L^1(\mathbb R^d)}] \lesssim \mathsf{E}_{(\mu_X\ast\mu_\varepsilon)^{\otimes n}}[\mathcal V(\hat{\mathbf{h}})] \lesssim \mathcal R_n.\]
For the bias term, the comparability assumption \(\|B_{\mathbf{h}}\|_{L^1(\mathbb R^d)} \le C_1\|B_{\mathbf{h}}\|_{L^2(\mathbb R^d)}\) gives
$$
\mathsf{E}_{(\mu_X\ast\mu_\varepsilon)^{\otimes n}}[\|B_{\hat{\mathbf{h}}}\|_{L^1(\mathbb R^d)}]
\le
C_1 \mathsf{E}_{(\mu_X\ast\mu_\varepsilon)^{\otimes n}}[\|B_{\hat{\mathbf{h}}}\|_{L^2(\mathbb R^d)}].$$
To bound the latter, we apply the triangle inequality to obtain
\[
\|B_{\hat{\mathbf{h}}}\|_{L^2(\mathbb R^d)}
\le
\|\check{F}_n - F_X\|_{L^2(\mathbb R^d)}
+
\|\check{F}_n - \mathsf{E}_{(\mu_X\ast\mu_\varepsilon)^{\otimes n}}[\check{F}_n]\|_{L^2(\mathbb R^d)}.
\]
Taking expectations and applying \eqref{eq:L2oracle} together with the \(L^2\)-bound from Proposition~\ref{prop:variance} gives 
$$\mathsf{E}_{(\mu_X\ast\mu_\varepsilon)^{\otimes n}}[\|B_{\hat{\mathbf{h}}}\|_{L^2(\mathbb R^d)}] \lesssim \mathcal R_n.$$
Hence,
\(\mathsf{E}_{(\mu_X\ast\mu_\varepsilon)^{\otimes n}}[\|B_{\hat{\mathbf{h}}}\|_{L^1(\mathbb R^d)}] \lesssim \mathcal R_n\).
Combining \eqref{eq:L1decomp} with the preceding estimates yields the \(L^1\)-oracle inequality for the unprojected estimator,
\begin{equation}
\mathsf{E}_{(\mu_X\ast\mu_\varepsilon)^{\otimes n}}[\|\check{F}_n - F_X\|_{L^1(\mathbb R^d)}]
\lesssim
\mathcal R_n.
\label{eq:L1oracle_unproj}
\end{equation}

\item \emph{Transfer to approximate projection estimators.}\label{st:transfer}\\[2pt]
By definition, \(\widehat F_{n,\hat{\mathbf{h}}}^{\ast,1}=\widehat F_{n,\hat{\mathbf{h}}}^{\ast}\) and satisfies
$
\|\widehat F_{n,\hat{\mathbf{h}}}^{\ast,1} - F_X\|_{L^1(\mathbb R^d)}\leq 2\|\check F_n - F_X\|_{L^1(\mathbb R^d)}+\varrho_{n,1}$.
Taking expectations and applying \eqref{eq:L1oracle_unproj} gives
\begin{equation}
\mathsf{E}_{(\mu_X\ast\mu_\varepsilon)^{\otimes n}}\Bigl[\|\widehat F_{n,\hat{\mathbf{h}}}^{\ast,1} - F_X\|_{L^1(\mathbb R^d)}\Bigr]
\lesssim
\mathcal R_n.
\label{eq:L1oracle_proj}
\end{equation}
Analogously, for the estimator $\widehat F_{n,\hat{\mathbf{h}}}^{\ast,2}$, 
Theorem~\ref{thm:oracle_ineq} gives exactly
\begin{equation}
\mathsf{E}_{(\mu_X\ast\mu_\varepsilon)^{\otimes n}}\Bigl[\|\widehat F_{n,\hat{\mathbf{h}}}^{\ast,2} - F_X\|_{L^2(\mathbb R^d)}\Bigr]
\lesssim
\mathcal R_n.
\label{eq:L2oracle_proj}
\end{equation}
\end{steps}
Inequalities \eqref{eq:L2oracle}, \eqref{eq:L1oracle_unproj}, \eqref{eq:L1oracle_proj}, and \eqref{eq:L2oracle_proj} are the desired oracle inequalities and the proof is complete.
\end{proof}


\section{Proofs related to the $1$-Wasserstein minimax result}\phantomsection\label{sec:S5}
In this section, we prove that the restricted \(L^1\)-projection estimator has finite first moment and achieves the optimal minimax rate \(\psi_n^{\boldsymbol{\alpha},\boldsymbol{\beta}} = n^{-E/2}\) for the \(1\)-Wasserstein distance over the class of product distributions \(\mathcal P_{\text{prod}}\), with matching lower and upper bounds.

We first establish the finite first moment property for both the integrated estimator, which allows us to construct a restricted estimator with compact support that preserves the optimal minimax rate. The final estimator is then obtained as the product of the univariate restricted estimators, and its optimality follows from the factorization property of the \(1\)-Wasserstein distance.

\begin{lemma}[Finite first moment of the integrated estimator]\label{lem:finiteMomentKernel}
Let 
\[
\hat{f}_{j,n,\mathbf{h}}(x) \coloneqq \frac{1}{n} \sum_{i=1}^n \frac{1}{h_j} \tilde{K}_{j,h_j}\!\left(\frac{x - Y_{i,j}}{h_j}\right),\quad x\in\mathbb R,
\]
where \(\tilde{K}_{j,h_j}\) is a kernel function satisfying
$\|\tilde{K}_{j,h_j}\|_{L^1(\mathbb R)} < \infty$ and $\int_{\mathbb R} |u| \, |\tilde{K}_{j,h_j}(u)| \, \di u < \infty$. 
Then, there exists a constant \(C_{j,h_j} > 0\) such that, almost surely,
\begin{equation}\label{eq:ineqg}
\int_{\mathbb R} |x| |\hat{f}_{j,n,\mathbf{h}}(x)|\,\di x \le C_{j,h_j}\biggl(h_j + \frac{1}{n}\sum_{i=1}^n |Y_{i,j}|\biggr) < \infty.
\end{equation}
\end{lemma}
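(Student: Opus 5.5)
The plan is a direct triangle-inequality computation followed by a change of variables in each summand. First I bound the absolute value of the average by the average of absolute values:
\[
\int_{\mathbb R}|x|\,|\hat f_{j,n,\mathbf h}(x)|\,\di x\le \frac1n\sum_{i=1}^n \int_{\mathbb R}|x|\,\frac{1}{h_j}\Bigl|\tilde K_{j,h_j}\Bigl(\frac{x-Y_{i,j}}{h_j}\Bigr)\Bigr|\,\di x .
\]
This interchange is legitimate pointwise, because the sum is finite and all integrands are non-negative.

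In the $i$-th integral I substitute $u=(x-Y_{i,j})/h_j$, so that $x=Y_{i,j}+h_ju$ and $\di x=h_j\,\di u$. The factor $h_j^{-1}$ then cancels, and the elementary inequality $|Y_{i,j}+h_ju|\le |Y_{i,j}|+h_j|u|$ gives
\[
\int_{\mathbb R}|x|\,\frac{1}{h_j}\Bigl|\tilde K_{j,h_j}\Bigl(\frac{x-Y_{i,j}}{h_j}\Bigr)\Bigr|\,\di x\le |Y_{i,j}|\,\|\tilde K_{j,h_j}\|_{L^1(\mathbb R)}+h_j\int_{\mathbb R}|u|\,|\tilde K_{j,h_j}(u)|\,\di u .
\]
Averaging over $i$ and setting $C_{j,h_j}\coloneqq\max\{\|\tilde K_{j,h_j}\|_{L^1(\mathbb R)},\,\int_{\mathbb R}|u|\,|\tilde K_{j,h_j}(u)|\,\di u\}$ yields exactly \eqref{eq:ineqg}. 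The constant is finite by the two integrability hypotheses of the lemma.

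Finiteness of the right-hand side holds for every realization of the sample, since each $Y_{i,j}$ is a real number. The bound therefore holds surely, not only almost surely; in particular no moment assumption on $\mathbf Y$ is needed. If one also wants integrability in expectation, one uses $\mathsf E|Y_{1,j}|<\infty$, which is implied by the moment conditions \eqref{eq:assonX}--\eqref{eq:assoneps} with $p>1/2$ (or by Lemma~\ref{lem:boundmomY}).

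There is no genuine obstacle in the argument itself. The only delicate point is the hypothesis $\int|u|\,|\tilde K_{j,h_j}(u)|\,\di u<\infty$: the deconvolution kernel decays only like $|u|^{-2}$ when $\varphi_{K_j}/\varphi_{\varepsilon_j}(\cdot/h_j)$ is merely $C^2$, so the first absolute moment may diverge. The lemma simply takes this as an assumption, so I would not verify it. I would add a remark that it holds once the Fourier multiplier is $C^3$, which follows from extending \ref{eq:E3} to a third derivative and using a smoother $\varphi_{K_j}$.
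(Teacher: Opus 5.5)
Your proof is correct and follows the paper's argument essentially verbatim. It uses the split $|x|\le|x-Y_{i,j}|+|Y_{i,j}|$, the substitution $u=(x-Y_{i,j})/h_j$, and the same constant $C_{j,h_j}$ defined as the maximum of the two kernel integrals. Your first step is in fact slightly more careful than the paper's, which writes an equality where only the inequality $\bigl|\sum_i a_i\bigr|\le\sum_i|a_i|$ holds. You are also right that the bound holds for every realization of the sample, not only almost surely.
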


\begin{proof}
Using \(|x| \le |x - Y_{i,j}| + |Y_{i,j}|\) and the change of variable \(u = (x - Y_{i,j})/h_j\), we have
\[
\begin{aligned}
\int_{\mathbb R} |x| \, |\hat{f}_{j,n,\mathbf{h}}(x)| \,\di x
&= \frac{1}{n} \sum_{i=1}^n \frac{1}{h_j} \int_{\mathbb R} |x| \, \left|\tilde{K}_{j,h_j}\!\left(\frac{x - Y_{i,j}}{h_j}\right)\right|\,\di x\\
&\le h_j \int_{\mathbb R} |u| \, |\tilde{K}_{j,h_j}(u)| \, \di u
+ \biggl(\frac{1}{n} \sum_{i=1}^n |Y_{i,j}|\biggr) \int_{\mathbb R} |\tilde{K}_{j,h_j}(u)| \, \di u\\
&= h_j \int_{\mathbb R} |u| \, |\tilde{K}_{j,h_j}(u)| \, \di u
+ \biggl(\frac{1}{n} \sum_{i=1}^n |Y_{i,j}|\biggr) \|\tilde{K}_{j,h_j}\|_{L^1(\mathbb R)}.
\end{aligned}
\]
Let
\[
C_{j,h_j}\coloneqq \max\left\{\int_{\mathbb R} |u| \, |\tilde{K}_{j,h_j}(u)| \, \di u,\; \|\tilde{K}_{j,h_j}\|_{L^1(\mathbb R)}\right\}.
\]
Since the kernel integrals are finite by assumption, \(C_{j,h_j} < \infty\). Then, \eqref{eq:ineqg} follows.
\end{proof}

We can now construct a restricted estimator with compact support that achieves the same rate.

\begin{corollary}[Rate preservation under compact support restriction]\label{cor:finiteMomentRestricted}
Let 
\(g(x) = \widehat{F}_{j,n,\mathbf{h}}(x)=\int_{-\infty}^x\hat{f}_{j,n,\mathbf{h}}(u) \,\di u\), $x\in\mathbb R$, be the univariate integrated estimator.
Assume that the minimizer 
$$F^\ast \coloneqq \arg\min_{F \in \mathfrak F_1^{\textnormal{fin}}} \|F - g\|_{L^1(\mathbb R)}$$
exists, where \(\mathfrak F_1^{\textnormal{fin}} := \{ F \in \mathfrak F_1 :\, F \text{ has finite first moment} \}\). Here \(\mathfrak F_1\) denotes the class of univariate distribution functions on \(\mathbb R\).
For \((M_n)_{n\in\mathbb N}\), let $\mathfrak F_{M_n} := \{ F \in \mathfrak F_1 :\, F(x) = 0 \text{ for } x < -M_n, \;\, F(x) = 1 \text{ for } x > M_n \}$. Let \(\widehat{F}_{j,n,\mathbf{h}}^{\ast}\) be a \(\varrho_n\)-approximate \(L^1\)-minimizer on \(\mathfrak F_{M_n}\) defined as
\[
\|\widehat{F}_{j,n,\mathbf{h}}^{\ast} - g\|_{L^1(\mathbb R)}
\le \inf_{F \in \mathfrak F_{M_n}} \|F - g\|_{L^1(\mathbb R)} + \varrho_n,
\]
where \(\varrho_n = O(\psi_n^{\boldsymbol{\alpha},\boldsymbol{\beta}})\) and \(\psi_n^{\boldsymbol{\alpha},\boldsymbol{\beta}}\) denotes the convergence rate of the unrestricted estimator defined in \eqref{eq:rate} \hyperref[eq:rate]{of the Main Article}, with \(\psi_n^{\boldsymbol{\alpha},\boldsymbol{\beta}} \to 0\).
Then, \(\widehat{F}_{j,n,\mathbf{h}}^{\ast}\) has finite first moment and, provided \(M_n\) is chosen such that
$
\inf_{F \in \mathfrak F_{M_n}} \|F - g\|_{L^1(\mathbb R)} = \|F^\ast - g\|_{L^1(\mathbb R)} + o(\psi_n^{\boldsymbol{\alpha},\boldsymbol{\beta}})$,
it achieves the same convergence rate as the unrestricted estimator $\|\widehat{F}_{j,n,\mathbf{h}}^{\ast} - g\|_{L^1(\mathbb R)}
= \|F^\ast - g\|_{L^1(\mathbb R)} + O(\psi_n^{\boldsymbol{\alpha},\boldsymbol{\beta}})$. 
\end{corollary}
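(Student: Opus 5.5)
Two claims need proof: that \(\widehat{F}_{j,n,\mathbf{h}}^{\ast}\) has a finite first moment, and that its \(L^1\)-distance to \(g\) exceeds \(\|F^\ast-g\|_{L^1(\mathbb R)}\) by at most \(O(\psi_n^{\boldsymbol{\alpha},\boldsymbol{\beta}})\).

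\textbf{Finite first moment.} Every \(F\in\mathfrak F_{M_n}\) is the distribution function of a measure supported in \([-M_n,\,M_n]\). Indeed, \(F=0\) on \((-\infty,-M_n)\), \(F=1\) on \((M_n,\infty)\), and \(F\) is right-continuous. Hence
\[
\int_{\mathbb R}|x|\,\mathrm{d}F(x)\le M_n<\infty .
\]
In particular \(\widehat{F}_{j,n,\mathbf{h}}^{\ast}\in\mathfrak F_{M_n}\) has finite first moment, and \(\mathfrak F_{M_n}\subset\mathfrak F_1^{\textnormal{fin}}\). This inclusion gives the trivial lower bound
\[
\inf_{F\in\mathfrak F_{M_n}}\|F-g\|_{L^1(\mathbb R)}\ \ge\ \|F^\ast-g\|_{L^1(\mathbb R)} .
\]
We use that the \(L^1\)-distance is finite, since \(g\) is the integral of \(\hat f_{j,n,\mathbf h}\), which has a finite first moment by Lemma~\ref{lem:finiteMomentKernel}. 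So \(g\) tends to \(0\) at \(-\infty\) and to \(1\) at \(+\infty\) at integrable rates.

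\textbf{Rate.} The definition of a \(\varrho_n\)-approximate minimizer, combined with the lower bound above, gives the sandwich
\[
\|F^\ast-g\|_{L^1(\mathbb R)}
\le \|\widehat{F}_{j,n,\mathbf{h}}^{\ast}-g\|_{L^1(\mathbb R)}
\le \inf_{F\in\mathfrak F_{M_n}}\|F-g\|_{L^1(\mathbb R)}+\varrho_n .
\]
Inserting the hypothesis on \(M_n\) turns the right-hand side into \(\|F^\ast-g\|_{L^1(\mathbb R)}+o(\psi_n^{\boldsymbol{\alpha},\boldsymbol{\beta}})+O(\psi_n^{\boldsymbol{\alpha},\boldsymbol{\beta}})\). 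This yields the claimed equality up to \(O(\psi_n^{\boldsymbol{\alpha},\boldsymbol{\beta}})\).

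\textbf{Main obstacle.} The delicate point is that the hypothesis on \(M_n\) can actually be met. I would check this by truncation. Set
\[
F^\ast_{M}(x)\coloneqq F^\ast(x)\mathbf 1_{\{x\ge -M\}}\ \text{ for } x\le M,\qquad F^\ast_M(x)\coloneqq 1\ \text{ for } x>M .
\]
Then \(F^\ast_M\in\mathfrak F_M\) and
\[
\|F^\ast_M-F^\ast\|_{L^1(\mathbb R)}\le \int_{|x|>M}\min\{F^\ast,\,1-F^\ast\}\,\mathrm dx,
\]
which is bounded by the tail first moment \(\int_{|x|>M}|x|\,\mathrm dF^\ast\). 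This tail moment tends to \(0\) as \(M\to\infty\). Choosing \(M_n\) large enough that it is \(o(\psi_n^{\boldsymbol{\alpha},\boldsymbol{\beta}})\) makes the hypothesis hold. The catch is that this choice is data-dependent. A deterministic choice would instead rely on the moment bound \eqref{eq:ineqg} together with moment assumptions on \(Y\), growing \(M_n\) polynomially in \(n\).
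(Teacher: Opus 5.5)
Your proposal is correct and follows the paper's proof essentially step for step. Both use the support in $[-M_n,M_n]$ for the moment bound, the approximate-minimizer inequality combined with the hypothesis on $M_n$, and truncation of $F^\ast$ to show that hypothesis can be met. Your added inclusion $\mathfrak F_{M_n}\subset\mathfrak F_1^{\textnormal{fin}}$ also supplies the lower bound $\|\widehat{F}_{j,n,\mathbf{h}}^{\ast}-g\|_{L^1(\mathbb R)}\ge\|F^\ast-g\|_{L^1(\mathbb R)}$, which the paper's one-sided chain leaves implicit even though it states an equality.

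Only a cosmetic fix is needed. Your bound by $\int_{|x|>M}\min\{F^\ast,1-F^\ast\}\,\mathrm{d}x$ holds only once $F^\ast(-M)\le 1/2\le F^\ast(M)$, so write directly $\|F^\ast_M-F^\ast\|_{L^1(\mathbb R)}=\int_{-\infty}^{-M}F^\ast\,\mathrm{d}x+\int_M^\infty(1-F^\ast)\,\mathrm{d}x\le\int_{|u|\ge M}|u|\,\mathrm{d}F^\ast(u)$. Also set $F^\ast_M=1$ on $[M,\infty)$ rather than only on $(M,\infty)$, so that it is right-continuous; the paper's truncation has the same small defect.
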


\begin{proof}
By Lemma~\ref{lem:finiteMomentKernel}, \(\hat{f}_{j,n,\mathbf{h}}\) has finite first moment. 
Since \(F^\ast \in \mathfrak F_1^{\text{fin}}\) by assumption, it has finite first moment.
By construction, \(\widehat{F}_{j,n,\mathbf{h}}^{\ast} \in \mathfrak F_{M_n}\), so its associated measure has support contained in \([-M_n,\, M_n]\) and therefore has finite first moment
\[
\int_{\mathbb R} |x| \, \di\widehat{F}_{j,n,\mathbf{h}}^{\ast}(x) \le M_n < \infty.
\]
It remains to show that the convergence rate is not compromised. Since \(F^\ast\) has finite first moment, for each \(M > 0\) define the truncated cumulative distribution function
\[
F^\ast_M(x) \coloneqq
\begin{cases}
0, & x < -M, \\
F^\ast(x), & |x| \le M, \\
1, & x > M.
\end{cases}
\]
Then, \(F^\ast_M \in \mathfrak F_M\) and, by the dominated convergence theorem, $\|F^\ast_M - F^\ast\|_{L^1(\mathbb R)} \to 0$ as $M \to \infty$.
Hence, for the given rate \(\psi_n^{\boldsymbol{\alpha},\boldsymbol{\beta}}\), we can choose \(M_n\) such that $
\|F^\ast_{M_n} - F^\ast\|_{L^1(\mathbb R)} = o(\psi_n^{\boldsymbol{\alpha},\boldsymbol{\beta}})$.
Therefore, the required condition on \(M_n\) can be satisfied. By the choice of \(M_n\) and since \(\varrho_n = O(\psi_n^{\boldsymbol{\alpha},\boldsymbol{\beta}})\),
\[\begin{split}
\|\widehat{F}_{j,n,\mathbf{h}}^{\ast} - g\|_{L^1(\mathbb R)}
&\le \inf_{F \in \mathfrak F_{M_n}} \|F - g\|_{L^1(\mathbb R)} + \varrho_n \\
&= \|F^\ast - g\|_{L^1(\mathbb R)} + o(\psi_n^{\boldsymbol{\alpha},\boldsymbol{\beta}}) + O(\psi_n^{\boldsymbol{\alpha},\boldsymbol{\beta}}) \\
&= \|F^\ast - g\|_{L^1(\mathbb R)} + O(\psi_n^{\boldsymbol{\alpha},\boldsymbol{\beta}}).
\end{split}
\]
Therefore, the restricted estimator achieves the same rate as the unrestricted estimator.
\end{proof}

Corollary~\ref{cor:finiteMomentRestricted} shows that the restriction to a compact support does not prevent optimal convergence, provided the minimizer \(F^\ast\) exists. The sequence \(M_n\) can be chosen (depending on the unknown \(F^\ast\)) such that the restricted estimator achieves the same rate as the unrestricted one.

\begin{proof}[Proof of Theorem~\ref{th:W1minimax} (Minimax optimality of the projection estimator for \(\We\) on \(\mathcal P_{\text{prod}}\))]
\mbox{}\\
The proof adapts the construction from the \(L^1\)-cumulative distribution function lower bound to the product setting.
Let \(j_0 \in \{1,\,\dots,\,d\}\) be a fixed coordinate index (the construction below works for any choice of \(j_0\) and the resulting rate does not depend on this choice). The role of \(j_0\) is that of a localization device: the two test densities are built to differ only on the \(j_0\)-th coordinate, while on all other coordinates they coincide. This has two key consequences. First,
the separation in \(W_1\) between the two hypotheses is proportional to \(\psi_n^{\boldsymbol{\alpha},\boldsymbol{\beta}}\). Second, the \(\chi^2\)-divergence between the convolved densities remains sufficiently small, specifically \(o(n^{-1})\), so that Le Cam's two-point method applies.
The resulting lower bound rate \(\psi_n^{\boldsymbol{\alpha},\boldsymbol{\beta}} = n^{-E/2}\), with $E$ as defined  in~\eqref{eq:rate} \hyperref[eq:rate]{of the Main Article}, is governed by the aggregate anisotropic smoothness and noise exponents across all coordinates, not by a single worst-behaving coordinate. The positive-part censoring \((\beta_j-1/2)_+\) means that coordinates with \(\beta_j \le 1/2\) do not contribute to the exponent, effectively reducing the dimension of the problem. This is an anisotropic aggregation over the coordinates that survive the censoring, rather than a worst-coordinate maximum.

\smallskip

We first prove the lower bound. From the \(L^1\)-lower bound proof, there exist functions \(\omega,\, \xi \in C_c^\infty(\mathbb R)\) with properties \ref{it:W1}--\ref{it:W4} and \ref{it:U1}--\ref{it:U5}. These will be used to construct the test densities below. In particular,
\begin{itemize}
    \item $\omega \ge 0$, $\operatorname{supp}(\omega) \subset [-\gamma_\omega,\,\gamma_\omega]$, $\int_{\mathbb R} \omega(x)\,\,\di x = 1$ and $\omega \ge c_\omega > 0$ on $[-\gamma_\omega,\, \gamma_\omega]$;
    \item $\operatorname{supp}(\xi) \subset [-\gamma_\xi,\, \gamma_\xi] \subset (-\gamma_\omega,\, \gamma_\omega)$, $\int_{\mathbb R} \xi(x)\,\di x = 0$ and $\Xi(x) \coloneqq \int_{-\infty}^x \xi(s)\,\di s \ge c_\Xi > 0$ for $x \in [1/2,\, 1/2+\eta]$.
\end{itemize}
Set $R_n \coloneqq n^{1/E}$ and
\(
\lambda_{j_0,n} := n^{-E/2} R_n^{-(\alpha_{j_0}+1)}
\).
Define for each $j$,
\[
\omega_{j,n}(x) \coloneqq 
\begin{cases}
R_n^{-(\alpha_{j_0}+1)} \omega(R_n^{-(\alpha_{j_0}+1)} x), & j = j_0, \\
\omega(x), & j \neq j_0,
\end{cases}
\]
and
\[
\xi_{j,n}(x) \coloneqq 
\begin{cases}
R_n^{-(\alpha_{j_0}+1)} \xi(R_n^{-(\alpha_{j_0}+1)} x), & j = j_0, \\
0, & j \neq j_0.
\end{cases}
\]
The test densities are
\begin{equation*}\label{eq:testdensitiesW1}
f_0(\mathbf x) \coloneqq \prod_{j=1}^d \omega_{j,n}(x_j), \qquad
f_1(\mathbf x) \coloneqq  [\omega_{j_0,n}(x_{j_0}) + \lambda_{j_0,n} \xi_{j_0,n}(x_{j_0})]
\prod_{\substack{j=1 \\ j \neq j_0}}^d \omega_{j,n}(x_j),\quad\mathbf{x}\in\mathbb R^d.
\end{equation*}
The verification that $f_0,\, f_1 \in \mathcal P_{\text{prod}}$ follows the same arguments as in the $L^1$-lower bound proof. The key points are the following:
\begin{itemize}
    \item since $\omega,\, \xi \in C_c^\infty(\mathbb R)$, we have $\omega_{j,n},\, \xi_{j,n} \in \mathcal N_1^1(\alpha_j, L_j)$ with constants independent of $n$;
    \item for sufficiently large $n$, $\lambda_{j_0,n} \|\xi\|_\infty \le c_\omega/2$, so $\omega_{j_0,n} + \lambda_{j_0,n} \xi_{j_0,n} \ge 0$;
    \item the product structure is preserved by construction;
    \item the finite first moment condition holds because $\omega$ and $\xi$ have compact support.
\end{itemize}
Then, the product densities $f_0$ and $f_1$ belong to $\mathcal N_1^d(\boldsymbol\alpha,\, L)$ for some $L > 0$ independent of $n$. In fact, if $f_j \in \mathcal N_1^1(\alpha_j,\, L_j)$ for $j = 1,\,\dots,\,d$, then the product density
\[
f(\mathbf x) = \prod_{j=1}^d f_j(x_j), \quad \mathbf x \in \mathbb R^d,
\]
belongs to $\mathcal N_1^d(\boldsymbol\alpha,\, L)$, with $\boldsymbol\alpha = (\alpha_1,\,\dots,\,\alpha_d)^\top$ and
$
L \coloneqq \max_{1 \le \ell \le d} L'_\ell
$,
where $L_\ell' \coloneqq \max\{L_\ell, \,1\}$.
For each $j = 1,\,\dots,\,d$, let $m_j \coloneqq \lceil \alpha_j \rceil - 1$. The partial derivative of order $m_j$ with respect to the $j$-th coordinate is
\[
\partial_j^{m_j} f(\mathbf x) = f_j^{(m_j)}(x_j) \prod_{\substack{\ell=1 \\ \ell \neq j}}^d f_\ell(x_\ell).
\]
For the $L^1$-integrability condition, since $\|f_\ell\|_{L^1(\mathbb R)} = 1$ for each $\ell$,
\[
\|\partial_j^{m_j} f\|_{L^1(\mathbb R^d)}
= \|f_j^{(m_j)}\|_{L^1(\mathbb R)} \prod_{\substack{\ell=1 \\ \ell \neq j}}^d \|f_\ell\|_{L^1(\mathbb R)}
\le L_j < \infty.
\]
For the Hölder condition, for any $\delta \in \mathbb R$,
\[
\begin{aligned}
\|\Delta_{\delta \mathbf e_j} \partial_j^{m_j} f\|_{L^1(\mathbb R^d)}= \|f_j^{(m_j)}(\cdot + \delta) - f_j^{(m_j)}(\cdot)\|_{L^1(\mathbb R)}
\prod_{\substack{\ell=1 \\ \ell \neq j}}^d \|f_\ell\|_{L^1(\mathbb R)}\le L_j |\delta|^{\alpha_j - m_j}.
\end{aligned}
\]
This proves that $f \in \mathcal N_1^d(\boldsymbol\alpha,\, L)$.
Thus, $f_0,\, f_1 \in \mathcal P_{\text{prod}}$ for all sufficiently large $n$.

\smallskip

We now compute the separation rate in $W_1$. 
Let $\mu_0,\,\mu_1$ be the probability measures corresponding to $f_0,\, f_1$. Since both are product measures and $W_1$ factorizes over independent coordinates, we have
$
W_1(\mu_0, \,\mu_1) = W_1(\mu_{j_0}^0,\, \mu_{j_0}^1)$,
where $\mu_{j_0}^0 \sim \omega_{j_0,n}$ and $\mu_{j_0}^1 \sim \omega_{j_0,n} + \lambda_{j_0,n} \xi_{j_0,n}$.
In one dimension, $W_1$ equals the $L^1$-distance between the cumulative distribution functions so
$W_1(\mu_{j_0}^0,\, \mu_{j_0}^1) = \lambda_{j_0,n} \int_{\mathbb R} |\Xi_{j_0,n}(x)| \, \di x$,
where $\Xi_{j_0,n}(x) \coloneqq \Xi(R_n^{-(\alpha_{j_0}+1)}x)$ and $\Xi$ is the primitive of $\xi$.
Define $
\kappa_{j_0} \coloneqq \int_{\mathbb R} |\Xi(u)| \, \di u > 0$. By the change of variable $u = R_n^{-(\alpha_{j_0}+1)}x$,
\begin{equation}\label{eq:W1separation}
W_1(\mu_0, \,\mu_1) = \lambda_{j_0,n} \int_{\mathbb R} |\Xi_{j_0,n}(x)| \, \di x
= \lambda_{j_0,n} R_n^{\alpha_{j_0}+1} \int_{\mathbb R} |\Xi(u)| \, \di u
= \lambda_{j_0,n} R_n^{\alpha_{j_0}+1} \kappa_{j_0}
= \kappa_{j_0} n^{-E/2}.
\end{equation}

We now bound the $\chi^2$-divergence. 
Set \(\epsilon \coloneqq (\gamma_\omega-\gamma_\xi)/2 > 0\). Since \(\gamma_\xi < \gamma_\omega\), we have \(\gamma_\xi + \epsilon < \gamma_\omega\). The support of \(f_1 - f_0\) is contained in
\[
\mathcal K_n\coloneqq \Big\{ \mathbf x \in \mathbb R^d :\, |x_{j_0}| \le \gamma_\xi R_n^{\alpha_{j_0}+1} + \epsilon, \; |x_j| \le \gamma_\xi + \epsilon \text{ for } j \neq j_0 \Big\}.
\]
We derive a lower bound on $f_0 \ast f_\varepsilon$ on $\mathcal K_n$.
We show that there exists a constant $c_0 > 0$ such that
\[
(f_0 \ast f_\varepsilon)(\mathbf x) \ge c_0 R_n^{-(\alpha_{j_0}+1)}, \quad \forall\, \mathbf x \in \mathcal K_n.
\]
Indeed, on the support of $f_1 - f_0$,
\begin{itemize}
    \item for $j = j_0$, since $\gamma_\xi + \epsilon < \gamma_\omega$ and $\omega(x) \ge c_\omega > 0$ for $|x| \le \gamma_\omega$, for sufficiently large $n$ we have $\gamma_\xi R_n^{\alpha_{j_0}+1} + \epsilon \le \gamma_\omega R_n^{\alpha_{j_0}+1}$, and hence
    \[
    \omega_{j_0,n}(x_{j_0}) \ge c_\omega R_n^{-(\alpha_{j_0}+1)}, \quad |x_{j_0}| \le \gamma_\xi R_n^{\alpha_{j_0}+1} + \epsilon;
    \]
   \item for $j \neq j_0$, since $\gamma_\xi + \epsilon < \gamma_\omega$, we have $\omega_{j,n}(x_j) = \omega(x_j) \ge c_\omega > 0$ for $|x_j| \le \gamma_\xi + \epsilon$.
\end{itemize}
Thus, on $\mathcal K_n$,
\[
f_0(\mathbf x) = \prod_{j=1}^d \omega_{j,n}(x_j) \ge c_* R_n^{-(\alpha_{j_0}+1)},
\]
where $c_* \coloneqq c_\omega^d > 0$.
Now, for any $0 < \varpi < 1$, there exists a compact set $K_\varpi \subset \mathbb R^d$ such that
$
\int_{K_\varpi} f_\varepsilon(\mathbf z) \, \di \mathbf z \ge 1 - \varpi$. 
Choose $K_\varpi$ sufficiently small so that $\mathbf x - K_\varpi \subset \mathcal K_n^*$ for all $\mathbf x \in \mathcal K_n$, where $\mathcal K_n^*$ is a slightly enlarged region where the same lower bound on $f_0$ holds.
Then, for $\mathbf x \in \mathcal K_n$,
\[
\begin{aligned}
(f_0 \ast f_\varepsilon)(\mathbf x)
\ge \int_{\mathbf x - K_\varpi} f_0(\mathbf u) f_\varepsilon(\mathbf x - \mathbf u) \, \di\mathbf u 
\ge c_* R_n^{-(\alpha_{j_0}+1)} \int_{K_\varpi} f_\varepsilon(\mathbf z) \, \di\mathbf z 
\ge c_* (1 - \varpi) R_n^{-(\alpha_{j_0}+1)}.
\end{aligned}
\]
Therefore, for $c_0\coloneqq c_* (1 - \varpi) > 0$, we have $
(f_0 \ast f_\varepsilon)(\mathbf x) \ge c_0 R_n^{-(\alpha_{j_0}+1)}$ for all $\mathbf x \in \mathcal K_n$.
By possibly reducing the constant, the same lower bound holds on the slightly enlarged region $\mathcal K_n^*$: there exists $c_0' \in (0,\,c_0]$ such that
$(f_0 \ast f_\varepsilon)(\mathbf x) \ge c_0' R_n^{-(\alpha_{j_0}+1)}$ for all $\mathbf x \in \mathcal K_n^*$.
Since, for \(n\) sufficiently large, $\operatorname{supp}\big((f_1 - f_0) \ast f_\varepsilon\big) \subset \mathcal K_n^*$,
\[
\begin{aligned}
\chi^2(f_1 \ast f_\varepsilon \parallel f_0 \ast f_\varepsilon)
&= \int_{\mathbb R^d} \frac{[(f_1 - f_0) \ast f_\varepsilon(\mathbf x)]^2}{(f_0 \ast f_\varepsilon)(\mathbf x)} \, \di\mathbf x\\
&\leq \int_{\mathcal K_n^*} \frac{[(f_1 - f_0) \ast f_\varepsilon(\mathbf x)]^2}{(f_0 \ast f_\varepsilon)(\mathbf x)} \, \di\mathbf x \\
&\le \frac{1}{c_0'} R_n^{\alpha_{j_0}+1} \int_{\mathcal K_n^*} [(f_1 - f_0) \ast f_\varepsilon(\mathbf x)]^2 \, \di\mathbf x\\
&\le \frac{1}{c_0'} R_n^{\alpha_{j_0}+1}\|(f_1 - f_0) \ast f_\varepsilon\|_{L^2(\mathbb R^d)}^2 \\
&\le \frac{1}{c_0'} R_n^{\alpha_{j_0}+1}\|f_1 - f_0\|_{L^2(\mathbb R^d)}^2 \|f_\varepsilon\|_{L^1(\mathbb R^d)}^2 \\
&= \frac{1}{c_0'} R_n^{\alpha_{j_0}+1}\|f_1 - f_0\|_{L^2(\mathbb R^d)}^2.
\end{aligned}
\]
We compute $\|f_1 - f_0\|_{L^2(\mathbb R^d)}^2$. Since $f_1 - f_0$ factorizes, we have
\[
\|f_1 - f_0\|_{L^2(\mathbb R^d)}^2
= \lambda_{j_0,n}^2 \|\xi_{j_0,n}\|_{L^2(\mathbb R)}^2 \prod_{\substack{j=1 \\ j \neq j_0}}^d \|\omega_{j,n}\|_{L^2(\mathbb R)}^2
= \lambda_{j_0,n}^2 R_n^{-(\alpha_{j_0}+1)} \|\xi\|_{L^2(\mathbb R)}^2 \prod_{\substack{j=1 \\ j \neq j_0}}^d \|\omega\|_{L^2(\mathbb R)}^2.
\]
Thus,
\begin{equation}\label{eq:chi2boundW1}
\chi^2(f_1 \ast f_\varepsilon \parallel f_0 \ast f_\varepsilon)
\le \frac{C}{c_0'} \lambda_{j_0,n}^2 R_n^{\alpha_{j_0}+1}
= \frac{C}{c_0'} n^{-E} R_n^{-(\alpha_{j_0}+1)}
\lesssim n^{-E - (\alpha_{j_0}+1)/E}
\lesssim n^{-1}
\end{equation}
because $\alpha_{j_0} > 0$ implies $E + (\alpha_{j_0}+1)/E \ge 2\sqrt{\alpha_{j_0}+1} > 1$ for every $E > 0$. Therefore,
$\chi^2(f_1 \ast f_\varepsilon \parallel f_0 \ast f_\varepsilon) = o(n^{-1})$.
By Le Cam's two-point method,
\[
\inf_{\hat{\mu}_n} \sup_{\mu_X \in \mathcal P_{\text{prod}}} \mathsf{E}_{(\mu_X\ast\mu_\varepsilon)^{\otimes n}}[W_1(\hat{\mu}_n,\, \mu_X)]
\ge \frac{1}{2} W_1(\mu_0,\, \mu_1)
\left[ 1 - \| (\mu_1 \ast \mu_\varepsilon)^{\otimes n} - (\mu_0 \ast \mu_\varepsilon)^{\otimes n} \|_{\mathrm{TV}} \right].
\]
From \eqref{eq:chi2boundW1}, Pinsker's inequality and tensorization of the $\chi^2$-divergence yield
$
\| (\mu_1 \ast \mu_\varepsilon)^{\otimes n} - (\mu_0 \ast \mu_\varepsilon)^{\otimes n} \|_{\mathrm{TV}} \to 0$.
Thus, for $n$ sufficiently large, using \eqref{eq:W1separation}:
\[
\inf_{\hat{\mu}_n} \sup_{\mu_X \in \mathcal P_{\text{prod}}} \mathsf{E}_{(\mu_X\ast\mu_\varepsilon)^{\otimes n}}[W_1(\hat{\mu}_n,\, \mu_X)]
\ge \frac{1}{4} W_1(\mu_0,\, \mu_1)\ge \frac{\kappa_{j_0}}{4} \psi_n^{\boldsymbol{\alpha},\boldsymbol{\beta}}.
\]
Therefore,
\[
\inf_{\hat{\mu}_n} \sup_{\mu_X \in \mathcal P_{\text{prod}}} \mathsf{E}_{(\mu_X\ast\mu_\varepsilon)^{\otimes n}}[W_1(\hat{\mu}_n,\, \mu_X)] \gtrsim \psi_n^{\boldsymbol{\alpha},\boldsymbol{\beta}},
\]
which completes the proof of the lower bound.

\smallskip

We now prove the upper bound. For each \(j = 1,\,\dots,\,d\), let \(\widehat{F}_{j,n,\mathbf{h}}\) be the univariate integrated estimator, and let \(\widehat{F}_{j,n,\mathbf{h}}^{\ast}\) be the restricted \(L^1\)-projection estimator defined in Corollary \ref{cor:finiteMomentRestricted}, with \(\varrho_{j,n} = O({n}^{-1/2})\) and \(M_n\) chosen as in Corollary~\ref{cor:finiteMomentRestricted}. Define the product estimator
\[
\widehat{F}_{n,\mathbf{h}}^*(\mathbf x) \coloneqq \prod_{j=1}^d \widehat{F}_{j,n,\mathbf{h}}^\ast(x_j), \quad \mathbf x = (x_1,\,\dots,\,x_d)^\top \in \mathbb R^d,
\]
and let \(\widehat{\mu}_{n,\mathbf{h}}^\ast \coloneqq \bigotimes_{j=1}^d \widehat{\mu}_{j,n,\mathbf{h}}^\ast\) be the associated probability measure. By Corollary~\ref{cor:finiteMomentRestricted}, each \(\widehat{\mu}_{j,n,\mathbf{h}}^\ast\) has finite first moment. Hence, by the factorization property of \(W_1\) for product measures with \(\ell^1\)-cost and using identity \eqref{eq:identity} \hyperref[eq:identity]{of the Main Article},
\[\begin{split}
\Wl(\widehat{\mu}_{n,\mathbf{h}}^\ast,\, \mu_X) 
&= \Wl\!\left(\bigotimes_{j=1}^d \widehat{\mu}_{j,n,\mathbf{h}}^\ast,\, \bigotimes_{j=1}^d \mu_{X,j}\right)\\
&= \sum_{j=1}^d \Wl(\widehat{\mu}_{j,n,\mathbf{h}}^\ast,\, \mu_{X,j})\\
&= \sum_{j=1}^d\| \widehat{F}_{j,n,\mathbf{h}}^* - F_{X,j}\|_{L^1(\mathbb R)}.
\end{split}
\]
Taking expectations, 
\[
\mathsf{E}_{(\mu_X\ast\mu_\varepsilon)^{\otimes n}}\big[ \Wl(\widehat{\mu}_{n,\mathbf{h}}^\ast,\, \mu_X)  \big]
=\sum_{j=1}^d \mathsf{E}_{(\mu_X\ast\mu_\varepsilon)^{\otimes n}}\big [\| \widehat{F}_{j,n,\mathbf{h}}^* - F_{X,j}\|_{L^1(\mathbb R)}\big].
\]
From Theorem~\ref{thm:ubrate}, applied in dimension $d=1$ with the restricted projection space $\mathfrak F_{M_n}$, for each $j$ there exists a constant $C_j > 0$ such that
\[
\sup_{f_j \in \mathcal N_1^1(\alpha_j, L_j)} 
\mathsf{E}_{(\mu_X\ast\mu_\varepsilon)^{\otimes n}} \big[\|\widehat{F}_{j,n,\mathbf{h}}^* - F_{X,j}\|_{L^1(\mathbb R)}\big]
\le C_j \, n^{-E_j/2},
\]
where we define
\[
E_j \coloneqq \bigg[1 + \frac{(\beta_j - \frac12)_+}{\alpha_j + 1}\bigg]^{-1}.
\]
The choice $\varrho_{j,n} = O({n}^{-1/2})$ and $M_n$ as in Corollary~\ref{cor:finiteMomentRestricted} ensures that the projection and truncation errors do not affect the rate. Therefore,
\[
\sup_{\mu_X \in \mathcal P_{\text{prod}}} 
\mathsf{E}_{(\mu_X\ast\mu_\varepsilon)^{\otimes n}} \big[ \Wl(\widehat{\mu}_{n,\mathbf{h}}^\ast,\, \mu_{X}) \big]
\le \sum_{j=1}^d C_j \, n^{-E_j/2}.
\]
From the definition of $E$,
\[
\frac{1}{E} = 1 + \sum_{\ell=1}^d \frac{(\beta_\ell-\frac12)_+}{\alpha_\ell+1}
\ge 1 + \frac{(\beta_j-\frac12)_+}{\alpha_j+1} = \frac{1}{E_j}>0.
\]
Therefore, for $n \ge 1$, $n^{-E_j/2} \le n^{-E/2}$. Consequently,
$\sum_{j=1}^d C_j n^{-E_j/2}
\le n^{-E/2}\sum_{j=1}^d C_j
\lesssim n^{-E/2}$. Hence,
\[
\sup_{\mu_X \in \mathcal P_{\text{prod}}} 
\mathsf{E}_{(\mu_X\ast\mu_\varepsilon)^{\otimes n}} \big[ \Wl(\widehat{\mu}_{n,\mathbf{h}}^\ast,\, \mu_{X}) \big]
\lesssim \psi_n^{\boldsymbol{\alpha},\boldsymbol{\beta}}.
\]
This completes the proof of the upper bound.

\smallskip

Combining the lower and upper bounds, we obtain the minimax rate 
\[
\inf_{\hat{\mu}_n} \sup_{\mu_X \in \mathcal P_{\text{prod}}} 
\mathsf{E}_{(\mu_X\ast\mu_\varepsilon)^{\otimes n}} [W_1(\hat{\mu}_n,\, \mu_X)] \asymp \psi_n^{\boldsymbol{\alpha},\boldsymbol{\beta}}.
\]
This completes the proof.
\end{proof}
\end{supplement}

\end{document}